\documentclass[11pt]{amsart}
\usepackage{a4wide,pdfsync,hyperref}
\usepackage{amsmath,amssymb,esint}
\usepackage{enumerate}
\usepackage{color}

\numberwithin{equation}{section}

\allowdisplaybreaks

\newtheorem{theorem}{Theorem}[section]
\newtheorem{lemma}[theorem]{Lemma}

\newtheorem{corollary}[theorem]{Corollary}
\newtheorem{proposition}[theorem]{Proposition}
\newtheorem{remark}[theorem]{Remark}

\newcommand{\eps}{\varepsilon}

\newcommand{\beqq}{\begin{eqnarray}}
\newcommand{\enqq}{\end{eqnarray}}

\newcommand{\enn}{\end{equation}}
\newcommand{\bef}{\begin{proof}}
\newcommand{\enf}{\end{proof}}

\let\e=\varepsilon

\let\f=\frac

\let\om=\omega

\let\Om=\Omega

\let\pa=\partial

\def\dv{\mbox{div}}

\def\curl{\mathop{\rm curl}\nolimits}

\def\ef{\hphantom{MM}\hfill\llap{$\square$}\goodbreak}

\newcommand{\beq}{\begin{equation}}
\newcommand{\eeq}{\end{equation}}
\newcommand{\ben}{\begin{eqnarray}}
\newcommand{\een}{\end{eqnarray}}
\newcommand{\beno}{\begin{eqnarray*}}
\newcommand{\eeno}{\end{eqnarray*}}

\begin{document}
\title[Inviscid Limit of NS with point vortex data]{The Navier-Stokes equations in $\mathbb R^2_+$ with point vortex initial data: Zero-viscosity limit}

\author[C. Wang]{Chao Wang}
\address{School of Mathematical Sciences\\ Peking University\\ Beijing 100871, China}
\email{wangchao@math.pku.edu.cn}

\author[J. Yue]{Jingchao Yue}
\address{School of Mathematical Sciences\\ Peking University\\ Beijing 100871, China}
\email{wasakarumi@163.com}

\author[Z. Zhang]{Zhifei Zhang}
\address{School of Mathematical Sciences\\ Peking University\\ Beijing 100871, China}
\email{zfzhang@math.pku.edu.cn}

\begin{abstract}
This is the second of two papers devoted to the asymptotic behavior of solutions to the incompressible Navier-Stokes equations in a half-space with point vortex initial data. A major difficulty stems from the interaction between the point vortex initial data and the boundary, which complicates the derivation of a valid asymptotic expansion.
To overcome this, we carry out a precise matching between the point vortex and boundary-layer profiles to accurately capture the correct viscous behavior of the vortex in the half-plane. Based on this matched asymptotic analysis, we decompose the vorticity into three components: vorticity near the point vortex, vorticity near the boundary, and vorticity in the transition layer. A key point is that each component must be analyzed in its own distinct region. On this basis, we establish refined estimates and thereby achieve the inviscid limit for the point vortex. Finally, we rigorously prove that solutions to the Navier-Stokes equations converge to the Lamb-Oseen vortex away from the boundary, while approaching the Prandtl boundary-layer system in the near-boundary region.
\end{abstract}

\date{\today}

\maketitle

\section{Introduction}

\subsection{Presentation of the problem and related results}
This is the second in a series of two papers concerning the zero-viscosity limit of the incompressible Navier-Stokes equations in a half-space with point vortex initial data. In the previous paper \cite{WYZ}, we established the existence and uniqueness of solutions to the Navier-Stokes equations for a fixed viscosity. In the present work, we analyze their asymptotic behavior in the high-Reynolds-number limit.  More precisely, we study the inviscid limit of the incompressible Navier-Stokes equations in a half-plane $\mathbb R^2_+$ with irregular initial data:
\begin{align}\label{eq: NS velocity}
	\left\{
	\begin{aligned}
		&\pa_t U-\nu\Delta U+U\cdot\nabla U+\nabla p=0,\quad (x,y)\in\mathbb R^2_+,\\
		&\dv U=0, \\
		&U|_{y=0}=0,
	\end{aligned}
	\right.
\end{align}
where $U=(u,v)$ and $p$ stand for the fluid velocity and pressure respectively, $\nu>0$ is the viscosity constant. 

It is well established that coherent structures play a crucial role in two-dimensional turbulent flows. Experiments and numerical simulations of decaying turbulence show that, at high Reynolds numbers, isolated regions of concentrated vorticity emerge after a short transient phase and persist over very long time scales (see \cite{Couder, Mcwilliams, Mcwilliams 2}). These nearly axisymmetric structures behave like point vortices when well separated, but undergo substantial deformation or even merging when two such structures approach one another closely. The long-time evolution of such flows is therefore governed by fundamental mechanisms including vortex interaction and merging (see \cite{Le Dizes, Meunier}). A natural mathematical framework for studying vortex interactions is to use point vortices as initial data.

In this paper, we consider the initial data $U_0$, whose vorticity is concentrated as a Dirac measure, i.e.,
\begin{align}\label{initial data}
	\omega_0=\curl U_0=\alpha\delta_{X_0},\quad with \quad \alpha\in\mathbb R,
\end{align}
where $X_0$ is a point away from the boundary. Without loss of generality, we assume
\begin{align*}
X_0=(x_0,y_0)=(0,20).
\end{align*} 
To investigate the Navier-Stokes equations with this type of singular initial data, one approach is to begin with the vorticity formulation, which states that
\begin{align}\label{eq: NS vorticity}
	\pa_t\omega-\nu\Delta\omega+U\cdot\nabla\omega=0,\qquad \omega|_{t=0}=\omega_0,
\end{align}
where $\omega:=\pa_x v-\pa_y u$. The velocity $U$ can be recovered by the Biot-Savart law:
\beno
U=\nabla^\perp\Delta_D^{-1}\omega,\quad \textrm{where} 
\quad \nabla^\perp=(-\pa_y,\pa_x).
\eeno
Using the non-slip boundary condition of $U$ and the Biot-Savart law, we derive the boundary condition of $\omega$ (see \cite{Maekawa}): 
\begin{align}\label{eq: BC of NS}
	\nu(\pa_y+|D_x|)\omega|_{y=0}=\pa_y\Delta_D^{-1}(U\cdot\nabla\omega)|_{y=0}.
\end{align}

The study of the inviscid limit in the whole space has an extensive history. For smooth initial data $(u_0, v_0) \in H^{s}$ ($s>2$), the inviscid limit with an $L^2$ convergence rate of order $\nu$ was established in \cite{BM, CF}, while the convergence in $H^s$ was addressed in \cite{MR}. Meanwhile, the case of irregular initial data has also attracted considerable attention. For vortex patch initial data (i.e., $\omega_0$ is a characteristic function), Constantin and Wu \cite{CW, CW1} demonstrated that 
\beno
\|U-U^e\|_{L^2}\leq C\nu^{1/2},
\eeno
where $U^e=(u^e,v^e)$  denotes a solution to the Euler equations 
\begin{align}\label{eq: Euler}
	\left\{
	\begin{aligned}
		\pa_t U^e+U^e\cdot\nabla U^e+\nabla p^e&=0,\\
		\operatorname{div}U^e&=0,\\
		U^e|_{t=0}&=U_0.
	\end{aligned}
	\right.
\end{align}
Subsequently, Abidi and Danchin \cite{AD} established the optimal $L^2$ convergence rate of $\nu^{3/2}$. Sueur \cite{Sueur} further derived an asymptotic expansion of the solution in the vanishing viscosity limit for fluids with sharp vorticity variations. This expansion was rigorously justified by Liao, Sueur and Zhang \cite{LSZ}. More recently, Constantin, Drivas and Elgindi \cite{CDE} generalized these results to wide classes of Yudovich-type initial data, showing that
\begin{align*}
	\lim_{\nu\rightarrow 0} \|\curl U-\curl U^e\|_{L^p} =0, \quad p\in [1,\infty).
\end{align*}
For vortex-sheet initial data, the discontinuity in the tangential velocity component leads to the formation of a strong Prandtl boundary layer in the inviscid limit. Caflisch and Sammartino \cite{CS} derived a specific form of the Prandtl system to describe this limiting behavior.
For more singular data such as point vortices, Gallay \cite{Gallay 2} first proved that the vorticity of the Navier-Stokes equations converges in $L^1$ norm,in the vanishing viscosity limit, to a superposition of Oseen vortices. More precisely, for initial vorticity $\omega_0=\sum_{i=1}^N \alpha_i\delta_{X_i}$, he established
\begin{align*}
	\frac{1}{|\alpha|}\int_{\mathbb R^2}
	\left| \omega(t,X)-\sum_{i=1}^N \frac{\alpha_i}{4\pi \nu t}e^{-\frac{|X-X_i(t)|^2}{4\pi\nu t}} \right|dX
	\leq C\nu t,\qquad for \ t\in(0,T),
\end{align*}
where $|\alpha|=\sum_{i=1}^N|\alpha_i|$
and $X_i(t)$  solves the Helmholtz-Kirchhoff system. Nguyen and Nguyen \cite{TT Nguyen} studied the interaction between a point vortex and a smooth vortex patch. Extensive work has also been devoted to the long-time dynamics of vortices in the full plane $\mathbb{R}^2$. In particular, \cite{Dolce, ZP 1} analyzed the evolution of a viscous vortex dipole formed by two point vortices.

The situation changes significantly in the presence of a boundary. The mismatch in boundary conditions gives rise to a boundary layer. For slip boundary conditions, this boundary layer is weak. Masmoudi and Rousset \cite{MR} justified the inviscid limit in Sobolev spaces using the conormal derivative method. For further related results, we refer to \cite{IF, IS, WXY, WXZ, XX1}.
Under no-slip boundary conditions, however, a strong boundary layer develops. In 1904, Prandtl introduced boundary-layer theory in \cite{Prandtl}, leading to the formal expansion
 \ben\label{formal expan}
 \left\{
 \begin{array}{l}
 u^{\e}(t,x,y) =u^{e}(t,x,y)+ u^{p}(t,x,\f{y}{\nu^{1/2}})+O(\nu^{1/2}),\\
 v^{\e}(t,x,y)= v^{e}(t,x,y)+\nu^{1/2} v^{p}(t,x,\f{y}{\nu^{1/2}})+O(\nu^{1/2}),
 \end{array}\right.
 \een
 where $(u^{p}, v^{p})$ satisfies the Prandtl equations. Owing to the strong boundary layer, the inviscid limit has been established only in certain special settings, such as analytic spaces \cite{SC1, SC2, WWZ, TT Nguyen, Kukavica} and Gevrey spaces \cite{CWZ-1, GMM, GMM1}. The inviscid limit in Sobolev spaces was justified in \cite{FTZ, Maekawa} under the structural assumption that vorticity is supported away from the boundary. A key ingredient in these works is the use of the conditions $\text{div } U^e = 0$ and $\text{curl } U^e = 0$ near the boundary, which ensure analytic regularity of the flow in that region. 
 
For irregular initial data, the methods developed in the aforementioned papers cannot be directly applied to handle the interaction between the singular data and the boundary layer. In \cite{WYZ1}, we studied the interaction among the internal transition layer, the boundary layer, and the initial layer to treat vortex-patch-type initial data. The key idea in \cite{WYZ1} was to construct an approximate solution that captures the interaction between the vortex patch, the boundary layer, and the initial layer.
For more general rough vortex patches, where the vorticity is only assumed to lie in $L^\infty$-a class of Yudovich-type initial data-the analysis becomes significantly more difficult. The low regularity prevents a direct construction of an asymptotic expansion. To overcome this obstacle, in \cite{HWYZ} we established a Kato-type criterion adapted to the Yudovich framework and constructed an appropriate energy space to prove convergence to the Euler equations.  
 
 This paper investigates the case of more singular initial data in the half-plane $\mathbb R^2_+$, namely the point vortex.  
In \cite{WYZ}, we studied the interaction between the boundary and the point vortex, leading to the following existence and uniqueness result for the Navier-Stokes equations with a point vortex in the half-plane. Notably, Dalibard and Gallay recently established the same result in \cite{Dalibard} using a different method.
 
\begin{theorem}\label{thm: existence and uniqueness}$($\cite{Dalibard,WYZ}$)$
Assume the initial data satisfies \eqref{initial data}. Then the Navier-Stokes equations \eqref{eq: NS vorticity} admit a unique global solution $(\omega,U)$ satisfying 
	\begin{align}\label{initial limit}
		\omega(t)\rightharpoonup \alpha\delta_{(x_0,y_0)}-u_0\delta_{\partial \mathbb R_+^2},\quad \text{vaguely in}\  M(\overline{\mathbb R^2_+})\ \text{as}\ t\rightarrow0^+,
	\end{align}
	where $u_0$ is the first component of the initial velocity $U_0=(u_0,v_0)$.
\end{theorem}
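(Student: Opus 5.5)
This theorem is quoted from \cite{Dalibard,WYZ}, so what follows is the strategy behind it rather than anything specific to the present work. The plan has three parts: build an approximate solution that explicitly carries the point-vortex singularity together with the initial boundary vorticity sheet, solve for a remainder that is regular enough for classical fixed-point arguments, and then use energy and uniqueness arguments to make the solution global and unique.

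\textbf{Approximate solution.} I would split $\omega=\omega^L+\omega^b+\omega^r$ as follows.
\begin{itemize}
\item $\omega^L(t,X)=\frac{\alpha}{4\pi\nu t}e^{-|X-X_0|^2/(4\nu t)}\chi(X)$ is the Lamb--Oseen vortex centred at $X_0=(0,20)$. The cutoff $\chi$ equals $1$ near $X_0$ and vanishes near $\{y=0\}$.
\item $\omega^b$ is a boundary corrector solving the Stokes problem in $\mathbb R^2_+$ with the vorticity boundary condition $\nu(\pa_y+|D_x|)\omega^b|_{y=0}=$ the trace produced by the velocity of $\omega^L$. It can be written explicitly through the Stokes kernel of the half-plane, e.g.\ via the Ukai/Maekawa representation.
\end{itemize}
The velocity of the point vortex at the wall is $u_0(x,0)=\partial_y\Delta_D^{-1}(\alpha\delta_{X_0})|_{y=0}$, which is smooth and decays like $|x|^{-2}$. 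Consequently $\omega^b(t)$ behaves like $-u_0(x,0)\,(\pi\nu t)^{-1/2}e^{-y^2/(4\nu t)}$. Both the $L^1$ bound and the weak limit $-u_0\delta_{\partial\mathbb R^2_+}$ follow from this explicit form.

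\textbf{Remainder.} The remainder $\omega^r$ satisfies a perturbed vorticity system with forcing coming from the cross terms. Two structural facts keep the forcing manageable. First, the Lamb--Oseen vortex is an exact radial solution in the whole plane, so its self-interaction vanishes. Second, the cutoff errors live on the support of $\nabla\chi$, at distance $O(1)$ from $X_0$, and are therefore exponentially small, $O(e^{-c/(\nu t)})$. I would then run a fixed point for $\omega^r$ in a weighted space such as
\[
\sup_{0<t<T} t^{1-1/p}\|\omega^r(t)\|_{L^p}+\ldots
\]
built on $L^1\cap L^p$ estimates for the Stokes semigroup with the boundary condition \eqref{eq: BC of NS}, following Maekawa. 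This gives a local solution with $\omega^r(t)\to0$ in $L^1$ as $t\to0^+$, which yields \eqref{initial limit}.

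\textbf{Global existence and uniqueness.} Global existence follows by continuing from any positive time $t_0$: the data $\omega(t_0)$ lie in $L^1\cap L^\infty$ and the velocity in $L^2_{loc}$, so the standard global theory for the two-dimensional Navier--Stokes equations with the no-slip condition applies, supplemented by an energy estimate for $U-U^L$ since $U^L\notin L^2$.

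\textbf{Main obstacle.} Uniqueness among all solutions with the initial trace \eqref{initial limit} is the hardest step. I would first show that any such solution satisfies $\|\omega(t)-\omega^L(t)-\omega^b(t)\|_{L^1}\to0$, using a compactness/rescaling argument near $X_0$ in the spirit of Gallay--Gallagher's uniqueness of Oseen vortices, together with Liouville-type rigidity of the rescaled limit. Uniqueness then reduces to a Gronwall estimate in the weighted space for the difference of two remainders. The delicate part is that the boundary-layer component carries a time-singular trace, so the rescaling argument must be localized away from the wall.
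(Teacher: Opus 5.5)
The paper does not prove this theorem; it quotes it from \cite{Dalibard,WYZ}. Judged on its own terms, your sketch has a genuine gap. It sits in the remainder step and reappears in the uniqueness step.

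\textbf{The linear terms around the vortex are not perturbative.} Consider the terms linear in the remainder, $U^L\cdot\nabla\omega^r+U^r\cdot\nabla\omega^L$. Write $\omega^r=\frac{\alpha}{\nu t}w(\eta,t)$ with $\eta=(X-X_0)/\sqrt{\nu t}$. These terms then become $\frac{\alpha}{\nu}\Lambda w$ in the equation $t\pa_t w-\mathcal L w+\frac{\alpha}{\nu}\Lambda w=\cdots$. They are invariant under the parabolic scaling and carry the factor $\alpha/\nu$, so short time gives no smallness.

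In a norm such as $\sup_{0<t<T}t^{1-1/p}\|\omega^r(t)\|_{L^p}$, their Duhamel contribution is bounded only by $C_p\frac{|\alpha|}{\nu}$ times the norm, with $C_p$ independent of $T$. This is not an artifact of the estimate. The linear map commutes with the scaling, and on self-similar profiles it acts as $w\mapsto\frac{\alpha}{\nu}\mathcal L^{-1}\Lambda w$, whose size is proportional to $|\alpha|/\nu$. The theorem is stated for every $\alpha\in\mathbb R$, and only $\alpha/\nu$ is scale-invariant. So your contraction closes only for small circulation Reynolds number.

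\textbf{The uniqueness step fails the same way.} The difference of two solutions satisfies a Volterra inequality whose kernel is homogeneous of degree $-1$ in $(t,s)$, with prefactor $|\alpha|/\nu$. A compactness/rescaling argument tells you only that this difference is $o(1)$ in the scale-invariant norm. With that information, the singular Gronwall argument again needs $C\frac{|\alpha|}{\nu}<1$.

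\textbf{The forcing is also larger than you claim.} Your two structural facts concern the forcing, not this operator. The first is also inaccurate in $\mathbb R^2_+$: $BS_{\mathbb R^2_+}[\omega^L]\cdot\nabla\omega^L$ contains the advection of $\omega^L$ by the image vortex, which has size $(\nu t)^{-1/2}$ in $L^1$. This is exactly what the drift $X_0'(t)$ in \eqref{def: X(t)0} removes. With the centre frozen, your remainder is only $O(t^{1/2})$ relative to $\omega^L$. That is far too crude to recover a contraction by demanding extra powers of $t$ in the norm.

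\textbf{What is missing} is a mechanism that neutralizes $\frac{\alpha}{\nu}\Lambda$ uniformly in $\alpha/\nu$. The standard one underlies the Gallagher--Gallay uniqueness theorem you allude to. It is also the scheme the paper attributes to \cite{WYZ} and reproduces in its own energy estimates.
\begin{itemize}
\item Estimate the vortex-core part in the Gaussian space $\mathcal Y$, where $\Lambda$ is skew-adjoint. The large term then drops out of the energy identity: in the proof of Lemma \ref{lem: estimates for I1,I8}, $I_{21}=0$, and $I_{22}$ only sees the region where the truncated weight \eqref{def: weight p} differs from $e^{|\eta|^2/4}$.
\item Use that $\mathcal L$ is coercive once the zero moment is controlled (Lemmas \ref{lem: est of zero moment} and \ref{lem: dissi term vp}).
\item Localize with $\chi_{vp}$, since such a weight cannot be carried up to the wall.
\item Couple this to a boundary estimate based on Maekawa's representation. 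Its weights in $|\xi|$ and $y^2/(\nu t)$ absorb the derivative loss from $|D_x|$ in \eqref{eq: BC of NS} and from the transport terms.
\item Add a weighted energy estimate in the intermediate region.
\end{itemize}

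Your description of the boundary sheet $\omega^b$, and of how it produces the trace $-u_0\delta_{\partial\mathbb R^2_+}$, is correct. Continuation from $t_0>0$ is not where the difficulty lies. But without the Gaussian-space stability of the Oseen vortex, neither your existence argument nor your uniqueness argument closes for general $\alpha$.
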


Building on this foundation, we proceed to study the asymptotic behavior of the aforementioned solutions. Our main result in this paper is summarized as follows.

\begin{theorem}\label{thm: main result vorticity convergence}
Let the initial data be given by \eqref{initial data}. Then, there exist $\nu_0,C>0$ and $T$ independent of $\nu$, a boundary layer corrector $\omega_b\in  C^\infty(\mathbb R^2_+)$, and a function $X(t)$ such that for any $0<\nu<\nu_0$, the solution $\omega$ to the Navier-Stokes equations \eqref{eq: NS vorticity} with initial data \eqref{initial data} satisfies for $1<p\leq+\infty$,
	\begin{align}\label{est: inviscid limit away from boundary}
		\left\|\omega(t,X)-\frac{\alpha}{4\pi \nu t}e^{-\frac{|X-X(t)|^2}{4\pi\nu t}} \right\|_{L^1_X(y\geq1)}\leq C\nu t,\qquad t\in(0,T),
	\end{align}
	and
	\begin{align}\label{est: inviscid limit near boundary}
		\left\|\|\omega-\omega_b\|_{L^p_x}\right\|_{L_y^1(y\leq2)}\leq C\nu^{1/2},\qquad t\in(0,T),
	\end{align}
	where $X=(x,y)$ and $X(t)=Z(t)+O(\nu^{1/2}t)$ is a smooth function used to describe the vortex center position satisfying $X(0)=X_0$ and $Z(t)=(x_0+\frac{\alpha t}{4\pi y_0},y_0)$.
\end{theorem}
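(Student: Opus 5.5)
The plan is a matched asymptotic expansion for the vorticity, followed by a localized stability estimate for the remainder. The approximate solution is
\[
\omega_{app}=\omega_v+\omega_b+\omega_{tr},
\]
with three pieces.
\begin{itemize}
\item The vortex piece $\omega_v$ is the Lamb--Oseen profile $\frac{\alpha}{4\pi\nu t}G\big(\frac{X-X(t)}{\sqrt{\nu t}}\big)$, written in self-similar variables $\xi=(X-X(t))/\sqrt{\nu t}$, plus corrections of order $\nu t$ as in Gallay's whole-space analysis \cite{Gallay 2}.
\item The center $X(t)$ is fixed by a solvability condition: the vortex is advected by the regular part of the velocity at its center, namely the image-vortex velocity $\frac{\alpha}{4\pi y_0}e_1$ plus the velocity induced by the boundary layer. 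This gives $X(t)=Z(t)+O(\nu^{1/2}t)$.
\item The boundary piece $\omega_b$ is the Prandtl layer vorticity $\nu^{-1/2}\Omega_p(t,x,y/\sqrt{\nu})$. Its outer datum is the slip velocity $u^e|_{y=0}$ of the Euler flow (point vortex plus image), and it carries the initial layer that produces the trace $-u_0\delta_{\partial\mathbb R^2_+}$ of Theorem~\ref{thm: existence and uniqueness}. Since the vortex sits at height $20$, this datum is smooth and analytic near $y=0$, so Prandtl is solvable on some $[0,T]$ independent of $\nu$.
\item The transition piece $\omega_{tr}$ is built with cutoffs supported in $1/2\le y\le 10$, say. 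It glues the Gaussian tails of $\omega_v$ and the decaying tail of $\omega_b$, and it corrects the Biot--Savart mismatch so that $U_{app}=\nabla^\perp\Delta_D^{-1}\omega_{app}$ satisfies the no-slip condition up to $O(\nu^{1/2})$.
\end{itemize}

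Next, I compute the error
\[
R=\partial_t\omega_{app}-\nu\Delta\omega_{app}+U_{app}\cdot\nabla\omega_{app},
\]
and the boundary error for \eqref{eq: BC of NS}, region by region. Near the vortex, Gaussian decay makes the cross terms exponentially small, and the residual is $O(\nu t)$ in the weighted $L^2(e^{|\xi|^2/4})$ space after rescaling. Near the boundary, the standard Prandtl residual is $O(\nu^{1/2})$ in $L^\infty_xL^1_y$. In the transition zone, all terms are smooth and $O(\nu^{1/2})$.

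The remainder $w=\omega-\omega_{app}$ satisfies a linearized vorticity equation with the boundary condition \eqref{eq: BC of NS}, zero initial data in the limiting sense of Theorem~\ref{thm: existence and uniqueness}, and the forcing above. I estimate it with three separately localized norms, each adapted to its own region.
\begin{itemize}
\item Near the vortex, I use the Gallay energy in self-similar variables with Gaussian weight. Its key feature is that the linearized Oseen operator $\mathcal L-\Lambda$ is coercive on the orthogonal complement of the first moments. These moments are exactly what the choice of $X(t)$ kills.
\item Near the boundary, I use the Maekawa-type estimate for the Stokes semigroup with the nonlocal boundary condition, in $L^1_y$ with weights in $y/\sqrt\nu$, giving $L^p_x$ control.
\item In the middle zone, I use a plain $L^1\cap L^\infty$ parabolic estimate.
\end{itemize}
The three estimates are coupled through the nonlocal velocity $\nabla^\perp\Delta_D^{-1}w$ and through commutators with the cutoffs. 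These coupling terms are absorbed because distinct regions are separated by a distance of order one, and the Biot--Savart kernel is smooth there. A bootstrap on $[0,T]$ then closes, giving $\|w\|_{L^1(y\ge1)}\lesssim\nu t$ and $\|\|w\|_{L^p_x}\|_{L^1_y(y\le2)}\lesssim\nu^{1/2}$. Since the corrections to the Oseen profile are of order $\nu t$ in $L^1$, and $\omega_{tr}$ and the tail of $\omega_b$ are small in $y\ge1$, this yields \eqref{est: inviscid limit away from boundary} and \eqref{est: inviscid limit near boundary}.

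The main obstacle will be the vortex-region estimate in the short-time regime $t\to0$, where the data is a Dirac mass. The boundary-layer velocity evaluated at the vortex is only $O(\nu^{1/2})$ rather than $O(\nu t)$. I would absorb it into $X(t)$, which is why the statement allows $X(t)-Z(t)=O(\nu^{1/2}t)$. I would then check that after this absorption the residual acting on the non-radial modes is genuinely $O(\nu t)$. This requires a second-order expansion of the external strain field at $X(t)$, so that $\nu t\,|\nabla^2 u_{ext}|$ is the leading forcing. The large weight $\alpha/\nu$ of the nonlinearity must also be handled using the antisymmetry of $\Lambda$ in the Gaussian-weighted space, as in \cite{Gallay 2}.
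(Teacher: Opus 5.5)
Your near-wall stability step cannot be closed in the form you describe (weighted $L^1_y$ norms with $L^p_x$ control). In the layer, the remainder equation loses a tangential derivative in several places.
\begin{itemize}
\item Through $|D_x|$ in the boundary condition \eqref{eq: BC of NS}. This is the part $a\,e^{-\theta_0 a(y+z)}$, $a=|\xi|+\nu^{-1/2}$, of the kernel $R_\xi$ in Lemma~\ref{lem: property of Rxi}.
\item Through $U_R\cdot\nabla\omega_a$. Since $\partial_y\omega_a\sim\nu^{-1}$, you must write $v_R\partial_y\omega_a=\frac{v_R}{y}\,y\partial_y\omega_a$, and $\frac{v_R}{y}\approx-\partial_x u_R$ at the wall.
\item Through $U_a\cdot\nabla\omega_R$ and $U_R\cdot\nabla\omega_R$, which cost $\partial_x$ or $y\partial_y$ in the Duhamel formula.
\end{itemize}
Analyticity of the Euler trace, which you do invoke, only gives existence of the Prandtl layer; it does nothing for the remainder. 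What makes Maekawa's method, and the paper, work is that the remainder norm itself is analytic in $x$ near the wall. The norm $Y_k(t)$ carries the weights $e^{\eps_0(1+\mu-y)_+|\xi|}$ and $e^{\eps_0(1+\mu)y^2/(\nu t)}$, with a radius $\mu<\mu_0-\gamma t$ that shrinks in time, and second derivatives weighted by $(\mu_0-\mu-\gamma t)^\beta$. Each lost derivative is paid by a Cauchy estimate (Lemma~\ref{lem: analytic recovery}) at cost $(\mu_0-\mu-\gamma s)^{-1}$. Lemma~\ref{lem: integral computation} then turns this into the small factor $\gamma^{-1/2}$ in Proposition~\ref{prop: est of Eb(t)}. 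Without such a mechanism you face the classical obstruction to finite-regularity stability of Prandtl expansions.

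Even granting stability, your approximation is one order too short for the rate $\nu t$ in \eqref{est: inviscid limit away from boundary}. With only the leading Prandtl layer, the wall residual is $O(\nu^{1/2})$, so the layer part of the remainder is $O(\nu^{1/2})$ rather than $O(\nu)$. Its velocity at the vortex enters \eqref{eq: profile eq} multiplied by $\sqrt{t/\nu}$. In particular, the $O(\nu)$ drift generated by the next-order layer vorticity projects onto $\nabla_\eta G\in\ker\Lambda$, which the large rotation $\frac{\alpha}{\nu}\Lambda$ cannot average out. So $\mathcal W_R$ stays of order $\nu^{1/2}$ times a power of $t$.

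The paper needs $\sqrt{t/\nu}\,\|BS_{\mathbb R^2_+}[\chi_b\omega_R]\|_{L^\infty(\chi_{vp})}\lesssim\nu t$. It obtains this in Lemma~\ref{lem: velocity estimates 3}(2) from $R_b=O(\nu)$ together with the concentration bound $\sup_y e^{\eps_0 y^2/(\nu t)}\|e^{-|\xi|}(\chi_b\omega_R)_\xi\|_{L^1_\xi}\lesssim(\nu t)^{1/2}$. Reaching $R_b=O(\nu)$ forces the iterated matching \eqref{sequence of approximate solution}: a second layer $\omega_b^{(1)}$ fed by the outer velocity $U_{vp}^{(1)}$ of $\Omega_0+\nu t\Omega_2$, then $X_2'=C_1$ to absorb its drift, and $\Omega_3$ to absorb the layer-induced strain $D_0$ and the $\mathcal Y_3$ term $B$.

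Your transition zone has the same problem. A residual of size $O(\nu^{1/2})$ in $\{1/2\le y\le 10\}$ already leaves an $O(\nu^{1/2}t)$ remainder on $\{y\ge1\}$. The paper has no transition vorticity; no-slip enters only through \eqref{eq: BC of omega p0}--\eqref{eq: BC of omega p1}. Instead it proves, with the weight $e^{\Psi}$, $\Psi\sim\eps_0/(\nu t)$, that the remainder is $e^{-c/(\nu t)}$-small in the middle region. This exponential smallness is load-bearing for two reasons.
\begin{itemize}
\item The boundary norm carries $e^{\eps_0 y^2/(\nu t)}$ up to $y\approx1$.
\item The mass of $\chi_{vp}\omega$ is conserved only up to the flux through $\operatorname{supp}\nabla\chi_{vp}$ (Lemma~\ref{lem: est of zero moment}). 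Near-zero mass of $\mathcal W_R$, not vanishing first moments, is what makes $\mathcal L$ coercive. It also gives the $|\eta|^{-2}$ decay of $\mathcal V^{\mathcal W_R}$ that keeps $B_R=O(\nu)$ against $|U_a|\sim|X-X(t)|^{-1}$.
\end{itemize}
A plain $L^1\cap L^\infty$ parabolic bound in the middle zone provides none of this.
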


\begin{remark}
	For general $N$ vortices data $\omega_0=\sum_{i=1}^N \alpha_i\delta_{X_i}$ supported away from boundary, we can establish an analogous convergence result by applying the same methodology.
\end{remark}


\subsection{The key ingredients}

In this subsection, we outline the key ingredients of the proof of Theorem \ref{thm: main result vorticity convergence}.

\subsubsection{Formulation under self-similar variables}
One of the main challenges arises from the irregular initial data, which introduces singularities into the analysis. To address this issue, it is necessary to obtain more detailed information near the point vortex. To this end, we employ self-similar variables to reformulate the system, a method introduced in \cite{Gallay 2}. The self-similar variables are defined as follows
  \begin{align}\label{def: self-similar}
\eta:=\frac{(x,y)-X(t)}{(\nu t)^{1/2}},
  \end{align}
    where $X(t)$ denotes the position of the point vortex at time $t$ and will be determined later by asymptotic expansion.
  
First, we introduce a cut-off function $\chi_{vp}$ defined by
  \begin{align}\label{def: chi vp}
  	\chi_{vp}(x,y)=
  	\left\{
  	\begin{aligned}
  		&1,\quad |(x,y)-(0,20)|\leq5,\\
  		&0,\quad |(x,y)-(0,20)|\geq6.
  	\end{aligned}
  	\right.
  \end{align}
Multiplying $\chi_{vp}$ on both sides of \eqref{eq: NS vorticity} yields
  \begin{align}\label{eq: NS near vortex point}
  	&\pa_t(\chi_{vp}\omega)
  	+BS_{\mathbb R^2_+}[\chi_{vp}\omega]\cdot\nabla(\chi_{vp}\omega)
  	-\nu\Delta(\chi_{vp}\omega)
  	+BS_{\mathbb R^2_+}[(1-\chi_{vp})\omega]\cdot\nabla(\chi_{vp}\omega)\\
  	\nonumber
  	&\qquad=
  	U\cdot\nabla\chi_{vp}\omega
  	-2\nu\nabla\chi_{vp}\cdot\nabla\omega
  	-\nu\Delta\chi_{vp}\omega.
  \end{align}
We define
  \begin{align}\label{def: self-similar}
  \frac{\alpha}{\nu t}\mathcal W \big(\eta,t\big):=	\chi_{vp}(x,y)\omega(t,x,y),
  \end{align}
  and
    \begin{align}\label{def: widetilde eta}
\widetilde\eta=(\widetilde\eta_1,\widetilde\eta_2):=\frac{X(t)-X(t)^\ast}{(\nu t)^{1/2}}.
  \end{align}
By the Biot-Savart law (see Lemma \ref{lem: derivation of velocity formula}), we have
  \begin{align}\label{velocity self-similar}
  	BS_{\mathbb R^2_+}[\chi_{vp}\omega](t,x,y)
  	=\frac{\alpha}{(\nu t)^{1/2}}\Big(\mathcal V^{\mathcal W}\big(\eta,t)-
  	\widetilde{\mathcal V^{\mathcal W}}(\eta+\widetilde\eta,t)\Big)
  \end{align}
  where $\mathcal V^{f}:=BS_{\mathbb R^2}[f]$ and $\widetilde{\mathcal V^{\mathcal W}}$ is the reflection operator defined later (see \eqref{def: tilde-F}).

  Based on the above notations, we obtain the equation of $\mathcal W$:
  \begin{align}\label{eq: profile eq}
  	&t\pa_t \mathcal W
  	+\Big\{\frac{\alpha}{\nu}\mathcal V^{\mathcal W}(\eta,t)
  	-\frac{\alpha}{\nu}\widetilde{\mathcal V^{\mathcal W}}(\eta+\widetilde\eta,t)
  	-\sqrt{\frac{t}{\nu}}X'(t)
  	\Big\}\cdot\nabla_\eta \mathcal W(\eta,t)-\mathcal L\mathcal W\\
  	\nonumber
  	&\qquad+\sqrt{\frac{t}{\nu}} BS_{\mathbb R^2_+}[\chi_b\omega](\eta,t)\cdot\nabla_\eta \mathcal W(\eta,t)\\
  	\nonumber
  	&= \frac{\nu t^2}{\alpha}U\cdot\nabla\chi_{vp} \omega
  	-\frac{2(\nu t)^2}{\alpha}\nabla\chi_{vp}\cdot\nabla\omega
  	-\frac{(\nu t)^2}{\alpha}\Delta\chi_{vp} \omega\\
  	\nonumber
  	&\qquad-\sqrt{\frac{t}{\nu}} BS_{\mathbb R^2_+}[(1-\chi_{vp}-\chi_b)\omega](\eta,t)\cdot\nabla_\eta \mathcal W(\eta,t),
  \end{align}
  where $\mathcal L:=\Delta_\eta+\frac{1}{2}\eta\cdot\nabla_\eta+1$.

\subsubsection{Construction of approximate solutions}

To study the asymptotic behavior of solutions to the Navier-Stokes equations, a standard approach is to construct approximate solutions via asymptotic analysis. Since the Euler equations with point vortex initial data are ill-posed, we cannot perform an asymptotic expansion directly from the velocity formulation \eqref{eq: NS velocity}, as is typically done. We therefore focus on the vorticity equation \eqref{eq: NS vorticity} when constructing approximate solutions.

To obtain a refined description of vorticity near the point vortex, we first introduce the self-similar coordinates \eqref{def: self-similar} in the vortex neighborhood and rewrite the Navier-Stokes equations in the self-similar form \eqref{eq: profile eq}. Notably, the second line of \eqref{eq: profile eq} captures the interaction between the point vortex and the boundary layer. This represents a key distinction from the full-space setting or from regular initial data in the half-space.

We assume that the vorticity and the position $X(t)$ admit the following asymptotic expansion in the vicinity of the point vortex:
\begin{align}\label{expand in intro}
	\mathcal W \sim \Omega_0+\nu t\Omega_2+\nu^{3/2}t\Omega_3+\cdots,\qquad
	X(t) \sim X_0(t)+\nu^{1/2}X_1(t)+\nu X_2(t)+\cdots.
\end{align}
In the near-boundary region, we assume that the vorticity admits the following asymptotic expansion:
\begin{align}\label{asymp expansion of omega b}
	\chi_b\omega(t,x,y)
	\sim \nu^{-1/2}\omega_b^{(0)}(t,x,\frac{y}{\nu^{1/2}})
	+\omega_b^{(1)}(t,x,\frac{y}{\nu^{1/2}})
	+\nu^{1/2}\omega_b^{(2)}(t,x,\frac{y}{\nu^{1/2}})
	+\cdots,
\end{align}
where $\chi_b$ is a cut-off function near the boundary. 

All that remains is to construct suitable $(\Omega_i, \omega_b^{(i)}, X_i)$ that approximate the solution to the Navier-Stokes equations.

Firstly, we choose $\Omega_0$ to be the Gaussian function and $X_0(t)$ (defined in \eqref{def: X(t)0}) to eliminate the singularity from the point vortex. We emphasize that if we take only the first term $\Omega_0$ as the approximate solution, the remainder ${\mathcal R}_{vp}\sim O(t)$ (see Lemma \ref{lem: expansion of Rvp0}) does not converge to $0$ as $\nu\rightarrow0$. It is therefore necessary to introduce the higher-order terms. 

To define $\omega_b^{(0)}$ near the boundary, we derive the expansion of velocity induced from vorticity near point vortex, namely
\begin{align*}
	BS_{\mathbb R^2_+}[\frac{\alpha}{\nu t}\chi_{vp}\cdot\Omega_0 (\frac{\cdot-X(t)}{\sqrt{\nu t}} )](x,y)
	=(u_{vp}^{(0)}, v_{vp}^{(0)})(t,x,y)+O(\nu^{1/2}),\quad for \quad y\leq 5,
\end{align*}
where $\chi_{vp}$ is a cut-off function near point vortex. By taking Taylor expansion in terms of the boundary layer variable $z=\frac{y}{\nu^{1/2}}$ and matching the order $\nu^{-1/2}$, we obtain the equation of $\omega_b^{(0)}$ (see \eqref{eq: omega p0}). We emphasize here that the initial data of $\omega_b^{(0)}$ is not zero as usual, due to the presence of an initial layer in \eqref{initial limit}. Therefore, we propose the following boundary condition for the vorticity:
\begin{align*}
	\lim_{t\rightarrow0^+}\omega_b^{(0)}=-u_0\delta_{\partial\mathbb R^2_+}\qquad in \ M(\overline{\mathbb R^2_+}).
\end{align*}
Due to the presence of an initial layer in $\omega_b^{(0)}$, we split the equation of $\omega_b^{(0)}$ into a heat equation with an initial layer that is easier to handle, and another equation without an initial layer. See the Appendix B for more details.

Armed with $\omega_b^{(0)}$, the corresponding velocity field $U_b^{(0)}$ is derived from the Biot-Savart law:
\begin{align*}
	U_b^{(0)}(t,x,y)=BS_{\mathbb R^2_+}[\nu^{-1/2}\omega_b^{(0)}(t,x,\frac{y}{\nu^{1/2}})].
\end{align*}
Here, one import property of $U_b^{(0)}$  is that it scales like $\nu^{1/2}$ near the point vortex, see Lemma \ref{est: U_b^0}. This behavior is crucial for the asymptotic matching procedure, implying that the first-order term $\Omega_2$ near the point vortex must satisfy 
\begin{align}\label{eq: Omega 2 toy model}
	\alpha\Lambda\Omega_2
	=-\frac{1}{(\nu t)^{1/2}}U_b^{(0)}\cdot\nabla_\eta \Omega_0
	+\frac{1}{t^{1/2}}X_1'(t)\cdot\nabla_\eta\Omega_0+\cdots,
\end{align}
where $\Lambda$ is a skew-adjoint differential operator defined in \eqref{def: lambda operator}. The term $U_b^{(0)}\cdot\nabla_\eta \Omega_0$ captures the boundary-vortex interaction and would vanish in a full-space setting.  To solve \eqref{eq: Omega 2 toy model}, the property of the operator $\Lambda$ implies that the right-hand side of \eqref{eq: Omega 2 toy model}  must belong to the space $\mathcal Y_n\cap \mathcal Z$, as stated in Proposition \ref{prop: properties of Lambda}. To satisfy this condition, we require the asymptotic expansion of $U_b^{(0)}\cdot\nabla_\eta \Omega_0$. This is rigorously derived via the Biot-Savart law in Lemma \ref{lem: expansion of the interaction term}, yielding:
\begin{align}\label{expand of Ub in intro}
U_b^{(0)}\cdot\nabla_\eta \Omega_0 = \nu^{1/2} C(t)\cdot\nabla_\eta\Omega_0 + \nu t^{1/2} D(\eta,t) + O(\nu^{3/2}t).
\end{align}
Meantime, we choose $X_1(t)$ in \eqref{expand in intro} to cancel the first term on the right-hand side of \eqref{expand of Ub in intro}, namely $X_1'(t)=C(t)$. This is the main idea behind the construction of $(\Omega_1, X_1)$.

 Subsequent higher-order terms can be determined through the same procedure. Therefore, we obtain an approximate solution $\omega_a=\omega_{a,b}+\omega_{a,vp}$ and define the error $\omega_R:=\omega-\omega_a$. Next, we outline the main ideas for deriving uniform estimates for the error $\omega_R$.

Following the approach in \cite{HWYZ} and \cite{WYZ}, we partition the half-space $\mathbb{R}^2_+$ into three regions: the vortex core, the boundary layer, and an intermediate region, and introduce corresponding energy functionals to control the vorticity in each. Meanwhile, we decompose the error into three components accordingly:
\begin{align*}
	\omega_R=\chi_{vp}\omega_R+\chi_b\omega_R+(1-\chi_{vp}-\chi_b)\omega_R.
\end{align*}
Since the vorticity in the intermediate region can be treated directly by standard energy methods, we focus on the remaining two components in the sequel.

\subsubsection{Strategies near the point vortex} 
The error system near the point vortex is transformed into self-similar coordinates as follows
\begin{align*}
	&t\pa_t\mathcal W_R-\mathcal L\mathcal W_R
	+\frac{\alpha}{\nu}\left\{\mathcal V^{\mathcal W_a}\cdot\nabla_\eta\mathcal W_R
	+\mathcal V^{\mathcal W_R}\cdot\nabla_\eta\mathcal W_a \right\}\\
	&\quad =-\sqrt{\frac{t}{\nu}}
	BS_{\mathbb R^2_+}[\chi_b\omega_R]\cdot\nabla_\eta\mathcal W_a+\cdots,
\end{align*}
where $BS_{\mathbb R^2_+}[\chi_b\omega_R]\cdot\nabla_\eta\mathcal W_a$ characterizes the boundary interaction. Formally, via the energy estimates, we have
\begin{align*}
	\sup_{[0,T]}\|\mathcal W_R\|_{L^2}
	\leq \sqrt{\frac{t}{\nu}}\left\|BS_{\mathbb R^2_+}[\chi_b\omega_R]\cdot\nabla_\eta\mathcal W_a\right\|_{L^2}
	+\cdots.
\end{align*}
Since our construction of the approximate solution implies 
$\mathcal W_R\sim O(\nu t)$, we require 
\begin{align*}
\|\sqrt{\frac{t}{\nu}} BS_{\mathbb R^2_+}[\chi_b\omega_R]\|_{L^\infty}\lesssim O(\nu t).
\end{align*}
 Resolving it necessitates sharp estimate on $\chi_b\omega_R$. To this end, we utilize Biot-Savart law to derive that
\begin{align*}
    \|BS_{\mathbb R^2_+}[\chi_b\omega_R]\|_{L^\infty(\chi_{vp})}
&\leq e^{-10|\xi|}\int_0^{+\infty} \frac{1-e^{-2|\xi|z}}{2z}\cdot z\chi_b(z)|(\omega_R)_\xi(z)|dz\\
&\sim \nu t \cdot\sup_{z>0}\left\| e^{-|\xi|}e^{\frac{\eps_0z^2}{\nu t}}|(\chi_b\omega_R)_\xi(z)\right\|_{L^1_\xi}\\
&\sim (\nu t)^{3/2},
\end{align*}
where we utilize the weight function $e^{\frac{\varepsilon_0 z^2}{\nu t}}$, which is part of the boundary energy functional $E_b(t)$ introduced in Section 4, to gain the factor $\nu t$.
The asymptotic property 
\begin{align*}
	\sup_{z>0}\left\| e^{-|\xi|}e^{\frac{\eps_0z^2}{\nu t}}|(\chi_b\omega_R)_\xi(z)\right\|_{L^1_\xi}
\sim (\nu t)^{1/2},
\end{align*}
can be expected from the construction of the approximate solution and is established using \eqref{integral eq of omega intro} below. For further details, we refer to Proposition \ref{prop: est of Eb(t)}.

\subsubsection{Strategies near the boundary}
 
When dealing with the vorticity near the boundary, we derive the integral equation for $\chi_b\omega_R$ as in \cite{HWYZ}, namely,
\begin{align}\label{integral eq of omega intro}
	(\chi_b\omega_R)_\xi(t,y)
	=&\int_0^t\int_0^{+\infty}\big( H_\xi(t-s,y,z)
 	+R_\xi(t-s,y,z)\big) (N_R)_\xi(s,z)dzds\\
 	\nonumber
 	&-\int_0^t\big(H_\xi(t-s,y,0)+ R_\xi(t-s,y,0)\big) (B_R)_\xi(s)ds,
\end{align}
where the semigroup kernel $R_\xi$ originates from the operator $|D_x|$ in the boundary condition \eqref{eq: BC of NS} and induces a loss of one derivative. The interaction terms $U_a\cdot\nabla\omega_R$ and $U_R\cdot\nabla\omega_R$ also introduce a loss of one derivative, either $\pa_x$ or $y\pa_y$. To resolve this difficulty, we employ a method inspired by \cite{TT Nguyen, Maekawa}, introducing the weight functions $e^{\eps_0(1+\mu)|\xi|}$ and $e^{\eps_0(1+\mu)\frac{y^2}{\nu t}}$. We then apply the inequalities in Lemma \ref{lem: analytic recovery}, which convert the derivative loss into a small-divisor problem.

Another difficulty in treating $\chi_b\omega_R$ lies in estimating the boundary term $B_R$. Since our construction of the approximate solution yields $\chi_b\omega_R\sim O(\nu)$, we require $B_R\lesssim O(\nu)$.
The boundary term $B_R$ is expressed as
\begin{align*}
	B_R=\pa_y\Delta_D^{-1}(U_R\cdot\nabla\omega_a+\cdots)|_{y=0},
\end{align*}
where the dominant term is $U_R\cdot\nabla\omega_a$. By means of the Biot-Savart law, we obtain
\begin{align}\label{est of BR intro}
	|(B_R)_\xi(s)|
	&\leq \left|\int_{\chi_{vp}} e^{2\pi i x\xi-2\pi y|\xi|}\curl\dv(U_a\otimes U_R)dxdy\right|+\cdots\\
	\nonumber
	&\leq e^{-10|\xi|}\int_{\chi_{vp}}|U_aU_R(s,X)|dxdy+\cdots,
\end{align}
where the approximate velocity satisfies $U_a(s,X)\sim \frac{1}{|X-X(s)|}$ near the point vortex. We therefore require a sharp pointwise estimate for $U_R$. The Biot-Savart law yields 
\begin{align*}
	|U_R(s,X)|
	&\leq \left|BS_{\mathbb R^2}[\chi_{vp}\omega_R](s,X)\right|
	+\cdots\\
	&\leq \left|\frac{1}{(\nu s)^{1/2}}BS_{\mathbb R^2}[\mathcal W_R](\frac{X-X(s)}{(\nu s)^{1/2}},s)\right|+\cdots,
\end{align*}
where we use the self-similar transformation and extend $\chi_{vp}\omega_R$ to the full plane $\mathbb R^2$. The construction of $\omega_a$ implies the cancellation property: $|\int_{\mathbb R^2} \mathcal W_R dxdy|\sim O(e^{-\frac{C}{\nu t}})$, which is sufficiently small. Using this property, we derive a sharper decay estimate for $BS_{\mathbb R^2}[\mathcal W_R]$ as $|\eta|\rightarrow+\infty$:
\begin{align}\label{est of WR intro squre}
	|\eta|^2|BS_{\mathbb R^2}[\mathcal W_R](\eta,t)|\leq O(\nu t).
\end{align}
For more details, we refer to Lemma \ref{lem: velocity VR pointwise}. The estimate \eqref{est of WR intro squre} implies the point-wise estimate for $U_R$:
\begin{align*}
	|U_R(s,X)|\leq (\nu s)^{1/2}\Big(1+\Big|\frac{X-X(s)}{(\nu s)^{1/2}}\Big| \Big)^{-2}+\cdots,
\end{align*}
which together with \eqref{est of BR intro} yields the estimate for $B_R$:
\begin{align*}
	|(B_R)_\xi(s)|
	\leq e^{-10|\xi|}\int_{\chi_{vp}}(\nu s)^{1/2}\Big(1+\Big|\frac{X-X(s)}{(\nu s)^{1/2}}\Big| \Big)^{-2}\cdot\frac{1}{|X-X(s)|}dX+\cdots
	\leq O(\nu s).
\end{align*}
It should be noted that if we only obtain the weaker decay rate $|\eta|^{-1}$ in \eqref{est of WR intro squre}, the integral in the above expression fails to converge as $\nu s\rightarrow0$.

\subsection{Notations}\label{Notations}
This subsection is dedicated to enumerate several notations frequently used.

     \begin{enumerate}
     
     \item In this paper, velocity is recovered from the vorticity. We frequently utilize the following notations to denote the Biot-Savart law in $\mathbb R^2$ or $\mathbb R^2_+$ for convenience.
  \begin{align}\label{BS law formulation 1}
  	\mathcal V^f
  	:=BS_{\mathbb R^2}[f]:&=\nabla^\perp\Delta^{-1} f
  	=\frac{1}{2\pi}\int_{\mathbb R^2}\frac{(X-Y)^\perp}{|X-Y|^2}f(Y)dY,
  \end{align}
  \begin{align}\label{BS law formulation 2}
  BS_{\mathbb R^2_+}[f]:&=\nabla^\perp\Delta_D^{-1} f
  	=\frac{1}{2\pi}\int_{\mathbb R^2_+}\big(\frac{(X-Y)^\perp}{|X-Y|^2}-\frac{(X-Y^*)^\perp}{|X-Y^*|^2}\big)f(Y)dY,
  \end{align}
  where $X=(x_1,x_2), Y=(y_1,y_2)\in\mathbb R^2$ and we denote $Y^*:=(y_1,-y_2)$.
  
     \item $F=(f_1,f_2):\mathbb R^2\rightarrow\mathbb R^2$, we denote  
  \begin{align}\label{def: tilde-F}
  	\widetilde F(x,y)=(-f_1(x,-y),f_2(x,-y)).
  \end{align}
  
     \item We use $f_\xi(y)$ to denote the Fourier transform about $x$ variable of function $f(x,y)$.
  
     \item $C_0$ denotes a constant independent of $\eps_0,\gamma,t,\delta$ and $C$ independent of $\gamma, t, \delta$.
  
     \item $\nabla$ denotes derivative for variable $(x,y)$ and $\nabla_\eta$ for self-similar variable $\eta$ defined later.
  
     \item For norm $\|\cdot\|_X$, $\|\cdot\|_Y$, we let $\|(1,x)f\|_X:=\|f\|_X+\|xf\|_X$ and $\|f\|_{X\cap Y}:=\|f\|_X+\|f\|_Y$.

     \item We use $f=O_{X}(g)$ to denote $\|f\|_{X}\leq g$.
  
     \item For functions $f, h$, we use $\|f\|_{L^p(h)}$ to denote $(\int_{\operatorname{supp}h}|f|^pdxdy)^{1/p}$.
  
     \item The cut-off function $\chi_{vp}$ is defined by \eqref{def: chi vp}. $\chi_m$ is defined by
     \begin{align}\label{def: chi m}
  	\chi_m(x,y)=
  	\left\{
  	\begin{aligned}
  		&1,\quad|(x,y)-(0,20)|\geq4 \ \text{and}\ y\geq\frac{3}{8},\\
  		&0,\quad|(x,y)-(0,20)|\leq3 \ \text{or}\ y\leq\frac{1}{4},
  	\end{aligned}
  	\right.
  \end{align}
  and
   $\chi_b$ is defined by
     \begin{align}\label{def: chi b}
	\chi_b(y)=
	\left\{
	\begin{aligned}
		&1,\quad y\leq2,\\
		&0,\quad y\geq3.
	\end{aligned}
	\right.
\end{align}
     
%
%
%

     \item Through this paper, $\mu_0=\frac{1}{10}$, $\beta\in(\f12,1)$ are fixed number.

     \end{enumerate}

\medskip

\section{Viscous profile of point vortex in the half plane}\label{Sec: Construction of the Approximate Solutions}

 \subsection{Approximate solutions}
 This subsection is devoted to outlining the key ideas behind the construction of the approximate solutions.
As the point vortex is positioned away from the boundary, we seek to formulate two kinds of approximate solutions: one centered around the point vortex and the other around the boundary. These strategies have been examined in different scholarly works, see \cite{Gallay 2, WWZ}. Our primary contribution lies in unifying these two kinds of solutions into a cohesive framework. 
Thus, we construct an approximate solutions enjoying the following structure:
\begin{align}\label{def:appp}
	\omega_a=\omega_{a,b}+\omega_{a,vp},
	\end{align}
where $\omega_{a,b}, \omega_{a,vp}$ provide approximate solutions in the vicinity of the boundary and the point vortex, respectively. Motivated by \cite{Gallay 2, WWZ}, we seek the approximate solution in the vicinity of the point vortex $\omega_{a,vp}$ with the following structure:
\begin{align}\label{def: omega a,vp U a,vp}
	\omega_{a,vp}(X,t):=\frac{\alpha}{\nu t}\mathcal W_a^\ast(\frac{X-X(t)}{\sqrt{\nu t}},t),\qquad
	U_{a,vp}:=BS_{\mathbb R^2_+}[\omega_{a,vp}],\qquad \textrm{with} \ X=(x,y),
\end{align}
with
\ben\label{def: X(t) ultimate}
X(t) =X_0(t)
	+\nu^{1/2}X_1(t)+\nu X_2(t),
\een
and
\begin{align}\label{def: app solution ultimate 1}
	\mathcal W_a^\ast=\chi_{vp}\mathcal W_a,\qquad
	\mathcal W_a:=\Omega_0+\nu t\Omega_2+\nu^{3/2}t\Omega_3.
\end{align}
The approximate solution near the boundary $\omega_{a,b}$ is defined with the following structure:
\begin{align}\label{def: app solution ultimate 2}
	\omega_{a,b}(t,x,y):=\chi_b(y)\big(\nu^{-1/2}\omega_b^{(0)}+\omega_b^{(1)}\big)(t,x,\frac{y}{\nu^{1/2}}),
	\qquad
	U_{a,b}:=BS_{\mathbb R^2_+}[\omega_{a,b}].
\end{align}

\begin{remark}
The expansion defined above is sufficient to establish the inviscid limit. Higher-order approximate solutions can be systematically constructed using the method presented in this paper.
\end{remark}


The construction of the approximate solution follows the sequence below:
\begin{align}\label{sequence of approximate solution}
	(\Omega_0, X_0)
	\rightarrow \omega_b^{(0)}
	\rightarrow (\Omega_2, X_1)
	\rightarrow \omega_b^{(1)}
	\rightarrow (\Omega_3, X_2)
	\rightarrow\cdots.
\end{align}

%
%
%
%

%

\medskip

To handle the advection terms, we also need expansions of the velocity near the boundary via the Biot-Savart law.
Strictly speaking, we define
\begin{align}\label{eq: def of up(k)}
	U_b^{(k)}= \big( u_b^{(k)}(t,x,y), v_b^{(k)}(t,x,y) \big)
	:=BS_{\mathbb R^2_+}[\nu^{-1/2}\chi_b\omega_b^{(k)}(t,x,\frac{y}{\nu^{1/2}})],\quad k\geq 0,
\end{align}
and
\begin{align}\label{eq: def of Uap}
	U_{a,b}=(u_{a,b}, v_{a,b})
	= (u_b^{(0)},v_b^{(0)})+\nu^{1/2} (u_b^{(1)},v_b^{(1)}).
\end{align}
Near the point vortex, the velocity is expanded by
\begin{align}\label{eq: velocity near boundary from vortex point}
	& BS_{\mathbb R^2_+}[\frac{\alpha}{\nu t}\chi_{vp}\cdot\big(\Omega_0+\nu t\Omega_2+\nu^{3/2}t\Omega_3 \big) (\frac{\cdot-X(t)}{\sqrt{\nu t}} ,t)](x,y)\\
	\nonumber
	&\sim \sum_{k\geq0}\nu^{k/2}\big(u_{vp}^{(k)}, v_{vp}^{(k)}\big)(t,x,y),\quad for \quad y\leq 5.
\end{align}
We define
\begin{align}
	U_{a,vp}=(u_{a,vp},v_{a,vp})
	=\big(u_{vp}^{(0)}, v_{vp}^{(0)}\big)
	+\nu^{1/2}\big(u_{vp}^{(1)}, v_{vp}^{(1)}\big),\quad for \quad y\leq 5.
\end{align}

\begin{remark}
 Our expansions near the point vortex are w.r.t $\nu^{1/2}$ instead of $(\nu t)^{1/2}$ introduced in \cite{Gallay 2} owing to the interaction between boundary layer and point vortex. More precisely, the term $BS_{\mathbb R^2_+}[\chi_b\omega]$ which is rooted in the vorticity near the boundary is expanded w.r.t $\nu^{1/2}$.
	\end{remark}

To characterize the approximation properties, we introduce two remainder operators to quantify the residual terms. We define the remainder operator near point vortex as
\begin{align}\label{def: Rvp}
	&\mathcal R_{vp}=L_{vp}(\mathcal W_a, \mathcal V^{\mathcal W_a}, X(t),U_{a,b})\\
	&=t\pa_t \mathcal W_a
	-\mathcal L\mathcal W_a
	+\sqrt{\frac{t}{\nu}} U_{a,b}\cdot\nabla_\eta \mathcal W_a(\eta,t)
    	\nonumber	\\
  	\nonumber
  	&\qquad\qquad+\Big\{\frac{\alpha}{\nu}\mathcal V^{\mathcal W_a}(\eta,t)
  	-\frac{\alpha}{\nu}\widetilde{\mathcal V^{\mathcal W_a}}(\eta+\widetilde\eta,t)
  	-\sqrt{\frac{t}{\nu}}X'(t)\Big\}\cdot\nabla_\eta \mathcal W_a(\eta,t),
\end{align}
and the remainder operator near the boundary as
\begin{align}\label{def: Rb}
R_b=L_b(\omega_{a,b},U_{a,b},U_{a,vp})
	=\pa_t\omega_{a,b}
	-\nu\Delta\omega_{a,b}
	+(U_{a,b}+U_{a,vp})\cdot\nabla\omega_{a,b}.
\end{align}

The following is the main result of this section.
\begin{proposition}\label{prop: est of remainder}
There  exist  constants $\nu_0, T$ small enough such that for any $0<\nu\leq\nu_0$, $0<t\leq T,\lambda \in(0,1),$ there exists  $\om_a$ defined by \eqref{def:appp} which satisfies the following estimates: 
\begin{align}\label{est: X(t)-X0}
	|X(t)-X_0|\leq1.
\end{align}
	\begin{align}\label{est of R}
		|\mathcal R_{vp}(t,\eta)|
		\leq C_\lambda\nu te^{-\lambda|\eta|^2/4},\qquad
		\left\| e^{C'|\xi|}\left\|e^{\frac{C'y^2}{\nu t}}((1,x)R_b)_\xi(t,y)\right\|_{L^1_y}\right\|_{L^1_\xi\cap L^2_\xi} \leq C\nu,
	\end{align}
	where $C_\lambda, C, C'$ are independent of $t, \nu$.
	
\end{proposition}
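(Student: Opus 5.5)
The plan is to build $\omega_a$ step by step along the sequence \eqref{sequence of approximate solution}. After each step I will estimate the part of $\mathcal R_{vp}$ and $R_b$ that the new term fails to cancel.

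\emph{Step 1: leading vortex term.} Take $\Omega_0=\frac{1}{4\pi}e^{-|\eta|^2/4}$. Then $\mathcal L\Omega_0=0$ and $\mathcal V^{\Omega_0}\cdot\nabla_\eta\Omega_0=0$, since the Oseen vortex is radial. Choose $X_0(t)$ to solve $X_0'(t)=\alpha\, BS$-velocity of the image vortex at $X_0^\ast$, which to leading order gives $Z(t)$. To get the residual, expand the reflected term $\widetilde{\mathcal V^{\Omega_0}}(\eta+\widetilde\eta)$ in Taylor series about $\widetilde\eta$. Since $|\widetilde\eta|\sim 40/(\nu t)^{1/2}$, this is an expansion in powers of $(\nu t)^{1/2}$. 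The zeroth order is cancelled by $X_0'$; the next order is quadratic in $\eta$ and is $O(\nu t)$ times a Gaussian.

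\emph{Step 2: boundary profiles and the interaction expansion.} Construct $\omega_b^{(0)}$ from the $\nu^{-1/2}$-order matching of the Prandtl-type equation driven by the trace $u_{vp}^{(0)}|_{y=0}$, following Appendix B. Split it into a heat part carrying the initial layer $-u_0\delta_{\partial\mathbb R^2_+}$ and a part with zero data. Then use Lemma \ref{est: U_b^0} and Lemma \ref{lem: expansion of the interaction term} to expand $U_b^{(0)}\cdot\nabla_\eta\Omega_0$ as in \eqref{expand of Ub in intro}. Set $X_1'=C(t)$ and solve for $\Omega_2$ from $\alpha\Lambda\Omega_2=(\text{RHS})$ via Proposition \ref{prop: properties of Lambda}. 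This is possible once the right side lies in $\mathcal Y_n\cap\mathcal Z$, i.e. it has Gaussian decay and no component in the kernel of $\Lambda$. Repeat the procedure for $\omega_b^{(1)}$ (matching at order $\nu^0$), and then for $(\Omega_3,X_2)$. Grouping powers, every term left in $\mathcal R_{vp}$ is of size $\nu t$ times something in a Gaussian-weighted space $e^{-|\eta|^2/4}$-type. The $C_\lambda e^{-\lambda|\eta|^2/4}$ pointwise form follows from Gaussian bounds on the $\Omega_i$ (Proposition \ref{prop: properties of Lambda}), with $\lambda<1$ absorbing polynomial factors. The cut-off $\chi_{vp}$ contributes only errors supported where $|\eta|\gtrsim(\nu t)^{-1/2}$, which are $O(e^{-c/(\nu t)})$. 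The bound \eqref{est: X(t)-X0} is immediate for small $T$, because $X_0'$, $C(t)$ and $X_2'$ are bounded.

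\emph{Step 3: boundary remainder.} Write $R_b$ as the Prandtl residuals at orders $\nu^{-1/2}$ and $\nu^0$, which vanish by construction. What remains is the following.
\begin{enumerate}[(i)]
\item Order $\nu^{1/2}$ terms, such as $\nu^{1/2}u_b^{(1)}\partial_x\omega_b^{(1)}$, Taylor remainders of $U_{a,vp}$ in $z$, and $\nu\partial_x^2$.
\item Terms from derivatives of $\chi_b$, which are exponentially small in the $z$-variable.
\end{enumerate}
Measured in $L^1_y$ with $y=\nu^{1/2}z$, each factor gains $\nu^{1/2}$, and the net result is $O(\nu)$. The weights $e^{C'|\xi|}$ and $e^{C'y^2/(\nu t)}$ hold provided the profiles $\omega_b^{(k)}$ are analytic in $x$ with Gaussian decay in $z/\sqrt t$. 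This in turn follows from the analyticity of the forcing, which comes from the velocity of a source at distance $\approx 20$ from the boundary and hence has $e^{-c|\xi|}$ decay. The $(1,x)$ weight uses the $|x|$-decay of that velocity field.

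\textbf{Main obstacle.} I expect the hardest step to be the boundary profile with initial layer, specifically obtaining uniform-in-$t$ analytic and Gaussian weighted bounds near $t=0$, where $\omega_b^{(0)}\sim t^{-1/2}$. Checking the solvability condition for $\Lambda$ (orthogonality to the kernel) is also delicate. For the initial layer I will first try the explicit heat kernel representation for the singular part and handle the rest by Duhamel estimates in the same weighted norms.
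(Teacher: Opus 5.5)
There is a genuine gap in your order bookkeeping for the image-vortex term in Step 1, and it carries over into Step 2.

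In \eqref{def: Rvp} the reflected velocity comes with the prefactor $\alpha/\nu$, and $|\nabla\mathcal V^G(\widetilde\eta)|\sim|\widetilde\eta|^{-2}\sim\nu t$. So once $X_0'$ removes the constant term, the next Taylor term $\frac{\alpha}{\nu}\big(\nabla\mathcal V^G(\widetilde\eta)\eta\big)\cdot\nabla_\eta G$ is a nonzero strain term of size $t$, not $\nu t$. The term after it has size $\nu^{1/2}t^{3/2}$. This is exactly Lemma~\ref{lem: expansion of Rvp0}: $\mathcal R^{(0)}_{vp}=tA+\nu^{1/2}t^{3/2}B+\nu t^2\widetilde R^{(0)}_{vp}$ with $A\in\mathcal Y_2$ and $B\in\mathcal Y_3$. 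It is also consistent with Lemma~\ref{lem: 1/nu of VG differ}, which after multiplication by $|\alpha|/\nu$ gives only $Ct\langle\eta\rangle$. Both terms are far above the target $\nu t$ and must be cancelled by the vortex profiles. Your plan never does this, so your claim that ``every term left in $\mathcal R_{vp}$ is of size $\nu t$'' does not follow.

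As a result, you give $\Omega_2$ the wrong role. Putting $\nu t\Omega_2$ into $\mathcal W_a$ contributes $\alpha t\Lambda\Omega_2+O(\nu t)$ to $\mathcal R_{vp}$, so $\Omega_2$ can only cancel order-$t$ terms. The only such term is $tA$, and the paper takes $\alpha\Lambda\Omega_2=-A$.

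The interaction term has no order-$t$ part. By Lemma~\ref{lem: expansion of the interaction term}, $\sqrt{t/\nu}\,U_b^{(0)}\cdot\nabla_\eta G=t^{1/2}C_0(t)\cdot\nabla_\eta G+\nu^{1/2}tD_0+O(\nu t^{3/2})$. Its $t^{1/2}$ piece lies in $\ker\Lambda$ and can only be removed by the drift $X_1'=C_0(t)$. Its $D_0$ piece sits at order $\nu^{1/2}t$ together with $t^{1/2}B$. Both are removed by $\Omega_3$ through $\alpha\Lambda\Omega_3=-(t^{1/2}B+D_0)$, while $X_2'=C_1(t)$ absorbs the $\nu^{1/2}t^{1/2}C_1\cdot\nabla_\eta G$ piece coming from $U_b^{(1)}$.

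If you solve $\Omega_2$ and $\Omega_3$ only against the interaction remainders, as your Step 2 describes, then $tA$ and $\nu^{1/2}t^{3/2}B$ survive. That leaves $\mathcal R_{vp}=O(t)$, and \eqref{est of R} fails.

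Once the bookkeeping is corrected, the solvability condition you call delicate is automatic: $A,D_k\in\mathcal Y_2$, $B\in\mathcal Y_3$, and the only kernel-direction terms are the $C_k\cdot\nabla_\eta G$ pieces absorbed by $X(t)$. The rest of your outline matches the paper:
\begin{itemize}
\item the exponentially small cutoff errors;
\item the extra $\nu^{1/2}$ gained by measuring $R_b$ in $L^1_y$ with $y=\nu^{1/2}z$;
\item the heat-equation splitting of the initial layer of $\omega_b^{(0)}$;
\item the bound on $X(t)-X_0$.
\end{itemize}
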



To conclude this subsection, we present some definitions necessary for constructing the approximate solutions.

In order to describe the profiles $\Omega_k$, we introduce the following Hilbert space as in \cite{Gallay 2}.
\begin{align}\label{def: space Y}
	\mathcal Y=\left\{ f\in L^2(\mathbb R^2): \int_{\mathbb R^2}|f(\eta)|^2e^{|\eta|^2/4}d\eta<+\infty \right\},
\end{align}
equipped with scalar product $\langle f_1,f_2\rangle_{\mathcal Y}=\int_{\mathbb R^2}f_1(\eta)f_2(\eta)e^{|\eta|^2/4}d\eta$. The space $\mathcal Y$ can be decomposed into different Fourier modes, that is
\begin{align*}
	\mathcal Y=\oplus_{n\geq0}\mathcal Y_n,\qquad with \quad \mathcal Y_n:=P_n \mathcal Y,
\end{align*}
where the orthogonal projection $P_n$ is defined in polar coordinate $\eta=(r\cos\theta, r\sin\theta)$:
\begin{align*}
  	P_n w(\eta):=a_n(r)\cos(n\theta)+b_n(r)\sin(n\theta)
\end{align*}
with
\begin{align*}
  w(\eta) =\sum_{n\geq0}a_n(r)\cos(n\theta)+b_n(r)\sin(n\theta).\end{align*}

The linear operator $\Lambda: D(\Lambda)\rightarrow\mathcal Y$ defined on domain $D(\Lambda)=\{w\in\mathcal Y, \mathcal V^G\cdot\nabla_\eta w\in \mathcal Y\}$ is defined by
\begin{align}\label{def: lambda operator}
	\Lambda w:=\mathcal V^G\cdot\nabla_\eta w+\mathcal V^w\cdot\nabla_\eta G, \qquad w\in D(\Lambda).
\end{align}

Following \cite{Gallay 2, Gallay 4}, we present below several properties of the operator $\Lambda$.
\begin{proposition}\label{prop: properties of Lambda}
	(1) The operator $\Lambda$ is skew-adjoint in $\mathcal Y$ and 
	\begin{align*}
		Ker(\Lambda)=\mathcal Y_0\oplus\{\alpha_1\pa_{\eta_1}G+\alpha_2\pa_{\eta_2}G:\alpha_1,\alpha_2\in\mathbb R\}.
	\end{align*}
	
	(2) If $f\in \mathcal Y_n\cap \mathcal Z$ for some $n\geq2$, there exists a unique $w\in \mathcal Y_n\cap\mathcal Z$ such that $\Lambda w=f.$ For $n=1$, if $f\in\mathcal Y_1\cap\mathcal Z$ and $\langle f,\pa_{\eta_1}G\rangle_{\mathcal Y}=\langle f,\pa_{\eta_2}G\rangle_{\mathcal Y}=0 $, there exists a unique $w\in \mathcal Y_1\cap\mathcal Z\cap Ker(\Lambda)^{\perp}$ such that $\Lambda w=f.$
	Here 
	\begin{align*}
		\mathcal Z:=\left\{ w : e^{|\eta|^2/4}w\in S_\ast(\mathbb R^2) \right\}\subseteq \mathcal Y,
	\end{align*}
	and $S_\ast(\mathbb R^2)$ consists of all smooth functions $w$ such that $w$ and all its derivatives have at most a polynomial growth at infinity.
\end{proposition}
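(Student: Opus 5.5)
This proposition is essentially Gallay's (and Gallay--Wayne's) analysis of the linearization at the Lamb--Oseen vortex. I would follow that route: reduce everything to Fourier modes in the angular variable, where $\Lambda$ becomes a family of ordinary differential operators in $r$. Throughout, $G(\eta)=\frac{1}{4\pi}e^{-|\eta|^2/4}$, and $\mathcal V^G=\frac{1}{2\pi r^2}(1-e^{-r^2/4})\,\eta^\perp$ is purely azimuthal with angular velocity $\phi(r)=\frac{1}{2\pi r^2}(1-e^{-r^2/4})$.

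\textbf{Skew-adjointness.} Write $\Lambda=\Lambda_1+\Lambda_2$ with $\Lambda_1 w=\mathcal V^G\cdot\nabla_\eta w=\phi(r)\partial_\theta w$ and $\Lambda_2 w=\mathcal V^w\cdot\nabla_\eta G=-\frac{r}{2}G\,\partial_r\Psi\cdot\frac1r\cdots$, more precisely $\Lambda_2 w=-\frac12 G\,\partial_\theta\Psi$, where $\Psi=\Delta^{-1}w$ is the stream function. The operator $\Lambda_1$ is skew-symmetric in $\mathcal Y$ because the weight $e^{|\eta|^2/4}$ is radial and $\partial_\theta$ is skew.

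For $\Lambda_2$, using $G e^{|\eta|^2/4}=\frac{1}{4\pi}$, we get
\[
\langle \Lambda_2 w_1,w_2\rangle_{\mathcal Y}=-\frac{1}{8\pi}\int \partial_\theta\Psi_1\,\Delta\Psi_2\,d\eta .
\]
Since $\partial_\theta$ commutes with $\Delta$ and is skew, this expression is antisymmetric in $(w_1,w_2)$.

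To upgrade skew-symmetry to skew-adjointness, I would note that $\Lambda_2$ is bounded, in fact compact, on $\mathcal Y$: the stream function of $w\in\mathcal Y$ grows at most logarithmically, and multiplication by $G$ kills the weight. The operator $\Lambda_1$ is skew-adjoint on its maximal domain, since it is multiplication by $in\phi(r)$ on each mode. Kato--Rellich then gives that $\Lambda$ is skew-adjoint on $D(\Lambda)$.

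\textbf{Kernel.} Decompose into modes $n\ge0$. On $\mathcal Y_0$ both terms vanish because $\partial_\theta=0$. On mode $n\ge1$, write $w=\omega(r)e^{in\theta}$ and $\Psi=\psi(r)e^{in\theta}$. The equation $\Lambda w=0$ becomes $\phi\,\omega=\frac12 G\psi$, so
\[
\psi''+\frac{\psi'}{r}-\frac{n^2}{r^2}\psi-\frac{G}{2\phi}\psi=0 .
\]
Rewriting with $h=\frac{G}{2\phi}>0$, we need $\psi$ regular at $0$ and with $\psi\sim r^{-n}$ at infinity.

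For $n\ge2$, I would show only $\psi=0$ works. Multiply by $r\bar\psi$ and compare with the explicit $n=1$ solution $\psi_1=r\phi$, which comes from $\partial_{\eta_j}G$. This is a Sturm comparison argument: the solution of the $n$-equation regular at the origin is strictly positive and grows, so it cannot decay. For $n=1$ the solution space is one-dimensional per $\cos/\sin$, and it is spanned by $\partial_{\eta_1}G,\partial_{\eta_2}G$. This yields the stated kernel.

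\textbf{Solvability (part (2)).} This is the step I expect to be the main obstacle. Skew-adjointness gives $\operatorname{Ran}\Lambda\subset \operatorname{Ker}\Lambda^\perp$ but not closedness, and we also need regularity in $\mathcal Z$. I would solve mode by mode explicitly. For $f=g(r)e^{in\theta}$ with $e^{r^2/4}g\in S_\ast$, the equation $\Lambda w=f$ reads
\[
in\bigl(\phi\,\omega-\tfrac12 G\psi\bigr)=g .
\]
Setting $\omega=\Delta_n\psi$, this is the inhomogeneous ODE
\[
\Delta_n\psi-h\psi=\frac{g}{in\phi}.
\]
For $n\ge2$ the homogeneous problem has no decaying-regular solution, so variation of parameters with the regular solution at $0$ and the decaying solution at $\infty$ gives a unique $\psi$. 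For $n=1$ the orthogonality conditions $\langle f,\partial_{\eta_j}G\rangle_{\mathcal Y}=0$ are exactly the Fredholm compatibility condition, and uniqueness is fixed by requiring orthogonality to the kernel.

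It remains to show $w=\frac{g}{in\phi}+\frac{G}{2\phi}\psi$ lies in $\mathcal Z$. Since $\phi\sim\frac{1}{2\pi r^2}$ at infinity and $\phi$ is smooth and positive at $0$, the first term is $e^{-r^2/4}$ times a polynomially growing smooth function. The second term is $G$ times $\psi/\phi$, which grows at most polynomially. Smoothness at the origin follows from the parity structure of the mode-$n$ ODE (behaviour $r^n$). Derivative bounds follow by differentiating the ODE.
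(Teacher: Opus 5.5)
Your route (angular Fourier decomposition, radial ODEs, comparison with the translation mode $\pa_{\eta_j}G$, then variation of parameters with a Fredholm condition at $n=1$) is the one in the works of Gallay and Gallay--Wayne that the paper cites. The paper gives no proof of its own.

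\textbf{The error.} There is a sign error in $\Lambda_2$, and it is not cosmetic: the sign of the potential in the radial equation is exactly what decides the kernel. Since $\nabla_\eta G=-\frac{\eta}{2}G$ and $\nabla^\perp\Psi\cdot\eta=-\eta_1\pa_{\eta_2}\Psi+\eta_2\pa_{\eta_1}\Psi=-\pa_\theta\Psi$, one gets $\Lambda_2 w=+\frac12G\,\pa_\theta\Psi$. (It is $\eta^\perp\cdot\nabla\Psi$, not $\nabla^\perp\Psi\cdot\eta$, that equals $\pa_\theta\Psi$.) On mode $n$ this gives $\Lambda w=in\big(\phi\,\omega+\frac12G\psi\big)$. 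Hence:
\begin{itemize}
\item the kernel equation is $\psi''+\frac{\psi'}{r}-\frac{n^2}{r^2}\psi+h\psi=0$ with $h=\frac{G}{2\phi}>0$;
\item the inhomogeneous problem is $\Delta_n\psi+h\psi=\frac{g}{in\phi}$;
\item the solution is $w=\frac{g}{in\phi}-h\psi$.
\end{itemize}
With your sign, $-\Delta_n+h$ is a positive operator (multiply by $r\bar\psi$ and integrate). So your ODE has no nonzero solution regular at $0$ and decaying at infinity for \emph{any} $n\ge1$. That contradicts the mode-one kernel you then assert. Indeed $r\phi$ does not solve your equation, since $\Delta_1(r\phi)=-\frac r2G=-h\,r\phi$. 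As written, your comparison is made against a function that is not a solution, and the kernel computation fails. A quick check on the sign: $\Lambda\pa_{\eta_1}G=0$ follows by differentiating $\mathcal V^{G(\cdot-a)}\cdot\nabla G(\cdot-a)=0$ in $a$.

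\textbf{After correcting the sign, your outline goes through.}

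For $n\ge2$: let $\psi_n\sim r^n$ be the solution regular at $0$ and $\psi_1=r\phi>0$. The identity $\big(r(\psi_1\psi_n'-\psi_n\psi_1')\big)'=\frac{n^2-1}{r}\psi_1\psi_n$, with the Wronskian term vanishing at $r=0$, shows that $\psi_n/\psi_1$ is positive and increasing. Hence $\psi_n\gtrsim r^{-1}$ at infinity, which is incompatible with $\psi_n=O(r^{-n})$. Alternatively, $\sup_{r>0}r^2h(r)<4$ makes $-\Delta_n-h$ positive for $n\ge2$ directly. This nondegeneracy is also what makes your Green's function exist for $n\ge2$.

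For $n=1$: the Fredholm condition $\int_0^\infty\frac{g}{i\phi}\,(r\phi)\,r\,dr=0$, i.e.\ $\int_0^\infty g\,r^2\,dr=0$, is exactly $\langle f,\pa_{\eta_j}G\rangle_{\mathcal Y}=0$, because $e^{|\eta|^2/4}\pa_{\eta_j}G=-\frac{\eta_j}{8\pi}$. Under this condition the particular solution regular at the origin still decays like $r^{-1}$.

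Your skew-adjointness argument (bounded skew-symmetric $\Lambda_2$ added to $\phi\,\pa_\theta$ on its maximal domain) and your $\mathcal Z$-regularity discussion do not depend on this sign.
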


\subsection{The naive order: $(\Omega_0, X_0(t))$}
We start with the Lamb-Oseen vortex, the solution to the Navier-Stokes equations in $\mathbb{R}^2$ :
\begin{align}\label{Lamb-Oseen}
	\omega(X,t)=\frac{\alpha}{\nu t}G(\frac{X}{\sqrt{\nu t}}),
	\qquad U(X,t)=\frac{\alpha}{(\nu t)^{1/2}}\mathcal V^G(\frac{X}{\sqrt{\nu t}}),\quad X \in\mathbb R^2,
\end{align}
where
\begin{align}\label{Oseen profile}
	G(\eta)=\frac{1}{4\pi}e^{-|\eta|^2/4},\qquad
	\mathcal V^G(\eta)=\frac{1}{2\pi}\frac{\eta^\perp}{|\eta|^2}\big(1-e^{-|\eta|^2/4}\big),\quad\eta\in\mathbb R^2.
\end{align}

Next, we neglect the support of $\mathcal W$ in \eqref{def: self-similar} at first. As in \cite{Gallay 2} and \cite{Dolce}, we choose 
\begin{align}\label{def: choice of Omega0}
	\Omega_0(\eta):=G(\eta)=\frac{1}{4\pi} e^{-|\eta|^2/4},
	\qquad
	\mathcal V^{\Omega_0}
	:=\mathcal V^G(\eta)=\frac{1}{2\pi}\frac{\eta^\perp}{|\eta|^2}\big(1-e^{-|\eta|^2/4}\big).
\end{align}
If we take $\mathcal W_a=\Omega_0, \mathcal V_a=\mathcal V^{\Omega_0}, U_{a,b}=0$ into \eqref{def: Rvp}, the fact \beno
t\pa_t \Omega_0=\mathcal L \Omega_0=0
\eeno
implies
\begin{align}\label{def: tilde Rvp0}
 {\mathcal R}_{vp}^{(0)}:=L_{vp}(\Omega_0, \mathcal V^{\Omega_0}, X_0(t),0)=\left\{\frac{\alpha}{\nu}\mathcal V^G(\eta)
	-\frac{\alpha}{\nu}\mathcal V^G(\eta+\widetilde\eta)
	-\sqrt{\frac{t}{\nu}}X_0'(t) \right\}\cdot\nabla_\eta G(\eta),
\end{align}
where we use the fact $\widetilde {\mathcal V^G}=\mathcal V^G$ and $\widetilde{\eta}$ is defined in \eqref{def: widetilde eta}. In order to eliminate the singularity in \eqref{def: tilde Rvp0} as $\nu\rightarrow0$, we set
\begin{align}\label{def: X(t)0}
	X_0'(t)=-\frac{\alpha}{\sqrt{\nu t}}\mathcal V^G(\widetilde\eta),
	\qquad
	X_0(0)=X_0.
\end{align}
Due to $\mathcal V^G\cdot\nabla G=0$, we obtain
\begin{align*} 
 {\mathcal R}_{vp}^{(0)} =-\frac{\alpha}{\nu}\left\{ \mathcal V^G(\eta+\widetilde\eta)
	-\mathcal V^G(\widetilde\eta) \right\}\cdot\nabla_\eta G(\eta).
\end{align*}


We employ the following lemma (Proposition 1 of \cite{Gallay 2}) to analyze $ {\mathcal R}_{vp}^{(0)}$ precisely.
\begin{lemma}\label{lem: expansion of Rvp0}
There exist functions $A\in \mathcal Y_2\cap \mathcal Z, B\in \mathcal Y_3\cap \mathcal Z, \widetilde {\mathcal R}_{vp}^{(0)}\in \mathcal Y\cap \mathcal Z$ such that
\begin{align*}
  {\mathcal R}_{vp}^{(0)}
	=tA(\eta,t)+\nu^{1/2}t^{3/2}B(\eta,t)
	+\nu t^2\widetilde R_{vp}^{(0)}(\eta,t),
\end{align*}
and for $T$ small and any $\lambda\in(0,1)$, we have
\begin{align}\label{est of AB,R0}
	 | A(\eta,t)|+| B(\eta,t)|
	+\left|\widetilde R_{vp}^{(0)}(\eta,t)\right|
	\leq Ce^{-\lambda|\eta|^2/4},
	\qquad
	0<t\leq T.
\end{align}
	
\end{lemma}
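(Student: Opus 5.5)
The plan is to Taylor expand $\mathcal V^G$ around $\widetilde\eta$, using that $|\widetilde\eta|$ is large: it is of order $(\nu t)^{-1/2}$. Indeed $X(t)-X(t)^\ast=(0,2y(t))$ with $y(t)\approx y_0=20$, so $\widetilde\eta=(0,2y(t))/\sqrt{\nu t}$ and $|\widetilde\eta|\ge c/\sqrt{\nu t}$. Outside the ball of radius $\sqrt 2$ about the origin we have $\mathcal V^G(\zeta)=\frac{1}{2\pi}\frac{\zeta^\perp}{|\zeta|^2}+O(e^{-|\zeta|^2/4})$. So for $|\eta|\le |\widetilde\eta|/2$ the exponentially small part is $O(e^{-c/(\nu t)})$, which is harmless. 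Set $K(\zeta)=\frac{1}{2\pi}\frac{\zeta^\perp}{|\zeta|^2}$, which is homogeneous of degree $-1$. Then
\begin{align*}
K(\eta+\widetilde\eta)-K(\widetilde\eta)=\sum_{k=1}^{3}\frac{1}{k!}D^kK(\widetilde\eta)[\eta^{\otimes k}]+\text{remainder},
\end{align*}
where $D^kK(\widetilde\eta)=O(|\widetilde\eta|^{-1-k})=O((\nu t)^{(1+k)/2})$. Multiplying by $\frac{\alpha}{\nu}$, the $k$-th term is of size $\nu^{(k-1)/2}t^{(k+1)/2}$ times a polynomial in $\eta$ of degree $k$. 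This gives the orders $t$, $\nu^{1/2}t^{3/2}$ and $\nu t^2$, exactly matching the claimed decomposition.

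Next, identify the angular structure. Because $\nabla_\eta G=-\frac{\eta}{2}G$, the $k$-th term is $-\frac{\alpha}{\nu k!}\,D^kK(\widetilde\eta)[\eta^{\otimes k}]\cdot(-\tfrac{\eta}{2})G$, a homogeneous polynomial of degree $k+1$ times $G$. The key identity is that $K$ is harmonic-like: each component of $K$ is the gradient of a harmonic function, namely $K=\nabla^\perp\frac{1}{2\pi}\log|\zeta|$. Hence $\eta\cdot D^kK(\widetilde\eta)[\eta^{\otimes k}]$ is, up to the rotation $\perp$, a harmonic homogeneous polynomial of degree $k+1$ in $\eta$. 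Concretely, writing $\eta\cdot D^k\nabla^\perp\phi[\eta^{\otimes k}]$ with $\phi$ harmonic, this is the $\theta$-derivative of $D^{k+1}\phi[\eta^{\otimes(k+1)}]$, which is $r^{k+1}$ times a pure mode $\cos,\sin((k+1)\theta)$. Therefore the $k=1$ term lies in $\mathcal Y_2$ and the $k=2$ term in $\mathcal Y_3$. Multiplication by the radial $G$ keeps them in $\mathcal Z$. So $A$ (from $k=1$) and $B$ (from $k=2$) have the required mode structure, with coefficients smooth and bounded in $t$, since $\widetilde\eta\sqrt{\nu t}$ is smooth and bounded away from zero for $t\le T$. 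Everything else, namely the $k=3$ term, the Taylor remainder and the exponential corrections, goes into $\nu t^2\widetilde R^{(0)}_{vp}$.

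The main obstacle is the bound \eqref{est of AB,R0} for $\widetilde R^{(0)}_{vp}$ uniformly in $\eta\in\mathbb R^2$, including the region $|\eta|\gtrsim|\widetilde\eta|$ where the Taylor expansion fails. For $|\eta|\le|\widetilde\eta|/2$, the Taylor remainder is bounded by $\frac{\alpha}{\nu}|\widetilde\eta|^{-5}|\eta|^4\cdot|\eta|G$. This is at most $C\nu t^2\cdot\nu^{1/2}t^{1/2}|\eta|^5G\le C\nu t^2 e^{-\lambda|\eta|^2/4}$, absorbing the polynomial into $e^{-(1-\lambda)|\eta|^2/4}$. For $|\eta|\ge|\widetilde\eta|/2\ge c(\nu t)^{-1/2}$, we instead bound the whole $\mathcal R^{(0)}_{vp}$ crudely: $|\mathcal V^G|\le C$, so $|\mathcal R^{(0)}_{vp}|\le\frac{C}{\nu}|\eta|G$. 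Then we use $e^{-(1-\lambda)|\eta|^2/4}\le e^{-c(1-\lambda)/(\nu t)}\le C_\lambda(\nu t)^N$ to beat the factor $\nu^{-1}(\nu t^2)^{-1}$. The same device handles the subtracted polynomial terms $tA$, $\nu^{1/2}t^{3/2}B$ in that region, since they are polynomial times $G$. Membership of $\widetilde R^{(0)}_{vp}$ in $\mathcal Y\cap\mathcal Z$ then follows from the $G$-factor and smoothness of $\mathcal V^G$. The constant depends on $\lambda$, which is why $\lambda<1$ is needed.
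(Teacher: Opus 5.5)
Your proposal is correct and follows the route the paper relies on. The paper does not prove this lemma itself but quotes Proposition~1 of \cite{Gallay 2}, whose argument you reproduce: Taylor-expand the image-vortex field $\frac{1}{2\pi}\zeta^\perp/|\zeta|^2=\nabla^\perp\frac{1}{2\pi}\log|\zeta|$ about $\widetilde\eta$, whose size is of order $(\nu t)^{-1/2}$. Harmonicity then shows that the $k$-th term paired with $\nabla_\eta G=-\frac{\eta}{2}G$ equals $-\frac{1}{k+1}\partial_\theta$ of a harmonic homogeneous polynomial of degree $k+1$, times $G$, so it lies in $\mathcal Y_{k+1}\cap\mathcal Z$. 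Finally, the exponentially small Gaussian tails and the region $|\eta|\gtrsim|\widetilde\eta|$ are absorbed using $e^{-(1-\lambda)|\eta|^2/4}\le C_{\lambda,N}(\nu t)^N$ there, and your order bookkeeping and two-region bound for $\widetilde R^{(0)}_{vp}$ are sound.
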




\medskip

To conclude this subsection, we derive an estimate for $\mathcal{V}^G$.
\begin{lemma}\label{lem: 1/nu of VG differ}
	For $T$ small, it holds that
	\begin{align*}
		\left|\mathcal V^G(\eta+\widetilde\eta)-\mathcal V^G(\widetilde\eta)\right|
		\leq C\nu t\langle\eta\rangle,\quad for\quad |\eta|\leq\frac{8}{\sqrt{\nu t}},\ 0<t\leq T.
	\end{align*}
\end{lemma}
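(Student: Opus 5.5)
The plan is a Taylor expansion of $\mathcal V^G$ around $\widetilde\eta$, using that $|\widetilde\eta|$ is of size $(\nu t)^{-1/2}$ while $|\eta|\le 8(\nu t)^{-1/2}$ keeps $\eta+\widetilde\eta$ far from the origin. First I will record the size of $\widetilde\eta$. By \eqref{def: widetilde eta}, $\widetilde\eta=(X(t)-X(t)^\ast)/(\nu t)^{1/2}=(0,2X_2(t))/(\nu t)^{1/2}$, where $X_2(t)$ denotes the vertical coordinate of the vortex center. By \eqref{est: X(t)-X0} we have $|X(t)-X_0|\le1$, so the vertical coordinate lies in $[19,21]$. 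Hence $\widetilde\eta=(0,\widetilde\eta_2)$ with $38(\nu t)^{-1/2}\le\widetilde\eta_2\le 42(\nu t)^{-1/2}$. If one prefers to avoid circularity with Proposition \ref{prop: est of remainder}, the same bound follows directly from \eqref{def: X(t)0} for $T$ small, since $|X_0'(t)|\lesssim 1$ and therefore $X_0(t)$ stays near $(0,20)$. Then, for $|\eta|\le 8(\nu t)^{-1/2}$, every point $\zeta$ on the segment $[\widetilde\eta,\widetilde\eta+\eta]$ satisfies $|\zeta|\ge 30(\nu t)^{-1/2}$.

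Next I split $\mathcal V^G=\mathcal V_1-\mathcal V_2$, where $\mathcal V_1(\zeta)=\frac{1}{2\pi}\frac{\zeta^\perp}{|\zeta|^2}$ and $\mathcal V_2(\zeta)=\frac{1}{2\pi}\frac{\zeta^\perp}{|\zeta|^2}e^{-|\zeta|^2/4}$. For $\mathcal V_1$, which is homogeneous of degree $-1$, we have $|\nabla\mathcal V_1(\zeta)|\le C|\zeta|^{-2}$. The mean value theorem along the segment then gives
\begin{align*}
|\mathcal V_1(\eta+\widetilde\eta)-\mathcal V_1(\widetilde\eta)|\le |\eta|\sup_{\zeta\in[\widetilde\eta,\widetilde\eta+\eta]}C|\zeta|^{-2}\le C\nu t|\eta|.
\end{align*}
For $\mathcal V_2$, both values are bounded by $C|\zeta|^{-1}e^{-|\zeta|^2/4}\le C(\nu t)^{1/2}e^{-225/(\nu t)}$, which is at most $C\nu t$ for $t\le T$ small and $\nu\le\nu_0$. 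Combining the two pieces gives the bound $C\nu t(1+|\eta|)\le C\nu t\langle\eta\rangle$.

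The only point needing care is the lower bound on $|\zeta|$ along the segment, which relies on the a priori location of $X(t)$. This is exactly what the restriction $|\eta|\le 8/\sqrt{\nu t}$ is designed for: it is compatible with the support of $\chi_{vp}$, which lies within distance $6$ of $(0,20)$, together with the shift $|X(t)-X_0|\le1$. Everything else is routine.
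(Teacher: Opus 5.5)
Your proof is correct and follows essentially the paper's route. You split off the Gaussian part, use the a priori location of $X(t)$ (the paper cites Lemma~\ref{lem: ODE est}) to get $|\widetilde\eta|\gtrsim(\nu t)^{-1/2}$ and hence keep $\eta+\widetilde\eta$ away from the origin, and then bound the homogeneous part $\zeta^\perp/|\zeta|^2$. The only difference is in that last step: the paper writes the difference out as an explicit algebraic identity, while your mean value argument is slightly cleaner and gives $C\nu t|\eta|$ directly.
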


\begin{proof}
	Using the definition of $\mathcal V^G$ in \eqref{Oseen profile} and $\widetilde\eta=\frac{X(t)-X(t)^\ast}{\sqrt{\nu t}}$, we have
	\begin{align*}
		&\left|\mathcal V^G(\eta+\tilde\eta)-\mathcal V^G(\tilde\eta)\right|
		\leq\frac{1}{2\pi} \left| \frac{(\eta+\tilde\eta)^\perp}{|\eta+\tilde\eta|^2}-\frac{\tilde\eta^\perp}{|\tilde\eta|^2}\right|
		+\frac{1}{2\pi} \big(e^{-\frac{|\eta+\tilde\eta|^2}{4}}
		+e^{-\frac{|\tilde\eta|^2}{4}} \big)\\
		&=\frac{1}{2\pi} \frac{\Big|\eta^\perp|\tilde\eta|^2-\tilde\eta^\perp(|\eta|^2+2\eta\cdot\tilde\eta) \Big|}{|\tilde\eta|^2|\eta+\tilde\eta|^2}
		+\frac{1}{2\pi} \big(e^{-\frac{|\eta+\tilde\eta|^2}{4}}+e^{-\frac{|\tilde\eta|^2}{4}} \big).
	\end{align*}
	For $T$ small, we use Lemma \ref{lem: ODE est} to have $|\eta|\leq\frac{8}{\sqrt{\nu t}}\leq\frac{20}{\sqrt{\nu t}}\leq|\tilde\eta|$, which implies 
	\begin{align*}
		\left|\mathcal V^G(\eta+\tilde\eta)-\mathcal V^G(\tilde\eta)\right|
		\leq C\nu t \big(\langle\eta\rangle+\frac{\langle\eta\rangle^2}{|\tilde\eta|} \big)+e^{-\frac{1}{\nu t}-\frac{|\eta|^2}{8}}
		\leq C\nu t\langle\eta\rangle.
	\end{align*}
\end{proof}


\subsection{The first order: $(\omega_b^{(0)}, \Omega_2, X_1)$}\label{sec: The first order expansion}
The aim of this subsection is to choose $(\omega_b^{(0)}, \Omega_2, X_1)$ to obtain sharper bounds for the remainder $\mathcal{R}_{vp}$.

\noindent\textbullet\ \ \textbf{Choice of $\omega_b^{(0)}$.}
Before defining $\omega_b^{(0)}$, we first describe the behavior of the velocity field induced by the vorticity near the point vortex. From the Biot-Savart law \eqref{BS law formulation 2}, we have
\begin{align}\label{eq: expansion of velocity from vortex point 0}
	&BS_{\mathbb R^2_+}[\frac{\alpha}{\nu t}\chi_{vp}\cdot\Omega_0 (\frac{\cdot-X(t)}{\sqrt{\nu t}} )](x,y)\\
	\nonumber
	&=\frac{1}{2\pi}\int_{\mathbb R^2}\big(\frac{(X-Y)^\perp}{|X-Y|^2}-\frac{(X-Y^*)^\perp}{|X-Y^*|^2}\big)\frac{\alpha}{\nu t}\chi_{vp}(Y)G(\frac{Y-X(t)}{\sqrt{\nu t}})dY \\
	\nonumber
	&=\frac{\alpha}{2\pi}\int_{\mathbb R^2}\big(
	\frac{(X-X(t)-\sqrt{\nu t}Z)^\perp}{|X-X(t)-\sqrt{\nu t}Z|^2}
	-\frac{(X-X(t)^\ast-\sqrt{\nu t}Z^\ast)^\perp}{|X-X(t)^\ast-\sqrt{\nu t}Z^\ast|^2}\big)\chi_{vp}(X(t)+\sqrt{\nu t}Z)G(Z)dZ  \\
	\nonumber
	&=(u_{vp}^{(0)}, v_{vp}^{(0)})(t,x,y)+O(\nu^{1/2}),\quad for \quad y\leq 5,
\end{align}
where $U_{vp}^{(0)}=(u_{vp}^{(0)}, v_{vp}^{(0)})$ is defined by 
\begin{align}\label{def: Uvp 0}
	U_{vp}^{(0)}(t,x,y)
	&=\frac{\alpha}{2\pi}\int_{\mathbb R^2}\big(
	\frac{(X-X(t))^\perp}{|X-X(t)|^2}
	-\frac{(X-X(t)^\ast)^\perp}{|X-X(t)^\ast|^2}\big)\chi_{vp}(X(t))G(Z)dZ\\
	\nonumber
	&=\frac{\alpha}{2\pi}\big(
	\frac{(X-X(t))^\perp}{|X-X(t)|^2}
	-\frac{(X-X(t)^\ast)^\perp}{|X-X(t)^\ast|^2}\big)\chi_{vp}(X(t)).
\end{align}

Next, we substitute the asymptotic expansion \eqref{asymp expansion of omega b}, the approximate velocity \eqref{eq: def of Uap}, and the boundary velocity \eqref{eq: velocity near boundary from vortex point} into the remainder definition \eqref{def: Rb}. Expanding the result in the boundary-layer variable $z=\frac{y}{\nu^{1/2}}$ via a Taylor series and matching terms of order $\nu^{-1/2}$ yields the governing equation for $\omega_b^{(0)}(t,x,z)$:
\begin{align}\label{eq: omega p0}
	\left\{
	\begin{aligned}
		&\pa_t \omega_b^{(0)}
		-\pa_z^2 \omega_b^{(0)}
		+\big(u_b^{(0)}+u_{vp}^{(0)}(t,x,0) \big)\pa_x\omega_b^{(0)}
		+\big(\frac{v_b^{(0)}}{y} +\pa_y v_{vp}^{(0)}(t,x,0) \big) z\pa_z\omega_b^{(0)}=0,\\
		&\lim_{t\rightarrow0^+}\omega_b^{(0)}=-u_0\delta_{\partial\mathbb R^2_+} .
	\end{aligned}
	\right.
\end{align}
Here the velocity $(u_b^{(0)}, v_b^{(0)})$ is deduced from $\omega_b^{(0)}$ by 
 \ben\label{def:u_b^0}
(u_b^{(0)}, v_b^{(0)})(t, x, y)=BS_{\mathbb R^2_+}[\nu^{-1/2}\chi_b\omega_b^{(0)}(t,x,\frac{y}{\nu^{1/2}})].
 \een

Solving the above equation requires the boundary condition for $\omega_b^{(0)}$  given in Section \ref{sec: boundary condition expansion}.

\begin{remark}
The initial condition for $\omega_b^{(0)}$ arises because the initial Navier-Stokes velocity does not satisfy the no-slip boundary condition and the vorticity is discontinuous at $t=0$ (see \eqref{initial limit}). Further details are provided in \cite{WYZ}. 
 \end{remark}

 To derive the higher-order expansion, we require the asymptotic behavior of $(u_b^{(0)}, v_b^{(0)})$.
\begin{lemma}\label{est: U_b^0}
Let $\omega_b^{(0)}$ be constructed by \eqref{eq: omega p0}. Assume that
\beno
 \int_0^{+\infty} |(1+|z|)(\omega_b^{(0)})_\xi(t,z) |dz \leq C_0.
\eeno
Then we have
\begin{align}\label{eq: behavior of (u_p^{(0)}, v_p^{(0)})}
	(u_b^{(0)},\frac{ v_b^{(0)}}{y}) \sim
	\left\{
	\begin{aligned}
		&O(1),\qquad
		y\leq5,\\
		&O(\nu^{1/2}),\qquad
		y\geq10.
	\end{aligned}
	\right.
\end{align}

\end{lemma}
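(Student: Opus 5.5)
We will work with the half-plane Biot--Savart law \eqref{BS law formulation 2} in partial Fourier variables in $x$. For a source $f(x,y)=\nu^{-1/2}\chi_b(y)\omega_b^{(0)}(t,x,y/\nu^{1/2})$, the stream function $\psi=\Delta_D^{-1}f$ satisfies $(\pa_y^2-4\pi^2\xi^2)\psi_\xi=f_\xi$ with $\psi_\xi(0)=0$. Its Green's function gives
\begin{align*}
\psi_\xi(y)=-\int_0^{\infty}\frac{e^{-2\pi|\xi||y-y'|}-e^{-2\pi|\xi|(y+y')}}{4\pi|\xi|}f_\xi(y')\,dy'.
\end{align*}
Hence $u_\xi=-\pa_y\psi_\xi$ and $v_\xi=2\pi i\xi\psi_\xi$. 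We will change variables $y'=\nu^{1/2}z$, which absorbs the prefactor $\nu^{-1/2}$, so that both components become integrals of $(\omega_b^{(0)})_\xi(t,z)$ against explicit kernels in $z$. The $L^\infty_x$ bounds then follow from $\|u\|_{L^\infty_x}\le\|u_\xi\|_{L^1_\xi}$. For this we assume, as is implicit in the construction in Appendix B, that the hypothesis holds with a weight in $\xi$ allowing integration; for instance the bound is uniform and $\omega_b^{(0)}$ is analytic in $x$, so that an $e^{-c|\xi|}$ factor is available. Alternatively one can work directly in physical space with the kernel $\frac{(X-Y)^\perp}{|X-Y|^2}-\frac{(X-Y^*)^\perp}{|X-Y^*|^2}$.

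\textbf{Region $y\le5$.} The kernel of $u_\xi$ is bounded in absolute value by $1$ (it is a difference and sum of exponentials), so $|u_\xi(y)|\le\int_0^\infty|(\omega_b^{(0)})_\xi(z)|dz\le C_0$. For $v/y$ we use $|e^{-a|y-y'|}-e^{-a(y+y')}|\le 2a\min(y,y')\le 2ay$ with $a=2\pi|\xi|$. This gives $|\psi_\xi(y)|\le C y\int|f_\xi|dy'$, and therefore $|v_\xi/y|\le C|\xi|\int_0^\infty|(\omega_b^{(0)})_\xi|dz$. This is $O(1)$ after the $\xi$-integration, using the decay in $\xi$. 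This gives the $O(1)$ bound.

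\textbf{Region $y\ge10$.} Since $\chi_b$ is supported in $y'\le3$, we have $y>y'$, and the kernel becomes $e^{-a y}\frac{\sinh(ay')}{a}$. The key cancellation is that this vanishes at $y'=0$ to first order: $|\sinh(ay')/a|\le y'e^{ay'}$. Substituting $y'=\nu^{1/2}z$ gives
\begin{align*}
|\psi_\xi(y)|+|u_\xi(y)|\lesssim e^{-a(y-3)}(1+|\xi|)\,\nu^{1/2}\int_0^\infty z|(\omega_b^{(0)})_\xi(z)|dz\le C\nu^{1/2}e^{-7\cdot 2\pi|\xi|}(1+|\xi|)C_0.
\end{align*}
Dividing $v$ by $y\ge10$ is harmless, and integrating in $\xi$ gives $O(\nu^{1/2})$. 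This is exactly where the moment $\int(1+|z|)|\omega_b^{(0)}|$ in the hypothesis is used. Physically, the boundary-layer vorticity together with its image acts like a dipole of strength $\nu^{1/2}\int z\,\omega_b^{(0)}$, so the leading monopole contribution cancels.

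\textbf{Main obstacle.} The delicate step is the region $y\le5$ for $v_b^{(0)}/y$ and the $\xi$-integrability. The factor $|\xi|$ coming from $v=\pa_x\psi$ requires control of $\xi(\omega_b^{(0)})_\xi$, or the analytic weight. I would try first to use the exponentially weighted-in-$\xi$ bounds available from the construction of $\omega_b^{(0)}$ (Appendix B). Failing that, I would use the physical-space kernel and the divergence-free identity $v(y)=-\int_0^y\pa_xu$, reducing $v/y$ to a bound on $\pa_xu$ in terms of $\pa_x\omega_b^{(0)}$ with the same $z$-moment hypothesis.
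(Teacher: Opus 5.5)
Your argument is correct. For $y\le5$ it is the paper's argument: the paper also bounds $(u_b^{(0)})_\xi$ through the Fourier-side Biot--Savart formula (Lemma~\ref{lem: velocity formula}) by $\int_0^\infty|(\omega_b^{(0)})_\xi|\,dz$, and it stops at that level of Fourier coefficients.

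For $y\ge10$ the paper works in physical space instead. It writes the difference of the direct and image kernels, extracts the factor $\nu^{1/2}\tilde z$, and bounds the rest of the kernel by $1$ using $y-\nu^{1/2}\tilde z\ge7$. It then concludes with $\int\!\!\int\tilde z\,|\omega_b^{(0)}(t,\tilde x,\tilde z)|\,d\tilde x\,d\tilde z$. The cancellation is the same as your $|\sinh(ay')/a|\le y'e^{ay'}$, but the two routes need different norms. The physical-space bound needs a first $z$-moment in $L^1_{x,z}$, which is not literally the stated hypothesis. Your Fourier-side bound uses exactly the hypothesis, uniformly in $\xi$. The separation factor $e^{-2\pi|\xi|(y-3)}\le e^{-14\pi|\xi|}$ then makes the $\xi$-integral converge, so you get a genuine $L^\infty_x$ bound of order $\nu^{1/2}$ with no extra assumption.

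Your caveat about $v_b^{(0)}/y$ for $y\le5$ is also correct, and the paper's ``same argument'' passes over it. Since $v/y\to\partial_y v|_{y=0}=-\partial_x u|_{y=0}$, the Fourier coefficient of $v_b^{(0)}/y$ genuinely carries a factor $|\xi|$. The stated hypothesis therefore only yields $O(|\xi|)$ for that component, not a bound uniform in $\xi$ and not an $L^\infty_x$ bound.

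Supplementing the hypothesis with the $e^{C'|\xi|}$-weighted bounds of Proposition~\ref{prop: WP of boundary layer} is the right fix. Its Appendix B proof for $\omega_b^{(0)}$ does not rely on this lemma, so there is no circularity. The same weight is also what you need to upgrade the $u_b^{(0)}$ bound for $y\le5$ from a coefficient bound to an $L^\infty_x$ bound, a step the paper never takes.
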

\begin{proof}

Firstly, by Lemma \ref{lem: derivation of velocity formula}, we obtain that for $y\leq5$,
\begin{align*}
	|( u_b^{(0)})_\xi(t,y)|
	\leq \f12 \int_0^{+\infty}\nu^{-1/2}\left|(\omega_b^{(0)})_\xi(t,\frac{z}{\nu^{1/2}})\right|\chi_b(z)dz
	\leq \int_0^{+\infty} \left|(\omega_b^{(0)})_\xi(t,z)\right|dz=O(1).
\end{align*}
For $y\geq10$, due to \eqref{BS law formulation 2}, it holds that
\begin{align*}
	u_b^{(0)}(t,x,y)
	&=\frac{1}{2\pi}\int_{\mathbb R^2_+}
	\big(\frac{y+\tilde y}{(x-\tilde x)^2+(y+\tilde y)^2}
	-\frac{y-\tilde y}{(x-\tilde x)^2+(y-\tilde y)^2}\big)
	\nu^{-1/2}\omega_b^{(0)}(t,\tilde x,\frac{\tilde y}{\nu^{1/2}})\chi_b(\tilde y)d\tilde xd\tilde y\\
	&=\frac{1}{\pi}\int_{\mathbb R^2_+}
	\frac{\nu^{1/2}\tilde z\big((x-\tilde x)^2+\nu\tilde z^2-y^2\big)}{\big((x-\tilde x)^2+(y+\nu^{1/2}\tilde z)^2\big)\big((x-\tilde x)^2+(y-\nu^{1/2}\tilde z)^2\big)}
	\omega_b^{(0)}(t,\tilde x,\tilde z)\chi_b(\nu^{1/2}\tilde z)d\tilde xd\tilde z.
\end{align*}
Since $\nu^{1/2}\tilde z\leq3$ on $supp\chi_b$, thus for $y\geq10$, 
\begin{align*}
	\left| \frac{\nu^{1/2}\tilde z\big((x-\tilde x)^2+\nu\tilde z^2-y^2\big)}{\big((x-\tilde x)^2+(y+\nu^{1/2}\tilde z)^2\big)\big((x-\tilde x)^2+(y-\nu^{1/2}\tilde z)^2\big)} \right|
	\leq \frac{\nu^{1/2}\tilde z}{(x-\tilde x)^2+(y-\nu^{1/2}\tilde z)^2}
	\leq \nu^{1/2}\tilde z,
\end{align*}
which implies
\begin{align*}
	|u_b^{(0)}(t,x,y)|
	\leq \frac{\nu^{1/2}}{\pi}\int_{\mathbb R^2_+}\tilde z |\omega_b^{(0)}(t,\tilde x,\tilde z)|\chi_b(\nu^{1/2}\tilde z)d\tilde xd\tilde z
	=O(\nu^{1/2}).
\end{align*}
The proof of $\frac{v_b^{(0)}}{y}$ is obtained by the same argument.
\end{proof}

\medskip

\noindent\textbullet\ \ \textbf{Construction of $(\Omega_2,X_1)$.}
 In \eqref{def: Rvp}, we take $\mathcal W_a, \mathcal V^{\mathcal W_a}, X(t)$ and $U_{a,b}$ as following
\begin{align*}
	\mathcal W_a=\Omega_0+\nu t\Omega_2,\qquad
	\mathcal V^{\mathcal W_a}=\mathcal V^{\Omega_0}+\nu t\mathcal V^{\Omega_2},\qquad
	X'(t)=-\frac{\alpha}{\sqrt{\nu t}}\mathcal V^G(\frac{X(t)-X(t)^\ast}{\sqrt{\nu t}})+\nu^{1/2}X_1'(t),
\end{align*}
and $U_{a,b}=U_b^{(0)}=(u_b^{(0)},v_b^{(0)})$ defined by \eqref{def:u_b^0}.  Then, we obatin
\begin{align}\label{def: tilde Rvp1}
	 {\mathcal R}^{(1)}_{vp}:=&(t\pa_t-\mathcal L)(\nu t\Omega_2)
	+\Big\{ \frac{\alpha}{\nu}\mathcal V^G(\eta)
	-\frac{\alpha}{\nu}\mathcal V^G(\eta+\widetilde\eta)
	+\frac{\alpha}{\nu}\mathcal V^G(\widetilde\eta)
	+\alpha t\mathcal V^{\Omega_2}(\eta,t)\\
	\nonumber
	&
	\quad-\alpha t \widetilde{{\mathcal V}^{\Omega_2}}(\eta+\widetilde\eta,t)
	-t^{1/2}X_1'(t)+\sqrt{\frac{t}{\nu}}U_b^{(0)} \Big\}\cdot\nabla_\eta (\Omega_0+\nu t\Omega_2)\\
	\nonumber
	=& {\mathcal R}_{l,1}+  {\mathcal R}_{h,1},
\end{align}
where $  {\mathcal R}_{l,1}$ and $ {\mathcal R}_{h,1}$ consist of low and high order terms respectively and are defined by
\begin{align*}
  {\mathcal R}_{l,1}
	= {\mathcal R}^{(0)}_{vp}
	+\alpha t\Lambda\Omega_2-t^{1/2}X_1'(t)\cdot\nabla_\eta G
	+\sqrt{\frac{t}{\nu}}U_b^{(0)}\cdot\nabla_\eta G,
\end{align*}
and
\begin{align*}
  {\mathcal R}_{h,1}
	=&-\alpha t \left\{\mathcal V^G(\eta+\widetilde\eta)-\mathcal V^G(\widetilde\eta) \right\}\cdot\nabla_\eta \Omega_2
	+(t\pa_t-\mathcal L)(\nu t\Omega_2)
	-\nu t^{3/2} X_1(t)\cdot\nabla_\eta\Omega_2\\
	&+\nu^{1/2}t^{3/2}U_b^{(0)}\cdot\nabla_\eta\Omega_2
	-\alpha t\widetilde{{\mathcal V}^{\Omega_2}}(\eta+\widetilde\eta,t)\cdot\nabla_\eta G -\alpha\nu t^2\left\{\widetilde{{\mathcal V}^{\Omega_2}}(\eta+\widetilde\eta,t)-\mathcal V^{\Omega_2}(\eta,t)\right\}\cdot\nabla_\eta\Omega_2.
\end{align*}

 \medskip

The following is the main results of this subsection.
\begin{lemma}\label{lem: R^{(1)}_{vp} est}
There exist $\Om_2 \in\mathcal Y_2\cap\mathcal Z$ and smooth function $X_1(t)$ such that
\beno
 {\mathcal R}^{(1)}_{vp} \sim O_{\mathcal Y}(\nu^{1/2}t).
\eeno
\end{lemma}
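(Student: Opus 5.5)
The plan is to split $\mathcal R^{(1)}_{vp}=\mathcal R_{l,1}+\mathcal R_{h,1}$ as above. We choose $(\Omega_2,X_1)$ so that the $O(t)$ part of $\mathcal R_{l,1}$ cancels exactly, and we then check that everything left over, including all of $\mathcal R_{h,1}$, is $O_{\mathcal Y}(\nu^{1/2}t)$.

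\textbf{Expanding $\mathcal R_{l,1}$.} By Lemma \ref{lem: expansion of Rvp0},
\[
\mathcal R^{(0)}_{vp}=tA+\nu^{1/2}t^{3/2}B+\nu t^2\widetilde R^{(0)}_{vp},
\]
with $A\in\mathcal Y_2\cap\mathcal Z$. The last two terms are already $O_{\mathcal Y}(\nu^{1/2}t)$, thanks to the Gaussian bound \eqref{est of AB,R0} with $\lambda>1/2$. For the interaction term we use the expansion \eqref{expand of Ub in intro}, i.e. Lemma \ref{lem: expansion of the interaction term}, which we may quote. It rests on Lemma \ref{est: U_b^0}: near the vortex, $U_b^{(0)}$ is $O(\nu^{1/2})$ and smooth, so a Taylor expansion at $X(t)$ gives
\[
U_b^{(0)}(X(t)+\sqrt{\nu t}\eta)=U_b^{(0)}(X(t))+\sqrt{\nu t}\,\eta\cdot\nabla U_b^{(0)}(X(t))+\cdots .
\]
Hence
\[
\sqrt{t/\nu}\,U_b^{(0)}\cdot\nabla_\eta G=t^{1/2}C(t)\cdot\nabla_\eta G+O_{\mathcal Y}(\nu^{1/2}t),
\]
where $C(t)=\nu^{-1/2}U_b^{(0)}(t,X(t))=O(1)$. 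The term $\sqrt{\nu t}\,\eta\cdot\nabla U_b^{(0)}\cdot\nabla G$ gains a factor $\nu^{1/2}$ from $\nabla U_b^{(0)}=O(\nu^{1/2})$ and a factor $t^{1/2}$ from the scaling, so multiplied by $\sqrt{t/\nu}$ it is $O(\nu^{1/2}t)$ with Gaussian decay. We then take $X_1'(t)=C(t)$ with $X_1(0)=0$; this removes the $\mathcal Y_1$ term $t^{1/2}(C-X_1')\cdot\nabla G$. Finally we solve
\[
\alpha\Lambda\Omega_2=-A .
\]
Since $A\in\mathcal Y_2\cap\mathcal Z$ and $n=2$, Proposition \ref{prop: properties of Lambda}(2) gives a unique $\Omega_2\in\mathcal Y_2\cap\mathcal Z$, with smooth dependence on $t$ through $\widetilde\eta$. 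The $t$-dependence of $A$ enters only via $\widetilde\eta$, which varies slowly because $|\widetilde\eta|\sim(\nu t)^{-1/2}$. This gives $\mathcal R_{l,1}=O_{\mathcal Y}(\nu^{1/2}t)$.

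\textbf{Bounding $\mathcal R_{h,1}$.} We go through the terms of $\mathcal R_{h,1}$ one by one, using that $\Omega_2$ and its derivatives are bounded by $Ce^{-\lambda|\eta|^2/4}$ (because $\Omega_2\in\mathcal Z$).
\begin{itemize}
\item $(t\pa_t-\mathcal L)(\nu t\Omega_2)=O(\nu t)$, provided $t\pa_t\Omega_2$ is bounded. This follows from the smooth dependence of $A$ on $\widetilde\eta$ together with $t\pa_t\widetilde\eta=O(|\widetilde\eta|)$, which makes $t\pa_tA$ bounded in $\mathcal Z$.
\item The term $-\alpha t\{\mathcal V^G(\eta+\widetilde\eta)-\mathcal V^G(\widetilde\eta)\}\cdot\nabla\Omega_2$ is controlled by Lemma \ref{lem: 1/nu of VG differ}: it is $O(\nu t^2\langle\eta\rangle)e^{-\lambda|\eta|^2/4}$ on $|\eta|\le 8(\nu t)^{-1/2}$. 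Outside that region the Gaussian factor makes it exponentially small.
\item $\nu t^{3/2}X_1'\cdot\nabla\Omega_2$ and $\nu^{1/2}t^{3/2}U_b^{(0)}\cdot\nabla\Omega_2$ are clearly $O(\nu t)$.
\item The reflected velocity satisfies
\[
\widetilde{\mathcal V^{\Omega_2}}(\eta+\widetilde\eta)=O\big(|\widetilde\eta|^{-3}\big)=O\big((\nu t)^{3/2}\big),
\]
since $\Omega_2\in\mathcal Y_2$ has zero mass and zero first moments, so its velocity decays like $|\cdot|^{-3}$. The same decay, or simply boundedness of $\mathcal V^{\Omega_2}$, handles the last term, which carries the prefactor $\nu t^2$.
\end{itemize}
Summing these gives $\mathcal R_{h,1}=O_{\mathcal Y}(\nu^{1/2}t)$.

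\textbf{Main obstacle.} The delicate step is the expansion of the interaction term, i.e. justifying \eqref{expand of Ub in intro} with constants uniform in $\nu$. The boundary-layer vorticity $\nu^{-1/2}\omega_b^{(0)}(y/\nu^{1/2})$ must produce a velocity that is $O(\nu^{1/2})$ near $X(t)$ together with its derivatives. For this I would redo the Biot--Savart computation in the proof of Lemma \ref{est: U_b^0} and differentiate the kernel. On the region $y\ge 10$ all the kernels are smooth, and each $x$- or $y$-derivative keeps the factor $\nu^{1/2}\tilde z$. The moment assumption $\int(1+z)|(\omega_b^{(0)})_\xi|\,dz\le C_0$ then closes the bound. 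On $|\eta|\gtrsim(\nu t)^{-1/2}$, where the Taylor expansion fails, the Gaussian weight of $\nabla G$ controls the remaining terms.
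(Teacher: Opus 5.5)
Your proposal is correct and follows essentially the same route as the paper. You split $\mathcal R^{(1)}_{vp}=\mathcal R_{l,1}+\mathcal R_{h,1}$, insert Lemma~\ref{lem: expansion of Rvp0} and Lemma~\ref{lem: expansion of the interaction term}, set $X_1'=C_0$ with $X_1(0)=0$, and solve $\alpha\Lambda\Omega_2=-A$ in $\mathcal Y_2\cap\mathcal Z$ via Proposition~\ref{prop: properties of Lambda}. You then bound $\mathcal R_{h,1}$ term by term using Lemma~\ref{lem: 1/nu of VG differ}, Lemma~\ref{lem: decay rate of velocity} and the $O(\nu^{1/2})$ size of $U_b^{(0)}$ near the vortex. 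Your extra remarks fill in details the paper leaves implicit: the boundedness of $t\pa_t\Omega_2$, the exponential smallness outside $|\eta|\le 8(\nu t)^{-1/2}$, and the $|\cdot|^{-3}$ decay of $\mathcal V^{\Omega_2}$.
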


\medskip

Before proving Lemma \ref{lem: R^{(1)}_{vp} est},  we firstly treat the interaction term $U_b^{(0)}\cdot\nabla_\eta G$. Here, we give a general version which treats $(u_b^{(k)},v_b^{(k)})\cdot\nabla_\eta G$, where $(u_b^{(k)},v_b^{(k)})$ is defined by \eqref{eq: def of up(k)}.

\begin{lemma}\label{lem: expansion of the interaction term}
	There exist bounded function $C_k(t)$, and $D_k\in\mathcal Y_2\cap\mathcal Z, \widetilde{R}_{int}^{(k)}\in\mathcal Y\cap\mathcal Z$ such that
	\begin{align*}
		(u_b^{(k)},v_b^{(k)})\cdot\nabla_\eta G(\eta)
		=\nu^{1/2}C_k(t)\cdot\nabla_\eta G(\eta)
		+\nu t^{1/2}D_k(\eta,t)
		+\nu^{3/2}t \widetilde{R}_{int}^{(k)}(\eta,t),\quad for \ |\eta|\leq\frac{8}{\sqrt{\nu t}},
	\end{align*}
	and for $T$ small and any $\lambda\in(0,1)$, we have
	\begin{align}\label{est of CDk,Rintk}
		|D_k(\eta,t)|
		+\left|\widetilde{R}_{int}^{(k)}(\eta,t)\right|
		\leq Ce^{-\lambda|\eta|^2/4},
	\qquad \textrm{for}\quad   |\eta|\leq\frac{8}{\sqrt{\nu t}},\quad
	0<t\leq T.
	\end{align}
\end{lemma}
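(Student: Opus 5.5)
The plan is to Taylor expand the boundary-layer velocity $U_b^{(k)}$ around the vortex center $X(t)$ in the physical variable $X = X(t)+\sqrt{\nu t}\,\eta$, and then use the structure of $\nabla_\eta G$. Two points drive the argument. First, $U_b^{(k)}$ is the Biot--Savart velocity of a vorticity supported in $\{y\le 3\}$, so it is harmonic (indeed real-analytic) in the region $|X-X_0|\le 9$. Second, by Lemma \ref{est: U_b^0}, $U_b^{(k)}$ and all its derivatives are $O(\nu^{1/2})$ there, uniformly in $t\le T$. For this we use the explicit kernel representation in the proof of Lemma \ref{est: U_b^0}. The kernel difference is $O(\nu^{1/2}\tilde z)$, and the same holds after any number of $X$-derivatives, since $y\ge 10$ keeps the singularity away. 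Combined with the assumed bound $\int(1+z)|(\omega_b^{(k)})_\xi|dz\le C_0$ (integrated in $x$ as an $L^1$ bound), this gives
\[
\sup_{|X-X_0|\le 9}|\nabla^j U_b^{(k)}(t,X)|\le C_j\nu^{1/2}, \qquad j\le 3 .
\]
Since $|\eta|\le 8/\sqrt{\nu t}$ means $|X-X(t)|\le 8$, and $|X(t)-X_0|\le1$ by \eqref{est: X(t)-X0}, the whole range of $\eta$ stays in this region.

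Next, write
\[
U_b^{(k)}(t,X(t)+\sqrt{\nu t}\eta)=U_b^{(k)}(t,X(t))+\sqrt{\nu t}\,\nabla U_b^{(k)}(t,X(t))\eta+\nu t\, Q(\eta,t),
\]
where $Q$ is the integral-form second-order Taylor remainder, so $|Q|\le C\nu^{1/2}|\eta|^2$. We set $C_k(t):=\nu^{-1/2}U_b^{(k)}(t,X(t))$, which is bounded by the first step. The zeroth-order term then gives exactly $\nu^{1/2}C_k\cdot\nabla_\eta G$.

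For the linear term, define $M:=\nu^{-1/2}\nabla U_b^{(k)}(t,X(t))$, a bounded matrix. Because $\operatorname{div}U_b^{(k)}=0$ and $\operatorname{curl}U_b^{(k)}=0$ in this region (the vorticity vanishes there), $M$ is trace-free and symmetric. Using $\nabla_\eta G=-\tfrac{\eta}{2}G$, the linear term equals
\[
-\tfrac12\,\nu t^{1/2}\,(\eta^{T}M\eta)\,G .
\]
For a symmetric trace-free $M$, the quadratic form $\eta^{T}M\eta$ is a pure $\cos2\theta,\sin2\theta$ mode $r^2(a\cos2\theta+b\sin2\theta)$. So $D_k:=-\tfrac12(\eta^{T}M\eta)G$ lies in $\mathcal Y_2\cap\mathcal Z$ and obeys $|D_k|\le Ce^{-\lambda|\eta|^2/4}$. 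This is the point where the harmonicity is used essentially: without it, a $\mathcal Y_0$ component would appear.

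For the remainder, set $\widetilde R^{(k)}_{int}:=\nu^{-1/2}Q\cdot\nabla_\eta G$. Then
\[
|\widetilde R^{(k)}_{int}|\le C|\eta|^3e^{-|\eta|^2/4}\le C_\lambda e^{-\lambda|\eta|^2/4},
\]
and the prefactor comes out as $\nu t\cdot\nu^{1/2}=\nu^{3/2}t$, as required. Smoothness in $\eta$ with polynomial growth after multiplying by $e^{|\eta|^2/4}$ follows from the analyticity of $U_b^{(k)}$ in the region, giving membership in $\mathcal Z$ on the stated ball.

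The main obstacle is the uniform derivative bound $|\nabla^j U_b^{(k)}|\lesssim\nu^{1/2}$ near the vortex. It requires checking that each differentiated kernel still has the cancellation producing the factor $\nu^{1/2}\tilde z$, for example by writing the difference of the direct and reflected kernels as $\int_{-\nu^{1/2}\tilde z}^{\nu^{1/2}\tilde z}\partial_y(\cdot)$. It also requires that the moment $\int z|\omega_b^{(k)}|$ be finite in $L^1_{x,z}$. This last point should follow from the Gaussian-in-$z$ decay of the boundary profiles constructed in Appendix B.
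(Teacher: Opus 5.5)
Your proposal is correct and is essentially the paper's argument. The paper Taylor-expands the Biot--Savart kernel around $X(t)$ in powers of $\sqrt{\nu t}\,\eta$, using $\frac{c_2}{1+2c_1+|c|^2}=c_2-2c_1c_2+O(|c|^3)$, and obtains the factor $\tilde y=\nu^{1/2}\tilde z$ from the cancellation of the direct and reflected kernels at $\tilde y=0$; after integrating against $\omega_b^{(k)}$, this is exactly your second-order Taylor expansion of $U_b^{(k)}$ at $X(t)$, with $C_k=\nu^{-1/2}U_b^{(k)}(t,X(t))$ and the same $\eta_1\eta_2$, $\eta_1^2-\eta_2^2$ structure for $D_k$. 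One small correction: to exclude a $\mathcal Y_0$ component you only need $\operatorname{div}U_b^{(k)}=0$ (so $M$ is trace-free), because the antisymmetric part of $M$ drops out of $\eta^{T}M\eta$; curl-freeness (harmonicity) is therefore not what is used essentially at that step.
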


\begin{proof}
	Due to the Biot-Savart law \eqref{BS law formulation 2}, the definition of $(u_b^{(k)},v_b^{(k)})$ \eqref{eq: def of up(k)} and the relation $\nabla_\eta G=-\f12\eta G$, we have
	\begin{align*}
		&(u_b^{(k)},v_b^{(k)})\cdot\nabla_\eta G(\eta)\\
		=&-\frac{1}{4\pi}\int_{\mathbb R^2_+} \frac{\big(-(y-\tilde y),x-\tilde x\big)}{(x-\tilde x)^2+(y-\tilde y)^2}\cdot\nu^{-1/2}\omega_b^{(k)}(t,\tilde x,\frac{\tilde y}{\nu^{1/2}})\chi_b(\tilde y)d\tilde xd\tilde y\cdot\eta G(\eta)\\
		&+\frac{1}{4\pi}\int_{\mathbb R^2_+} \frac{\big(-(y+\tilde y),x-\tilde x\big)}{(x-\tilde x)^2+(y+\tilde y)^2}\cdot\nu^{-1/2}\omega_b^{(k)}(t,\tilde x,\frac{\tilde y}{\nu^{1/2}})\chi_b(\tilde y)d\tilde xd\tilde y\cdot\eta G(\eta)=I_1+I_2.
	\end{align*}
	
	For $I_1$, due to $\eta=\frac{(x,y)-X(t)}{\sqrt{\nu t}}$, we have
	\begin{align*}
		&\frac{\big(-(y-\tilde y),x-\tilde x\big)}{(x-\tilde x)^2+(y-\tilde y)^2}\cdot\eta
		=\frac{\big(-\sqrt{\nu t}\eta_2-x_2(t)+\tilde y,\sqrt{\nu t}\eta_1+x_1(t)-\tilde x \big)}{(\sqrt{\nu t}\eta_1+x_1(t)-\tilde x)^2 +(\sqrt{\nu t}\eta_2+x_2(t)-\tilde y)^2}\cdot(\eta_1,\eta_2)\\
		&=\frac{-\eta_1\big(x_2(t)-\tilde y\big)+\eta_2\big(x_1(t)-\tilde x\big)}{(\sqrt{\nu t}\eta_1+x_1(t)-\tilde x \big)^2 +(\sqrt{\nu t}\eta_2+x_2(t)-\tilde y\big)^2}\\
		&=(\nu t)^{-1/2}\frac{-\eta_1 b_2+\eta_2 b_1}{|\eta|^2+2(\eta_1 b_1+\eta_2 b_2)+|b|^2}
		=(\nu t)^{-1/2}\frac{x_2}{1+2x_1+|x|^2},
	\end{align*}
	where we set $b_1=\frac{x_1(t)-\tilde x}{\sqrt{\nu t}}, b_2=\frac{x_2(t)-\tilde y}{\sqrt{\nu t}}, c_1=\frac{\eta_1 b_1+\eta_2 b_2}{|b|^2}, c_2=\frac{-\eta_1 b_2+\eta_2 b_1}{|b|^2}$.

The following expansion has been established in the Appendix of \cite{Dolce}:
\begin{align*}
	\frac{c_2}{1+2c_1+|c|^2}
	=c_2-2c_1c_2+O(|c|^3),\qquad for \quad |c|\leq\f34.
\end{align*}
Plugging the definition of $c_1, c_2$, we obtain for $|\eta|\leq\frac{8}{\sqrt{\nu t}}$,
\begin{align*}
	\frac{\big(-(y-\tilde y),x-\tilde x\big)}{(x-\tilde x)^2+(y-\tilde y)^2}\cdot\eta
		=&\frac{-\eta_1(x_2(t)-\tilde y)+\eta_2(x_1(t)-\tilde x)}{(x_1(t)-\tilde x)^2+(x_2(t)-\tilde y)^2} \\
		&+2(\nu t)^{1/2}(\eta_1^2-\eta_2^2)\frac{(x_1(t)-\tilde x)(x_2(t)-\tilde y)}{\big((x_1(t)-\tilde x)^2+(x_2(t)-\tilde y)^2\big)^2}\\
		&-2(\nu t)^{1/2}\eta_1\eta_2\frac{(x_1(t)-\tilde x)^2-(x_2(t)-\tilde y)^2}{\big((x_1(t)-\tilde x)^2+(x_2(t)-\tilde y)^2\big)^2}+r_1(\eta,\tilde x,\tilde y),
\end{align*}
where $|r_1(\eta,\tilde x,\tilde y)|\leq C\nu t|\eta|^3$. $I_2$ can be computed similarly by replacing $x_2(t)-\tilde y$ with $x_2(t)+\tilde y$. Thus, there exist functions $f_1, f_2, f_3, f_4$ $\in C_b^\infty( supp \chi_b)$ for $t$ small such that
\begin{align*}
	&-\frac{\big(-(y-\tilde y),x-\tilde x\big)}{(x-\tilde x)^2+(y-\tilde y)^2}\cdot\eta
	+\frac{\big(-(y+\tilde y),x-\tilde x\big)}{(x-\tilde x)^2+(y+\tilde y)^2}\cdot\eta\\
	&=\Big\{\frac{\eta_1(x_2(t)-\tilde y)-\eta_2(x_1(t)-\tilde x)}{(x_1(t)-\tilde x)^2+(x_2(t)-\tilde y)^2}+\frac{-\eta_1(x_2(t)+\tilde y)+\eta_2(x_1(t)-\tilde x)}{(x_1(t)-\tilde x)^2+(x_2(t)+\tilde y)^2}\Big\} \\
	&\quad+2(\nu t)^{1/2}(\eta_1^2-\eta_2^2)\Big\{-\frac{(x_1(t)-\tilde x)(x_2(t)-\tilde y)}{\big((x_1(t)-\tilde x)^2+(x_2(t)-\tilde y)^2\big)^2}
	+\frac{(x_1(t)-\tilde x)(x_2(t)+\tilde y)}{\big((x_1(t)-\tilde x)^2+(x_2(t)+\tilde y)^2\big)^2} \Big\} \\
	&\quad+2(\nu t)^{1/2}\eta_1\eta_2 \Big\{\frac{(x_1(t)-\tilde x)^2-(x_2(t)-\tilde y)^2}{\big((x_1(t)-\tilde x)^2+(x_2(t)-\tilde y)^2\big)^2}-\frac{(x_1(t)-\tilde x)^2-(x_2(t)+\tilde y)^2}{\big((x_1(t)-\tilde x)^2+(x_2(t)+\tilde y)^2\big)^2}\Big\} \\
	&\quad+r_1(\eta,\tilde x,\tilde y)+r_2(\eta,\tilde x,\tilde y) \\
	&=\tilde y\Big(\eta_1 f_1(X(t),\tilde x,\tilde y)+\eta_2 f_2(X(t),\tilde x,\tilde y)  \Big)\\
	&\qquad +(\nu t)^{1/2}\tilde y\Big(\eta_1\eta_2  f_3(X(t),\tilde x,\tilde y)+(\eta_1^2-\eta_2^2) f_4(X(t),\tilde x,\tilde y) \Big)
	+\tilde y r(\eta,\tilde x,\tilde y),
\end{align*}
where $|r(\eta,\tilde x,\tilde y)|\leq C\nu t|\eta|^3$ and the factor $\tilde y$ appears since 
\begin{align*}
	\frac{\big(-(y-\tilde y),x-\tilde x\big)}{(x-\tilde x)^2+(y-\tilde y)^2}=\frac{\big(-(y+\tilde y),x-\tilde x\big)}{(x-\tilde x)^2+(y+\tilde y)^2},\qquad when \quad \tilde y=0.
\end{align*}
Then we have
\begin{align*}
	&(u_b^{(k)},v_b^{(k)})\cdot\nabla_\eta G(\eta)
	=\frac{1}{4\pi}\int_{\mathbb R^2_+}
	\Big\{ \nu^{1/2}\tilde z\Big(\eta_1 f_1(X(t),\tilde x,\nu^{1/2}\tilde z)+\eta_2 f_2(X(t),\tilde x,\nu^{1/2}\tilde z)  \Big)\\
	&\qquad\qquad+\nu t^{1/2}\tilde z\Big(\eta_1\eta_2  f_1(X(t),\tilde x,\nu^{1/2}\tilde z)+(\eta_1^2-\eta_2^2) f_2(X(t),\tilde x,\nu^{1/2}\tilde z) \Big) +\nu^{1/2}\tilde z r(\eta,\tilde x,\nu^{1/2}\tilde z) \Big\}\\
	&\qquad\qquad\qquad\qquad\cdot\omega_b^{(0)}(t,\tilde x,\tilde z)\chi_b(\nu^{1/2}\tilde z)d\tilde xd\tilde z\cdot G(\eta),
\end{align*}
which along with Proposition \ref{prop: WP of boundary layer}
 gives the desired result.
\end{proof}


Now we turn to prove Lemma \ref{lem: R^{(1)}_{vp} est}.\medskip

\noindent{\bf Proof of Lemma \ref{lem: R^{(1)}_{vp} est}}. Substituting Lemma \ref{lem: expansion of Rvp0} and Lemma \ref{lem: expansion of the interaction term} to $ {\mathcal R}_{l,1}$, we obtain
\begin{align*}
  {\mathcal R}_{l,1}
	=&\alpha t\Lambda\Omega_2
	+tA(\eta,t)
	+t^{1/2}C_0(t)\cdot\nabla_\eta G(\eta)
	-t^{1/2}X_1'(t)\cdot\nabla_\eta G(\eta) \\
	&+\nu^{1/2}t \big(t^{1/2}B(\eta,t)+D_0(\eta,t)\big)
	+\nu t^{3/2} \big(t^{1/2}R_{vp}^{(0)}(\eta,t)
	+R_{int}^{(0)}(\eta,t)\big).
\end{align*}
It is natural to set
\begin{align*}
	t\alpha\Lambda\Omega_2
	+tA(\eta,t)
	+t^{1/2}C_0(t)\cdot\nabla_\eta G(\eta)
	-t^{1/2}X_1(t)\cdot\nabla_\eta G(\eta)=0.
\end{align*}
Recalling $A\in\mathcal Y_2$, thus by Proposition  \ref{prop: properties of Lambda}, to fulfill the solvability condition we set 
\begin{align}\label{eq: eq of Omega2}
	X_1'(t)=C_0(t),\quad X_1(0)=(0,0), \qquad \textrm{and} \quad \alpha\Lambda\Omega_2
	+A(\eta,t)=0.
\end{align}
Thus, by Proposition  \ref{prop: properties of Lambda}, there exists a unique $\Omega_2\in\mathcal Y_2\cap\mathcal Z$. By now, we obtain 
\beno
  {\mathcal R}_{l,1}\sim O_{\mathcal Y}(\nu^{1/2}t).
\eeno

For $  {\mathcal R}_{h,1}$, using Lemma \ref{lem: 1/nu of VG differ}, Lemma \ref{lem: decay rate of velocity} and Lemma \ref{est: U_b^0}, we obtain
\beno
|  {\mathcal R}_{h,1}| \leq C_\lambda\nu^{1/2}t e^{-\lambda|\eta|^2/4}, \quad \textrm{for}\quad |\eta|\leq\frac{8}{\sqrt{\nu t}},\quad \lambda\in(0,1),
\eeno
which completes the proof of the lemma.

\begin{remark}
	Setting $\mathcal{W}_a = \Omega_0 + (\nu t)^{1/2}\Omega_1 + \nu t \Omega_2$ in \eqref{def: Rvp} and matching terms of order $\nu^{1/2}$ yields $\Lambda\Omega_1=0$. We therefore set $\Omega_1=0$ to simplify the subsequent analysis.
\end{remark}

\subsection{The second order: $(\omega_b^{(1)}, \Omega_3, X_2)$}\label{sec: The second order}

In this subsection, we aim to choose $(\omega_b^{(1)}, \Omega_3, X_2)$ to establish sharper bounds for the remainders ${R}_b\sim \nu$ and $\mathcal R_{vp}\sim \nu t$.

\noindent\textbullet\ \ \textbf{Choice of $\omega_b^{(1)}$.}
To define $\omega_b^{(1)}$, we firstly derive the velocity induced by $\Omega_0+\nu t\Omega_2$: 
\begin{align}\label{eq: expansion of velocity from vortex point 1}
	&BS_{\mathbb R^2_+}[\frac{\alpha}{\nu t}\chi_{vp}\cdot\big(\Omega_0+\nu t\Omega_2 \big) (\frac{\cdot-X(t)}{\sqrt{\nu t}} ,t)](x,y)\\
	\nonumber
	&=\frac{\alpha}{2\pi}\int_{\mathbb R^2}\big(
	\frac{(X-X(t)-\sqrt{\nu t}Z)^\perp}{|X-X(t)-\sqrt{\nu t}Z|^2}
	-\frac{(X-X(t)^\ast-\sqrt{\nu t}Z^\ast)^\perp}{|X-X(t)^\ast-\sqrt{\nu t}Z^\ast|^2}\big)\\
	\nonumber
	&\qquad\qquad\cdot\chi_{vp}(X(t)+\sqrt{\nu t}Z)\cdot (G+\nu t\Omega_2)(t,Z)dZ\\
	\nonumber
	&=(u_{vp}^{(0)}, v_{vp}^{(0)})(t,x,y)+\nu^{1/2}(u_{vp}^{(1)}, v_{vp}^{(1)})(t,x,y) +O(\nu),\quad for \quad y\leq 5,
\end{align}
where  $U_{vp}^{(1)}=(u_{vp}^{(1)}, v_{vp}^{(1)})$ is defined by
\begin{align}\label{def: Uvp 1}
	U_{vp}^{(1)}:=\lim_{\nu\rightarrow0^+}\nu^{-1/2}
	\Big\{BS_{\mathbb R^2_+}[\frac{\alpha}{\nu t}\chi_{vp}\cdot\big(\Omega_0+\nu t\Omega_2 \big) (\frac{\cdot-X(t)}{\sqrt{\nu t}} ,t)](x,y)
	-U_{vp}^{(0)}(t,x,y)\Big\}.
\end{align}

Substituting  \eqref{eq: def of Uap}, \eqref{eq: velocity near boundary from vortex point} into \eqref{eq: NS vorticity}, expanding the result in the boundary-layer variable  $z=\frac{y}{\nu^{1/2}}$ via a Taylor series and matching the order of $\nu^0$ yields the governing equation for $\omega_b^{(1)}(t,x,z)$ as
\begin{align}\label{eq: omega p1}
	\left\{
	\begin{aligned}
		&\pa_t \omega_b^{(1)}-\pa_z^2 \omega_b^{(1)}
		+ \big(u_b^{(0)}+u_{vp}^{(0)}(t,x,0) \big)\pa_x\omega_b^{(1)}
		+\big(\frac{v_b^{(0)}}{y}+\pa_y v_{vp}^{(0)}(t,x,0) \big)z\pa_z\omega_b^{(1)}
		\\
		&\qquad+\Big\{u_b^{(1)}+z\pa_y u_b^{(0)}(t,x,0)+u_{vp}^{(1)}(t,x,0) \Big\}\pa_x\omega_b^{(0)}\\
		&\qquad+\Big\{ \frac{v_b^{(1)}}{y}+\frac{z}{2}\pa_y^2 v_{vp}^{(1)}(t,x,0)+\pa_y v_{vp}^{(1)}(t,x,0) \Big\}z\pa_z\omega_b^{(0)}=0, \\
		&\omega_b^{(1)}|_{t=0}=0.
	\end{aligned}
	\right.
\end{align}

Now,  recalling the definition of reminder term $R_b$, we define
\beno
\widetilde{R}_b=L_b\big(\nu^{-1/2}\omega_b^{(0)}+\omega_b^{(1)}, U_b^{(0)}+\nu^{1/2}U_b^{(1)},U_{vp}^{(0)}+\nu^{1/2}U_{vp}^{(1)} \big).
\eeno
Then, substituting the above $(U_{vp}^i, \omega_b^{(i)})$ into \eqref{def: Rb} and Proposition \ref{prop: WP of boundary layer}, for $T$ small enough, we obtain 
\begin{align}\label{est: tilde Rb}
	\sum_{i+j\leq8}\left\| e^{C'|\xi|}\left\|e^{\frac{C'y^2}{\nu t}}(\pa_x^i(y\pa_y)^j(1,x)\widetilde{R}_b)_\xi(t,y)\right\|_{L^1_y}\right\|_{L^1_\xi\cap L^2_\xi} \leq C\nu,\qquad 0\leq t\leq T,
\end{align}
where $C,C'$ are independent of $\nu$.

\smallskip

\noindent\textbullet\ \ \textbf{Choice of $(\Omega_3, X_2)$.}
Taking
\begin{align*}
	\mathcal W_a
	&=\Omega_0+\nu t\Omega_2+\nu^{3/2}t\Omega_3,\qquad
	\mathcal V_a
	=\mathcal V^{\Omega_0}+\nu t\mathcal V^{\Omega_2}+\nu^{3/2}t\mathcal V^{\Omega_3} ,\\
  X'(t)&=-\frac{\alpha}{\sqrt{\nu t}}\mathcal V^G(\widetilde\eta)+\nu^{1/2}X_1'(t)+\nu X_2'(t),\quad U_{a,b}=U_b^{(0)}+\nu^{1/2}U_b^{(1)}.
\end{align*}
Substituting the them into \eqref{def: Rvp}, we obtain
\begin{align*}
 {\mathcal R}^{(2)}_{vp}:= {\mathcal R}_{l,2}+ {\mathcal R}_{h,2},
\end{align*}
where $  {\mathcal R}_{l,2}$ and $ {\mathcal R}_{h,2}$ consist of low and high order terms respectively and are defined by
\begin{align*}
 {\mathcal R}_{l,2}
	=&{\mathcal R}^{(0)}_{vp}+\alpha t\Lambda\Omega_2-t^{1/2}X_1'(t)\cdot\nabla_\eta G
	+\alpha \nu^{1/2}t\Lambda\Omega_3-\nu^{1/2}t^{1/2}X_2'(t)\cdot\nabla_\eta G\\
	&+\sqrt{\frac{t}{\nu}}\left\{ U_b^{(0)}+\nu^{1/2}U_b^{(1)}\right\}\cdot\nabla_\eta G,
\end{align*}
and
\begin{align*}
 {\mathcal R}_{h,2}
	=&(t\pa_t-\mathcal L)(\nu t\Omega_2+\nu^{3/2}t\Omega_3)
	-\nu t^{3/2}\big( X_1'(t)+\nu^{1/2}X_2'(t) \big)\cdot\nabla_\eta(\Omega_2+\nu^{1/2}\Omega_3)\\
	&-\alpha t\left\{ \widetilde{\mathcal V^{\Omega_2}}(\eta+\widetilde\eta)
	+\nu^{1/2}\widetilde{\mathcal V^{\Omega_3}}(\eta+\widetilde\eta) \right\}\cdot\nabla_\eta G\\
	&-\alpha t\left\{ \mathcal V^G(\eta+\widetilde\eta)-\mathcal V^G(\widetilde\eta)\right\}\cdot\nabla_\eta(\Omega_2+\nu^{1/2}\Omega_3) \\
	&-\alpha\nu t^2 \left\{\widetilde{\mathcal V^{\Omega_2}}(\eta+\widetilde\eta)-\mathcal V^{\Omega_2}(\eta,t)\right\}\cdot\nabla_\eta(\Omega_2+\nu^{1/2}\Omega_3)\\
	&-\alpha\nu^{3/2} t^2 \left\{\widetilde{\mathcal V^{\Omega_3}}(\eta+\widetilde\eta)-\mathcal V^{\Omega_3}(\eta,t)\right\}\cdot\nabla_\eta(\Omega_2+\nu^{1/2}\Omega_3)\\
	&+\nu^{1/2}t^{3/2}\left\{ U_b^{(0)}+\nu^{1/2}U_b^{(1)}\right\}\cdot\nabla_\eta(\Omega_2+\nu^{1/2}\Omega_3).
\end{align*}

 Here, this is the main results of this subsection
\begin{lemma}\label{lem: R^{(2)}_{vp}}
There exist $\Om_3 \in(\mathcal Y_2\oplus\mathcal Y_3)\cap\mathcal Z$ and smooth function $X_2(t)$ such that
\beno
 {\mathcal R}^{(2)}_{vp} \sim O_{\mathcal Y}(\nu t).
\eeno
\end{lemma}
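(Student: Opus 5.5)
We follow the template of the proof of Lemma \ref{lem: R^{(1)}_{vp} est}. The plan is to expand every term of $\mathcal R_{l,2}$ to order $\nu t$ and then choose $(\Omega_3, X_2)$ to cancel the part of order $\nu^{1/2}$.

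\textbf{Step 1: expand $\mathcal R_{l,2}$.} We substitute Lemma \ref{lem: expansion of Rvp0} for $\mathcal R^{(0)}_{vp}$. For both $k=0$ and $k=1$ we use Lemma \ref{lem: expansion of the interaction term} to write
\[
\sqrt{t/\nu}\,U_b^{(k)}\cdot\nabla_\eta G = t^{1/2}C_k\cdot\nabla_\eta G+\nu^{1/2}tD_k+\nu t^{3/2}\widetilde R^{(k)}_{int}.
\]
The order-one terms cancel by \eqref{eq: eq of Omega2}, namely $X_1'=C_0$ and $\alpha\Lambda\Omega_2+A=0$. What remains is
\begin{align*}
\mathcal R_{l,2}={}&\nu^{1/2}t\Big(\alpha\Lambda\Omega_3+t^{1/2}B+D_0\Big)+\nu^{1/2}t^{1/2}\big(C_1(t)-X_2'(t)\big)\cdot\nabla_\eta G\\
&+\nu t\Big(D_1+t^{1/2}\widetilde R^{(0)}_{vp}+t^{1/2}\widetilde R^{(0)}_{int}\Big)+\nu^{3/2}t\,\widetilde R^{(1)}_{int}.
\end{align*}

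\textbf{Step 2: choose $(\Omega_3,X_2)$.} We set $X_2'(t)=C_1(t)$ with $X_2(0)=0$, which is smooth and bounded because $C_1$ is. We then solve
\[
\alpha\Lambda\Omega_3=-D_0-t^{1/2}B.
\]
Since $D_0\in\mathcal Y_2\cap\mathcal Z$ and $B\in\mathcal Y_3\cap\mathcal Z$, we apply Proposition \ref{prop: properties of Lambda}(2) separately to the $n=2$ and $n=3$ modes. Because $\Lambda$ commutes with the projections $P_n$, this gives a unique $\Omega_3\in(\mathcal Y_2\oplus\mathcal Y_3)\cap\mathcal Z$, depending on $t$ as a parameter. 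No $n=1$ solvability condition arises, because the mode-one term $C_1\cdot\nabla_\eta G$ has already been absorbed by $X_2'$. After this choice, $\mathcal R_{l,2}=O_{\mathcal Y}(\nu t)$, with Gaussian-weighted bounds inherited from \eqref{est of AB,R0} and \eqref{est of CDk,Rintk}.

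\textbf{Step 3: bound $\mathcal R_{h,2}$.} We check term by term that $\mathcal R_{h,2}$ is $O(\nu t)$ with weight $e^{-\lambda|\eta|^2/4}$ on $|\eta|\le 8/\sqrt{\nu t}$.
\begin{itemize}
\item $(t\pa_t-\mathcal L)(\nu t\Omega_2+\nu^{3/2}t\Omega_3)$ is $O(\nu t)$, provided $\Omega_2$ and $\Omega_3$ depend smoothly on $t$ in $\mathcal Z$ through $A,B,D_0$, so that $t\pa_t\Omega_i$ stays bounded.
\item The $X'$ term carries $\nu t^{3/2}$.
\item The term with $\mathcal V^G(\eta+\widetilde\eta)-\mathcal V^G(\widetilde\eta)$ is $O(\nu t^2\langle\eta\rangle)$ by Lemma \ref{lem: 1/nu of VG differ}.
\item The $U_b$ terms are $O(\nu t^{3/2})$ by Lemma \ref{est: U_b^0}, since $U_b^{(k)}=O(\nu^{1/2})$ near the vortex.
\item The remaining terms involve the reflected velocities $\widetilde{\mathcal V^{\Omega_i}}(\eta+\widetilde\eta)$.
\end{itemize}

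\textbf{Main obstacle.} The hardest term is $\alpha t\,\widetilde{\mathcal V^{\Omega_2}}(\eta+\widetilde\eta)\cdot\nabla_\eta G$, which carries only a factor $t$. Here $|\eta+\widetilde\eta|\gtrsim 1/\sqrt{\nu t}$, so we need $|\mathcal V^{\Omega_i}(\zeta)|\lesssim|\zeta|^{-2}$ or better at infinity. This is where we would invoke Lemma \ref{lem: decay rate of velocity}. For $\Omega_2\in\mathcal Y_2$ the profile has zero mass and zero first moments, so its velocity decays like $|\zeta|^{-3}$, which gives $t\cdot(\nu t)^{3/2}$. For $\Omega_3$, the prefactor $\nu^{1/2}t$ combined with even $|\zeta|^{-2}$ decay already gives $\nu^{3/2}t^2$. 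If decay of order $|\zeta|^{-2}$ for $\mathcal V^{\Omega_2}$ is all that is available, this term is only $O(\nu t^2)$, which still suffices. Combining Steps 1–3 gives $\mathcal R^{(2)}_{vp}=O_{\mathcal Y}(\nu t)$.
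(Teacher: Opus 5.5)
Your proposal is correct and follows essentially the same route as the paper. You expand $\mathcal R_{l,2}$ via Lemmas \ref{lem: expansion of Rvp0} and \ref{lem: expansion of the interaction term}, set $X_2'=C_1$, solve $\alpha\Lambda\Omega_3=-D_0-t^{1/2}B$ mode by mode with Proposition \ref{prop: properties of Lambda}, and bound $\mathcal R_{h,2}$ term by term with Lemmas \ref{lem: 1/nu of VG differ}, \ref{lem: decay rate of velocity} and \ref{est: U_b^0}; your explicit use of the $|\zeta|^{-3}$ decay of $\mathcal V^{\Omega_2}$ for the reflected term usefully spells out what the paper leaves implicit. The only slips are harmless powers in Step 1: the remainders are actually $\nu t^2\widetilde R^{(0)}_{vp}$ and $\nu^{3/2}t^{3/2}\widetilde R^{(1)}_{int}$, slightly smaller than what you wrote, so the conclusion is unaffected.
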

\begin{proof}
Firstly, using Lemma \ref{lem: expansion of Rvp0}, Lemma \ref{lem: expansion of the interaction term} and \eqref{eq: eq of Omega2}, we rewrite $ {\mathcal R}_{l,2}$ as following
\begin{align*}
 {\mathcal R}_{l,2}
	=&\nu^{1/2}t \left\{\alpha\Lambda\Omega_3+t^{1/2}B(\eta,t)+D_0(\eta,t) \right\}
	+\nu^{1/2}t^{1/2} \big(C_1(t)\cdot\nabla_\eta G(\eta)-X_2'(t)\cdot\nabla_\eta G\big)\\
	&+\nu t \big(D_1(\eta,t)+t^{1/2}\widetilde{ R}_{int}^{(0)}(\eta,t)+t \widetilde{ R}_{vp}^{(0)}(\eta,t)+\nu^{1/2}t^{1/2}\widetilde{\mathcal R}_{int}^{(1)}(\eta,t) \big).
\end{align*}
To eliminate the terms with the order $\nu^{\f12}$ in the above equation, we set
\begin{align}\label{eq: eq of Omega3}
    X_2'(t)=C_1(t),\quad X_2(0)=(0,0),
	\qquad \alpha\Lambda\Omega_3+t^{1/2}B(\eta,t)+D_0(\eta,t)=0.
\end{align}
Recalling $B\in\mathcal Y_3, D_0\in\mathcal Y_2$,  by Proposition  \ref{prop: properties of Lambda}, there exists a unique $\Omega_3\in\mathcal Y_2\oplus\mathcal Y_3$ such that
\beno
\alpha\Lambda\Omega_3+tB(\eta,t)+D_0(\eta,t)=0.
\eeno
Therefore, we obtain $ {\mathcal R}_{l,2}\sim O_{\mathcal Y}(\nu t)$.

For $  {\mathcal R}_{h,2}$, by Lemma \ref{lem: 1/nu of VG differ}, Lemma \ref{lem: decay rate of velocity} and Lemma \ref{est: U_b^0}, we obtain
\beno
|  {\mathcal R}_{h,2}| \leq C_\lambda\nu^{1/2}t e^{-\lambda|\eta|^2/4}, \quad \textrm{for}\quad |\eta|\leq\frac{8}{\sqrt{\nu t}},\quad \lambda\in(0,1).
\eeno

For $  {\mathcal R}_{h,2}$, we have $  {\mathcal R}_{h,2}\sim O(\nu t)$ for $|\eta|\leq\frac{8}{\sqrt{\nu t}}$, which is obtained from $|\mathcal V^G(\eta)|\sim \min\{|\eta|,\frac{1}{|\eta|}\}$, Lemma \ref{lem: decay rate of velocity}, Lemma \ref{lem: 1/nu of VG differ} and $U_b^{(0)}\sim \nu^{1/2}$ away from boundary (see \eqref{eq: behavior of (u_p^{(0)}, v_p^{(0)})}).

Summing the above estimates, we obtain the desired results.
\end{proof}


\subsection{Boundary condition}\label{sec: boundary condition expansion}

This section is devoted to deriving the asymptotic expansion of the boundary condition \eqref{eq: BC of NS}. Bringing $\om_{b}$ into the left hand side of \eqref{eq: BC of NS} gives
\begin{align*}
	\nu(\pa_y+|D_x|)\omega|_{y=0}
	\sim
	\pa_z\omega_b^{(0)}|_{z=0}
	+\nu^{1/2}\big(\pa_z\omega_b^{(1)}+|D_x|\omega_b^{(0)}\big)|_{z=0}
	+\cdots.
\end{align*}

The right hand side of \eqref{eq: BC of NS} contains nonlocal operator $\pa_y\Delta_D^{-1}$ which relies on the behavior of $U\cdot\nabla\omega$ in the whole $\mathbb R^2_+$.

We consider $U\cdot\nabla\omega$ near the boundary at first.
\begin{align*}
	U_a\cdot\nabla\omega_{a,b}
	=&
	\left\{U_{vp}^{(0)}
	+U_b^{(0)}
	+\nu^{1/2}U_{vp}^{(0)}
	+\nu^{1/2}U_b^{(1)}
	+\cdots\right\}\\
	&\quad\cdot\nabla \left\{\nu^{-1/2}\chi_b(y)\omega_b^{(0)}(t,x,\frac{y}{\nu^{1/2}})
	+\chi_b(y)\omega_b^{(1)}(t,x,\frac{y}{\nu^{1/2}})
	+\cdots\right\}\\
	=
	&BC_{b,0}+\nu^{1/2}BC_{b,1},\qquad
	for \quad y\leq5,
\end{align*}
where $BC_{b,0}, BC_{b,1}$ are defined by
\begin{align}\label{def: BC b0}
	BC_{b,0}
	=&\nu^{-1/2}\Big\{ (u_{vp}^{(0)}+u_b^{(0)})(t,x,y)(\chi_b(y)\pa_x\omega_b^{(0)})(t,x,\frac{y}{\nu^{1/2}})\\
	\nonumber
	&+\frac{v_{vp}^{(0)}+v_b^{(0)}}{y}(t,x,y)y\pa_y\big(\chi_b(y)\omega_b^{(0)}(t,x,\frac{y}{\nu^{1/2}})\big) \Big\},
\end{align}
\begin{align}\label{def: BC b1}
	BC_{b,1}=\nu^{-1/2}\big(U_a\cdot\nabla\omega_{a,b}-BC_{b,0}\big).
	\end{align}
	
\begin{remark}
	We stress here that by Lemma \ref{lem: velocity formula} and transformation $y=\nu^{1/2}z$, we observe
\begin{align*}
	\big(\pa_y\Delta_D^{-1}BC_{b,0} \big)_\xi|_{y=0}
	=-\int_0^{+\infty}e^{-|\xi|y} (BC_{b,0})_\xi(t,y)dy
	\sim O(1).
\end{align*}
$BC_{b,1}$ can be analyzed similarly.
\end{remark}

We consider $U\cdot\nabla\omega$ near the point vortex subsequently.
\begin{align*}
	& U_a\cdot\nabla\omega_{a,vp} =
	\Big\{ U_b^{(0)}
	+\nu^{1/2}U_b^{(1)}
	+\frac{\alpha}{\sqrt{\nu t}}\big(\mathcal V^G(\frac{X-X(t)}{\sqrt{\nu t}})-\mathcal V^G(\frac{X-X(t)^\ast}{\sqrt{\nu t}}) \big)\\
	&\qquad+\sum_{k=2,3}\nu^{\frac{k-1}{2}}t^{\f12} \big(\mathcal V^{\Omega_k}(\frac{X-X(t)}{\sqrt{\nu t}},t)-\widetilde{\mathcal V^{\Omega_k}}(\frac{X-X(t)^\ast}{\sqrt{\nu t}},t) \big) \Big\}\\
	&\quad\cdot\nabla \Big\{ \frac{\alpha}{\nu t}G(\frac{X-X(t)}{\sqrt{\nu t}})\chi_{vp}
	+\sum_{k=2,3}(\nu t)^{\frac{k-2}{2}}\Omega_k(\frac{X-X(t)}{\sqrt{\nu t}})\chi_{vp} \Big\}\\
	&= BC_{vp,0}+\nu^{1/2}BC_{vp,1},
	\qquad for \quad y\geq10,
\end{align*}
where $BC_{vp,0}, BC_{vp,1}$ are defined by
\begin{align}\label{def: BC vp0}
	BC_{vp,0}
	=-\frac{\alpha}{\sqrt{\nu t}}\mathcal V^G(\frac{X-X(t)^\ast}{\sqrt{\nu t}})\cdot\nabla \Big\{\frac{\alpha}{\nu t}G(\frac{X-X(t)}{\sqrt{\nu t}})\chi_{vp} \Big\} ,
\end{align}
\begin{align}\label{def: BC vp1}
	BC_{vp,1}
	=\nu^{-1/2}\big(U_a\cdot\nabla\omega_{a,vp}-BC_{vp,0}\big).
\end{align}

\begin{remark}
	We stress here that by \eqref{BS law formulation 2} and integration by parts, we have
	\begin{align*}
	\pa_y\Delta_D^{-1}BC_{vp,0}|_{y=0}
	&\sim \frac{1}{\pi}\int_{\operatorname{supp}\chi_{vp}}
	\frac{\tilde y}{(x-\tilde x)^2+\tilde y^2}
	\frac{\alpha}{\sqrt{\nu t}}\mathcal V^G(\frac{\widetilde X-X(t)^\ast}{\sqrt{\nu t}})\cdot\nabla \Big\{\frac{\alpha}{\nu t}G(\frac{\widetilde X-X(t)}{\sqrt{\nu t}}) \Big\} d\tilde xd\tilde y\\
	&\sim -\frac{1}{\pi}\int_{\operatorname{supp}\chi_{vp}} \nabla\big(\frac{\tilde y}{(x-\tilde x)^2+\tilde y^2} \big)\frac{\alpha^2}{(\nu t)^{3/2}}\mathcal V^G(\frac{\widetilde X-X(t)^\ast}{\sqrt{\nu t}})
	G(\frac{\widetilde X-X(t)}{\sqrt{\nu t}}) 
	d\tilde xd\tilde y\\
	&\leq C\int_{\mathbb R^2}\frac{1}{(\nu t)^{1/2}}
	\left|\mathcal V^G(\eta+\frac{X(t)-X(t)^\ast}{\sqrt{\nu t}})\right|G(\eta)d\eta
	\leq C,
\end{align*}
where we used the transformation $\eta=\frac{\widetilde X-X(t)}{\sqrt{\nu t}}$ and $\mathcal V^G(\eta)\sim\min\{|\eta|,\frac{1}{|\eta|}\}$ in the last line.
$BC_{vp,1}$ can be analyzed similarly by noticing $U_b^{(0)}\sim\nu ^{1/2}$ away from the boundary.
\end{remark}

Summing up, we set 
\begin{align}\label{eq: BC of omega p0}
	\pa_z\omega_b^{(0)}|_{z=0}
	=\big(\pa_y\Delta_D^{-1}BC_{b,0} \big)|_{y=0}
	+\big(\pa_y\Delta_D^{-1}BC_{vp,0} \big)|_{y=0},
\end{align}
\begin{align}\label{eq: BC of omega p1}
	\pa_z\omega_b^{(1)}|_{z=0}
	=\big(\pa_y\Delta_D^{-1}BC_{b,1} \big)|_{y=0}
	+\big(\pa_y\Delta_D^{-1}BC_{vp,1} \big)|_{y=0}
	-\nu^{1/2}|D_x|\omega_b^{(0)}|_{z=0}.
\end{align}

\smallskip

\subsection{Proof of Proposition \ref{prop: est of remainder}}\label{Sec: Proof of Proposition est of remainder}

With $\Omega_i, \omega_b^{(i)}, U_b^{(i)}, U_{vp}^{(i)}, X_i(t)$ defined above, the ultimate approximate solutions are defined by
\begin{align*}
	\omega_a=\omega_{a,b}+\omega_{a,vp},
	\end{align*}
where
\begin{align*}
	\omega_{a,vp}(X,t):=\frac{\alpha}{\nu t}\mathcal W_a^\ast(\frac{X-X(t)}{\sqrt{\nu t}},t),\quad
	\omega_{a,b}(t,x,y):=\chi_b(y)\big(\nu^{-1/2}\omega_b^{(0)}+\omega_b^{(1)}\big)(t,x,\frac{y}{\nu^{1/2}}),
\end{align*}
with
\begin{align*} 
	\mathcal W_a^\ast=\chi_{vp}\mathcal W_a,\qquad
	\mathcal W_a:=\Omega_0+\nu t\Omega_2+\nu^{3/2}t\Omega_3.
\end{align*}

The vortex center $X(t)$ is defined as 
\begin{align*}
	X'(t)=-\frac{\alpha}{\sqrt{\nu t}}\mathcal V^G(\widetilde\eta)+\nu^{1/2}X_1'(t)+\nu X_2'(t),\qquad X(0)=X_0,
\end{align*}
with $X_1(t), X_2(t)$ defined before.  

We define the remainder ${\mathcal R}_{vp}$ as
\begin{align*}
	{\mathcal R}_{vp}:=L_{vp}(\mathcal W_a^\ast,\mathcal V^{\mathcal W_a^\ast},X(t),U_{a,b}).
\end{align*}
The difference between ${\mathcal R}_{vp,2}$ and ${\mathcal R}_{vp}$ are terms involving derivatives on $\chi_{vp}$, which can be estimated through the exponential decay at infinity of $\Omega_0\sim\Omega_3$. Thus, by Lemma \ref{lem: R^{(2)}_{vp}}, we deduce that the remainder ${\mathcal R}_{vp}$ satisfies for any $\lambda\in(0,1)$,
\begin{align}\label{est of Rvp ultimate}
	|{\mathcal R}_{vp}(t,\eta)|
	\leq C_\lambda \nu te^{-\lambda|\eta|^2/4}.
\end{align}

By the same argument, the remainder $R_b:=L_b(\omega_{a,b},U_{a,b},U_{a,vp})$ satisfies 
\begin{align}\label{est of Rb ultimate}
	\sum_{i=1}^8\left\| e^{C'|\xi|}\left\|e^{\frac{C'y^2}{\nu t}}(\pa_x^i(y\pa_y)^j(1,x)R_b)_\xi(t,y)\right\|_{L^1_y}\right\|_{L^1_\xi\cap L^2_\xi} \leq C\nu,
\end{align}
when $T$ small enough.

It is easy to prove  \eqref{est: X(t)-X0} by the definition of $X(t)$ and Lemma \ref{lem: ODE est}. 
By now, we finish the proof of the Proposition \ref{prop: est of remainder}.
\ef

\begin{remark}
	Here we stress that the definitions of $X(t)$ are different in Section \ref{sec: The first order expansion}, \ref{sec: The second order}. The definitions of $\omega_b^{(0)},\omega_b^{(1)}, U_b^{(0)}, U_b^{(1)}, U_{vp}^{(0)}, U_{vp}^{(1)}, \Omega_2,\Omega_3$ are all depend on preceding terms in the expansion of $X'(t)$ and are well defined therefore.
\end{remark}

%

\subsection{Estimates of  the approximate solution}

For later use, we enumerate several estimates for the approximate solutions below.
\begin{proposition}\label{prop: app estimates}
	There exist $C', T>0$ such that for $0< t\leq T$, $\lambda\in(0,1), 2<p\leq+\infty$, there holds that
	\begin{align}\label{est: app1}
		\sum_{i+j\leq8}\left\| e^{C'|\xi|}\left\|e^{\frac{C'y^2}{\nu t}}\big((1,x)\pa_x^i(y\pa_y)^j\omega_{a,b}\big)_\xi(t,y)\right\|_{L^1_y}\right\|_{L^1_\xi\cap L^2_\xi} \leq C,
	\end{align}
	\begin{align}\label{est: app2}
		\sum_{i\leq8}\left|\nabla_\eta^i\mathcal W_a(\eta,t)\right|
		\leq C_\lambda e^{-\lambda|\eta|^2/4},
		\qquad
		\int_{\mathbb R^2} \mathcal W_a(t,\eta) d\eta=1,
	\end{align}
	\begin{align}\label{est: app3}
		\sum_{i+j\leq8}\|\pa_x^i\pa_y^jU_{a,b}\|_{L^\infty(y\geq\f14)}\leq C\nu^{1/2},
		\qquad
		|U_{a,vp}(X,t)|
		\leq \frac{C}{|X-X(t)|}+O_{L^p}(e^{-\frac{C_1}{\nu t}}),
	\end{align}
	\begin{align}\label{est: app4}
		\sum_{i+j\leq8}\left\| e^{C'|\xi|}\sup_{0<y<5}\left|\big(\pa_x^i(y\pa_y)^j (u_a,\frac{v_a}{y})\big)_\xi(t,y)\right|\right\|_{L^1_\xi}\leq C,
	\end{align}
	where we use the notation $O_{L^p}(e^{-\frac{C_1}{\nu t}})$ to denote some function $f$ satisfying $\|f\|_{L^p}\leq Ce^{-\frac{C_1}{\nu t}}$.
\end{proposition}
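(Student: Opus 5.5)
The four estimates will be checked one at a time, using the explicit structure of $\omega_a=\omega_{a,b}+\omega_{a,vp}$ together with the bounds on the profiles proved earlier.

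\emph{Estimate \eqref{est: app1}.} Since $\omega_{a,b}=\chi_b(y)(\nu^{-1/2}\omega_b^{(0)}+\omega_b^{(1)})(t,x,y/\nu^{1/2})$, the change of variables $y=\nu^{1/2}z$ turns the $L^1_y$ norm into $\nu^{1/2}\|\cdot\|_{L^1_z}$. This exactly cancels the prefactor $\nu^{-1/2}$. The weight becomes $e^{C'z^2/t}$, and $y\pa_y=z\pa_z$. Derivatives falling on $\chi_b$ are supported in $2\le y\le3$, i.e.\ $z\sim\nu^{-1/2}$, where the Gaussian weight of the profiles makes them exponentially small. So \eqref{est: app1} reduces to the weighted analytic bound on $\omega_b^{(0)},\omega_b^{(1)}$ with weights $e^{C'|\xi|}e^{C'z^2/t}$ and $(1,x)$, which I take from Proposition \ref{prop: WP of boundary layer}. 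The constant $C'$ is chosen smaller than the one provided there.

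\emph{Estimate \eqref{est: app2}.} The pointwise bound follows because $\Omega_0=G$ and $\Omega_2,\Omega_3\in\mathcal Z$. By definition of $\mathcal Z$, $e^{|\eta|^2/4}\Omega_k$ and all its derivatives grow at most polynomially, so they are bounded by $C_\lambda e^{-\lambda|\eta|^2/4}$ for every $\lambda<1$. The $t$-dependence of $\Omega_3$ through $t^{1/2}B$ stays uniform for $t\le T$. For the mass identity, $\int G=1$. Moreover $\Omega_2\in\mathcal Y_2$, and $\Omega_3\in\mathcal Y_2\oplus\mathcal Y_3$ has no radial component, so its angular integral vanishes and it has zero mean.

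\emph{Estimate \eqref{est: app3}.} For $U_{a,b}$ I argue as in Lemma \ref{est: U_b^0}. On $y\ge1/4$ the source lies in $\tilde y=\nu^{1/2}\tilde z$ and is concentrated near $\tilde y=0$. The kernel difference $K(X-\tilde X)-K(X-\tilde X^*)$ vanishes at $\tilde y=0$, which gives a factor $\tilde y=\nu^{1/2}\tilde z$. The difficulty is that $\chi_b$ extends up to $y=3$, so the kernel is not smooth on $y\ge1/4$. This is handled by the Gaussian localization: the part of $\omega_b$ with $\tilde z\gtrsim\nu^{-1/2}$ is $O(e^{-c/(\nu t)})$. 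On the remaining part $|X-\tilde X|\ge1/8$, and all derivatives of the kernel are bounded there. For $U_{a,vp}$, write $\omega_{a,vp}$ as $\frac{\alpha}{\nu t}\mathcal W_a$ minus the tail $(1-\chi_{vp})$. The full-plane Biot--Savart law of the scaled profile equals $\frac{\alpha}{\sqrt{\nu t}}\mathcal V^{\mathcal W_a}(\eta)$. Since $|\mathcal V^{\mathcal W_a}(\eta)|\lesssim|\eta|^{-1}$ (the mass is finite and the moments decay), this is bounded by $C/|X-X(t)|$. The reflected part is bounded, because $X(t)^*$ is at distance about $40$. The truncation tail has $L^1\cap L^\infty$ norm $O(e^{-C_1/(\nu t)})$, so by Hardy--Littlewood--Sobolev/Young its velocity is $O_{L^p}(e^{-C_1/(\nu t)})$ for $2<p\le\infty$.

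\emph{Estimate \eqref{est: app4}.} This is the step I expect to be hardest. It needs Fourier-analytic control of $u_a$ and $v_a/y$ on $0<y<5$, uniformly in $\nu$. For the $U_{a,b}$ part I use the representation in Lemma \ref{lem: derivation of velocity formula}:
\[
(u)_\xi(y)=\int K_\xi(y,\tilde y)\,\omega_\xi(\tilde y)\,d\tilde y,\qquad K_\xi=\tfrac12\big(e^{-|\xi||y-\tilde y|}\mp e^{-|\xi|(y+\tilde y)}\big).
\]
From this, $(v/y)_\xi$ has the kernel $(1-e^{-2|\xi|\min(y,\tilde y)})/(2\min(y,\tilde y))$, which is bounded by $|\xi|$ and can be absorbed by slightly reducing $C'$. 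Then $e^{C'|\xi|}$ is paid by the analyticity in \eqref{est: app1}. The operators $\pa_x^i$ pass to $\xi^i$, and $y\pa_y$ passes to the vorticity after integration by parts. For the $U_{a,vp}$ part, the source sits at distance at least $14$ from the strip $y<5$. Its velocity is harmonic there with Fourier transform decaying like $e^{-14|\xi|}$, apart from the exponentially small tail. This dominates $e^{C'|\xi|}$ once $C'<14$. The term $v/y$ is handled by $v|_{y=0}=0$ and the mean value theorem.
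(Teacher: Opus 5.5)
Your proposal is correct and follows essentially the paper's proof. The pieces line up as follows:
\begin{itemize}
\item \eqref{est: app1} comes from rescaling Proposition~\ref{prop: WP of boundary layer}.
\item \eqref{est: app2} comes from $\Omega_k\in\mathcal Z$ and the vanishing mean of the $\mathcal Y_2\oplus\mathcal Y_3$ components.
\item For \eqref{est: app3}, you use the same full-plane/truncation/reflection split for $U_{a,vp}$, and the same boundary cancellation of the Dirichlet kernel for $U_{a,b}$. You exploit that cancellation in physical space, as in Lemma~\ref{est: U_b^0}; the paper does it on the Fourier side through $1-e^{-2|\xi|z}\le 2|\xi|z$.
\item For \eqref{est: app4}, you use the Fourier-side Biot--Savart formula, plus the $e^{-c|\xi|}$ decay from the separation of $\operatorname{supp}\chi_{vp}$ from $\{y<5\}$. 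That formula is Lemma~\ref{lem: velocity formula}, not Lemma~\ref{lem: derivation of velocity formula} as you cite.
\end{itemize}

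One point needs tightening: the reflected part of $U_{a,vp}$. Being merely bounded does not give the estimate as stated globally in $X$. However, for $X\in\mathbb R^2_+$ and $Y\in\operatorname{supp}\chi_{vp}$ one has $|X-Y^\ast|\ge\max(|X-Y|,14)\ge\frac23|X-X(t)|$, so this term is in fact $\le C/|X-X(t)|$. Also, the separation used in \eqref{est: app4} is at least $9$, not $14$; this is harmless.
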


The proof of Proposition \ref{prop: app estimates} is postponed in the Appendix \ref{sec: Estimates for approximate solutions}.

\section{Proof of zero-viscosity limit}

This section is devoted to the proof of  Theorem \ref{thm: main result vorticity convergence}.
\subsection{The remainder system}

Let
\begin{align}\label{def: omega R, U R}
	\omega_R=\omega-\omega_a,\qquad
	U_R=(u_R, v_R):=U-U_a=BS_{\mathbb R^2_+}[\omega_R].
\end{align}
where $\omega_a=\omega_{a,b}+\omega_{a,vp}$ is defined by \eqref{def:appp} and $U_a=BS_{\mathbb R^2_+}[\omega_a]$.
Therefore, a direct computation yields the system of $(\omega_R, U_R)$:
\begin{align}\label{eq: omega R, U R}
	\left\{
	\begin{aligned}
		&\pa_t\omega_R-\nu\Delta\omega_R+U_a\cdot\nabla\omega_R+U_R\cdot\nabla\omega_a+U_R\cdot\nabla\omega_R=-R_b-R_{vp},\\
		&\omega_R|_{t=0}=0,\qquad\qquad 
		\nu(\pa_y+|D_x|)\omega_R|_{y=0}=B_R,
	\end{aligned}
	\right.
\end{align}
where $R_b:=L_b(\omega_{a,b},U_{a,b},U_{a,vp})$ and 
\begin{align}\label{def: Rvp original v}
	R_{vp}:=\pa_t\omega_{a,vp}-\nu\Delta\omega_{a,vp}
	+(U_{a,b}+U_{a,vp})\cdot\nabla\omega_{a,vp}.
\end{align}
Here, the boundary condition $B_R$ is defined by 
\begin{align}\label{def: BR}
B_R:=\pa_y\Delta_D^{-1}\big(U_a\cdot\nabla\omega_R+U_R\cdot\nabla\omega_a+U_R\cdot\nabla\omega_R \big)|_{y=0}-\nu|D_x|\omega_b^{(1)}|_{z=0} .
\end{align}

Obviously, our construction implies
\begin{align}\label{support property of app and rem}
	\operatorname{supp}\omega_{a,b}, \operatorname{supp} R_b \subseteq \operatorname{supp} \chi_b,\qquad
	\operatorname{supp}\omega_{a,vp}, \operatorname{supp}R_{vp} \subseteq \operatorname{supp} \chi_{vp}.
\end{align}

\subsection{Energy functionals}
This subsection introduces the energy functionals used to prove Theorem \ref{thm: main result vorticity convergence}. We partition the half-space into three parts: near the point vortex, a middle region, and near the boundary. Correspondingly, we define three energy functionals, $E_{vp}$, $E_{m}$ and $E_{b}$.

\smallskip
 
\noindent\textbullet\ \ \textbf{Energy functional near the point vortex: $E_{vp}$}\smallskip

It is beneficial to introduce the following self-similar transformation to study the remainder near the point vortex:
\begin{align}\label{def: ssv rem}
	\chi_{vp}(x,y)\omega_R(t,x,y)
	=\frac{\alpha}{\nu t}\mathcal W_R(\eta,t),\qquad \eta:=\frac{(x,y)-X(t)}{(\nu t)^{1/2}},
\end{align}
where $X(t)$ is defined by \eqref{def: X(t) ultimate}. Due to \eqref{est: X(t)-X0} and $\operatorname{supp}\chi_{vp}$, it holds that
\begin{align}\label{supp of WR}
 	\operatorname{supp}\mathcal W_R\subseteq \{|\eta|\leq 7(\nu t)^{-1/2} \}.
 \end{align}
 
 In order to control vorticity near the point vortex, we need the following weight
 \begin{align}\label{def: weight p}
 	p(\eta,t)=
 	\left\{
 	\begin{aligned}
 		&e^{|\eta|^2/4},\qquad\qquad \uppercase\expandafter {\romannumeral1}  =\{ |\eta|\leq 2a_0(\nu t)^{-1/4} \},\\
 		&e^{a_0^2(\nu t)^{-1/2}},\qquad \uppercase\expandafter {\romannumeral2}  = \{ 2a_0(\nu t)^{-1/4}\leq |\eta|\leq 7(\nu t)^{-1/2} \} ,
 	\end{aligned}
 	\right.
 \end{align}
 here $a_0$ is sufficiently small and will be determined later. 
 
 The following energy functional is to describe vorticity near the point vortex
 \begin{align}\label{def: Evp(t)}
 	E_{vp}(t):=\sup_{0<s<t}\left\|p(\cdot,s)^{\f12}\mathcal W_R(\cdot,s)\right\|_{L^2}
 	+\big(\int_0^t\frac{D_{vp}(s)^2}{s}ds \big)^{1/2},
 \end{align}
 where 
 \begin{align*}
 	D_{vp}(t)=\left\|p(\cdot,t)^{\f12}\nabla_\eta\mathcal W_R(\cdot,t)\right\|_{L^2}.
 \end{align*}

\medskip

\noindent\textbullet\ \ \textbf{Energy functional in middle region: $E_m$}\smallskip

To define the energy functional on $\operatorname{supp}\chi_m$, we have to introduce the following functions.
  The smooth function $\theta(x,y)$ is defined as 
  \begin{align}\label{def: theta}
  	\theta(x,y)=
  	\left\{
  	\begin{aligned}
  		&1,\qquad\quad|(x,y)-(0,20)|\leq4 \ \text{or}\ y\leq\frac{3}{8},\\
  		&\frac{1}{4},\qquad\quad|(x,y)-(0,20)|\geq5 \ \text{and}\ y\geq\frac{1}{2},
  	\end{aligned}
  	\right.
  \end{align}
  and $\Gamma(t):=\{(x,y)\in\mathbb R^2_+: 1-\gamma t-\theta(x,y)\geq0\}, \gamma$ is a large quantity determined later.
  
  The following weight functions are of great use below
  \begin{align}\label{def: psi}
  	\psi(x,y)=y^2(1+|x|),\quad
  	\Psi_{t_0}(t,x,y)=\frac{20\eps_0}{\nu(t+t_0)}\big(1-\gamma t-\theta(x,y)\big)_+^2,\quad
  	\Psi:=\lim_{t_0\rightarrow0}\Psi_{t_0},
  \end{align}
  where $\eps_0\ll1$ is a constant determined later in the proof.
   Then, for $0\leq t\leq \frac{1}{4\gamma}, y\geq\frac{1}{2}$ and $|(x,y)-(0,20)|\geq5$, it holds that $\Psi_{t_0}(t,x,y)\geq\frac{5\eps_0}{t+t_0}$.
   
   The following energy functional is to describe the vorticity in the middle region
   \begin{align}\label{def: Em(t)}
   	E_m(t):=\sup_{0<s<t}\left\|e^\Psi\psi\chi_m\omega_R(s)\right\|_{L^2}
   	+\nu^{1/2}\big(\int_0^t D_m(s)^2ds\big)^{1/2}
   	+e^{\frac{10\eps_0}{\nu t}}\sup_{0< s< t}\|\omega_R(s)\|_{H^3(\frac{7}{8}\leq y\leq 4)} ,
   \end{align}
   where
   \begin{align*}
   	D_m(t):=\left\|e^\Psi\psi\nabla(\chi_m\omega_R)(t)\right\|_{L^2},
   \end{align*}
   and the last term in $E_m(t)$ captures the high order estimates within the middle region.

\smallskip

\noindent\textbullet\ \ \textbf{Energy functional near the boundary: $E_{b}$}\smallskip

 To control the vorticity near the boundary, we introduce the following norms
\begin{align*}
	\| f\|_{\mu,t}
	=\int_0^{1+\mu}e^{\eps_0(1+\mu)\frac{y^2}{\nu t}}|f(y)|dy,
	\qquad
	\|f\|_{Y^k_{\mu,t}}=\left\|\|e^{\eps_0(1+\mu-y)_+|\xi|}f_\xi\|_{\mu,t}\right\|_{L^k_\xi},\ k=1,2,
\end{align*}
\begin{align*}
	\|f\|_{Y_k(t)}
	=\sup_{\mu<\mu_0-\gamma t}
	\Big(\sum_{i+j\leq1}\|\pa_x^i(y\pa_y)^jf\|_{Y^k_{\mu,t}}
	+(\mu_0-\mu-\gamma t)^\beta\sum_{i+j=2}\|\pa_x^i(y\pa_y)^jf\|_{Y^k_{\mu,t}}\Big),\quad k=1,2.
\end{align*}
Here $0<\mu\leq\mu_0=\frac{1}{10}$. Throughout the paper, we fix $\beta\in(\frac{1}{2},1)$ and suppose $t\in(0,\frac{1}{4\gamma})$.

The following energy functional is to describe the vorticity near the boundary
\begin{align}\label{def: Eb(t)}
	E_b(t):=\sup_{0<s<t}\|(1,x)\omega_R(s)\|_{Y_1(s)\cap Y_2(s)}.
\end{align}

Now, we are in a position to define the energy functional:
\begin{align}\label{def: E(t)}
	E(t):=(\nu t)^{-1}E_{vp}(t)
	+E_m(t)
	+\nu^{-1} E_b(t),\quad
	D(t):=D_m(t)+D_{vp}(t). 
\end{align}

\subsection{Energy estimates}

For $E(t)$, we have the following estimates.

\begin{proposition}\label{prop: est of Evp(t)}
	There exist $T,C,C_1>0$ such that for $0< t\leq T$,
	\begin{align*}
		E_{vp}(t)
		\leq C\nu t\big(1+E(t)\big)^3.
	\end{align*}
\end{proposition}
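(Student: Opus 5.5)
We derive the equation for $\mathcal W_R$ and run a weighted $L^2$ energy estimate with the weight $p(\eta,t)$ of \eqref{def: weight p}, in the spirit of Gallay's argument in \cite{Gallay 2}.

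\textbf{Step 1: the error equation.} Multiply \eqref{eq: omega R, U R} by $\chi_{vp}$ and pass to the self-similar variables \eqref{def: ssv rem}. This gives
\begin{align*}
t\pa_t\mathcal W_R-\mathcal L\mathcal W_R+\frac{\alpha}{\nu}\big(\mathcal V^{\mathcal W_a}\cdot\nabla_\eta\mathcal W_R+\mathcal V^{\mathcal W_R}\cdot\nabla_\eta\mathcal W_a\big)
=-\mathcal R_{vp}+\mathcal N+\mathcal B+\mathcal C .
\end{align*}
The terms on the right are:
\begin{itemize}
\item $\mathcal N=-\frac{\alpha}{\nu}\mathcal V^{\mathcal W_R}\cdot\nabla_\eta\mathcal W_R$ (plus the reflected-image parts), the nonlinear term;
\item $\mathcal B=-\sqrt{t/\nu}\,BS_{\mathbb R^2_+}[(1-\chi_{vp})\omega_R]\cdot\nabla_\eta(\mathcal W_a+\mathcal W_R)$, collecting the boundary and middle interactions;
\item $\mathcal C$, the commutator terms from $\nabla\chi_{vp}$, which live in $5\le|X-X_0|\le 6$.
\end{itemize}

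\textbf{Step 2: the energy identity.} Take the $L^2$ inner product with $p\,\mathcal W_R$. Three facts handle the linear part.
\begin{itemize}
\item In region I, $p=e^{|\eta|^2/4}$. There $-\mathcal L$ is self-adjoint and nonnegative on $\mathcal Y$, which produces $D_{vp}^2$ (up to a lower-order term). The operator $\Lambda$ is skew-adjoint by Proposition \ref{prop: properties of Lambda}, so the leading $\alpha/\nu$ part $\Lambda$ drops out.
\item The remaining $\alpha/\nu$ contributions are $(\mathcal V^{\mathcal W_a}-\mathcal V^G)\cdot\nabla\mathcal W_R$ and $\mathcal V^{\mathcal W_R}\cdot\nabla(\mathcal W_a-G)$, which carry a factor $\nu t$. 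Together with the image velocity, controlled by Lemma \ref{lem: 1/nu of VG differ}, they give $O(t)\,\|p^{1/2}\mathcal W_R\|(\|p^{1/2}\mathcal W_R\|+D_{vp})$.
\item In region II, $p$ is constant, so no transport-of-weight term arises. The mismatch at the interface $|\eta|\sim a_0(\nu t)^{-1/4}$ is controlled because $\mathcal W_a\lesssim e^{-\lambda|\eta|^2/4}$ by \eqref{est: app2}. The time derivative of $p$ in region II has a good sign, and $a_0$ is chosen small for this.
\end{itemize}
Dividing by $t$ and integrating in time, Gronwall (with $T$ small) yields
\begin{align*}
E_{vp}(t)^2\lesssim\int_0^t\frac{1}{s}\,\big|\langle p\,\mathcal W_R,\ \text{RHS}\rangle\big|\,ds .
\end{align*}

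\textbf{Step 3: the forcing and cubic terms.}
\begin{itemize}
\item By \eqref{est of R}, $\mathcal R_{vp}$ contributes $\nu t\,E_{vp}$.
\item The commutator $\mathcal C$ is supported where $|\eta|\gtrsim(\nu t)^{-1/2}$ and involves $\omega_R$ on $\operatorname{supp}\chi_m$. We bound it using $E_m$: the weight $e^{\Psi}\ge e^{5\eps_0/t}$ beats $p\le e^{a_0^2(\nu t)^{-1/2}}$ if $a_0$ is small, giving $\nu t\,E_m E_{vp}$.
\item The nonlinear term $\mathcal N$ is bounded by $\frac{1}{\nu}\|\mathcal V^{\mathcal W_R}\|_{L^\infty}D_{vp}\|p^{1/2}\mathcal W_R\|$. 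Using $\|\mathcal V^{f}\|_{L^\infty}\lesssim\|f\|_{L^{4/3}\cap L^4}\lesssim\|p^{1/2}f\|+\|p^{1/2}\nabla f\|$, this gives $(\nu t)^{-1}E_{vp}^3\lesssim \nu t\,E^3$ after the normalization $E\ge(\nu t)^{-1}E_{vp}$.
\end{itemize}

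\textbf{Step 4: the boundary interaction (main obstacle).} The key term is $\mathcal B$, which requires $\sqrt{t/\nu}\,\|BS_{\mathbb R^2_+}[\chi_b\omega_R]\|_{L^\infty(\operatorname{supp}\chi_{vp})}\lesssim \nu t\,E$. I would use the Fourier representation of Lemma \ref{lem: derivation of velocity formula}: at height $y\ge 14$ the kernel carries the factor $e^{-10|\xi|}(1-e^{-2|\xi|\tilde y})$, which contributes a factor $\tilde y$ since $1-e^{-2|\xi|\tilde y}\lesssim|\xi|\tilde y$. The Gaussian weight $e^{\eps_0 \tilde y^2/(\nu t)}$ in $Y_1$ then gives
\begin{align*}
\int \tilde y\,|\omega_\xi|\,d\tilde y\lesssim(\nu t)^{1/2}\,\|\omega\|_{Y^1_{\mu,t}} ,
\end{align*}
and hence $\|BS[\chi_b\omega_R]\|_{L^\infty}\lesssim(\nu t)^{1/2}E_b\le \nu(\nu t)^{1/2}E$. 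Multiplying by $\sqrt{t/\nu}$ gives $\nu t\,E$, so this term contributes $\nu t\,E(1+E)$. The middle-region part of $\omega_R$ is handled similarly, using the exponential smallness $e^{-\eps_0/(\nu t)}$ built into $E_m$.

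Finally, one integrates in $s$ with the weight $\frac{1}{s}$. This is harmless because every term above carries a factor $s$ or $D_{vp}$, and $\int_0^t D_{vp}^2/s\,ds$ is exactly what appears in $E_{vp}$. Combining Steps 2--4 gives $E_{vp}(t)\le C\nu t(1+E(t))^3$.
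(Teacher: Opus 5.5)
There is a genuine gap. Your overall architecture matches the paper's: a weighted $L^2$ estimate with $p$, skew-adjointness of $\Lambda$, exponentially small cut-off terms via $E_m$, and the boundary interaction via the Gaussian weight in $E_b$. But you never control the zero moment $\int_{\mathbb R^2}\mathcal W_R\,d\eta$, and without it the \emph{linear} image term
\[
F_3=\frac{\alpha}{\nu}\,\widetilde{\mathcal V^{\mathcal W_R}}(\eta+\widetilde\eta,t)\cdot\nabla_\eta\mathcal W_a(\eta,t)
\]
does not close.

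\textbf{Why your cited tools do not cover $F_3$.} Lemma \ref{lem: 1/nu of VG differ}, which you cite for ``the image velocity'', only concerns $\mathcal V^G(\eta+\widetilde\eta)-\mathcal V^G(\widetilde\eta)$, the image of the Gaussian, whose monopole is cancelled by the choice of $X_0'(t)$. Your cubic bound on $\mathcal N$ does not cover $F_3$ either, since $F_3$ is linear in $\mathcal W_R$.

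\textbf{Why $F_3$ is not perturbative without a moment bound.} Since $|\widetilde\eta|\ge 38(\nu t)^{-1/2}$ and $\operatorname{supp}\mathcal W_R\subseteq\{|\eta|\le 7(\nu t)^{-1/2}\}$, the image velocity of $\mathcal W_R$ at the core is dominated by its monopole, of size $\sim(\nu t)^{1/2}|\int\mathcal W_R\,d\eta|$. Hence $|F_3|\sim\sqrt{t/\nu}\,|\int\mathcal W_R\,d\eta|\,|\nabla_\eta\mathcal W_a|$. With only the trivial bound $|\int\mathcal W_R\,d\eta|\le C\|p^{1/2}\mathcal W_R\|_{L^2}$, this can add $C\sqrt{t/\nu}\,\|p^{1/2}\mathcal W_R\|_{L^2}^2$ to $t\frac{d}{dt}\|p^{1/2}\mathcal W_R\|_{L^2}^2$. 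That gives a Gronwall factor $e^{C\sqrt{t/\nu}}$. Treating it instead as a source, using $\|p^{1/2}\mathcal W_R\|_{L^2}\le\nu tE$, only yields $E_{vp}\lesssim\nu^{1/2}t^{3/2}E$, which is not $O(\nu t)$ uniformly in $\nu$.

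\textbf{The missing ingredient} is conservation of circulation near the vortex, Lemma \ref{lem: est of zero moment}. The quantity $f(t)=\int\chi_{vp}\omega$ satisfies $f'=\int(\nu\Delta\chi_{vp}+U\cdot\nabla\chi_{vp})\omega$, which lives on $\operatorname{supp}\nabla\chi_{vp}$. There $\omega_a$, and through $E_m$ also $\omega_R$, are $O(e^{-C_1/(\nu t)})$. Together with $\int\mathcal W_a\,d\eta=1$, this gives $|\int\mathcal W_R\,d\eta|\le C(1+E)^2e^{-C_1/(\nu t)}$. Lemma \ref{lem: velocity VR pointwise} then upgrades the far-field decay of $\mathcal V^{\mathcal W_R}$ to $|\eta|^{-2}$. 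The image velocity becomes $O(\nu t)\big(\|p^{1/2}\mathcal W_R\|_{L^2}+D_{vp}\big)$ up to exponentially small terms, and $F_3$ is an $O(t)$ perturbation; this is the paper's $I_5$.

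\textbf{Dissipation.} The paper also uses this moment bound to get coercivity from Lemma \ref{lem: dissi term vp} applied to $\zeta_3\mathcal W_R$. There your nonnegativity claim could be salvaged by keeping only a fraction $\delta<1$ of $\|\nabla_\eta w\|_{\mathcal Y}^2-\|w\|_{\mathcal Y}^2\ge 0$. The resulting growth term $2\delta\|w\|^2$ stays below the $t^2$ forcing rate. Nothing of this kind rescues $F_3$, however.

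\textbf{Boundary interaction.} Your Step 4 is adequate for the inequality as stated. The paper instead uses the pointwise-in-$y$ bound in Proposition \ref{prop: est of Eb(t)} to get $(\nu t)^{3/2}$ there, an extra factor of $t$.
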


\begin{proposition}\label{prop: est of Em(t)}
	There exist $T,C,C_1>0$ such that for $0< t\leq T$,
	\begin{align*}
		E_m(t)
		\leq Ct^{1/2}\big(1+E(t)\big)^3,
	\end{align*}
	and
	\begin{align*}
		e^{\frac{12\eps_0}{\nu t}}\sup_{[0,t]}\|\omega_R(s)\|_{H^3(\frac{7}{8}\leq y\leq 4)}
		\leq C\big(1+E(t)\big)^{8}.
	\end{align*}
\end{proposition}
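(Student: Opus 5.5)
The plan is a weighted energy estimate for $\chi_m\omega_R$ in the middle region, followed by a separate local higher-regularity estimate for the strip $\frac78\le y\le4$.

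\textbf{Weighted $L^2$ estimate.} Multiply \eqref{eq: omega R, U R} by $\chi_m$. Since $\operatorname{supp}\omega_a$, $R_b$, $R_{vp}$ lie in $\operatorname{supp}\chi_b\cup\operatorname{supp}\chi_{vp}$ by \eqref{support property of app and rem}, the equation for $\chi_m\omega_R$ has the following forcing only:
\begin{itemize}
\item commutators $[\chi_m,\nu\Delta]\omega_R$ and $(U_a+U_R)\cdot\nabla\chi_m\,\omega_R$;
\item the terms $\chi_m(R_b+R_{vp}+U_R\cdot\nabla\omega_a)$, restricted to the overlap of supports.
\end{itemize}
We test against $e^{2\Psi_{t_0}}\psi^2\chi_m\omega_R$ and let $t_0\to0$. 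The key structural fact is
\[
\partial_t\Psi_{t_0}+\frac{\nu}{\ldots}|\nabla\Psi_{t_0}|^2\le0
\]
for $\eps_0$ small and $\gamma$ large. This holds because $\Psi=\frac{20\eps_0}{\nu(t+t_0)}(1-\gamma t-\theta)_+^2$, so $-\partial_t\Psi$ contains both $\Psi/(t+t_0)$ and a $\gamma$-term, and these dominate $\nu|\nabla\Psi|^2\sim\eps_0^2\nu^{-1}(t+t_0)^{-2}(\cdot)^2|\nabla\theta|^2$. The transport term $U_a\cdot\nabla\Psi$ is absorbed by the same $\gamma$-term: on $\operatorname{supp}\nabla\theta$ we have $|U_a|\lesssim1$ by \eqref{est: app3} (we are away from $X(t)$), which gives a bound $\lesssim\Psi\cdot\gamma^{-1}\cdot(\text{gain})$. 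The growth of $\psi=y^2(1+|x|)$ under transport is handled by $|U_a|\lesssim1$ and $|\nabla\psi|/\psi\lesssim1$ on $\operatorname{supp}\chi_m$. The nonlinear term $U_R\cdot\nabla\omega_R$ is treated by integrating by parts onto the weight. Here we need $\|U_R\|_{L^\infty(\operatorname{supp}\chi_m)}\lesssim E(t)$, which follows from the Biot--Savart law together with the three pieces of $E$:
\begin{itemize}
\item near the vortex we use $E_{vp}\le(\nu t)E$;
\item near the boundary we use $E_b\le\nu E$;
\item the middle part is controlled directly.
\end{itemize}
The result is a Gronwall-type inequality
\[
\frac{d}{dt}\|e^\Psi\psi\chi_m\omega_R\|^2+\nu D_m^2\lesssim(1+E)^2\|\cdot\|^2+(1+E)^3\cdot(\text{source}).
\]

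\textbf{Size of the sources.} The boundary commutators live on $\frac14\le y\le\frac38$, where $\theta=1$ and hence $\Psi=0$. There $\omega_R$ is controlled by $E_b$, so these terms are $O(\nu E)$. The vortex-side commutators live on $3\le|X-X_0|\le4$, where $\Psi=0$ again. There $\omega_R$ is controlled by $E_{vp}$ using $\mathcal W_R$ at $|\eta|\sim(\nu t)^{-1/2}$, which lies in region II of the weight $p$. This gives the factor $e^{-a_0^2(\nu t)^{-1/2}/2}$, which is tiny. The overlaps involving $R_{vp}$ and $U_R\cdot\nabla\omega_a$ are Gaussian-small in the same way. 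After time integration all sources give $\lesssim t^{1/2}(1+E)^3$, where the $t^{1/2}$ comes from Cauchy--Schwarz in time. This proves the first inequality for the first two terms of $E_m$.

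\textbf{$H^3$ estimate on $\frac78\le y\le4$.} For the second claim we localize to $\frac78\le y\le4$ with a cutoff equal to $1$ there and supported in $\frac12<y<4.5$ away from the vortex. On that support we have $\theta=\frac14$ once $|X-X_0|\ge5$, so $\Psi\ge\frac{5\eps_0}{\nu t}\cdot4\cdot(\frac34-\gamma t)^2\ge\frac{13\eps_0}{\nu t}$ for $t$ small. The weighted $L^2$ bound therefore already gives $e^{13\eps_0/(\nu t)}\|\omega_R\|_{L^2}\lesssim(1+E)^3$. We then bootstrap to $H^3$ by parabolic energy estimates for $\partial^k\omega_R$, $k\le3$, with nested cutoffs. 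Each derivative costs a factor $\nu^{-1/2}t^{-1/2}$ or $\nu^{-1}$ through interpolation, and these powers are absorbed by giving up $e^{\eps_0/(\nu t)}$ from the exponential. The nonlinearity $U_R\cdot\nabla\omega_R$ at the $H^3$ level uses the previous $E_m$ norm, which is why the power grows up to $(1+E)^8$.

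\textbf{Main obstacle.} The expected main difficulty is the middle-region $L^\infty$ bound on $U_R$ coming from the boundary piece. Through Biot--Savart it requires summing the $Y_1$ norm of $\chi_b\omega_R$ with the $\psi$-weighted $L^2$ norm of $\chi_m\omega_R$. The weight $\psi$ must be nonzero for this sum to make sense, which is where $y\ge\frac14$ on $\operatorname{supp}\chi_m$ is used. A second point to check is that the $\gamma$-absorption is compatible with $t\le\frac1{4\gamma}$.
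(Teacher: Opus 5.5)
Your overall route is the paper's. You test the $\chi_m$-localized equation against $e^{2\Psi_{t_0}}\psi^2\chi_m\omega_R$ and let the two good terms in $-\pa_t\Psi_{t_0}$ absorb $\nu|\nabla\Psi_{t_0}|^2$ and $U_a\cdot\nabla\Psi_{t_0}$. You put the commutators on $\{\Psi_{t_0}=0\}$, get $t^{1/2}$ by integrating a bounded source, and then run local parabolic estimates with nested cutoffs as in Lemma \ref{lem: sobolev 1}. There are, however, two gaps.

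\textbf{The velocity bound is not available.} Your treatment of $U_R\cdot\nabla(\chi_m\omega_R)$ rests on the claim that $\|U_R\|_{L^\infty(\operatorname{supp}\chi_m)}\lesssim E(t)$ follows from Biot--Savart and the three pieces of $E$. It does not. In $\operatorname{supp}\chi_m\cap\{y\ge4\}$, which contains the outer annulus of the vortex cell, $E$ gives at each fixed time only weighted $L^2$ control of $\omega_R$; gradients enter only through the time-integrated $D_m$, $D_{vp}$. $L^2$ vorticity does not give a bounded Biot--Savart velocity, since $|X|^{-1}\notin L^2_{loc}(\mathbb R^2)$. You need local $L^p$ control with $p>2$, which here only comes from interpolating with the dissipation. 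This is why Lemma \ref{lem: velocity estimates 2}(1) reads $\|U_R\|_{L^\infty(\chi_m)}\le C\nu E+Ce^{-\frac{C_1}{\nu s}}E^{3/4}D^{1/4}$. Because $D$ is controlled only in $L^2$ in time, the term $\frac12\int U_R\cdot\nabla(e^{2\Psi_{t_0}}\psi^2)|\chi_m\omega_R|^2$ from your integration by parts cannot be absorbed into the $\gamma$-term pointwise in $t$. There are two fixes:
\begin{itemize}
\item bound its $D^{1/4}$-part directly and integrate in time, the small prefactor beating the $\frac{1}{\nu t}$ coming from $\nabla\Psi_{t_0}$;
\item or, as the paper does, skip the integration by parts and use $\|U_R\|_{L^\infty(\chi_m)}D_mE_m\le\frac{\nu}{5}D_m^2+\frac{C}{\nu}\|U_R\|_{L^\infty(\chi_m)}^2E_m^2$.
\end{itemize}
For the same reason your ``main obstacle'' is misplaced. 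The boundary contribution is the easy one (Lemma \ref{lem: velocity formula} and $E_b$); the real issue is local integrability above $L^2$.

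\textbf{The exponent in the $H^3$ step is miscalculated.} On the strip $\theta=\frac14$, so $\Psi_{t_0}=\frac{20\eps_0}{\nu(t+t_0)}(\frac34-\gamma t)^2<\frac{45\eps_0}{4\nu t}$, not $\ge\frac{13\eps_0}{\nu t}$. The weighted bound therefore gives at most $e^{(45/4-\delta)\eps_0/(\nu t)}\|\omega_R\|_{L^2}$ there. Your bootstrap only loses exponent from that point, so it cannot reach $e^{12\eps_0/(\nu t)}$. The paper's Lemma \ref{lem: sobolev 1} has the same inconsistency, since it uses $e^{-2\Psi}\le e^{-\frac{40\eps_0}{\nu t}}$ on the strip. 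What Proposition \ref{prop: est of E(t) sum up} actually needs is an exponent strictly above $10$, so that the $H^3$-part of $E_m$ is $O(e^{-c\eps_0/(\nu t)})(1+E)^8$. Your argument gives this once corrected; alternatively, enlarge the constant $20$ in \eqref{def: psi}.

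\textbf{A smaller point on the sources.} $R_b$ and $\omega_{a,b}$ meet $\operatorname{supp}\chi_m$ on $\frac12\le y\le3$, where $\Psi_{t_0}>0$. There, ``Gaussian-small'' does not come from $\Psi_{t_0}=0$. It requires $\eps_0$ small relative to the decay rate $C'$ of Proposition \ref{prop: WP of boundary layer}.
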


\begin{proposition}\label{prop: est of Eb(t)}
There exist $T,C>0$ such that for $0< t\leq T$,
	\begin{align*}
		E_b(t)
		\leq C(\frac{\nu}{\gamma^{1/2}}+\nu t)\big(1+E(t)\big)^4.
	\end{align*}
For $\eps_0$ small, it holds that
	\begin{align*}
		\sup_{y>0}  e^{\frac{\eps_0 y^2}{\nu t}}\left\|e^{-|\xi|}(\chi_b\omega_R)_\xi(y,t)\right\|_{L^1_\xi\cap L^2_\xi}
		\leq& C(\nu t)^{1/2}\big(1+E(t)\big)^4.
	\end{align*}
\end{proposition}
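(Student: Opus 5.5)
The plan for the estimate of $E_b(t)$ is to work with the Fourier-in-$x$ integral representation of $\chi_b\omega_R$. First I localize the remainder system \eqref{eq: omega R, U R} with $\chi_b$. The commutator terms $[\chi_b,\nu\Delta]\omega_R$ and $U\cdot\nabla\chi_b\,\omega_R$ are supported in $2\le y\le3$. There they are controlled by the $H^3(\frac78\le y\le4)$ part of $E_m$, which carries the factor $e^{-10\eps_0/(\nu t)}$ and easily beats the Gaussian weight $e^{\eps_0(1+\mu)y^2/(\nu t)}$ on $y\le 1+\mu$. The localized equation is then written in the Duhamel form \eqref{integral eq of omega intro}, using the Stokes-type kernels $H_\xi+R_\xi$ of the operator $\pa_t-\nu\Delta$ with the boundary condition $\nu(\pa_y+|D_x|)\omega=B_R$. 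The forcing is
\[
N_R=-R_b-U_a\cdot\nabla\omega_R-U_R\cdot\nabla\omega_a-U_R\cdot\nabla\omega_R+\text{commutators},
\]
and the boundary datum is $B_R$. I would take as known the kernel bounds in the weighted norms $\|\cdot\|_{\mu,t}$ (the heat kernel preserves $e^{\eps_0(1+\mu)y^2/(\nu t)}$ for $\eps_0$ small; $R_\xi$ behaves like $|\xi|e^{-\nu\xi^2 t}$ times a boundary-layer profile). I would also take the analytic recovery inequalities of Lemma \ref{lem: analytic recovery}, which for time-shrinking radius $\mu<\mu_0-\gamma s$ give
\[
\int_0^t \|\pa f(s)\|_{Y_{\mu}}ds\lesssim \int_0^t (\mu_0-\mu-\gamma s)^{-\beta}\|f\|_{Y(s)}ds\lesssim \gamma^{-1}\sup\|f\|_{Y(s)} .
\]
In $E_b$ this derivative loss is converted into the $\gamma^{-1/2}$ factor, since the $\beta$-weighted second derivatives appear.

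Next I treat the terms in order. The term $R_b$ is handled by Proposition \ref{prop: est of remainder}: it is $O(\nu)$ in exactly these norms, with the $(1,x)$ weight, so its contribution integrated in time is $C\nu t$. The transport term $U_a\cdot\nabla\omega_R=u_a\pa_x\omega_R+\frac{v_a}{y}\,y\pa_y\omega_R$ uses \eqref{est: app4}, an analytic bound on $(u_a,v_a/y)$ for $y<5$. Via the recovery lemma this gives $C\gamma^{-1/2}E_b$, which is at most $C\nu\gamma^{-1/2}E$. For $U_R\cdot\nabla\omega_a$ I split $U_R=BS[\chi_b\omega_R]+BS[\chi_m\omega_R]+BS[\chi_{vp}\omega_R]$. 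The first piece is bounded by $E_b$, using Lemma \ref{lem: derivation of velocity formula} so that $v_R/y$ is controlled. The latter two are analytic near $y\le 1+\mu$ with size $\nu tE$, by $E_m$, $E_{vp}$ and the decay in $\xi$. Then \eqref{est: app1} for $\omega_{a,b}$, which is $O(\nu^{-1/2})$ pointwise but $O(1)$ in $L^1_y$, gives $O(\nu t\,E)$ after the time integral. The quadratic term is bounded by $E_b^2\lesssim \nu^2E^2$.

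The boundary term $B_R$ is where I expect the main obstacle, as the introduction indicates. I would write $(B_R)_\xi=-\int e^{-|\xi|y}(\cdots)_\xi\,dy$ and split the contributions by region. Near the boundary the estimate is as above. Near the vortex, after integrating by parts once, I get
\[
|(B_R)_\xi|\lesssim e^{-10|\xi|}\int_{\chi_{vp}}\bigl(|U_aU_R|+|U_R|^2\bigr)dX .
\]
Here I need the pointwise bound $|U_R|\lesssim (\nu s)^{1/2}\langle \eta\rangle^{-2}\,\nu s\,E$. This comes from the vanishing mean $\int\mathcal W_R\,d\eta=O(e^{-C/(\nu s)})$, a consequence of \eqref{est: app2} and conservation of circulation, together with the $p$-weighted $L^2$ bound in $E_{vp}$; this is Lemma \ref{lem: velocity VR pointwise}. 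Combined with $|U_a|\lesssim|X-X(s)|^{-1}$ from \eqref{est: app3}, the integral is $O(\nu s\,E)$. The boundary kernel $H_\xi(t-s,y,0)+R_\xi(t-s,y,0)$ has $L^1_y$-norm bounded by $(\nu(t-s))^{-1/2}\cdot\nu$ up to the normalization from the boundary condition. Its time integral therefore yields $C\nu t^{1/2}E$, or $C\nu t$ if the middle term is handled carefully; either fits into $\nu t(1+E)^4$ only if the kernel singularity integrates correctly, and this is what I must check. The term $-\nu|D_x|\omega_b^{(1)}$ is $O(\nu)$ directly.

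Collecting the terms gives $E_b\le C(\nu\gamma^{-1/2}+\nu t)(1+E)^4$. For the second estimate I apply the same Duhamel formula but take $\sup_y$ instead of $L^1_y$ with weight $e^{\eps_0y^2/(\nu t)}$, sacrificing $e^{-|\xi|}$ so that no derivative recovery is needed. The kernels in $L^\infty_y$ cost an extra $(\nu(t-s))^{-1/2}$. Integrating it against the $O(\nu s)$ sources gives $(\nu t)^{1/2}$ times $(1+E)^4$.
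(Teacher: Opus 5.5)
Your overall strategy is the paper's: the Duhamel formula with $H_\xi+R_\xi$, the shrinking-radius analytic norms, $N_R$ bounded through the velocity lemmas and Proposition~\ref{prop: app estimates}, the vortex part of $B_R$ reduced to $\int_{\chi_{vp}}(|U_aU_R|+|U_R|^2)$, and a $\sup_y$ version losing $e^{-|\xi|}$ for the second bound. The reduction needs two integrations by parts, via $U\cdot\nabla\omega=\curl\dv(U\otimes U)$, not one. The real problems are that the step you flag as the main obstacle is left unresolved and mis-sized, and that one half of $E_b$ is never treated.

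\textbf{The boundary term.} The kernel $H_\xi(t-s,\cdot,0)+R_\xi(t-s,\cdot,0)$ has weighted $L^1_y$ norm $O(1)$, not $(\nu(t-s))^{-1/2}\nu$.
\begin{itemize}
\item For instance, $\int_0^\infty e^{\eps_0(1+\mu)y^2/(\nu t)}\,2g(\nu(t-s),y)\,dy\le C$ for $\eps_0$ small, and likewise for $R_\xi$ via its time-integral formula in Lemma~\ref{lem: property of Rxi}. The factor $(\nu(t-s))^{-1/2}$ is the $L^\infty_y$ size and matters only for the second inequality.
\item Hence, by Lemma~\ref{lem: est of HRB}, the contribution to $E_b$ is $\int_0^t\sum_{i\le1}\|e^{\eps_0(1+\mu)|\xi|}\xi^i(B_R)_\xi(s)\|_{L^1_\xi\cap L^2_\xi}ds$. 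At second order there is an extra factor $(\mu_0-\mu-\gamma s)^{-1}$ and the radius becomes $\mu_1$. What you need is $B_R=O(\nu)$ with one $\xi$-derivative, and that is also its true size; your $O(\nu sE)$ is not.
\item Your pointwise bound has a scaling error. Since $BS_{\mathbb R^2_+}[\chi_{vp}\omega_R]$ carries $\frac{\alpha}{(\nu s)^{1/2}}\mathcal V^{\mathcal W_R}$ and $E_{vp}\le\nu sE$, the decaying part is $(\nu s)^{1/2}\langle\eta\rangle^{-2}E$, not $(\nu s)^{3/2}\langle\eta\rangle^{-2}E$.
\item Corollary~\ref{cor: est of UR} also contains a non-decaying piece $C\nu(1+E)^2$, coming from $BS_{\mathbb R^2_+}[\zeta_1\omega_R]$ and controlled only by $E_b$. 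It further contains a piece $(\nu s)^{1/4}E^{3/4}D_{vp}^{1/4}\langle\eta\rangle^{-2}$, which is not bounded pointwise in time by $E$.
\item So the vortex part of $B_R$ is $\nu(1+E)^4+(\nu s)^{3/4}E^{3/4}D_{vp}^{1/4}+\dots$. The $D_{vp}$ terms close only after time integration, using H\"older and $\int_0^tD_{vp}^2/s\,ds\le E_{vp}^2\le(\nu tE)^2$.
\item The same correction applies to your second inequality. The sources are $O(\nu)$, not $O(\nu s)$, and that is exactly what makes $\int_0^t(\nu(t-s))^{-1/2}\nu\,ds\sim(\nu t)^{1/2}$ come out.
\item Also in the second inequality: for $y\ge(\nu t)^{1/2}$ the pointwise bound $ae^{-\theta_0a(y+z)}$ on $R_\xi$ does not beat $e^{\eps_0y^2/(\nu t)}$, which is why the paper switches to the time-integral formula for $R_\xi$ there.
\end{itemize}

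\textbf{The $x$-weighted half of $E_b$.} $E_b$ controls $(1,x)\omega_R$, and you never treat $x\omega_R$.
\begin{itemize}
\item Its equation \eqref{eq: chib x omegaR} has extra sources $u_a\omega_R+u_R\omega_a+u_R\omega_R-2\nu\pa_x\omega_R$.
\item More importantly, its boundary datum is $(\widetilde{B_R})_\xi=i\pa_\xi(B_R)_\xi-i\nu\,\mathrm{sgn}\xi\,(\omega_R)_\xi|_{y=0}$, where the last term comes from the commutator $[x,|D_x|]$.
\item The trace $(\omega_R)_\xi|_{y=0}$ is not controlled by $E_b$: weighted $L^1_y$ norms with conormal derivatives $y\pa_y$ give no trace.
\item The paper obtains it by evaluating \eqref{eq: integral eq of omega R} at $y=0$. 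There the kernels cost $(\nu(s-\tau))^{-1/2}$, and $R_\xi$ costs an extra $|\xi|$, recovered by passing to the larger radius $\mu_2$. The prefactor $\nu$ then gives $\nu\cdot(\text{trace})\lesssim\nu^{3/2}\gamma^{-1/2}(\mu_0-\mu-\gamma s)^{-\beta}(1+E)^4+\dots$ (Lemma~\ref{lem: est of widetilde BR}).
\end{itemize}
Without this step the $x$-weighted half of $E_b$, and hence the first inequality, does not close.
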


Proposition \ref{prop: est of Evp(t)} $\sim$ Propositions \ref{prop: est of Eb(t)} will be proved in Section \ref{sec: est of Evp} $\sim$ Section \ref{sec: estimate of E_b(t)}.\smallskip
Armed with Proposition \ref{prop: est of Evp(t)} $\sim$ Propositions \ref{prop: est of Eb(t)}, we readily obtain
 
\begin{proposition}\label{prop: est of E(t) sum up}
	There exist $\nu_0,T,C>0$ such that for $0< t\leq T,0<\nu\leq\nu_0$,
	\begin{align*}
		E(t)\leq C.
	\end{align*}
\end{proposition}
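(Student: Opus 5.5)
We combine the three a priori estimates of Propositions \ref{prop: est of Evp(t)}, \ref{prop: est of Em(t)} and \ref{prop: est of Eb(t)} into one inequality for $E(t)$ and close it by a continuity (bootstrap) argument. Recalling the definition \eqref{def: E(t)}, we divide each estimate by the weight attached to it in $E(t)$:
\begin{align*}
(\nu t)^{-1}E_{vp}(t)\leq C(1+E(t))^3,\qquad
E_m(t)\leq Ct^{1/2}(1+E(t))^3,\qquad
\nu^{-1}E_b(t)\leq C(\gamma^{-1/2}+t)(1+E(t))^4.
\end{align*}

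The first bound carries no small factor, and this is the main obstacle. As written, $E\leq C(1+E)^3+\cdots$ cannot be closed. I would therefore return to the proof of Proposition \ref{prop: est of Evp(t)} and separate its structure. The constant-order part should come only from the remainder $\mathcal R_{vp}=O(\nu t)$ in Proposition \ref{prop: est of remainder}, and it should enter linearly, with no power of $E$. The nonlinear terms $\mathcal V^{\mathcal W_R}\cdot\nabla\mathcal W_R$, together with the coupling to $\chi_b\omega_R$ through the second estimate of Proposition \ref{prop: est of Eb(t)} and the coupling to the middle region, should carry an extra factor such as $t^{1/2}$, $\nu^{1/2}$ or $a_0$. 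The target is a refined bound of the form
\[
(\nu t)^{-1}E_{vp}(t)\leq C_0+C\big(t^{1/2}+\nu^{1/2}\big)(1+E(t))^3 ,
\]
where $C_0$ depends only on the approximate solution. If this refinement fails, the fallback is to rescale the vortex component of $E$ by a large constant $K$ and use the weight $K^{-1}(\nu t)^{-1}E_{vp}$. That requires checking that the cross terms in the other two propositions depend on $E_{vp}$ only through quantities that are already small.

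Assuming the refined bound, summing the three estimates gives
\[
E(t)\leq C_0+C\big(t^{1/2}+\nu^{1/2}+\gamma^{-1/2}+t\big)(1+E(t))^4 .
\]
We fix $\gamma$ large so that $C\gamma^{-1/2}(1+2C_0+2)^4\leq 1/4$. Then we take $T\leq 1/(4\gamma)$ and $\nu_0$ small so that the remaining small terms contribute at most $1/4$ as well.

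Next we need $E(t)\to 0$ as $t\to0^+$, or at least $\limsup_{t\to0}E(t)\leq C_0$, together with continuity of $E$ in $t$. This follows from $\omega_R|_{t=0}=0$ and from the solution theory of Theorem \ref{thm: existence and uniqueness}; for the self-similar part it comes from the short-time behaviour in \cite{WYZ}. Let $t^*$ be the supremum of times $t\leq T$ with $E\leq 2C_0+2$. On $[0,t^*]$ the inequality above gives $E\leq C_0+1/2<2C_0+2$, so continuity forces $t^*=T$. This yields $E(t)\leq C:=2C_0+2$ for all $0<t\leq T$ and $0<\nu\leq\nu_0$.
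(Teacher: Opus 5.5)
Your diagnosis is right, and it is more careful than the paper's own one-line argument. With the weights in \eqref{def: E(t)}, Proposition~\ref{prop: est of Evp(t)} only gives $(\nu t)^{-1}E_{vp}\leq C(1+E)^3$. Likewise, Proposition~\ref{prop: est of Eb(t)} gives $\nu^{-1}E_b\leq C(\gamma^{-1/2}+t)(1+E)^4$. The paper's proof instead adds the unweighted bounds $C\nu t(1+E)^3$ and $C(\nu\gamma^{-1/2}+\nu t)(1+E)^4$ to reach $E\leq C(\nu\gamma^{-1/2}+t^{1/2})(1+E)^8$, which does not follow from the three propositions as stated. So the direct summation indeed cannot close, and the obstruction is the vortex component, as you say.

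The gap in your proposal is that the inequality which actually closes the argument is only announced as a target. Everything after ``Assuming the refined bound'' is conditional on it.

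Your fallback does not rescue this. With $E_K:=K^{-1}(\nu t)^{-1}E_{vp}+E_m+\nu^{-1}E_b$ you have $E\leq KE_K$, so the best available bound is $K^{-1}(\nu t)^{-1}E_{vp}\leq K^{-1}C(1+KE_K)^3$. Already the part of $C(1+E)^3$ that is linear in $E$ has no small coefficient, so no reweighting creates smallness unless the $E$-free source is split off.

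That splitting is available one level down, in Lemma~\ref{lem: estimates for I1,I8}, before it is compressed into Proposition~\ref{prop: est of Evp(t)}. In the inequality for $t\frac{d}{dt}\|p^{1/2}\mathcal W_R\|_{L^2}^2$:
\begin{enumerate}
\item The only $E$-independent source is $C(\nu t)^2$ from $I_4$, i.e.\ from $\mathcal R_{vp}=O(\nu t)$.
\item The terms $Ct\|p^{1/2}(\nabla_\eta,\eta)\mathcal W_R\|_{L^2}^2$ from $I_2$--$I_5$ are absorbed by the dissipation in $I_1$ for $t$ small.
\item The nonlinear and boundary coupling $I_6$ costs $C\nu^2t^3(1+E)^6$, one power of $t$ more than the source.
\item Everything else is exponentially small. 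The $e^{-C_1/(\nu t)}D^2$ terms remain harmless after dividing by $t$, because $\int_0^tD_{vp}^2s^{-1}ds\leq(\nu t)^2E^2$ and $\nu\int_0^tD_m^2ds\leq E^2$.
\end{enumerate}
Dividing by $t$ and integrating gives $(\nu t)^{-1}E_{vp}\leq C_0+Ct^{1/2}(1+E)^3+(\text{exp.\ small})(1+E)^4$, with $C_0$ independent of $\gamma$. With this in hand, your bootstrap goes through: fix $\gamma$ first, then choose $T\leq1/(4\gamma)$ and $\nu_0$.

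Separately, the starting point of your continuity argument is asserted rather than proved, namely $\limsup_{t\to0}E(t)\leq C_0$ and continuity of $E$. Because of the singular weights $(\nu t)^{-1}$ and $e^{10\eps_0/(\nu t)}$, this does not follow just from $\omega_R|_{t=0}=0$. The paper handles it through the $t+t_0$ regularization described in the remark following the proposition, letting $t_0\to0^+$ at the end.
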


\begin{proof}
	By Proposition \ref{prop: est of Evp(t)} $\sim$ Propositions \ref{prop: est of Eb(t)}, we have
	\begin{align*}
		E(t)&\leq C\nu t\big(1+E(t)\big)^3
		+Ct^{1/2}\big(1+E(t)\big)^3
		+Ce^{-\frac{2\eps_0}{\nu t}}\big(1+E(t)\big)^{8}
		+C(\frac{\nu}{\gamma^{1/2}}+\nu t)\big(1+E(t)\big)^4\\
		&\leq C(\frac{\nu}{\gamma^{1/2}}+t^{1/2})\big(1+E(t)\big)^8.
	\end{align*}
	 Taking $T,\nu_0$ small and $\gamma$ large, we derive the desired result by a bootstrap argument.
\end{proof}

\begin{remark}
	Strictly speaking, we should define the energy functional $E(t)$ as 
	\begin{align*}
		E(t):=\big(\nu (t+t_0) \big)^{-1}E_{vp}(t)
	+E_m(t)
	+\nu^{-1} E_b(t).
	\end{align*}
	Letting $t_0\rightarrow0^+$ and employing the same method in this paper, we can show that all constants $C$ remain uniformly bounded with respect to $t_0$, thereby eliminating the singularity at $t=0$. 
\end{remark}

\subsection{Proof of Theorem \ref{thm: main result vorticity convergence}}

Based on Proposition \ref{prop: est of E(t) sum up} and the definitions of $E_{vp}(t), E_m(t), E_b(t)$, it holds that
\begin{align*}
	&\left\|\omega(t,X)-\frac{\alpha}{4\pi \nu t}e^{-\frac{|X-X(t)|^2}{4\pi\nu t}} \right\|_{L^1_X(y\geq1)}\\
	&\leq \left\|\chi_{vp} \Big(\omega(t,X)-\frac{\alpha}{4\pi \nu t}e^{-\frac{|X-X(t)|^2}{4\pi\nu t}} \Big) \right\|_{L^1}
	+\left\|(1-\chi_{vp})\Big(\omega(t,X)-\frac{\alpha}{4\pi \nu t}e^{-\frac{|X-X(t)|^2}{4\pi\nu t}} \Big) \right\|_{L^1(y\geq1)}\\
	&\leq C\|\mathcal W-G\|_{L^1_\eta}
	+C\|(1-\chi_{vp})(\omega_a+\omega_R)\|_{L^2(y\geq1)}
	+\left\|(1-\chi_{vp})\cdot\frac{\alpha}{4\pi \nu t}e^{-\frac{|X-X(t)|^2}{4\pi\nu t}}  \right\|_{L^1(y\geq1)}\\
	&\leq C\|p^{1/2}\big(\nu t\Omega_2+\nu^{3/2}t\Omega_3\big)\|_{L^2_\eta}
	+C\|p^{1/2}\mathcal W_R\|_{L^2_\eta}
	+Ce^{-\frac{C_1}{\nu t}}\left\|e^\Psi\psi\chi_m(\omega_a+\omega_R)\right\|_{L^2}
	+Ce^{-\frac{C_1}{\nu t}}\\
	&\leq C\nu t+CE_{vp}(t)+Ce^{-\frac{C_1}{\nu t}} E_m(t)\\
	&\leq C\nu t,
\end{align*}
where the weight functions $p,\Psi,\psi$ are defined in \eqref{def: weight p}, \eqref{def: psi}.

For the second result, we set $\omega_b=\nu^{-1/2}\omega_b^{(0)}$. For any $1<p\leq+\infty$, we obatin
\begin{align*}
	&\left\|\|\omega-\nu^{-1/2}\omega_b^{(0)}\|_{L^p_x}\right\|_{L_y^1(y\leq2)}\\
	&\leq \left\|\|\omega_R\|_{L^p_x}\right\|_{L_y^1(y\leq3/4)}
	+\left\|\|\omega_R\|_{L^p_x}\right\|_{L_y^1(3/4 \leq y\leq2)}
	+\left\|\|\omega_b^{(1)}(t,x,\frac{y}{\nu^{1/2}})\|_{L^p_x}\right\|_{L^1_y}\\
	&\leq \left\|\|(1,x)\omega_R\|_{L^\infty_x}\right\|_{L^1_y(y\leq3/4)}
	+Ce^{-\frac{C_1}{\nu t}}\left\|e^\Psi\psi\chi_m\omega_R\right\|_{L^2}
	+\left\|\left\|(1,x)\omega_b^{(1)}(t,x,\frac{y}{\nu^{1/2}})\right\|_{L^\infty_x}\right\|_{L^1_y}\\
	&\leq \left\|\|\big((1,x)\omega_R\big)_\xi\|_{L^1_y(y\leq3/4)}\right\|_{L^1_\xi}
	+Ce^{-\frac{C_1}{\nu t}}E_m(t)
	+\nu^{1/2}\left\|\left\|\big((1,x)\omega_b^{(1)}(t,x,z)\big)_\xi\right\|_{L^1_z}\right\|_{L^1_\xi}\\
	&\leq E_b(t)+Ce^{-\frac{C_1}{\nu t}}
	+C\nu^{1/2}
	\leq C\nu^{1/2}.
\end{align*}

By now, we finish the proof of Theorem \ref{thm: main result vorticity convergence}.
\ef

\section{Velocity Estimates via Biot-Savart Law}\label{sec: Elliptic Estimates}

In this section, we provide velocity estimates which are frequently used in the proof of  Proposition \ref{prop: est of Evp(t)} $\sim$ Propositions \ref{prop: est of Eb(t)}. The estimates in this section are derived in a manner similar to those in Section 3 of \cite{WYZ}, except that here we emphasize the need for more refined estimates to capture the dependence of the bounds on viscosity.

To begin with, we give the Biot-Savart law under Fourier transformation which is proved in \cite{Maekawa}.

\begin{lemma}\label{lem: velocity formula}
	The velocity $U$ is recovered from $\omega$ through $U=BS_{\mathbb R^2_+}[\omega]$ or formulas below
	\begin{align*}
		&u_\xi(y)=\frac{1}{2}\Big(-\int_0^y e^{-|\xi|(y-z)}\big(1-e^{-2|\xi|z}\big)\omega_\xi(z)dz
		+\int_y^{+\infty}e^{-|\xi|(z-y)}\big(1+e^{-2|\xi|y}\big)\omega_\xi(z)dz\Big),\\
		&v_\xi(y)=-\frac{i\xi}{2|\xi|}\Big(\int_0^y e^{-|\xi|(y-z)}\big(1-e^{-2|\xi|z}\big)\omega_\xi(z)dz
		+\int_y^{+\infty}e^{-|\xi|(z-y)}\big(1-e^{-2|\xi|y}\big)\omega_\xi(z)dz\Big).
	\end{align*}
\end{lemma}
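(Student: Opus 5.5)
We prove the formulas by taking the Fourier transform in $x$ of the elliptic problem that defines $U=BS_{\mathbb R^2_+}[\omega]$. Write $U=\nabla^\perp\phi$ with $\phi=\Delta_D^{-1}\omega$, so that
\begin{align*}
\Delta\phi=\omega\ \ \text{in } \mathbb R^2_+,\qquad \phi|_{y=0}=0,\qquad \phi\to0 \ \text{as } y\to+\infty .
\end{align*}
Then $u=-\pa_y\phi$ and $v=\pa_x\phi$. After the Fourier transform in $x$, each frequency $\xi\neq0$ gives a Dirichlet ODE on $(0,\infty)$:
\begin{align*}
(\pa_y^2-|\xi|^2)\phi_\xi=\omega_\xi,\qquad \phi_\xi(0)=0,\qquad \phi_\xi \ \text{bounded}.
\end{align*}
Up to the factor $(2\pi)^2$ coming from the Fourier convention, which we absorb into $|\xi|$ as the paper implicitly does, we solve this with the Green's function
\begin{align*}
G_\xi(y,z)=-\frac{1}{2|\xi|}\big(e^{-|\xi||y-z|}-e^{-|\xi|(y+z)}\big).
\end{align*}
This is the method of images: the free-space kernel $-\frac{1}{2|\xi|}e^{-|\xi||y-z|}$ minus its reflection across $y=0$. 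One checks directly that it vanishes at $y=0$, decays as $y\to\infty$, and has derivative jump $1$ across $y=z$. Hence
\begin{align*}
\phi_\xi(y)=\int_0^\infty G_\xi(y,z)\omega_\xi(z)\,dz .
\end{align*}

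Next we differentiate. For $v$ we use $v_\xi=i\xi\phi_\xi$ (with $2\pi$ normalization absorbed). We split the integral at $z=y$. For $z<y$ the kernel is $-\frac1{2|\xi|}e^{-|\xi|(y-z)}(1-e^{-2|\xi|z})$, and for $z>y$ it is $-\frac1{2|\xi|}e^{-|\xi|(z-y)}(1-e^{-2|\xi|y})$. This gives the stated formula for $v_\xi$, including the prefactor $-\frac{i\xi}{2|\xi|}$.

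For $u$ we use $u_\xi=-\pa_y\phi_\xi$. The kernel is continuous across $y=z$, so differentiating the two pieces produces no boundary terms. For $z<y$, $\pa_y$ of $-\frac1{2|\xi|}\big(e^{-|\xi|(y-z)}-e^{-|\xi|(y+z)}\big)$ equals $\frac12 e^{-|\xi|(y-z)}(1-e^{-2|\xi|z})$. For $z>y$, $\pa_y$ of $-\frac1{2|\xi|}\big(e^{-|\xi|(z-y)}-e^{-|\xi|(y+z)}\big)$ equals $-\frac12 e^{-|\xi|(z-y)}(1+e^{-2|\xi|y})$. Multiplying by $-1$ yields exactly the claimed expression for $u_\xi$.

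It remains to justify the identification $\nabla^\perp\Delta_D^{-1}\omega$ with the kernel formula \eqref{BS law formulation 2}. Both are the unique decaying solution of the Dirichlet problem, or equivalently one can Fourier transform the image kernel directly, so they agree for, say, $\omega\in L^1\cap L^\infty$ with enough decay; general $\omega$ follows by density. At $\xi=0$ the formulas are read as limits: the $v$ formula is bounded since $(1-e^{-2|\xi|z})/|\xi|\to2z$. The only mildly delicate point is this normalization and uniqueness bookkeeping; the computation itself is routine.
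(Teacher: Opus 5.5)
Your derivation is correct: the Dirichlet Green's function $-\frac{1}{2|\xi|}\big(e^{-|\xi||y-z|}-e^{-|\xi|(y+z)}\big)$ for the Fourier-transformed stream-function problem, followed by $v_\xi=i\xi\phi_\xi$ and $u_\xi=-\pa_y\phi_\xi$, reproduces both formulas exactly, and Fourier-transforming the image kernel in \eqref{BS law formulation 2} gives the same kernels. The paper gives no argument and simply cites \cite{Maekawa}, where the formula comes from this same stream-function/ODE computation, so your route is the standard one; one small slip is that the limit $(1-e^{-2|\xi|z})/|\xi|\to 2z$ at $\xi=0$ concerns $\phi_\xi$, whereas in $v_\xi$ the factor $(1-e^{-2|\xi|z})\to0$ multiplies the bounded $\xi/|\xi|$, so $v_\xi\to0$ as $\xi\to0$.
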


\medskip

The following elliptic estimates are used to handle transport terms near the boundary.
\begin{lemma}\label{lem: velocity estimates 1}
There exists a constant $C>0$ such that for $U_R=(u_R,v_R)$ near the boundary, it holds that
	\begin{align*}
		\left\|\sup_{0<y<1+\mu}e^{\eps_0(1+\mu-y)_+|\xi|}\left|\Big(u_R,\pa_x u_R,\frac{v_R}{y},y\pa_y u_R,y\pa_y(\frac{v_R}{y}) \Big)_\xi(s,y)\right|\right\|_{L^1_\xi}
		\leq C\nu E(s),
	\end{align*}
    and
	\begin{align*}
		\left\|\sup_{0<y<1+\mu}e^{\eps_0(1+\mu-y)_+|\xi|}\left| \big(\frac{\pa_x v_R}{y} \big)_\xi(s,y)\right|\right\|_{L^1_\xi}
		\leq C\nu(\mu_0-\mu-\gamma s)^{-\beta} E(s).
	\end{align*}

\end{lemma}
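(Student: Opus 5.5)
We prove Lemma \ref{lem: velocity estimates 1} by splitting $\omega_R=\chi_b\omega_R+\chi_m\omega_R+\chi_{vp}\omega_R+(\text{the remaining piece})$ and estimating the velocity generated by each part separately with the Fourier representation of Lemma \ref{lem: velocity formula}. Each part is controlled by a different component of $E(s)$. The factor $\nu$ comes from $\nu^{-1}E_b$ near the boundary, from $(\nu s)^{-1}E_{vp}$ near the vortex, and from the exponentially small weights in $E_m$ in the middle region.

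\textbf{Boundary part.} For $0<y<1+\mu$ we use the formulas of Lemma \ref{lem: velocity formula}.
\begin{itemize}
\item For $u_\xi$, the kernel is bounded by $e^{-|\xi||y-z|}$. Moreover $(1+\mu-y)_+\le (1+\mu-z)_++|y-z|$, so the analytic weight $e^{\eps_0(1+\mu-y)_+|\xi|}$ transfers from $y$ to $z$ at the cost of $e^{\eps_0|\xi||y-z|}$, which the kernel absorbs.
\item This gives $\sup_y e^{\eps_0(1+\mu-y)_+|\xi|}|(u_R)_\xi(y)|\lesssim \|e^{\eps_0(1+\mu-z)_+|\xi|}(\chi_b\omega_R)_\xi\|_{\mu,s}$ plus contributions from $z>1+\mu$. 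Taking $L^1_\xi$ bounds this by $\|\omega_R\|_{Y_1(s)}\le \nu\cdot\nu^{-1}E_b\le \nu E$.
\item For $v_R/y$ we use $(1-e^{-2|\xi|\min(y,z)})/y\lesssim |\xi|$ on $z<y$, and $\lesssim\min(|\xi|,1/y)$ otherwise. This is why the statement bounds $\pa_x v_R/y$ only with a loss. Without the $\pa_x$, we use $|1-e^{-2|\xi|z}|\le 2|\xi| z$ and $z\le y$ to get $|\xi|$ times an integral, and $|\xi|\omega_\xi$ is the symbol of $\pa_x\omega$, so the bound uses the first-order part of $Y_1$ with no loss.
\item The terms $\pa_x u_R$ and $y\pa_y u_R$ involve one derivative of $\omega_R$ and are handled the same way. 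For $y\pa_y$ we differentiate the formula, producing $y|\xi|$ factors and a boundary term $y\omega_\xi(y)$; the latter is controlled by $\sup_y|y\omega_\xi|\lesssim\int|(y\pa_y\omega)_\xi|+|\omega_\xi|$.
\item The terms $\pa_x v_R/y$ and $y\pa_y(v_R/y)$ cost one extra derivative. They are bounded by second-order $Y_1$ norms, which carry the factor $(\mu_0-\mu-\gamma s)^{-\beta}$ in the definition of $Y_k(t)$; this is the source of the loss in the second estimate.
\end{itemize}

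\textbf{Vortex and middle parts.} These have support in $y\ge 1/4$. For $y<1+\mu\le 1.1$, where the kernel is smooth in $\xi$-space, we have the following.
\begin{itemize}
\item For the vortex part we write the velocity as an integral over $z\ge 13$ against $e^{-|\xi|(z-y)}$. This gives the factor $e^{-10|\xi|}$, which beats $e^{\eps_0 1.1|\xi|}$, and leaves an $L^1$ quantity $\|\chi_{vp}\omega_R\|_{L^1}\lesssim \|p^{1/2}\mathcal W_R\|_{L^2}\le E_{vp}\le \nu s E$. Derivatives fall on the smooth kernel.
\item For the middle part, the portion with $z\ge 1/2$ similarly has $e^{-|\xi||z-y|}$ decay where $z-y$ may be small. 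Here we instead use the $H^3(\frac78\le y\le4)$ term in $E_m$, which carries $e^{-10\eps_0/(\nu s)}\ll\nu$, together with the weight $e^\Psi$ (at least $e^{5\eps_0/\nu s}$ away from the vortex and boundary) in $\|e^\Psi\psi\chi_m\omega_R\|_{L^2}$. For the far region, $\psi$ supplies decay in $x$, giving $L^1_x$ control and hence $L^\infty_\xi$, and the $(1,x)$-weight trick gives $L^1_\xi$.
\end{itemize}

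\textbf{Main obstacle.} The hardest step is converting the $L^2$-type middle-region functional and the $H^3$ piece into the $L^1_\xi$ Fourier-analytic sup norm on the overlap zone $y\in[\frac38,1.1]$. There the analytic weight $e^{\eps_0(1+\mu-y)|\xi|}$ is not compensated by kernel decay, since $z-y$ can be small. We would first try to handle it by noting that on this overlap $\omega_R$ is also included in the boundary norm (supported on $y\le1+\mu$, with weight $e^{\eps_0(1+\mu-z)|\xi|}$), so the near-diagonal contribution is counted in $E_b$. Only the part $z\ge1+\mu$ then needs $E_m$, and there $(1+\mu-y)-(z-y)\le0$, so the weight is absorbed by $e^{-|\xi|(z-y)}$. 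The remaining $\xi$-integrability then follows from $H^3$ regularity.
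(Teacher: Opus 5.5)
\textbf{The gap: $y\pa_y(v_R/y)$.} You group it with $\pa_x v_R/y$, say it "costs one extra derivative," and bound it by second-order $Y_1$ norms. That yields only $C\nu(\mu_0-\mu-\gamma s)^{-\beta}E(s)$. The lemma, however, places $y\pa_y(v_R/y)$ in the first, loss-free estimate, so your argument proves a strictly weaker statement for this component. The premise is also false.

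The missing idea is incompressibility. Since $\pa_x u_R+\pa_y v_R=0$,
\[
y\pa_y\Big(\frac{v_R}{y}\Big)=\pa_y v_R-\frac{v_R}{y}=-\pa_x u_R-\frac{v_R}{y}.
\]
Both terms on the right are first-order quantities that your own arguments already bound by $C\nu E(s)$. Equivalently, if you differentiate the formula of Lemma \ref{lem: velocity formula} in $y$, the contributions at $z=y$ cancel and $(\pa_y v_R)_\xi=-i\xi(u_R)_\xi$, so no second derivative of $\omega_R$ ever appears. This is how the paper handles the term. Only $\pa_x v_R/y$, which genuinely involves $|\xi|^2(\omega_R)_\xi$, carries the loss $(\mu_0-\mu-\gamma s)^{-\beta}$.

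\textbf{Everything else matches the paper.} The cut-off decomposition in your first paragraph is unnecessary (and $\chi_b,\chi_m,\chi_{vp}$ overlap, so they do not form a partition). Since $\chi_b\equiv 1$ on $y\le 2$, the norm $\|\cdot\|_{Y^1_{\mu,s}}$ already controls all of $\omega_R$ on $(0,1+\mu)$. As your last paragraph concludes, one simply splits the $z$-integral at $1+\mu$:
\begin{itemize}
\item For $z<1+\mu$, use the weight transfer $e^{\eps_0(1+\mu-y)_+|\xi|}e^{-|\xi||y-z|}\le e^{\eps_0(1+\mu-z)_+|\xi|}$.
\item For $1+\mu\le z\le 2$, use the $H^3(\frac78\le y\le 4)$ part of $E_m$, which gives $\|\omega_R\|_{H^1(1\le y\le 2)}\le e^{-10\eps_0/(\nu s)}E(s)$ and hence $L^1_\xi$ integrability.
\item For $z\ge 2$, use the kernel factor $e^{-|\xi|/2}$ together with $\|\omega_R\|_{L^1(y\ge2)}\lesssim e^{-C_1/(\nu s)}\|e^\Psi\psi\chi_m\omega_R\|_{L^2}+\|p^{1/2}\mathcal W_R\|_{L^2}$. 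No $(1,x)$-weight trick is needed there.
\end{itemize}

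One small correction on the boundary term $y(\omega_R)_\xi(s,y)$ in $y\pa_y u_R$. Applying the fundamental theorem of calculus to $e^{\eps_0(1+\mu-y)_+|\xi|}\,y(\omega_R)_\xi$ also produces $\eps_0|\xi|\,y\,|(\omega_R)_\xi|$. So $\|\pa_x\omega_R\|_{Y^1_{\mu,s}}$ enters as well, which is harmless since it is first order.
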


\begin{proof}
 We first focus on the case $u_R$. Lemma \ref{lem: velocity formula} implies
\begin{align*}
		(u_R)_\xi(s,y)=&-\frac{1}{2}\int_0^y e^{-|\xi|(y-z)}\big(1-e^{-2|\xi|z}\big)(\omega_R)_\xi(s,z)dz\\
		&+\frac{1}{2}\big(\int_y^{1+\mu}+\int_{1+\mu}^{+\infty}\big)e^{-|\xi|(z-y)}\big(1+e^{-2|\xi|y}\big)(\omega_R)_\xi(s,z)dz:=I_1+I_2+I_3.
	\end{align*}
	Based on the relation
	\begin{align}\label{weight transform 3}
		e^{\eps_0(1+\mu-y)_+|\xi|}e^{-|\xi||y-z|}
		\leq e^{\eps_0(1+\mu-z)_+|\xi|},
	\end{align}
	we have
	\begin{align*}
		e^{\eps_0(1+\mu-y)_+|\xi|}\big(|I_1|+|I_2|\big)
		\leq C_0\int_0^{1+\mu}e^{\eps_0(1+\mu-z)_+|\xi|}|(\omega_R)_\xi(s,z)|dz,
	\end{align*}
	which gives
	\begin{align*}
		&\left\|\sup_{0<y<1+\mu}e^{\eps_0(1+\mu-y)_+|\xi|}\big(|I_1|+|I_2|\big)\right\|_{L^1_\xi}
		\leq C_0\|\omega_R(s)\|_{Y^1_{\mu,s}}
		\leq C_0\nu E(s).
	\end{align*}
	
	A direct computation gives
	\begin{align*}
		e^{\eps_0(1+\mu-y)_+|\xi|}|I_3|
		\leq C_0\int_{1+\mu}^2 |(\omega_R)_\xi(s,z)|dz
		+C_0\int_2^{+\infty}e^{-|\xi|/2}|(\omega_R)_\xi(s,z)|dz,
	\end{align*}
	which implies
	\begin{align*}
		&\left\|\sup_{0<y<1+\mu}e^{\eps_0(1+\mu-y)_+|\xi|}|I_3|\right\|_{L^1_\xi}\\
		&\leq C_0\left\|\int_{1+\mu}^2 |(\omega_R)_\xi(s,z)|dz\right\|_{L^1_\xi}
		+C_0\left\|\int_2^{+\infty}e^{-|\xi|/2}|(\omega_R)_\xi(s,z)|dz\right\|_{L^1_\xi}\\
		&\leq C_0\int_1^2\left\|(1+|\xi|^2)^{-1/2}\right\|_{L^2_\xi}\left\|(1+|\xi|^2)^{1/2}(\omega_R)_\xi(s,z)\right\|_{L^2_\xi}dz
		+C_0\|\omega_R(s)\|_{L^1(z\geq2)}\\
		&\leq C_0\|\omega_R\|_{H^1(1\leq y\leq2)}
		+\|(1-\chi_{vp})\omega_R\|_{L^1}
		+C_0\|\chi_{vp}\omega_R\|_{L^1}\\
		&\leq C_0\|\omega_R\|_{H^1(1\leq y\leq2)}
		+C_0e^{-\frac{C_1}{\nu s}}\left\|e^\Psi\psi\chi_m\omega_R\right\|_{L^2}
		+C_0\|p^{1/2}\mathcal W_R\|_{L^2}\\
		&\leq C_0e^{-\frac{10\eps_0}{\nu s}}E(s)
		+C_0\nu sE(s)\\
		&\leq C_0\nu sE(s).
	\end{align*}
	
	Collecting the estimates together, we obtain the result for $u_R$. And $\pa_x u_R$ is treated similarly by replacing $\omega_R$ with $\pa_x\omega_R$.

    \medskip
	
	The estimates for $\frac{v_R}{y}$ and $\frac{\pa_x v_R}{y}$ is obtained by a similar procedure and the following
	\begin{align*}
		\frac{(v_R)_\xi(s,y)}{y}
		=& \frac{1}{2y}\int_0^y e^{-|\xi|(y-z)}\big(1-e^{-2|\xi|z}\big)(\omega_R)_\xi(s,z)dz\\
		&+\frac{1}{2y}\big(\int_y^{1+\mu}+\int_{1+\mu}^{+\infty}\big)e^{-|\xi|(z-y)}\big(1-e^{-2|\xi|y}\big)(\omega_R)_\xi(s,z)dz,\\
		\left|1-e^{-2|\xi|z}\right|&\leq 2|\xi|z\leq2|\xi|y,
		\quad
		\left|1-e^{-2|\xi|y}\right|\leq 2|\xi|y,
		\quad
		\text{for}
		\quad
		z\leq y.
	\end{align*}

    \medskip
	
	For $y\pa_y u_R$, a direct computation and Lemma \ref{lem: velocity formula} lead to
	\begin{align*}
		y\pa_y (u_R)_\xi(s,y)
		=&\frac{y}{2}\Big(\int_0^y e^{-|\xi|(y-z)}\big(1-e^{-2|\xi|z}\big)|\xi|(\omega_R)_\xi(s,z)dz\\
		&+\int_y^{+\infty}e^{-|\xi|(z-y)}\big(1+e^{-2|\xi|y}\big)|\xi|(\omega_R)_\xi(s,z)dz\\
		&-2\int_y^{+\infty}e^{-|\xi|(z-y)}e^{-2|\xi|y}|\xi|(\omega_R)_\xi(s,z)dz\Big)
		-y(\omega_R)_\xi(s,y).
	\end{align*}
	The first three terms are treated as above. For the last term, fundamental theorem of calculus gives rise to
	\begin{align*}
		\left\|\sup_{0<y<1+\mu}e^{\eps_0(1+\mu-y)_+|\xi|}|y(\omega_R)_\xi(s,y)|\right\|_{L^1_\xi}
		&\leq \|\omega_R(s)\|_{Y^1_{\mu,s}}
		+\|y\pa_y\omega_R(s)\|_{Y^1_{\mu,s}}
		+\|\pa_x\omega_R(s)\|_{Y^1_{\mu,s}}\\
		&\leq 3E_b(s)\leq C\nu E(s).
	\end{align*}
	Thus, we obtain the inequality for $y\pa_y u_R$. The case $y\pa_y\big(\frac{v_R}{y}\big)$ is derived from the relation
	\begin{align*}
		y\pa_y\big(\frac{v_R}{y}\big)
		=\pa_y v_R-\frac{v_R}{y}
		=-(\pa_x u_R)-\frac{v_R}{y}.
	\end{align*}
	\end{proof}


Next lemma provides velocity estimates in the middle region.
\begin{lemma}\label{lem: velocity estimates 2}
	(1) For $U_R$ on $\operatorname{supp}\chi_m$, we have
	\begin{align*}
		\|U_R(s)\|_{L^\infty(\chi_m)}
		\leq C\nu E(s)
		+Ce^{-\frac{C_1}{\nu s}}E(s)^{3/4}
		D(s)
		^{1/4}.
	\end{align*}

    (2) For $i,j\geq0$, $\frac{1}{2}\leq a<b\leq3$, we have
	\begin{align*}
		\left\|\pa_x^i\pa_y^j U_R(s)\right\|_{L^\infty(a\leq y\leq b)}
		\leq C\nu E(s)
		+C\|\omega_R(s)\|_{H^{i+1}(a-\frac{1}{100}\leq z\leq{b+\frac{1}{100}})}.
	\end{align*}
\end{lemma}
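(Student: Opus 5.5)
The plan is to decompose $\omega_R$ according to the partition of unity underlying the energy functional. We write
\[
\omega_R=\chi_{vp}\omega_R+\chi_b\omega_R+(1-\chi_{vp}-\chi_b)\omega_R,
\]
and set $U_R=U^{vp}+U^{b}+U^{m}$, where each piece is $BS_{\mathbb R^2_+}$ applied to the corresponding piece of vorticity. Each contribution is then estimated separately on the region in question: on $\operatorname{supp}\chi_m$ for (1), and on the strip $a\le y\le b$ for (2). The key geometric fact is that both target regions lie at distance $\geq 1$ from $\operatorname{supp}\chi_{vp}$, or at least from the region where $\mathcal W_R$ carries its main mass. They are also either at positive distance from $\operatorname{supp}\chi_b$, or else $\chi_b\omega_R$ is measured there in the $E_b$ norm.

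For the \emph{vortex piece}, we write
\[
U^{vp}(X)=\frac{\alpha}{(\nu s)^{1/2}}\big(\mathcal V^{\mathcal W_R}(\eta)-\widetilde{\mathcal V^{\mathcal W_R}}(\eta+\widetilde\eta)\big).
\]
When $X$ is at distance $\ge 1$ from $X(s)$ (in case (1), $|X-X_0|\ge 3$), the kernel $(X-Y)^\perp/|X-Y|^2$ is smooth and bounded for $Y$ near $X(s)$. This gives $|U^{vp}|\lesssim \|\chi_{vp}\omega_R\|_{L^1}\lesssim \|p^{1/2}\mathcal W_R\|_{L^2}\le E_{vp}\le C\nu s E$, and derivatives are handled the same way. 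The one subtlety is that $\chi_{vp}\omega_R$ has support reaching $|X-X_0|\le 6$, which overlaps $\operatorname{supp}\chi_m$ (where $|X-X_0|\ge 3$). I split $\mathcal W_R$ into region I and region II of the weight $p$. Region II carries the factor $e^{-a_0^2(\nu s)^{-1/2}/2}$ and contributes an exponentially small term, handled by a local $L^\infty$ Biot--Savart bound: for $\|f\|_{L^\infty}$ I would use Gagliardo--Nirenberg, $\|BS[f]\|_{L^\infty}\lesssim\|f\|_{L^2}^{1/2}\|f\|_{L^4}^{1/2}$ with $\|f\|_{L^4}\lesssim \|f\|_{L^2}^{1/2}\|\nabla f\|_{L^2}^{1/2}$. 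This produces the shape $E^{3/4}D^{1/4}$, and rescaling from $\eta$ to $X$ in the gradient term yields a negative power of $\nu s$ that the exponential prefactor absorbs into $e^{-C_1/(\nu s)}$. In region I, the points are within $2a_0(\nu s)^{1/4}\ll 1$ of $X(s)$, so the kernel is smooth there.

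For the \emph{middle piece}, on $\operatorname{supp}\chi_m$ we have $\Psi\ge 5\eps_0/s$ away from the transition zone. Hence $\|(1-\chi_{vp}-\chi_b)\omega_R\|_{L^1\cap L^2}\lesssim e^{-C_1/(\nu s)}E_m$, and again the local $L^\infty$ bound uses $D_m$ to give $e^{-C_1/(\nu s)}E^{3/4}D^{1/4}$; far-field contributions decay thanks to the weight $\psi$. In case (2), the middle vorticity near the strip is instead controlled by $\|\omega_R\|_{H^{i+1}}$ on the enlarged strip: I would apply Sobolev embedding to the local part (a cutoff supported in $a-\frac1{100}\le y\le b+\frac1{100}$), while the remaining part, at distance $\geq \frac1{100}$, has a smooth kernel and is bounded by $L^1$ norms, hence by $C\nu E$.

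For the \emph{boundary piece}, I would use Lemma \ref{lem: velocity formula} in Fourier variables exactly as in the $I_3$ computation of Lemma \ref{lem: velocity estimates 1}. For $y\ge 1/4$, the factor $e^{-|\xi||y-z|}$ together with the $Y^1_{\mu,s}$ norm, whose weight $e^{\eps_0 y^2/(\nu s)}$ localizes near $z\lesssim(\nu s)^{1/2}$, gives $\|U^b\|_{L^\infty}\le \|(U^b)_\xi\|_{L^1_\xi L^\infty_y}\lesssim E_b\le \nu E$. For $\pa_x^i\pa_y^j$, each derivative brings a factor $|\xi|$, which $e^{-|\xi|(y-z)}$ with $y-z\ge \frac18$ absorbs. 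The main obstacle I expect is the overlap of $\operatorname{supp}\chi_{vp}$ with $\operatorname{supp}\chi_m$ in the annulus $3\le|X-X_0|\le 6$, where the $L^1$ kernel argument fails. There I would rely on the region II exponential smallness of $p$ together with the Gagliardo--Nirenberg interpolation, tracking the powers of $\nu s$ carefully so that they are absorbed by $e^{-C_1/(\nu s)}$.
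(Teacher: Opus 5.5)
Your overall strategy matches the paper's: split $\omega_R$ by region, use a smooth-kernel $L^1$ bound for pieces separated from the target set, use interpolation for pieces carrying an exponentially large weight, and use the Fourier formula of Lemma \ref{lem: velocity formula} near the boundary. Your region I/II split of $\mathcal W_R$ plays the role of the paper's cutoff $\zeta_2$. Your middle piece $(1-\chi_{vp}-\chi_b)\omega_R$ is supported in $\{y\ge 2,\ |(x,y)-(0,20)|\ge5\}$, where $\theta=\frac14$, so it really does sit where $\Psi\ge 5\eps_0/(\nu s)$. That is slightly cleaner than the paper's $(1-\chi_{vp})(1-\zeta_1)\omega_R$, whose support reaches into $\frac38<y<\frac12$ where $\theta$ is near $1$.

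\textbf{The gap is in the boundary piece.} You treat $\chi_b\omega_R$ as if it lived in $\{y\le\frac38\}$. You bound it entirely by $E_b$, and in (2) you absorb $|\xi|^{i+j}$ into $e^{-|\xi|(y-z)}$ using $y-z\ge\frac18$. But $\chi_b=1$ up to $y=2$ and vanishes only for $y\ge3$, whereas $E_b$ only sees $0<y<1+\mu\le\frac{11}{10}$. Two consequences follow.
\begin{itemize}
\item On $1+\mu\le z\le3$ this piece is controlled only through $E_m$, namely its last term $e^{\frac{10\eps_0}{\nu s}}\|\omega_R\|_{H^3(\frac78\le y\le4)}$.
\item In (2) the strip $a\le y\le b\subset[\frac12,3]$ lies inside $\operatorname{supp}\chi_b$. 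For $z\in[a-\frac1{100},b+\frac1{100}]$ there is no separation, so the derivative-absorption step fails.
\end{itemize}
In (1) this is repairable by the $I_3$-type argument you cite. The bound $|u_\xi(y)|\le\int_0^\infty|\omega_\xi(z)|\,dz$ holds uniformly in $y$. The part $z\ge1+\mu$ is bounded, via Cauchy--Schwarz in $\xi$, by $\|\omega_R\|_{H^1(1\le y\le3)}\le e^{-\frac{10\eps_0}{\nu s}}E_m(s)$. In (2) you must instead send the part of $\chi_b\omega_R$ near the strip to the local $H^{i+1}$ estimate, as you already do for the middle piece.

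The paper avoids this by not splitting with $\chi_b$ in (2) at all. It splits the $z$-integral of Lemma \ref{lem: velocity formula} into $[0,a-\frac1{100}]$, $[a-\frac1{100},b+\frac1{100}]$ and $[b+\frac1{100},\infty)$.
\begin{itemize}
\item On the two outer intervals it uses $e^{-|\xi|/100}$, then $E_b$, the exponentially weighted $E_m$, and $\|\mathcal W_R\|_{L^1}\lesssim E_{vp}$.
\item On the middle interval it uses the bound $\|\langle\xi\rangle^{-1}\|_{L^2_\xi}\|\langle\xi\rangle(\pa_x^i\omega_R)_\xi\|_{L^2_\xi L^2_z}$. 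This also handles the fact that the strip is unbounded in $x$, where a naive Sobolev-embedding argument needs care.
\end{itemize}

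\textbf{Two smaller slips.}
\begin{itemize}
\item The inequality $\|BS_{\mathbb R^2_+}[f]\|_{L^\infty}\lesssim\|f\|_{L^2}^{1/2}\|f\|_{L^4}^{1/2}$ is false on $\mathbb R^2$. The two sides scale differently, and $|X|^{-1}\notin L^2$ at infinity. Use $\|f\|_{L^{4/3}}^{1/2}\|f\|_{L^4}^{1/2}$, as the paper does. For your compactly supported region-II piece and for the $\psi$-weighted middle piece, $\|f\|_{L^{4/3}}$ is still controlled by the weighted $L^2$ norms, so the $E^{3/4}D^{1/4}$ shape survives.
\item The region-II weight only yields $e^{-\frac{a_0^2}{2}(\nu s)^{-1/2}}$. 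After absorbing powers of $(\nu s)^{-1}$ you get $e^{-c(\nu s)^{-1/2}}$, not $e^{-C_1/(\nu s)}$. The paper's bound for the analogous piece $(1-\zeta_2)\chi_{vp}\omega_R$ carries the same weaker factor, which still beats every power of $\nu s$.
\end{itemize}
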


\begin{proof}
	(1) We need cut-off functions $\zeta_1, \zeta_2$ defined by 
	\begin{align}\label{def: zeta 12}
		\zeta_1=
		\left\{
		\begin{aligned}
			&1,\quad y\leq3/8,\\
			&0,\quad y\geq1/2,
		\end{aligned}
		\right.
		\qquad
		\zeta_2=
		\left\{
		\begin{aligned}
			&1,\quad |(x,y)-(0,20)|\leq2,\\
			&0,\quad |(x,y)-(0,20)|\geq3.
		\end{aligned}
		\right.
	\end{align}
	Obviously, it holds that
\begin{align*}
		\|U_R\|_{L^\infty(\chi_m)}
		\leq& \|BS_{\mathbb R^2_+}[\zeta_1\omega_R]\|_{L^\infty(\chi_m)}
		+\|BS_{\mathbb R^2_+}[(1-\chi_{vp})(1-\zeta_1)\omega_R]\|_{L^\infty}\\
		&+\|BS_{\mathbb R^2_+}[(1-\zeta_2)\chi_{vp}\omega_R]\|_{L^\infty}
		+\|BS_{\mathbb R^2_+}[\zeta_2\omega_R]\|_{L^\infty(\chi_m)}.
	\end{align*}
	
	Based on Lemma \ref{lem: velocity formula}, we find
	\begin{align*}
		\|BS_{\mathbb R^2_+}[\zeta_1\omega_R]\|_{L^\infty(\chi_m)}\leq C\|\omega_R\|_{Y_1(s)}\leq CE_b(s).
	\end{align*}
	
	Sobolev embedding implies for some $C_1>0$,
	\begin{align*}
		&\|BS_{\mathbb R^2_+}[(1-\chi_{vp})(1-\zeta_1)\omega_R]\|_{L^\infty}\\
		&\leq C\|(1-\chi_{vp})(1-\zeta_1)\omega_R\|_{L^{4/3}}^{1/2}\|(1-\chi_{vp})(1-\zeta_1)\omega_R\|_{L^4}^{1/2}\\
		&\leq Ce^{-\frac{C_1}{\nu s}}\left\|e^\Psi\psi\chi_m\omega_R\right\|_{L^2}^{3/4} \left\|e^\Psi\psi\nabla(\chi_m\omega_R)\right\|_{L^2}^{1/4}
		\leq Ce^{-\frac{C_1}{\nu s}} E_m(s)^{3/4}D_m(s)^{1/4},
	\end{align*}
	and
	\begin{align*}
		&\|BS_{\mathbb R^2_+}[(1-\zeta_2)\chi_{vp}\omega_R]\|_{L^\infty}
	   \leq C\|(1-\zeta_2)\chi_{vp}\omega_R\|_{L^{4/3}}^{1/2}\|(1-\zeta_2)\chi_{vp}\omega_R\|_{L^4}^{1/2}\\
	   &\leq C\left\|(1-\zeta_2)\frac{\alpha}{\nu s}\mathcal W_R(\frac{(x,y)-X(s)}{(\nu s)^{1/2}},s)\right\|_{L^{4/3}}^{1/2}
	   \left\|(1-\zeta_2)\frac{\alpha}{\nu s}\mathcal W_R(\frac{(x,y)-X(s)}{(\nu s)^{1/2}},s)\right\|_{L^4}^{1/2}\\
	   &\leq Ce^{-\frac{C_1}{\nu s}} \left\|p^{1/2}\mathcal W_R\right\|_{L^2}^{3/4}\left\|p^{1/2}\nabla_\eta\mathcal W_R\right\|_{L^2}^{1/4}
	   \leq Ce^{-\frac{C_1}{\nu s}} E_{vp}(s)^{3/4}D_{vp}(s)^{1/4}.
	\end{align*}
	
	Due to Lemma \ref{lem: BS est appendix}, we deduce
	\begin{align*}
		\|BS_{\mathbb R^2_+}[\zeta_2\omega_R]\|_{L^\infty(\chi_m)}
		\leq C\|\zeta_2\omega_R\|_{L^1}
		\leq C\|\mathcal W_R\|_{L^1}
		\leq C\left\|p^{1/2}\mathcal W_R\right\|_{L^2}
		\leq C E_{vp}(s).
	\end{align*}
	
	Collecting these estimates together gives 
	\begin{align*}
		\|U_R\|_{L^\infty(\chi_m)}
		\leq
		C\nu E(s)
		+Ce^{-\frac{C_1}{\nu s}}E(s)^{3/4}
		D(s)^{1/4}.
	\end{align*}

    (2)  We only deal with the case  $\pa_x^i u_R$, since other cases can be treated in a similar way. Lemma \ref{lem: velocity formula} gives
\begin{align*}
		|(\pa_x^i u_R)_\xi(s,y)|
		&\leq \int_0^{a-\frac{1}{100}}e^{-\frac{|\xi|}{100}}|\xi|^i|(\omega_R)_\xi(s,z)|dz
		+\int_{a-\frac{1}{100}}^{b+\frac{1}{100}} |\xi|^i|(\omega_R)_\xi(s,z)|dz\\
		&\qquad+\int_{b+\frac{1}{100}}^{+\infty}e^{-\frac{|\xi|}{100}}|\xi|^i|(\omega_R)_\xi(s,z)|dz\\
		&\leq C_0\int_0^{a-\frac{1}{100}}|(\omega_R)_\xi(s,z)|dz
		+C_0\|(\pa_x^i\omega_R)_\xi(s,z)\|_{L^2_z(a-\frac{1}{100}\leq z\leq{b+\frac{1}{100}})}\\
		&\qquad+C_0 \int_{b+\frac{1}{100}}^{+\infty}e^{-\frac{|\xi|}{200}}|(\omega_R)_\xi(s,z)|dz,
	\end{align*}
	which implies
\begin{align*}
		&\|\pa_x^i u_R(s)\|_{L^\infty(a\leq y\leq b)}
		\leq C_0\|\omega_R(s)\|_{Y_1(s)}
		+C_0\|\omega_R(s)\|_{L^1(z\geq3)} \\
		&\quad\qquad\qquad\qquad+C_0 \left\|(1+|\xi|^2)^{-\f12}\right\|_{L^2_\xi}\left\|\left\|(1+|\xi|^2)^{\f12}(\pa_x^i\omega_R)_\xi(s,z)\right\|_{L^2_z(a-\frac{1}{100}\leq z\leq{b+\frac{1}{100}})}\right\|_{L^2_\xi}\\
		&\leq C_0E_b(s)
		+C_0\|\omega_R(s)\|_{H^{i+1}(a-\frac{1}{100}\leq z\leq{b+\frac{1}{100}})}
		+C_0e^{-\frac{C_1}{\nu s}}\|e^\Psi \psi\chi_m\omega_R(s)\|_{L^2}
		+C_0\|\mathcal W_R\|_{L^1}\\
		&\leq C_0\nu E(s)
		+C_0\|\omega_R(s)\|_{H^{i+1}(a-\frac{1}{100}\leq z\leq{b+\frac{1}{100}})}.
	\end{align*}

\end{proof}


Next lemma provides estimates used to handle transport terms near the point vortex.

\begin{lemma}\label{lem: velocity estimates 3}
	(1) For $U_R$ on $\operatorname{supp}\chi_{vp}$, we have for $X\in\operatorname{supp}\chi_{vp}$, $\eta=\frac{X-X(s)}{\sqrt{\nu s}}$,
	\begin{align*}
		|U_R(s,X)|
		&\leq C\nu E(s)
		+Ce^{-\frac{C_1}{\nu s}}E_m(s)^{3/4} D_m(s)^{1/4}
		+\frac{1}{(\nu s)^{1/2}}\left|\mathcal V^{\mathcal W_R}(\frac{X-X(s)}{(\nu s)^{1/2}},s)\right|\\
		&\qquad
		+\frac{1}{(\nu s)^{1/2}}\left|\widetilde{\mathcal V^{\mathcal W_R}}(\frac{X-X(s)^\ast}{(\nu s)^{1/2}},s)\right|.
	\end{align*}
	
	(2) The following estimate will be used to describe the interaction between the point vortex and boundary effect
	\begin{align*}
		\left\|BS_{\mathbb R^2_+}[\chi_b\omega_R]\right\|_{L^\infty(\chi_{vp})}
		&\leq C(\nu t)^{3/2}\big(1+E(t)\big)^4.
	\end{align*}
\end{lemma}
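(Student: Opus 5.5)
For part (1) I plan to split the vorticity with the cut-offs $\chi_{vp}$, $\chi_m$, $\chi_b$ (and $\zeta_1,\zeta_2$ from \eqref{def: zeta 12}), writing
\begin{align*}
U_R=BS_{\mathbb R^2_+}[\chi_{vp}\omega_R]+BS_{\mathbb R^2_+}[\zeta_1\omega_R]+BS_{\mathbb R^2_+}[(1-\chi_{vp})(1-\zeta_1)\omega_R].
\end{align*}
The last two pieces are treated exactly as in the proof of Lemma \ref{lem: velocity estimates 2}(1). The boundary piece $\zeta_1\omega_R$ is supported in $y\le 1/2$, while $X\in\operatorname{supp}\chi_{vp}$ forces $y\ge 14$. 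Hence Lemma \ref{lem: velocity formula} gives an $e^{-c|\xi|}$ factor, and the piece is bounded by $C\|\omega_R\|_{Y_1(s)}\le CE_b(s)\le C\nu E(s)$. The middle piece is handled by the $L^{4/3}$--$L^4$ interpolation together with the weight $e^{\Psi}\psi$; since $\Psi\gtrsim \eps_0/(\nu s)$ there, this yields $Ce^{-C_1/(\nu s)}E_m^{3/4}D_m^{1/4}$.

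For the vortex piece I use \eqref{BS law formulation 2}: the direct kernel plus the reflected one. Substituting $Y=X(s)+(\nu s)^{1/2}\zeta$ in \eqref{def: ssv rem}, the direct kernel becomes exactly $(\nu s)^{-1/2}\mathcal V^{\mathcal W_R}(\eta,s)$, using $\frac{\alpha}{\nu s}\cdot \nu s=\alpha$ (the constant $\alpha$ is absorbed into $C$). For the reflected kernel, $Y^\ast=X(s)^\ast+(\nu s)^{1/2}\zeta^\ast$. A change of variables combined with the definition \eqref{def: tilde-F} then gives $(\nu s)^{-1/2}\widetilde{\mathcal V^{\mathcal W_R}}((X-X(s)^\ast)/(\nu s)^{1/2},s)$; this is the same computation as in Lemma \ref{lem: derivation of velocity formula}. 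This proves (1).

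For part (2) I work in Fourier variables via Lemma \ref{lem: velocity formula}. For $X\in\operatorname{supp}\chi_{vp}$ we have $y\ge14$, while $\chi_b\omega_R$ lives in $z\le3$, so only the $\int_0^y$ part contributes:
\begin{align*}
|(BS[\chi_b\omega_R])_\xi(y)|\le \int_0^3 e^{-|\xi|(y-z)}\,\big|1-e^{-2|\xi|z}\big|\,|(\chi_b\omega_R)_\xi(z)|\,dz .
\end{align*}
The key cancellation is $|1-e^{-2|\xi|z}|\le 2|\xi|z$, which gains a factor $z$. The factor $|\xi|e^{-|\xi|(y-z)}\le Ce^{-10|\xi|}$ is harmless, using $y-z\ge 11$. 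Next I apply the second estimate of Proposition \ref{prop: est of Eb(t)}, which gives the pointwise-in-$z$ bound
\begin{align*}
\big\|e^{-|\xi|}(\chi_b\omega_R)_\xi(z)\big\|_{L^1_\xi}\le C(\nu t)^{1/2}e^{-\eps_0z^2/(\nu t)}(1+E)^4 .
\end{align*}
Integrating in $z$,
\begin{align*}
\int_0^\infty z\,e^{-\eps_0 z^2/(\nu t)}\,dz\sim \nu t/\eps_0,
\end{align*}
so the total is $C(\nu t)^{3/2}(1+E)^4$. The $L^\infty_x$ norm is then bounded by the $L^1_\xi$ norm. The $v$-component is analogous and gets the same factor $z$ from $1-e^{-2|\xi|z}$.

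The main obstacle is the reflected term in (1). Its argument is centred at $X(s)^\ast$, at distance $\gtrsim 30/(\nu s)^{1/2}$ from the support of $\mathcal W_R$, and the sign and orientation conventions of $\widetilde{\cdot}$ must match \eqref{def: tilde-F} precisely. In (2), the care needed is to apply the weighted bound of Proposition \ref{prop: est of Eb(t)} uniformly in $z$, not merely in $L^1_z$.
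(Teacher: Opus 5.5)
Your proposal is correct and is essentially the paper's own proof. For (1) you use the same splitting into $\zeta_1\omega_R$ (bounded by $\|\omega_R\|_{Y_1(s)}\le C\nu E(s)$), $(1-\zeta_1-\chi_{vp})\omega_R$ (the $L^{4/3}$--$L^4$ interpolation with the weight $e^{\Psi}\psi$) and $\chi_{vp}\omega_R$ (the self-similar rescaling of Lemma~\ref{lem: derivation of velocity formula}). For (2) you use the Fourier representation of Lemma~\ref{lem: velocity formula} with the gain $|1-e^{-2|\xi|z}|\le 2|\xi|z$, paired with the $\sup_z$ Gaussian-weighted bound of Proposition~\ref{prop: est of Eb(t)}, so that $\int_0^\infty z e^{-\eps_0 z^2/(\nu t)}dz\sim\nu t$.

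One small caution, which the paper shares: $\Psi$ is not bounded below by $c\eps_0/(\nu s)$ on the strip $\frac38<y<\frac12$. There $1-\zeta_1\neq0$ but $\theta$ may be close to $1$, so $\Psi$ can vanish. To justify the factor $e^{-C_1/(\nu s)}$ it is cleaner to take $\zeta_1\equiv1$ up to $y=\frac12$; the $Y_1$ norm still covers that region.
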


\begin{proof}
	(1) Recalling $\zeta_1$ defined by \eqref{def: zeta 12}, we deduce that
	\begin{align*}
		&|U_R(s,X)|
		\leq \|BS_{\mathbb R^2_+}[\zeta_1\omega_R]\|_{L^\infty}
		+\|BS_{\mathbb R^2_+}[(1-\zeta_1-\chi_{vp})\omega_R]\|_{L^\infty}
		+|BS_{\mathbb R^2_+}[\chi_{vp}\omega_R](s,X)|\\
		&\leq C\|\omega_R(s)\|_{Y_1(s)}
		+Ce^{-\frac{C_1}{\nu s}}\|e^\Psi\psi\chi_m\omega_R\|_{L^2}^{3/4}\|e^\Psi\psi\nabla(\chi_m\omega_R)\|_{L^2}^{1/4}\\
		&\qquad+\frac{1}{(\nu s)^{1/2}}\left|\mathcal V^{\mathcal W_R}(\frac{X-X(s)}{(\nu s)^{1/2}},s)\right|
		+\frac{1}{(\nu s)^{1/2}}\left|\widetilde{\mathcal V^{\mathcal W_R}}(\frac{X-X(s)^\ast}{(\nu s)^{1/2}},s)\right| \\
		&\leq C\nu E(s)
		+Ce^{-\frac{C_1}{\nu s}}E_m(s)^{3/4} D_m(s)^{1/4}
		+\frac{1}{(\nu s)^{1/2}}\left|\mathcal V^{\mathcal W_R}(\frac{X-X(s)}{(\nu s)^{1/2}},s)\right|\\
		&\qquad
		+\frac{1}{(\nu s)^{1/2}}\left|\widetilde{\mathcal V^{\mathcal W_R}}(\frac{X-X(s)^\ast}{(\nu s)^{1/2}},s)\right|.
	\end{align*}
	
	(2) We denote $BS_{\mathbb R^2_+}[\chi_b\omega_R]=(u_1,u_2)$. By Lemma \ref{lem: velocity formula}, we have
	\begin{align*}
		&\left\|BS_{\mathbb R^2_+}[\chi_b\omega_R]\right\|_{L^\infty(\chi_{vp})}
		\leq \Big\|\sup_{10\leq y\leq30}(u_1)_\xi(y)\Big\|_{L^1_\xi}
		+\Big\|\sup_{10\leq y\leq30}(u_2)_\xi(y)\Big\|_{L^1_\xi}\\
		&\leq \left\| e^{-10|\xi|}\int_0^y \frac{1-e^{-2|\xi|z}}{2z}z\chi_b(z)|(\omega_R)_\xi(z)|dz \right\|_{L^1_\xi}
		\leq C\int_0^{+\infty}z \left\|e^{-|\xi|}(\chi_b\omega_R)_\xi(z)\right\|_{L^1_\xi}dz\\
		&\leq C\int_0^{+\infty}ze^{-\frac{\eps_0z^2}{\nu s}}dz
		\cdot\sup_{z>0}  e^{\frac{\eps_0 z^2}{\nu t}}\left\|e^{-|\xi|}(\chi_b\omega_R)_\xi(z,t)\right\|_{L^1_\xi\cap L^2_\xi}\\
		&\leq C(\nu t)^{3/2}\big(1+E(t)\big)^4,
	\end{align*}
	where we used the second inequality in Propositions \ref{prop: est of Eb(t)}.
\end{proof}

\section{Energy estimate near the point vortex}\label{sec: est of Evp}

This section is devoted to the proof of Proposition \ref{prop: est of Evp(t)}.

\subsection{Error system and basic estimates}

 Multiplying $\chi_{vp}$ on \eqref{eq: omega R, U R}, with \eqref{def: ssv rem} and definitions of approximate solutions in hand, we derive the system of $\mathcal W_R$ by a direct computation:
\begin{align}\label{eq: mathcal W R}
	\left\{
	\begin{aligned}
		&t\pa_t\mathcal W_R(\eta,t)-\mathcal L\mathcal W_R(\eta,t)
	+\frac{\alpha}{\nu}\left\{\mathcal V^{\mathcal W_a}\cdot\nabla_\eta\mathcal W_R
	+\mathcal V^{\mathcal W_R}\cdot\nabla_\eta\mathcal W_a \right\}(\eta,t)
	=\sum_{1\leq i\leq5}F_i,\\
	&\lim_{t\rightarrow0}\mathcal W_R=0,
	\end{aligned}
	\right.
\end{align}
where
\begin{align*}
	F_1=\frac{\alpha}{\nu}\left\{ \widetilde{\mathcal V^{\mathcal W_a}}(\eta+\widetilde\eta,t)
	-\mathcal V^G(\widetilde\eta,t) \right\}\cdot\nabla_\eta\mathcal W_R(\eta,t),
\end{align*}
\begin{align*}
	F_2=\Big( t^{1/2}X_1(t)+(\nu t)^{1/2}X_2(t)-\sqrt{\frac{t}{\nu}}U_{a,b} \Big)\cdot\nabla_\eta\mathcal W_R(\eta,t)
	-\chi_{vp}\mathcal R_{vp}(\eta,t) ,
\end{align*}
\begin{align*}
	F_3=\frac{\alpha}{\nu}\widetilde{\mathcal V^{\mathcal W_R}}(\eta+\frac{X(t)-X(t)^\ast}{\sqrt{\nu t}},t)\cdot\nabla_\eta\mathcal W_a(\eta,t),
\end{align*}
\begin{align*}
	F_4=-\sqrt{\frac{t}{\nu}}U_R\cdot\nabla_\eta\mathcal W_R(\eta,t)
	-\chi_{vp}\sqrt{\frac{t}{\nu}}
	BS_{\mathbb R^2_+}[(1-\chi_{vp})\omega_R]\cdot\nabla_\eta\mathcal W_a(\eta,t),
\end{align*}
and
\begin{align*}
	F_5=&-\frac{\alpha}{\nu}\left\{\mathcal V^{(1-\chi_{vp})\mathcal W_a}(\eta,t)
	-\widetilde{\mathcal V^{(1-\chi_{vp})\mathcal W_a}}(\eta+\widetilde\eta,t)\right\}\cdot\nabla_\eta\mathcal W_R(\eta,t)\\
	&+\frac{\alpha}{\nu}(1-\chi_{vp})\left\{\mathcal V^{\mathcal W_R}(\eta,t)
	-\widetilde{\mathcal V^{\mathcal W_R}}(\eta+\widetilde\eta,t)\right\}\cdot\nabla_\eta\mathcal W_a(\eta,t)\\
	&+t\chi_{vp}U_R\cdot\nabla\left\{(1-\chi_{vp})\mathcal W_a(\eta,t)\right\}
	+\frac{\nu t^2}{\alpha}U_R\cdot\nabla\chi_{vp}\omega_R
	-\frac{2(\nu t)^2}{\alpha}\nabla\chi_{vp}\cdot\nabla\omega_R\\
	&-\frac{(\nu t)^2}{\alpha}\Delta\chi_{vp}\omega_R
	+\frac{\nu t^2}{\alpha}(U_{a,b}+U_{a,vp})\cdot\nabla\chi_{vp}\omega_R.
\end{align*}

Before estimating $E_{vp}(t)$, we enumerate several lemmas which will be used later. The first lemma shows that the zero moment of $\mathcal W_R$ is sufficiently small in terms of $\nu t$.

\begin{lemma}\label{lem: est of zero moment}
 There exist a $T$ and  $C, C_1>0$ such that the following holds
 	\begin{align*}
 		\left|\int_{\mathbb R^2}\mathcal W_R(t,\eta)d\eta\right|
 		\leq C\big(1+ E(t)\big)^2 e^{-\frac{C_1}{\nu t}},\quad \textrm{for} \quad 0<t\leq T.
 	\end{align*}
\end{lemma}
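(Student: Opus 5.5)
Since $\int_{\mathbb R^2}\mathcal W_R\,d\eta=\frac{\nu t}{\alpha}\int\chi_{vp}\omega_R\,dX$, I will estimate the localized mass $M(t):=\int_{\mathbb R^2_+}\chi_{vp}\omega_R\,dX$ by differentiating in time and integrating. The approach is a localized mass identity for the remainder system \eqref{eq: omega R, U R}. The key idea is that every flux into $\operatorname{supp}\chi_{vp}$ can only enter through $\operatorname{supp}\nabla\chi_{vp}\subset\{5\le|X-X_0|\le6\}$. That annulus lies in the middle region, where all quantities are exponentially small in $\nu t$.

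First step: the exact identity. Multiply \eqref{eq: omega R, U R} by $\chi_{vp}$ and integrate over $\mathbb R^2_+$. Since $\operatorname{div}U=0$ and $\chi_{vp}$ is compactly supported away from the boundary, integration by parts gives
\begin{align*}
\frac{d}{dt}M(t)=\int\Big(\nu\,\omega_R\Delta\chi_{vp}+\omega_R\,(U_a+U_R)\cdot\nabla\chi_{vp}+\omega_a\,U_R\cdot\nabla\chi_{vp}\Big)dX-\int\chi_{vp}(R_b+R_{vp})\,dX.
\end{align*}
Here $U_a\cdot\nabla\omega_R+U_R\cdot\nabla\omega_a+U_R\cdot\nabla\omega_R=\operatorname{div}(U\omega_R+U_R\omega_a)$.

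Second step: the source terms. By \eqref{support property of app and rem}, $\chi_{vp}R_b=0$. For $\int\chi_{vp}R_{vp}$, I go back to \eqref{def: Rvp original v}. Because $\omega_{a,vp}$ is supported in $\operatorname{supp}\chi_{vp}$ and the velocity is divergence free, the transport and diffusion terms integrate to zero in $\int R_{vp}\,dX$. Moreover $\int\omega_{a,vp}\,dX=\alpha\int\chi_{vp}\mathcal W_a\,d\eta$, and by \eqref{est: app2} this equals $\alpha$ up to $O(e^{-C_1/(\nu t)})$, with time derivative of the same size. The cut-off $\chi_{vp}$ is not identically $1$ on $\operatorname{supp}\omega_{a,vp}$, but the corrections $\int(1-\chi_{vp})R_{vp}$ live where $|\eta|\gtrsim(\nu t)^{-1/2}$. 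There the Gaussian bounds in \eqref{est: app2} and \eqref{est of R} give $Ce^{-C_1/(\nu t)}$, even after the singular prefactors $(\nu t)^{-k}$ are absorbed by shrinking $C_1$.

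Third step: the flux terms on $A:=\operatorname{supp}\nabla\chi_{vp}$.
\begin{itemize}
\item On $A$, $\omega_a$ is exponentially small by the Gaussian decay of $\mathcal W_a$.
\item $U_a$ is bounded on $A$ by \eqref{est: app3}, since $|X-X(t)|\ge4$ there by \eqref{est: X(t)-X0}.
\item $U_R$ is bounded on $A$ by Lemma \ref{lem: velocity estimates 3}(1) and Lemma \ref{lem: velocity estimates 2}, giving at most $C(1+E)$ up to harmless factors.
\item $\omega_R$ on $A$ is controlled by the middle energy: there $\Psi\ge 5\eps_0/(\nu t)$, so $\|\omega_R\|_{L^2(A)}\le Ce^{-5\eps_0/(\nu t)}E_m(t)$. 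Alternatively, the weight $p\ge e^{a_0^2(\nu t)^{-1/2}}$ in region II of $E_{vp}$ can be used.
\end{itemize}
Together these give $|M'(s)|\le C(1+E(s))^2e^{-C_1/(\nu s)}$. Integrating from $0$, using $M(0^+)=0$ from the initial condition of $\mathcal W_R$, and using monotonicity of $s\mapsto e^{-C_1/(\nu s)}$, I get $|M(t)|\le Ct(1+E)^2e^{-C_1/(\nu t)}$. Multiplying by $\nu t/\alpha$ yields the claim.

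Main obstacle: making the exponential smallness on $A$ genuinely uniform, which has two parts.
\begin{itemize}
\item \textbf{Lower bound on $\Psi$.} I must check that $\Psi$ is bounded below on the annulus $5\le|X-X_0|\le6$, where $\theta$ may be between $1/4$ and $1$. This may require using the $E_{vp}$ weight in region II instead of $E_m$.
\item \textbf{The $\nu\,\omega_R\Delta\chi_{vp}$ term.} I also need the $t\to0$ behavior of this term to be integrable. If necessary, I would handle it via the $H^3$ bound in $E_m$ with factor $e^{10\eps_0/(\nu t)}$.
\end{itemize}
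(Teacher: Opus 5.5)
Your argument is correct and rests on the same mechanism as the paper's: a localized mass balance in which mass can only enter through the annulus $A=\operatorname{supp}\nabla\chi_{vp}$, where the $E_m$ weight gives exponential smallness. The difference is that you do the bookkeeping on the remainder equation rather than on the Navier--Stokes equation. The paper differentiates $f(t)=\int\chi_{vp}\omega\,dX$ for the exact solution, so no source terms appear. It then writes $\alpha\int\mathcal W_R\,d\eta=f(t)-f(0)+\int(1-\chi_{vp}^2)\frac{\alpha}{\nu t}\mathcal W_a\,dX$, using $f(0)=\alpha$ from \eqref{initial limit} and $\int\mathcal W_a\,d\eta=1$. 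The approximate solution is therefore only evaluated at the final time, and no properties of $R_{vp}$ are needed.

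Your route produces the extra source $-\int\chi_{vp}R_{vp}$. Your claim that $\frac{d}{dt}\int\omega_{a,vp}\,dX$ is pointwise $O(e^{-C_1/(\nu t)})$ is not justified, since the paper proves no bounds on $\partial_t\mathcal W_a$. It is also unnecessary. Integrate $\int R_{vp}\,dX=\frac{d}{dt}\int\omega_{a,vp}\,dX$ in time instead, and you recover exactly the paper's static identity. The leftover term $\int(1-\chi_{vp})R_{vp}$ is then handled by $R_{vp}=\frac{\alpha}{\nu t^2}\mathcal R_{vp}$ together with \eqref{est of Rvp ultimate}.

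Some smaller points:
\begin{itemize}
\item The scaling is $\int\mathcal W_R\,d\eta=\frac{1}{\alpha}\int\chi_{vp}\omega_R\,dX$, with no factor $\nu t$. This is harmless.
\item For $U_R$ on $A\subset\{\chi_m=1\}$, use Lemma~\ref{lem: velocity estimates 2}(1). Do not use Corollary~\ref{cor: est of UR} or Lemma~\ref{lem: velocity VR pointwise}, because both depend on the lemma you are proving.
\item The bound in Lemma~\ref{lem: velocity estimates 2}(1) contains a term $e^{-C_1/(\nu s)}E^{3/4}D^{1/4}$. So it does not give $|M'(s)|\le C(1+E)^2e^{-C_1/(\nu s)}$ pointwise in time. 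You must integrate in time first and control $\int_0^t e^{-C_1/(\nu s)}D(s)^{1/4}\,ds$ by H\"older, using the dissipation built into $E$, as the paper does.
\item Both of your ``main obstacles'' are non-issues. On $A$ one has $y\ge 14$ and $|X-(0,20)|\ge 5$. Hence $\theta\equiv\frac14$ and $\chi_m\equiv 1$ there, and $\Psi\ge 5\eps_0/(\nu t)$ for $\gamma t\le\frac14$. The viscous term then satisfies $|\nu\int\omega_R\Delta\chi_{vp}|\le C\nu e^{-5\eps_0/(\nu t)}E_m(t)$.
\end{itemize}
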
 
 
\begin{proof}
 	Let $f(t)=\int_{\mathbb R^2_+}\chi_{vp}\omega dxdy$. Then \eqref{def: ssv rem} implies
 	\begin{align*}
 		\alpha\int_{\mathbb R^2}\mathcal W_R(t,\eta)d\eta
 		&=\int_{\mathbb R^2_+}\chi_{vp}\omega_R dxdy
 		=\int_{\mathbb R^2_+}\chi_{vp}\omega dxdy-\int_{\mathbb R^2_+}\chi_{vp}\omega_{a,vp}dxdy\\
 		&=\int_{\mathbb R^2_+}\chi_{vp}\omega dxdy-\int_{\mathbb R^2}\chi_{vp}^2\cdot\frac{\alpha}{\nu t}\mathcal W_a(\frac{X-X(t)}{(\nu t)^{1/2}} ,t)dxdy\\
 		&=f(t)-f(0)+\int_{\mathbb R^2}(1-\chi_{vp}^2)\cdot\frac{\alpha}{\nu t}\mathcal W_a(\frac{X-X(t)}{(\nu t)^{1/2}} ,t)dxdy,
 	\end{align*}
 	here we use $\operatorname{supp}\omega_{a,b}\cap\operatorname{supp}\chi_{vp}=\emptyset$ and Proposition \ref{prop: app estimates}.
 	
 	We use \eqref{eq: NS vorticity}, Lemma \ref{lem: velocity estimates 2}, Proposition \ref{prop: app estimates} and integrate by parts to find for some $C>0$
 	\begin{align*}
 		|f'(t)|&=\left|\int_{\mathbb R^2_+}\chi_{vp}\big(\nu\Delta\omega-U\cdot\nabla\omega\big)dxdy\right|
 		=\left|\int_{\mathbb R^2_+}(\nu\Delta\chi_{vp}
 		+U\cdot\nabla\chi_{vp})\omega dxdy\right|\\
 		&\leq C\big(\nu+\|U\|_{L^4(\nabla\chi_{vp})}\big)\|\omega\|_{L^2(\nabla\chi_{vp})}\\
 		&\leq C\big(\nu+\|U_a\|_{L^4(\nabla\chi_{vp})}
 		+\|U_R\|_{L^\infty(\nabla\chi_{vp})}\big)
 		\big(\|\omega_{a}\|_{L^2(\nabla\chi_{vp})}
 		+\|\omega_{R}\|_{L^2(\nabla\chi_{vp})}\big)\\
 		&\leq C\big(1+\|U_R\|_{L^\infty(\nabla\chi_{vp})}\big)
 		\big(e^{-\frac{C_1}{\nu t}}+e^{-\frac{C_1}{\nu t}}\|e^{\Psi}\psi\chi_m\omega_{R}\|_{L^2}\big)\\
 		&\leq Ce^{-\frac{C_1}{\nu t}}\big(1+E(t)\big)\big(1+\nu E(t)+e^{-\frac{C'}{\nu t}}E(t)^{3/4}D(t)^{1/4}\big),
 	\end{align*}
 	which implies 
 	\begin{align*}
 		|f(t)-f(0)|&\leq  C\int_0^t e^{-\frac{C_1}{\nu s}}\big(1+E(s)\big)\big(1+\nu E(s)+e^{-\frac{C'}{\nu s}}E(s)^{3/4}D(s)^{1/4}\big)ds\\
 		&\leq Ce^{-\frac{C_1}{\nu t}}\big(1+E(t)\big)^2,
 	\end{align*}
 	where we recall the definition of $E(t)$ incorporates dissipative terms.

 	Due to $\operatorname{supp}\chi_{vp}$, it holds that
 	\begin{align*}
 		\left|\int_{\mathbb R^2}(1-\chi_{vp}^2)\cdot\frac{\alpha}{\nu t}\mathcal W_a(\frac{X-X(t)}{(\nu t)^{1/2}} ,t)dxdy\right|
 		\leq C e^{-\frac{C_1}{\nu t}}.
 	\end{align*}
 	
 	Collecting these estimates together, we obtain the desired result.
\end{proof}

Next lemma will be used to handle the dissipative term.
 \begin{lemma}\label{lem: dissi term vp}
 	Recall $\mathcal L=\Delta_\eta+\frac{\eta}{2}\cdot\nabla_\eta+1$. It holds that
 	\begin{align*}
 		\langle \mathcal W,\mathcal L\mathcal W\rangle_{\mathcal Y}
 		&=\|\mathcal W\|^2_{\mathcal Y}
 		-\|\nabla_\eta\mathcal W\|^2_{\mathcal Y}\\
 		&\leq -\frac{1}{12}\|\mathcal W\|^2_{\mathcal Y}
 		-\frac{1}{96}\|\eta\mathcal W\|^2_{\mathcal Y}
 		-\frac{1}{6}\|\nabla_\eta\mathcal W\|^2_{\mathcal Y}
 		+C\left|\int_{\mathbb R^2}\mathcal W d\eta\right|^2.
 	\end{align*}
 \end{lemma}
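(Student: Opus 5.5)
The plan is to prove the identity by a direct integration by parts in the weighted space $\mathcal Y$. The inequality will then follow from a spectral gap for the conjugated operator, which becomes a harmonic oscillator; this is the classical argument of \cite{Gallay 2}.

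\textbf{Identity.} Write $p_0(\eta)=e^{|\eta|^2/4}$, so that $\nabla p_0=\frac{\eta}{2}p_0$. Integrating by parts,
\begin{align*}
\int \mathcal W\,\Delta_\eta\mathcal W\,p_0\,d\eta=-\|\nabla_\eta\mathcal W\|_{\mathcal Y}^2-\int \mathcal W\,\nabla_\eta\mathcal W\cdot\frac{\eta}{2}\,p_0\,d\eta .
\end{align*}
The last term cancels exactly with the contribution $\int \mathcal W\,\frac{\eta}{2}\cdot\nabla_\eta\mathcal W\,p_0\,d\eta$ of the drift. The zeroth-order term $+1$ in $\mathcal L$ gives $\|\mathcal W\|_{\mathcal Y}^2$. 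This yields the identity $\langle \mathcal W,\mathcal L\mathcal W\rangle_{\mathcal Y}=\|\mathcal W\|_{\mathcal Y}^2-\|\nabla_\eta\mathcal W\|_{\mathcal Y}^2$. Here one assumes enough decay for the boundary terms to vanish; this is harmless for $\mathcal W_R$, which is compactly supported by \eqref{supp of WR}.

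\textbf{Conjugation.} Set $v=e^{|\eta|^2/8}\mathcal W$. Then $\nabla_\eta\mathcal W\, p_0^{1/2}=\nabla v-\frac{\eta}{4}v$, and using $\int v\,\eta\cdot\nabla v=-\int v^2$ we get
\begin{align*}
\|\nabla_\eta\mathcal W\|_{\mathcal Y}^2=\int|\nabla v|^2+\frac{|\eta|^2}{16}v^2\,d\eta+\frac12\|v\|_{L^2}^2,
\end{align*}
so that
\begin{align*}
Q:=\|\nabla_\eta\mathcal W\|^2_{\mathcal Y}-\|\mathcal W\|^2_{\mathcal Y}=\big\langle (H-\tfrac12)v,v\big\rangle_{L^2},\qquad H=-\Delta+\frac{|\eta|^2}{16}.
\end{align*}
The operator $H$ is a two-dimensional harmonic oscillator with frequency $\frac14$. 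Its ground energy is $\frac12$, with ground state $e^{-|\eta|^2/8}$, which corresponds to $\mathcal W=G$. The next eigenvalue is $1$, so the spectral gap is $\frac12$.

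\textbf{Coercivity.} Decompose $\mathcal W=cG+\mathcal W^\perp$ with $\langle \mathcal W^\perp,G\rangle_{\mathcal Y}=0$. Since $\langle f,G\rangle_{\mathcal Y}=\frac1{4\pi}\int f\,d\eta$, we have $c=\int\mathcal W\,d\eta$ (because $\int G=1$). The spectral gap gives
\begin{align*}
Q\ge \tfrac12\|\mathcal W^\perp\|_{\mathcal Y}^2\ge \tfrac14\|\mathcal W\|_{\mathcal Y}^2-C\Big|\int\mathcal W\,d\eta\Big|^2 .
\end{align*}

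\textbf{Recovering the gradient and moment terms.} This is the main obstacle: only bookkeeping of constants, but it must produce the stated fractions. Write $Q=\theta Q+(1-\theta)Q$ for a small $\theta$, say $\theta=\frac13$. Bound $(1-\theta)Q$ from below by the previous step. In $\theta Q$, use the conjugated form together with
\begin{align*}
\|\nabla_\eta\mathcal W\|_{\mathcal Y}^2\le 2\int|\nabla v|^2+\frac18\int|\eta|^2v^2, \qquad \|\eta\mathcal W\|^2_{\mathcal Y}=\int|\eta|^2 v^2 .
\end{align*}
Then $\int|\nabla v|^2+\frac{|\eta|^2}{16}v^2$ controls a multiple of $\|\nabla_\eta\mathcal W\|^2_{\mathcal Y}+\|\eta\mathcal W\|^2_{\mathcal Y}$, at the price of the term $-\frac{\theta}{2}\|\mathcal W\|_{\mathcal Y}^2$. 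That loss is absorbed by the $\frac{1-\theta}{4}\|\mathcal W\|_{\mathcal Y}^2$ gained from the gap, which leaves a positive multiple of $\|\mathcal W\|^2_{\mathcal Y}$. Tuning $\theta$, and splitting the $|\eta|^2$ and gradient contributions accordingly, should produce the constants $\frac1{12},\frac1{96},\frac16$. Any slightly different constants would serve the later energy estimates equally well.
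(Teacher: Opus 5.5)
Your route is sound and, in fact, it is the standard argument behind the estimate the paper only cites. The paper invokes Lemma 5.1 of \cite{Gallay 3} for $\int\mathcal W\,d\eta=0$ and then subtracts $(\int\mathcal W\,d\eta)\,G$. You instead conjugate to the oscillator $H=-\Delta+\frac{|\eta|^2}{16}$ and use its gap $\frac12$ directly, which makes the argument self-contained.

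Treating the mean through the $\mathcal Y$-orthogonal splitting $\mathcal W=cG+\mathcal W^\perp$ is, if anything, cleaner than the paper's reduction. The reason is that $\|\eta\mathcal W\|^2_{\mathcal Y}-\|\eta\mathcal W^\perp\|^2_{\mathcal Y}$ contains the cross term $\frac{c}{2\pi}\int|\eta|^2\mathcal W^\perp\,d\eta$, which is not bounded by $Cc^2$.

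\textbf{The gap: the final step cannot give the stated constants.} As you set it up, no choice of $\theta$ rescues it, because the constants $\frac1{12},\frac1{96},\frac16$ are exactly sharp for the convex-combination argument. Write $A=\int|\nabla v|^2$, $B=\|\eta\mathcal W\|^2_{\mathcal Y}$, $N=\|\mathcal W\|^2_{\mathcal Y}$ and $c=\int\mathcal W\,d\eta$.

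\begin{itemize}
\item Using your exact identity $\|\nabla_\eta\mathcal W\|^2_{\mathcal Y}=A+\frac B{16}+\frac N2$, the target becomes $Q\ge\frac16A+\frac1{48}B+\frac16N-Cc^2$.
\item In $\theta Q+(1-\theta)Q$, the only source of $B$ is $\frac{\theta}{16}B$, so you need $\theta\ge\frac13$.
\item The coefficient of $N$ is $(1-\theta)g-\frac\theta2$, where $g$ is the constant in $Q\ge gN-Cc^2$, and it must be at least $\frac16$.
\item With your lossy $g=\frac14$ this forces $\theta\le\frac19$, a contradiction.
\item Your Cauchy--Schwarz bound $\|\nabla_\eta\mathcal W\|^2_{\mathcal Y}\le 2A+\frac B8$ is also too lossy. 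It raises the required $B$-coefficient to $\frac1{32}$, hence $\theta\ge\frac12$. This is incompatible even with $g=\frac12$, since the $\frac1{12}N$ requirement then allows only $\theta\le\frac5{12}$.
\end{itemize}

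\textbf{The fix.} Lose nothing in either step. Orthogonality gives $\|\mathcal W^\perp\|^2_{\mathcal Y}=N-c^2\|G\|^2_{\mathcal Y}$ exactly, so $Q\ge\frac12N-\frac12c^2\|G\|^2_{\mathcal Y}$. Keep the exact gradient identity, and take $\theta=\frac13$:
\[
Q\ \ge\ \tfrac13A+\tfrac1{48}B+\tfrac16N-\tfrac13c^2\|G\|^2_{\mathcal Y}\ \ge\ \tfrac16\|\nabla_\eta\mathcal W\|^2_{\mathcal Y}+\tfrac1{96}\|\eta\mathcal W\|^2_{\mathcal Y}+\tfrac1{12}\|\mathcal W\|^2_{\mathcal Y}-C\Big|\int\mathcal W\,d\eta\Big|^2 .
\]
The second inequality holds because $\frac16\big(A+\frac B{16}+\frac N2\big)+\frac B{96}+\frac N{12}=\frac16A+\frac1{48}B+\frac16N$. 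This is exactly the lemma.

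Your remark that other constants would serve the later energy estimates may well be true. However, the statement you were asked to prove fixes these particular constants, so the bookkeeping has to be done this way.
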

 
 \begin{proof}
 	The case $\int_{\mathbb R^2}\mathcal W d\eta=0$ has been proved in Lemma 5.1 of \cite{Gallay 3}. General case is obtained from replacing $\mathcal W$ by $\mathcal W-\int_{\mathbb R^2}\mathcal W d\eta\cdot G$.
 \end{proof}

Next lemma provides velocity estimates.
\begin{lemma}\label{lem: velocity VR pointwise}
	For $\mathcal V^{\mathcal W_R}=(\mathcal V_1, \mathcal V_2)=BS_{\mathbb R^2}[\mathcal W_R]$, it holds that
	\begin{align*}
		&(1+|\eta|^2)|\mathcal V^{\mathcal W_R}(\eta,t)|\\
		&\leq C\left\|p^{1/2}\mathcal W_R\right\|_{L^2}
		+C\left\|p^{1/2}\mathcal W_R\right\|_{L^2}^{3/4}\left\|p^{1/2}\nabla_\eta\mathcal W_R\right\|_{L^2}^{1/4}
		+C|\eta|\big(1+ E(t)\big)^2 e^{-\frac{C_1}{\nu t}}.
	\end{align*}
\end{lemma}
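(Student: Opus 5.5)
We split into the regions $|\eta|\le 1$ and $|\eta|\ge 1$. In both regions we use the Biot--Savart kernel $K(\eta)=\frac{1}{2\pi}\frac{\eta^\perp}{|\eta|^2}$, so that $\mathcal V^{\mathcal W_R}=K*\mathcal W_R$. The weight $p$ satisfies $p\ge 1$ everywhere and $p\ge e^{|\eta|^2/4}$ on region I, while on region II it is huge, namely $p\ge e^{a_0^2(\nu t)^{-1/2}}$. Hence weighted Lebesgue norms of $\mathcal W_R$ of the type $\|(1+|\eta|)^k\mathcal W_R\|_{L^q}$, $q\in[1,2]$, are bounded by $C\|p^{1/2}\mathcal W_R\|_{L^2}$ via Cauchy--Schwarz. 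On region II the factor $|\eta|^k\le C(\nu t)^{-k/2}$ is absorbed by $e^{-a_0^2(\nu t)^{-1/2}/2}$, and we use $\operatorname{supp}\mathcal W_R\subseteq\{|\eta|\le 7(\nu t)^{-1/2}\}$ from \eqref{supp of WR}.

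\textbf{Bounded region $|\eta|\le 1$.} We apply the standard interpolation estimate $\|K*f\|_{L^\infty}\le C\|f\|_{L^{4/3}}^{1/2}\|f\|_{L^4}^{1/2}$. The $L^{4/3}$ norm is controlled by the weighted $L^2$ norm. The $L^4$ norm is controlled by Gagliardo--Nirenberg, $\|f\|_{L^4}\le C\|f\|_{L^2}^{1/2}\|\nabla f\|_{L^2}^{1/2}$. Since $p\ge1$, this gives
\[
|\mathcal V^{\mathcal W_R}|\le C\|p^{1/2}\mathcal W_R\|_{L^2}^{3/4}\|p^{1/2}\nabla_\eta\mathcal W_R\|_{L^2}^{1/4}.
\]
This produces the middle term of the statement.

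\textbf{Far region $|\eta|\ge1$.} We subtract the mass. Set $m=\int\mathcal W_R\,d\eta$ and write
\[
\mathcal V^{\mathcal W_R}(\eta)=m K(\eta)+\int\big(K(\eta-\zeta)-K(\eta)\big)\mathcal W_R(\zeta)\,d\zeta .
\]
For the first term, Lemma \ref{lem: est of zero moment} gives $|\eta|^2|mK(\eta)|\le C|\eta|(1+E(t))^2e^{-C_1/(\nu t)}$, which is the last term of the statement. The second term is split into the parts $|\zeta|\le|\eta|/2$ and $|\zeta|\ge|\eta|/2$.

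On $|\zeta|\le|\eta|/2$, we use the mean value bound $|K(\eta-\zeta)-K(\eta)|\le C|\zeta|/|\eta|^2$. This yields a contribution $C|\eta|^{-2}\int|\zeta||\mathcal W_R|\,d\zeta\le C|\eta|^{-2}\|p^{1/2}\mathcal W_R\|_{L^2}$, using $\int|\zeta|^2p^{-1}<\infty$.

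On $|\zeta|\ge|\eta|/2$, we bound $|K(\eta)|\int_{|\zeta|\ge|\eta|/2}|\mathcal W_R|$ by $|\eta|^{-1}\cdot|\eta|^{-1}\int|\zeta||\mathcal W_R|$, which is handled as before. We bound the singular piece $\int_{|\zeta|\ge|\eta|/2}|K(\eta-\zeta)||\mathcal W_R(\zeta)|\,d\zeta$ by inserting the weight $|\zeta|^2/|\eta|^2\gtrsim 1$. The resulting integral $\int|K(\eta-\zeta)||\zeta|^2|\mathcal W_R(\zeta)|\,d\zeta$ is treated by the $L^{4/3}$--$L^4$ interpolation applied to $f=|\zeta|^2\mathcal W_R$. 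Here $\|f\|_{L^4}$ is again reduced by Gagliardo--Nirenberg to $\||\zeta|^2\mathcal W_R\|_{L^2}$ and $\|\nabla(|\zeta|^2\mathcal W_R)\|_{L^2}$, both controlled by the $p^{1/2}$-weighted norms, with at most polynomial losses in region II absorbed as described above. This gives the bound $C|\eta|^{-2}\|p^{1/2}\mathcal W_R\|_{L^2}^{3/4}\|p^{1/2}\nabla\mathcal W_R\|_{L^2}^{1/4}$.

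The main obstacle is this last piece, specifically justifying that $p^{1/2}$ dominates $|\zeta|^2$ together with its gradient uniformly in $\nu t$. Across the interface between regions I and II the weight is continuous but not smooth. We therefore avoid differentiating $p$ and only use the pointwise bound $|\zeta|^2\le C p^{1/2}$, which holds on region I, and on region II because $(\nu t)^{-1}\ll e^{a_0^2(\nu t)^{-1/2}/2}$. We then apply $\nabla(|\zeta|^2\mathcal W)=2\zeta\mathcal W+|\zeta|^2\nabla\mathcal W$ directly.
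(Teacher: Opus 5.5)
Your proof is correct and follows the paper's argument. It uses the same $|\eta|\le 1$ / $|\eta|\ge 1$ split, the $L^{4/3}$--$L^4$ interpolation with Gagliardo--Nirenberg near the origin, mass subtraction controlled by Lemma \ref{lem: est of zero moment}, and the same interpolation applied to $|\zeta|^2\mathcal W_R$ for the singular part. The only cosmetic difference is that the paper bounds the kernel difference globally by $C|\eta'|/(|\eta||\eta-\eta'|)$ via an exact algebraic identity and then uses $|\eta|\le|\eta-\eta'|+|\eta'|$, where you split $\zeta$ into near and far parts. One small slip: the term $2\zeta\mathcal W_R$ in $\nabla(|\zeta|^2\mathcal W_R)$ also produces a $C|\eta|^{-2}\|p^{1/2}\mathcal W_R\|_{L^2}$ contribution, which you omitted but which is harmless since it is already in the statement.
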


\begin{proof}
	For $|\eta|\leq1$, Sobolev embedding implies
	\begin{align*}
		\left|\mathcal V^{\mathcal W_R}(\eta,t)\right|
		&\leq C\|\mathcal W_R\|_{L^{4/3}}^{1/2}\|\mathcal W_R\|_{L^4}^{1/2}
		\leq C\left\|p^{1/2}\mathcal W_R\right\|^{3/4}_{L^2}\left\|p^{1/2}\nabla_\eta\mathcal W_R\right\|^{1/4}_{L^2}.
	\end{align*}
	
	For $|\eta|\geq1$, let $\eta=(\eta_1,\eta_2), \eta'=(\eta_1',\eta_2')$, a direct computation gives
	\begin{align*}
		\frac{\eta_1-\eta_1'}{|\eta-\eta'|^2}
		-\frac{\eta_1}{|\eta|^2}
		=\frac{1}{|\eta|^2|\eta-\eta'|^2}
		\big((\eta_1-\eta_1')(\eta\cdot\eta')
		+(\eta_2-\eta_2')(\eta^\perp\cdot\eta')\big),
	\end{align*}
	which along with \eqref{BS law formulation 1} implies
	\begin{align*}
		|\mathcal V_2(\eta,t)|
		&\leq \frac{1}{2\pi}\left|\int_{\mathbb R^2}(\frac{\eta_1-\eta_1'}{|\eta-\eta'|^2}
		-\frac{\eta_1}{|\eta|^2})\mathcal W_R(\widetilde\eta,t)\widetilde\eta\right|
		+\frac{1}{2\pi}\frac{1}{|\eta|}\left|\int_{\mathbb R^2}\mathcal W_R(\eta',t)d\eta'\right|\\
		&\leq C\int_{\mathbb R^2}\frac{1}{|\eta||\eta-\eta'|}|\eta'||\mathcal W_R(\eta',t)|d\eta'
		+\frac{C}{|\eta|}\big(1+ E(t)\big)^2 e^{-\frac{C_1}{\nu t}},
	\end{align*}
	where we used Lemma \ref{lem: est of zero moment}.
	
	Thus, by Hardy-Littlewood-Sobolev inequality, we have
	\begin{align*}
		&|\eta|^2|\mathcal V_2(\eta,t)|
		\leq C\int_{\mathbb R^2}\frac{|\eta-\eta'|+|\eta'|}{|\eta-\eta'|}|\eta'||\mathcal W_R(\eta',t)|d\eta'
		+C|\eta|\big(1+ E(t)\big)^2 e^{-\frac{C_1}{\nu t}}\\
		&\leq C\int_{\mathbb R^2}|\eta'||\mathcal W_R(\eta',t)|d\eta'
		+C\int_{\mathbb R^2}\frac{1}{|\eta-\eta'|}|\eta'|^2|\mathcal W_R(\eta',t)|d\eta'
		+C|\eta|\big(1+ E(t)\big)^2 e^{-\frac{C_1}{\nu t}}\\
		&\leq C\left\|p^{1/2}\mathcal W_R\right\|_{L^2}
		+C\left\|\eta^2\mathcal W_R(\eta,t)\right\|_{L^{4/3}}^{1/2}
		\left\|\eta^2\mathcal W_R(\eta,t)\right\|_{L^4}^{1/2}
		+C|\eta|\big(1+ E(t)\big)^2 e^{-\frac{C_1}{\nu t}}\\
		&\leq C\left\|p^{1/2}\mathcal W_R\right\|_{L^2}
		+C\left\|p^{1/2}\mathcal W_R\right\|_{L^2}^{3/4}\left\|p^{1/2}\nabla_\eta\mathcal W_R\right\|_{L^2}^{1/4}
		+C|\eta|\big(1+ E(t)\big)^2 e^{-\frac{C_1}{\nu t}}.
	\end{align*}
	$\mathcal V_1$ can be estimated similarly.
\end{proof}

By Lemma \ref{lem: velocity VR pointwise} and Lemma \ref{lem: velocity estimates 3}, we have
\begin{corollary}\label{cor: est of UR}
	For $U_R$ on $\operatorname{supp}\chi_{vp}$, we have for $X\in\operatorname{supp}\chi_{vp}$, $\eta=\frac{X-X(s)}{\sqrt{\nu s}}$,
	\begin{align*}
		|U_R(s,X)|
		\leq& C\nu\big(1+E(s)\big)^2
		+Ce^{-\frac{C_1}{\nu s}}E(s)^{3/4}D_m(s)^{1/4}
		\\
		&+C\big(1+|\eta|\big)^{-2}\big((\nu s)^{1/2}E(s)+(\nu s)^{1/4}E(s)^{3/4}D_{vp}(s)^{1/4}\big).
	\end{align*}
\end{corollary}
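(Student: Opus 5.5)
The plan is to insert the pointwise bound of Lemma \ref{lem: velocity VR pointwise} into the three-term decomposition of Lemma \ref{lem: velocity estimates 3}(1), and then rewrite everything in terms of $E(s)$ and $D(s)$. Lemma \ref{lem: velocity estimates 3}(1) already gives, for $X\in\operatorname{supp}\chi_{vp}$,
\begin{align*}
|U_R(s,X)|\leq C\nu E(s)+Ce^{-\frac{C_1}{\nu s}}E_m(s)^{3/4}D_m(s)^{1/4}+\frac{1}{(\nu s)^{1/2}}\big(|\mathcal V^{\mathcal W_R}(\eta,s)|+|\widetilde{\mathcal V^{\mathcal W_R}}(\eta+\widetilde\eta,s)|\big),
\end{align*}
and $E_m\leq E$, so the first two terms are already in the desired form. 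Everything therefore reduces to the two self-similar velocity terms.

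For the direct term, Lemma \ref{lem: velocity VR pointwise} together with the definitions $E_{vp}(s)\leq \nu s E(s)$ and $D_{vp}\leq D$ gives
\begin{align*}
(1+|\eta|^2)|\mathcal V^{\mathcal W_R}(\eta,s)|\leq C\nu sE(s)+C(\nu s)^{3/4}E(s)^{3/4}D_{vp}(s)^{1/4}+C|\eta|(1+E(s))^2e^{-\frac{C_1}{\nu s}}.
\end{align*}
Dividing by $(\nu s)^{1/2}$ produces $(1+|\eta|)^{-2}\big((\nu s)^{1/2}E+(\nu s)^{1/4}E^{3/4}D_{vp}^{1/4}\big)$, up to constants. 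The zero-moment remainder is handled using the support bound \eqref{supp of WR}: since $|\eta|\lesssim(\nu s)^{-1/2}$, we get $(\nu s)^{-1/2}|\eta|(1+|\eta|^2)^{-1}e^{-C_1/(\nu s)}\leq (\nu s)^{-1/2}e^{-C_1/(\nu s)}\leq C\nu$. This is absorbed into $C\nu(1+E)^2$, which explains the squared power in the statement.

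The reflected term is the point needing care. Its argument is $\eta+\widetilde\eta$, and $|\widetilde\eta|\geq 20(\nu s)^{-1/2}$ by Lemma \ref{lem: ODE est}. Since $|\eta|\leq 8(\nu s)^{-1/2}$ on $\operatorname{supp}\chi_{vp}$, this gives $|\eta+\widetilde\eta|\geq 12(\nu s)^{-1/2}\geq|\eta|$. The reflection only changes signs of components, so $|\widetilde{\mathcal V^{\mathcal W_R}}(\eta+\widetilde\eta)|=|\mathcal V^{\mathcal W_R}(\zeta)|$ at a point $\zeta$ with $|\zeta|=|\eta+\widetilde\eta|\geq|\eta|$. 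Applying Lemma \ref{lem: velocity VR pointwise} at $\zeta$ bounds it by the same expression with $(1+|\zeta|^2)^{-1}\leq(1+|\eta|^2)^{-1}$ in the main terms. In the zero-moment term, $|\zeta|(1+|\zeta|^2)^{-1}\leq C(\nu s)^{1/2}$, which is even better. The main (mild) obstacle is keeping track of this geometric separation so that the reflected contribution inherits the same $(1+|\eta|)^{-2}$ decay. Collecting the terms and using $1+|\eta|^2\sim(1+|\eta|)^2$ yields the corollary.
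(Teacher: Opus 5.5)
Your proposal is correct and is essentially the paper's argument: the paper obtains the corollary exactly by inserting Lemma \ref{lem: velocity VR pointwise} (at $\eta$ and at the reflected point $\eta+\widetilde\eta$) into Lemma \ref{lem: velocity estimates 3}(1), using $E_{vp}(s)\leq \nu sE(s)$ and $E_m\leq E$. Your treatment of the reflected term via $|\eta+\widetilde\eta|\geq|\eta|$ (which in fact holds whenever $X$ and $X(s)$ both lie in the upper half-plane) and your absorption of the exponentially small zero-moment term into $C\nu(1+E(s))^2$ supply the details the paper leaves implicit.
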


\smallskip

\subsection{Proof of Proposition \ref{prop: est of Evp(t)}}

Using \eqref{eq: mathcal W R}, we obtain
\begin{align*}
 	\f12 t\frac{d}{dt}\left\|p^{1/2}\mathcal W_R\right\|_{L^2}^2
 	=\int_{\mathbb R^2}\frac{t}{2}\pa_t p |\mathcal W_R|^2 d\eta
 	+\int_{\mathbb R^2}p\mathcal W_R t\pa_t\mathcal W_R d\eta
 	=\sum_{i=1}^{7}I_i,
 \end{align*}
 where
 \begin{align*}
 	I_1&=\int_{\mathbb R^2}\frac{t}{2}\pa_t p |\mathcal W_R|^2 d\eta
 	+\int_{\mathbb R^2}p\mathcal W_R \mathcal L\mathcal W_R d\eta,\\
 	I_2&=-\frac{\alpha}{\nu}\int_{\mathbb R^2}p\mathcal W_R \big(\mathcal V^{\mathcal W_a}\cdot\nabla_\eta\mathcal W_R
 	+\mathcal V^{\mathcal W_R}\cdot\nabla_\eta\mathcal W_a\big)d\eta,\\
	I_{i+2}&=\int_{\mathbb R^2}p\mathcal W_R F_i d\eta,\quad 1\leq i\leq5.
 \end{align*}
 Estimates for $I_1\sim I_7$ are given in Lemma \ref{lem: estimates for I1,I8} below. Armed with Lemma \ref{lem: estimates for I1,I8}, we take $t$ small to obtain
 \begin{align*}
 	t\frac{d}{dt}\left\|p^{1/2}\mathcal W_R\right\|_{L^2}^2
 	\leq -\frac{1}{100}\left\|p^{1/2}\nabla_\eta\mathcal W_R\right\|_{L^2}^2
 	+C(\nu t)^2\big(1+E(t)\big)^6
 	+Ce^{-\frac{C_1}{\nu t}}D(t)^2.
 \end{align*}
 We divide $t$ on both sides, integrate from $0$ to $t$ and take $t$ small to obtain Proposition \ref{prop: est of Evp(t)}.
 \ef

 To close the estimates, all we left is the estimates of $I_i$.
  \begin{lemma}\label{lem: estimates for I1,I8}
  	It holds that for some $C_1>0$
  	\begin{align*}
  	I_1
  	\leq -\frac{1}{48}\left\|p^{1/2}\mathcal W_R\right\|_{L^2}^2
 		-\frac{1}{6}\left\|p^{1/2}\nabla_\eta\mathcal W_R\right\|_{L^2}^2
 		-\frac{1}{96}\left\|p^{1/2}\eta\mathcal W_R\right\|_{L^2}^2
 		+C\big(1+ E(t)\big)^4e^{-\frac{C_1}{(\nu t)^{1/2}}},
  \end{align*}
  \begin{align*}
  	\sum_{i=2}^5|I_i|
  	&\leq \frac{1}{100}\left\|p^{1/2}\mathcal W_R\right\|_{L^2}^2
  	+Ct\left\|p^{1/2}\nabla_\eta\mathcal W_R\right\|_{L^2}^2
  	+Ct\left\|p^{1/2}\eta\mathcal W_R\right\|_{L^2}^2\\
  	&\qquad+C(\nu t)^2+C\big(1+E(t)\big)^4e^{-\frac{C_1}{\nu t}},
  \end{align*}
  \begin{align*}
  	|I_6|\leq  \frac{1}{1000}D_{vp}(t)^2
		+C\nu^2 t^3\big(1+E(t)\big)^6
		+Ce^{-\frac{C_1}{\nu t}}D(t)^2,
  \end{align*}
  and
  \begin{align*}
  	|I_7|\leq Ce^{-\frac{C_1}{\nu t}}
 	\Big(\left\|p^{1/2}\mathcal W_R\right\|_{L^2}^2
 	+
 	\left\|p^{1/2}\nabla_\eta\mathcal W_R\right\|_{L^2}^2
 	+\big(1+E(t)\big)^4
 	+D(t)^2 \Big).
  \end{align*}
  \end{lemma}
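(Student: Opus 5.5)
The proof treats each $I_i$ separately, using the weight $p$ from \eqref{def: weight p}, the bounds on $\mathcal W_a$ in Proposition \ref{prop: app estimates}, the remainder bound \eqref{est of R}, and the velocity bounds of Section \ref{sec: Elliptic Estimates}.

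\textbf{The dissipative term $I_1$.} On region I we have $p=e^{|\eta|^2/4}$, which is independent of $t$. There $I_1$ is exactly $\langle \mathcal W_R,\mathcal L\mathcal W_R\rangle_{\mathcal Y}$, and Lemma \ref{lem: dissi term vp} applies. The zero-moment correction is then controlled by Lemma \ref{lem: est of zero moment}.

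On region II, $p=e^{a_0^2(\nu t)^{-1/2}}$ is constant in $\eta$ and satisfies $\frac t2\pa_t p=-\frac{a_0^2}{4}(\nu t)^{-1/2}p\le 0$. This is a good sign. Integrating $\langle p\mathcal W_R,\mathcal L\mathcal W_R\rangle$ by parts there produces
\begin{align*}
-\|p^{1/2}\nabla\mathcal W_R\|^2+\left(1-\tfrac12\right)\|p^{1/2}\mathcal W_R\|^2.
\end{align*}
The interface term at $|\eta|=2a_0(\nu t)^{-1/4}$ comes from the jump of $\nabla p$. I expect it to have a good sign, or to be absorbable.

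The region-II mass term $\tfrac12\|p^{1/2}\mathcal W_R\|^2$ has the wrong sign. It is beaten by the large negative quantity $\frac{a_0^2}{4}(\nu t)^{-1/2}$ coming from $\pa_t p$. This leaves the claimed bound.

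I would split more carefully than this. I would use Gallay's spectral gap only on a cutoff of region I, and place commutator errors where $|\eta|\sim a_0(\nu t)^{-1/4}$. There $G$-type quantities are of size $e^{-c(\nu t)^{-1/2}}$, which produces the term $e^{-C_1/(\nu t)^{1/2}}$.

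\textbf{$I_2$ to $I_5$.} For $I_2$, the $\Omega_0$ part satisfies $\mathcal V^G\cdot\nabla p=0$, because $p$ is radial. Hence $\int p\mathcal W_R\,\mathcal V^G\cdot\nabla\mathcal W_R=0$. The term $\int p\mathcal W_R\mathcal V^{\mathcal W_R}\cdot\nabla G$ is the skew part of $\Lambda$ on region I, and it vanishes up to a zero-moment error. The $\nu t\Omega_2$ and $\nu^{3/2}t\Omega_3$ parts carry a factor $\alpha t$. Bounding $|\mathcal V^{\mathcal W_R}|$ by Lemma \ref{lem: velocity VR pointwise} gives $Ct(\|p^{1/2}\nabla\mathcal W_R\|^2+\|p^{1/2}\eta\mathcal W_R\|^2)$.

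For $I_3$, i.e.\ $F_1$, I use Lemma \ref{lem: 1/nu of VG differ}. The reflected part of $\mathcal V^{\mathcal W_a}$ is $O(\nu t\langle\eta\rangle)$, so the $\nu^{-1}$ factor becomes $Ct\langle\eta\rangle|\nabla\mathcal W_R|$. Cauchy--Schwarz then finishes.

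For $I_4$, i.e.\ $F_2$, the $\mathcal R_{vp}$ part is bounded by $C\nu t\,\|p^{1/2}\mathcal W_R\|$ using \eqref{est of R}. This gives $\frac1{100}\|\cdot\|^2+C(\nu t)^2$. The $X_i$ and $U_{a,b}$ transport terms are $O(t^{1/2})|\nabla\mathcal W_R|$, by \eqref{est: app3}.

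For $I_5$, i.e.\ $F_3$, note that $\eta+\tilde\eta$ lies at distance $\gtrsim(\nu t)^{-1/2}$ from the origin. Lemma \ref{lem: velocity VR pointwise} then gives $\nu^{-1}|\widetilde{\mathcal V^{\mathcal W_R}}|\lesssim \nu^{-1}\nu t\,(\ldots)$. Multiplied against $\nabla\mathcal W_a$, this is bounded by $Ct\|p^{1/2}\mathcal W_R\|(\ldots)$, plus exponentially small terms.

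\textbf{$I_6$ and $I_7$.} For $I_6$, i.e.\ $F_4$, the main obstacle is the nonlinear term $\sqrt{t/\nu}\,U_R\cdot\nabla\mathcal W_R$. I would insert Corollary \ref{cor: est of UR}. The $(1+|\eta|)^{-2}(\nu s)^{1/2}E$ part gives $t^{1/2}(\nu t)^{1/2}E\|p^{1/2}\mathcal W_R\|D_{vp}$, and $\|p^{1/2}\mathcal W_R\|\le\nu t E$. Young's inequality then yields $\frac1{1000}D_{vp}^2+C\nu^2t^3(1+E)^6$.

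The term $BS[(1-\chi_{vp})\omega_R]\cdot\nabla\mathcal W_a$ is treated by splitting $\omega_R$ into its boundary part and its middle part. For the boundary part I use Lemma \ref{lem: velocity estimates 3}(2), which gives $(\nu t)^{3/2}$ and hence, after multiplying by $\sqrt{t/\nu}$, a contribution of order $\nu t^2$. For the middle part I use Lemma \ref{lem: velocity estimates 2}, which gives $e^{-C_1/\nu t}D$.

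For $I_7$, every term of $F_5$ lives where $1-\chi_{vp}\neq0$ or $\nabla\chi_{vp}\neq0$. This means $|\eta|\gtrsim(\nu t)^{-1/2}$, where $\mathcal W_a$ is $e^{-C/\nu t}$ small, or where $\omega_R$ is controlled by $E_m$ with weight $e^{-C_1/\nu t}$. Direct Hölder estimates give the stated bound.
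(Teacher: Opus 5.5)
The genuine gap is in $I_3$, and it is tied to what $I_1$ really gives on region~II.

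\textbf{What $I_1$ gives on region~II.} Your mechanism there is the right one: the term $\frac t2\pa_t p=-\frac{a_0^2}{4}(\nu t)^{-1/2}p$ beats the $+\frac12\|p^{1/2}\mathcal W_R\|^2_{L^2(II)}$ mass term. The paper instead drops $\frac t2\pa_t p$ and writes $-\frac14\dv(\eta p)=-\frac12p-\frac18|\eta|^2p$ on all of $\mathbb R^2$, although on II one only has $\dv(\eta p)=2p$. Your own computation, however, shows there is no $|\eta|^2$-weighted dissipation on II. You obtain $-\frac{1}{96}\|p^{1/2}\eta\mathcal W_R\|_{L^2(I)}^2$ on I, but only $-\frac{a_0^2}{8}(\nu t)^{-1/2}\|p^{1/2}\mathcal W_R\|_{L^2(II)}^2$ on II, while $|\eta|$ reaches $7(\nu t)^{-1/2}$ there. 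So ``this leaves the claimed bound'' is not justified for the $\eta$-term. The $\eta$-weighted gain is usable only on region~I, and every $|\eta|$-weighted loss in $I_2$--$I_5$ must therefore be confined to region~I.

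\textbf{Why your $I_3$ fails, and the fix.} Your $I_3$ does not confine that loss. The coefficient $\frac{\alpha}{\nu}\{\mathcal V^G(\eta+\widetilde\eta)-\mathcal V^G(\widetilde\eta)\}$ has size $t\langle\eta\rangle$, i.e.\ up to $t^{1/2}\nu^{-1/2}$ on II. Direct Cauchy--Schwarz, even with optimized Young, therefore leaves a term as large as $t\nu^{-1}\|p^{1/2}\mathcal W_R\|_{L^2(II)}^2$. Absorbing it into $(\nu t)^{-1/2}\|p^{1/2}\mathcal W_R\|_{L^2(II)}^2$ requires $t^{3/2}\ll\nu^{1/2}$, which is not uniform in $\nu$.

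The missing step is the paper's integration by parts. The field $\eta\mapsto\mathcal V^G(\eta+\widetilde\eta)-\mathcal V^G(\widetilde\eta)$ is divergence-free, so
\[
I_{31}=-\frac{\alpha}{2\nu}\int_{\mathbb R^2}\big(\mathcal V^G(\eta+\widetilde\eta)-\mathcal V^G(\widetilde\eta)\big)\cdot\nabla_\eta p\,|\mathcal W_R|^2\,d\eta .
\]
Since $\nabla_\eta p=\frac{\eta}{2}p$ on I and $\nabla_\eta p=0$ on II, Lemma~\ref{lem: 1/nu of VG differ} gives $|I_{31}|\le Ct\|p^{1/2}\langle\eta\rangle\mathcal W_R\|_{L^2(I)}^2$. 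This is exactly what the region-I gain absorbs. You already used this idea ($\mathcal V^G\cdot\nabla p=0$) for the $\mathcal V^G$ part of $I_2$. The other transport terms have coefficients bounded uniformly in $\nu$, so Cauchy--Schwarz is fine for them.

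\textbf{A smaller correction in $I_{22}$.} There is no zero-moment error: $\int e^{|\eta|^2/4}w\,\mathcal V^w\cdot\nabla G\,d\eta=0$ holds exactly for every $w$. What must be estimated is $\frac{\alpha}{\nu}\int_{II}(e^{|\eta|^2/4}-p)\mathcal W_R\,\mathcal V^{\mathcal W_R}\cdot\nabla G\,d\eta$. This is exponentially small because $p\ge e^{a_0^2(\nu t)^{-1/2}}$ on II.
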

  
\begin{proof}
  Subsequently, we shall treat $I_1\sim I_7$ term by term.\smallskip
  
  \textbf{Estimate of $I_1$.}
   We first introduce two cut-off functions $\zeta_3, \zeta_4$ such that
  \begin{align*}
  	 \zeta_3|_{\{|\eta|\leq a_0(\nu t)^{-1/4} \}}=1, \quad \zeta_4|_{\{|\eta|\geq 2a_0(\nu t)^{-1/4} \}}=1,\quad \zeta_3^2+\zeta_4^2=1,\ |\nabla_\eta\zeta_i|\leq C(\nu t)^{1/4}.
  \end{align*}
  Denote $\mathcal W_1=\zeta_3\mathcal W_R$, $\mathcal W_2=\zeta_4\mathcal W_R$, thus,
  \begin{align*}
  	|\mathcal W_R|^2=|\mathcal W_1|^2+|\mathcal W_2|^2,\quad \ |\nabla_\eta\mathcal W_R|^2
  	=|\nabla_\eta \mathcal W_1|^2
  	+|\nabla_\eta \mathcal W_2|^2
  	-(|\nabla_\eta\zeta_3|^2+|\nabla_\eta\zeta_4|^2)|\mathcal W_R|^2.
  \end{align*}
  We use integration by parts and recall \eqref{def: weight p} to obtain
  \begin{align*}
  	I_1=&-\frac{1}{4}a_0^2(\nu t)^{-1/2}\int_{II}p|\mathcal W_R|^2d\eta
  	-\int_{\mathbb R^2}p|\nabla_\eta\mathcal W_R|^2d\eta
  	-\int_{\mathbb R^2}\mathcal W_R \nabla_\eta \mathcal W_R\cdot\nabla_\eta p d\eta\\
  	&-\frac{1}{4}\int_{\mathbb R^2}\dv(\eta p)|\mathcal W_R|^2 d\eta
  	+\int_{\mathbb R^2}p|\mathcal W_R|^2d\eta\\
  	\leq& 
  	-\sum_{i=1}^2\int_{\mathbb R^2}p|\nabla_\eta\mathcal W_i|^2d\eta
  	-\sum_{i=1}^2\int_{\mathbb R^2}\mathcal W_i \nabla_\eta \mathcal W_i\cdot\nabla_\eta p d\eta
    -\frac{1}{4}\sum_{i=1}^2\int_{\mathbb R^2}\dv(\eta p)|\mathcal W_i|^2 d\eta\\
  	&
  	+\sum_{i=1}^2\int_{\mathbb R^2}p|\mathcal W_i|^2d\eta
  	+\int_{\mathbb R^2}p(|\nabla_\eta\zeta_3|^2+|\nabla_\eta\zeta_4|^2)|\mathcal W_R|^2d\eta.
  \end{align*}
  
   For $\mathcal W_1$, integration by parts and Lemma \ref{lem: dissi term vp} imply
  \begin{align*}
  	&\quad-\int_{\mathbb R^2}p|\nabla_\eta\mathcal W_1|^2d\eta
  	-\int_{\mathbb R^2}\mathcal W_1 \nabla_\eta \mathcal W_1\cdot\nabla_\eta p d\eta
  	-\frac{1}{4}\int_{\mathbb R^2}\dv(\eta p)|\mathcal W_1|^2 d\eta
  	+\int_{\mathbb R^2}p|\mathcal W_1|^2d\eta\\
  	&=-\int_{\mathbb R^2}p|\nabla_\eta\mathcal W_1|^2d\eta
  	+\int_{\mathbb R^2}p|\mathcal W_1|^2d\eta\\
  	&\leq -\frac{1}{12}\int_{\mathbb R^2}p|\mathcal W_1|^2d\eta
 		-\frac{1}{96}\int_{\mathbb R^2}p|\eta|^2|\mathcal W_1|^2d\eta
 		-\frac{1}{6}\int_{\mathbb R^2}p|\nabla_\eta\mathcal W_1|^2d\eta
 		+C\left|\int_{\mathbb R^2}\mathcal W_1d\eta\right|^2 .
  \end{align*}
  
  For $\mathcal W_2$, Young inequality gives
  \begin{align*}
  	&-\int_{\mathbb R^2}p|\nabla_\eta\mathcal W_2|^2d\eta
  	-\int_{\mathbb R^2}\mathcal W_2 \nabla_\eta \mathcal W_2\cdot\nabla_\eta p d\eta
  	-\frac{1}{4}\int_{\mathbb R^2}\dv(\eta p)|\mathcal W_2|^2 d\eta
  	+\int_{\mathbb R^2}p|\mathcal W_2|^2d\eta\\
  	&\leq -\f14\int_{\mathbb R^2}p|\nabla_\eta\mathcal W_2|^2d\eta
  	+\f13 \int_{I}\frac{|\nabla p|^2}{p}|\mathcal W_2|^2d\eta
  	-\f18 \int_{\mathbb R^2}p|\eta|^2|\mathcal W_2|^2d\eta
  	+\f12 \int_{\mathbb R^2}p|\mathcal W_2|^2d\eta\\
  	&\leq -\f14\int_{\mathbb R^2}p|\nabla_\eta\mathcal W_2|^2d\eta
  	-\frac{1}{24}\int_{\mathbb R^2}p|\eta|^2|\mathcal W_2|^2d\eta
  	+\f12 \int_{\mathbb R^2}p|\mathcal W_2|^2d\eta\\
  	&\leq -\f14\int_{\mathbb R^2}p|\nabla_\eta\mathcal W_2|^2d\eta
  	-\frac{1}{36}\int_{\mathbb R^2}p|\eta|^2|\mathcal W_2|^2d\eta,
  \end{align*}
  here we used $\operatorname{supp}\mathcal W_2\subseteq \{|\eta|\geq a_0(\nu t)^{-1/4} \}$ in the last line.
  
   And by $|\nabla_\eta\zeta_i|\leq C(\nu t)^{1/4}$, we have
  \begin{align*}
  	\int_{\mathbb R^2}p(|\nabla_\eta\zeta_3|^2+|\nabla_\eta\zeta_4|^2)|\mathcal W_R|^2d\eta
  	\leq C(\nu t)^{1/2}\int_{\mathbb R^2}p|\mathcal W_R|^2d\eta.
  \end{align*}
  
  Lemma \ref{lem: est of zero moment} gives
  \begin{align*}
  	&\left|\int_{\mathbb R^2}\mathcal W_1 d\eta\right|^2
  	\leq \left|\int_{\mathbb R^2}\mathcal W_R d\eta\right|^2
  	+\left|\int_{\mathbb R^2}(1-\zeta_3)\mathcal W_R d\eta\right|^2\\
  	&\leq C\big(1+ E(t)\big)^4 e^{-\frac{C_1}{\nu t}}
  	+Ce^{-\frac{C_1}{(\nu t)^{1/2}}} \left\|p^{1/2}\mathcal W_R\right\|_{L^2}^2
  	\leq C\big(1+ E(t)\big)^4e^{-\frac{C_1}{(\nu t)^{1/2}}} .
  \end{align*}
  
  Collecting these estimates together and taking $\nu t$ small, we obtain the desired result.

  \textbf{Estimate of $I_2$.}
  We rewrite $\mathcal W_a$ as 
  \begin{align}\label{eq: def of mathcal F}
  	\mathcal W_a=G+\nu t\mathcal F,\qquad
  	\mathcal V^{\mathcal W_a}=\mathcal V^G+\nu t\mathcal V^{\mathcal F}.
  \end{align}
  where $\mathcal F:=\Omega_2+\nu^{1/2}\Omega_3$. Thus,
  \begin{align*}
  	I_2&=-\frac{\alpha}{\nu}\int_{\mathbb R^2}p\mathcal W_R\mathcal V^G\cdot\nabla_\eta\mathcal W_R d\eta
  	-\frac{\alpha}{\nu}
  	\int_{\mathbb R^2}p\mathcal W_R\mathcal V^{\mathcal W_R}\cdot\nabla_\eta G d\eta\\
  	&\quad-\alpha t\int_{\mathbb R^2}p\mathcal W_R\big(\mathcal V^{\mathcal F}\cdot\nabla_\eta\mathcal W_R
  	+\mathcal V^{\mathcal W_R}\cdot\nabla_\eta\mathcal F\big)d\eta:=I_{21}+I_{22}+I_{23}.
  \end{align*}
  
  For $I_{21}$, integration by parts implies
  \begin{align*}
  	I_{21}=\frac{\alpha}{2\nu}\int_{\mathbb R^2}\dv(p\mathcal V^G)|\mathcal W_R|^2d\eta
  	=\frac{\alpha}{2\nu}\int_{\mathbb R^2}\mathcal V^G\cdot\nabla_\eta p |\mathcal W_R|^2d\eta=0.
  \end{align*}
  
  For $I_{22}$, using the fact $\mathcal W\rightarrow \mathcal V^{\mathcal W}\cdot\nabla G$ is skew-adjoint in space $\mathcal Y$ which is proved in \cite{Gallay 2}, \cite{Maekawa 1}, we have
  \begin{align*}
  	|I_{22}|
  	&=\left|\frac{\alpha}{\nu}\int_{\mathbb R^2}(e^{|\eta|^2/4}-p)\mathcal W_R \mathcal V^{\mathcal W_R}\cdot\nabla_\eta Gd\eta\right|
  	\leq \frac{|\alpha|}{\nu}\int_{II}|\eta||\mathcal W_R|\left|\mathcal V^{\mathcal W_R}\right|d\eta\\
  	&\leq \frac{|\alpha|}{\nu}\|\eta\mathcal W_R\|_{L^{4/3}(II)}\left\|\mathcal V^{\mathcal W_R}\right\|_{L^4}
  	\leq Ce^{-\frac{C_1}{(\nu t)^{1/2}}}\left\|p^{1/2}\mathcal W_R\right\|^2_{L^2},
  \end{align*}
  where we used
  \begin{align}\label{est of V L4}
  	\|\mathcal V_R\|_{L^4}
    \leq C\|\mathcal W_R\|_{L^{4/3}}
    \leq C\left\|p^{1/2}\mathcal W_R\right\|_{L^2}.
  \end{align}
  
  For $I_{23}$, we utilize Lemma \ref{lem: decay rate of velocity} and \eqref{est of V L4}  to have
  \begin{align*}
  	|I_{23}|
  	&\leq |\alpha t|\int_{\mathbb R^2}p|\mathcal W_R|
  	\big(|\mathcal V^{\mathcal F}||\nabla_\eta\mathcal W_R|
  	+|\mathcal V^{\mathcal W_R}||\nabla_\eta \mathcal F|\big)d\eta\\
  	&\leq |\alpha t|\left\|p^{1/2}\mathcal W_R\right\|_{L^2} \big(\left\|p^{1/2}\nabla_\eta\mathcal W_R\right\|_{L^2}\|\mathcal V^{\mathcal F}\|_{L^\infty}
  	+\left\|p^{1/2}\nabla_\eta \mathcal F\right\|_{L^4}\|\mathcal V^{\mathcal W_R}\|_{L^4}\big)\\
  	&\leq Ct\left\|p^{1/2}\mathcal W_R\right\|_{L^2}^2
  	+Ct\left\|p^{1/2}\nabla_\eta\mathcal W_R\right\|_{L^2}^2.
  \end{align*}
  
  Collecting these estimates together yields the desired result.
  
  \textbf{Estimate of $I_3$.}
  Due to \eqref{eq: def of mathcal F}, we have
  \begin{align*}
  	I_3=&\frac{\alpha}{\nu}\int_{\mathbb R^2}p\mathcal W_R \Big(\mathcal V^G(\eta+\widetilde\eta)-\mathcal V^G(\widetilde\eta)\Big)\cdot\nabla_\eta\mathcal W_R d\eta\\
  	&+\alpha t\int_{\mathbb R^2}p\mathcal W_R \widetilde{\mathcal V^{\mathcal F}}(\eta+\widetilde\eta,t)\cdot\nabla_\eta\mathcal W_R d\eta=I_{31}+I_{32}.
  \end{align*}
  
  For $I_{31}$, integration by parts gives
  \begin{align*}
  	I_{31}&=-\frac{\alpha}{2\nu}\int_I \Big(\mathcal V^G(\eta+\widetilde\eta)-\mathcal V^G(\widetilde\eta)\Big)\cdot\nabla_\eta p|\mathcal W_R|^2d\eta\\
  	&=-\frac{\alpha}{\nu}\int_I \Big(\mathcal V^G(\eta+\widetilde\eta)-\mathcal V^G(\widetilde\eta)\Big)\cdot\eta p|\mathcal W_R|^2d\eta.
  \end{align*}
Thus, by Lemma \ref{lem: 1/nu of VG differ}, it holds that
\begin{align*}
	|I_{31}|\leq Ct\left\|p^{1/2}\langle\eta\rangle\mathcal W_R\right\|_{L^2(I)}^2
	\leq Ct\left\|p^{1/2}\eta\mathcal W_R\right\|_{L^2}^2
	+Ct\left\|p^{1/2}\mathcal W_R\right\|_{L^2}^2.
\end{align*}

For $I_{32}$, by Lemma \ref{lem: decay rate of velocity}, it holds that
\begin{align*}
	I_{32}
	\leq Ct\left\|p^{1/2}\mathcal W_R\right\|_{L^2}\left\|p^{1/2}\nabla_\eta\mathcal W_R\right\|_{L^2}\left\|\mathcal V^{\mathcal F}\right\|_{L^\infty}
	\leq Ct\left\|p^{1/2}\mathcal W_R\right\|_{L^2}^2
	+Ct\left\|p^{1/2}\nabla_\eta\mathcal W_R\right\|_{L^2}^2.
\end{align*}

Collecting these estimates together yields the desired result.

\smallskip

\textbf{Estimate of $I_4$.}
Due to Proposition \ref{prop: app estimates} and \eqref{est of Rvp ultimate}, we have
\begin{align*}
	|I_4|&\leq 
	C\int_{\mathbb R^2}p|\mathcal W_R|
	\big(t^{1/2}|\nabla_\eta\mathcal W_R|
	+\nu te^{-|\eta|^2/5}\big)d\eta\\
	&\leq \frac{1}{1000}\left\|p^{1/2}\mathcal W_R\right\|_{L^2}^2
	+Ct\left\|p^{1/2}\nabla_\eta\mathcal W_R\right\|_{L^2}^2
	+C(\nu t)^2.
\end{align*}

\smallskip

\textbf{Estimate of $I_5$.}
\eqref{est: X(t)-X0} implies $|X(t)-X(t)^\ast|\geq38$, which along with $ \operatorname{supp}\mathcal W_R\subseteq\{|\eta|\leq7(\nu t)^{-1/2}\}$ and Lemma \ref{lem: velocity VR pointwise} implies
\begin{align*}
  	\left|\widetilde{\mathcal V_R}(\eta+\widetilde\eta,t)\right|
  	\leq C\nu t \Big(\left\|p^{1/2}\mathcal W_R\right\|_{L^2}
  	+\left\|p^{1/2}\nabla_\eta\mathcal W_R\right\|_{L^2}\Big)
  	+\big(1+ E(t)\big)^2e^{-\frac{C_1}{\nu t}} .
  \end{align*}
  Thus,
  \begin{align*}
  	|I_5|
  	&\leq \frac{|\alpha|}{\nu}\int_{\mathbb R^2}p|\mathcal W_R|\left|\widetilde{\mathcal V^{\mathcal W_R}}(\eta+\widetilde\eta,t)\right||\nabla_\eta\mathcal W_a|d\eta\\
  	&\leq C\left\|p^{1/2}\mathcal W_R\right\|_{L^2}
  	\Big(t\left\|p^{1/2}\mathcal W_R\right\|_{L^2}
  	+t\left\|p^{1/2}\nabla_\eta\mathcal W_R\right\|_{L^2}
  	+\big(1+ E(t)\big)^2e^{-\frac{C_1}{\nu t}}\Big)\\
  	&\leq Ct\left\|p^{1/2}\mathcal W_R\right\|_{L^2}^2
  	+Ct\left\|p^{1/2}\nabla_\eta\mathcal W_R\right\|_{L^2}^2
  	+C\big(1+ E(t)\big)^4e^{-\frac{C_1}{\nu t}}.
  \end{align*}
  
  \smallskip
  
 \textbf{Estimate of $I_6$.}
 Due to $\operatorname{supp}\mathcal W_R\subseteq\operatorname{supp}\chi_{vp}$, Corollary \ref{cor: est of UR} implies
 \begin{align*}
 	&|I_6|
 	\leq C\sqrt{\frac{t}{\nu}}
 	\left\|p^{1/2}\mathcal W_R\right\|_{L^2}
 	\left\|p^{1/2}\nabla_\eta\mathcal W_R\right\|_{L^2}
 	\Big(\nu\big(1+E(t)\big)^2
 	+(\nu t)^{1/2}E(t)\\
 	&\quad\qquad+(\nu t)^{1/4}E(t)^{3/4}D_{vp}(t)^{1/4}
 	+e^{-\frac{C_1}{\nu t}}E(t)^{3/4}D_m(t)^{1/4} \Big)\\
 	&\qquad+C\sqrt{\frac{t}{\nu}}
 	\left\|p^{1/2}\mathcal W_R\right\|_{L^2}
 	\Big(\|BS_{\mathbb R^2_+}[(1-\chi_{vp}-\chi_b)\omega_R]\|_{L^4_\eta}\left\|p^{1/2}\nabla_\eta\mathcal W_a\right\|_{L^4}\\
 	&\quad\qquad+\|BS_{\mathbb R^2_+}[\chi_b\omega_R]\|_{L^\infty}
 	\left\|p^{1/2}\nabla_\eta\mathcal W_a\right\|_{L^2} \Big)\\
 	&\leq C\sqrt{\frac{t}{\nu}}
 	E_{vp}(t)D_{vp}(t)
 	\Big(\nu^{1/2}\big(1+E(t)\big)^2
 	+(\nu t)^{1/4}E(t)^{3/4}D_{vp}(t)^{1/4}
 	+e^{-\frac{C_1}{\nu t}}E(t)^{3/4}D_m(t)^{1/4} \Big)\\
 	&\qquad+C\sqrt{\frac{t}{\nu}}
 	E_{vp}(t)
 	\cdot\big(\nu t \big)^{3/2}\big(1+E(t)\big)^4\\
		&\leq \frac{1}{1000}D_{vp}(t)^2
		+C\nu^2 t^3\big(1+E(t)\big)^6
		+Ce^{-\frac{C_1}{\nu t}}D(t)^2.
 \end{align*}

 \textbf{Estimate of $I_7$.}
 Using the fact $\left\|\mathcal V^{(1-\chi_{vp})\mathcal W_a}\right\|_{L^\infty}\leq C\|(1-\chi_{vp})\mathcal W_a\|_{L^{4/3}\cap L^4}\leq Ce^{-\frac{C_1}{\nu t}}$ and Proposition \ref{prop: app estimates}, we have
 \begin{align*}
 	|F_5|\leq& Ce^{-\frac{C_1}{\nu t}}\Big(|\nabla_\eta\mathcal W_R(\eta,t)|+|\mathcal V^{\mathcal W_R}(\eta,t)|\mathbb I_{\operatorname{supp}(1-\chi_{vp})}
 	+\big|\widetilde{\mathcal V^{\mathcal W_R}}(\eta+\widetilde\eta,t)\big|\mathbb I_{\operatorname{supp}(1-\chi_{vp})}\\
 	&+|U_R(X,t)|\mathbb I_{\operatorname{supp}\nabla\chi_{vp}}\Big)
 	+C\nu t^2\big(|U_R\omega_R|
 	+|\omega_R|+|\nabla\omega_R|\big)(X,t)\mathbb I_{\operatorname{supp}\nabla\chi_{vp}},
 \end{align*}
 thus, by the definition of $p$, we have
 \begin{align*}
 	&|I_7|\leq \int_{\mathbb R^2}p|\mathcal W_R||F_5|d\eta\\
 	&\leq Ce^{-\frac{C_1}{\nu t}}
 	\left\|p^{1/2}\mathcal W_R\right\|_{L^2}
 	\Big(\left\|p^{1/2}\nabla_\eta\mathcal W_R\right\|_{L^2}
 	+(\nu t)^{-\f14}e^{\f12 a_0^2(\nu t)^{-1/2}}\|\mathcal V^{\mathcal W_R}\|_{L^4}
 	+(\nu t)^{-\f12}\|U_R\|_{L^4(\nabla\chi_{vp})} \Big)\\
 	&\qquad+C(\nu t)^{-\f12}e^{\f12 a_0^2(\nu t)^{-1/2}}
 	\left\|p^{1/2}\mathcal W_R\right\|_{L^2}
 	\Big(\|U_R\|_{L^4(\nabla\chi_{vp})}\|\omega_R\|_{L^4(\nabla\chi_{vp})}\\
 	&\qquad\qquad+\|\omega_R\|_{L^2(\nabla\chi_{vp})}
 	+\|\nabla\omega_R\|_{L^2(\nabla\chi_{vp})}\Big)\\
 	&\leq Ce^{-\frac{C_1}{2\nu t}}
 	\Big(\left\|p^{1/2}\mathcal W_R\right\|_{L^2}^2
 	+
 	\left\|p^{1/2}\nabla_\eta\mathcal W_R\right\|_{L^2}^2
 	+\left\|p^{1/2}\mathcal W_R\right\|_{L^2}
 	\big(\|e^\Psi\psi\chi_m\omega_R\|_{L^2} \\
 	&\qquad
 	+\nu E(t)\|e^\Psi\psi\chi_m\omega_R\|_{L^2}^{1/2}\|e^\Psi\psi\nabla(\chi_m\omega_R)\|_{L^2}^{1/2}
 	+\|e^\Psi\psi\nabla(\chi_m\omega_R)\|_{L^2}\big)\Big)\\
 	&\leq Ce^{-\frac{C_1}{2\nu t}}
 	\Big(\left\|p^{1/2}\mathcal W_R\right\|_{L^2}^2
 	+
 	\left\|p^{1/2}\nabla_\eta\mathcal W_R\right\|_{L^2}^2
 	+\big(1+E(t)\big)^4
 	+D_m(t)^2 \Big).
 \end{align*}

 By now, we finish the proof of Lemma \ref{lem: estimates for I1,I8}.
 \end{proof}

\section{Energy estimate in the middle region}\label{sec: est of Em}

This section is devoted to the proof of Proposition \ref{prop: est of Em(t)}.

\subsection{Weighted $L^2$ estimate}
This subsection is devoted to proving the first inequality in Proposition \ref{prop: est of Em(t)}. Recalling the cutoff function $\chi_m$ defined in \eqref{def: chi m} and multiplying both sides of \eqref{eq: omega R, U R} by $\chi_m$, we obtain
\begin{align}\label{eq: omega R chi m}
	\pa_t(\chi_m\omega_R)-\nu\Delta(\chi_m\omega_R)
	=F_1+F_2+F_3+F_4,
\end{align}
where
\begin{align*}
	&F_1=-U_a\cdot\nabla(\chi_m\omega_R),
	\qquad
	F_2=-\chi_mU_R\cdot\nabla\omega_a
	-\chi_m(R_b+R_{vp}), \\
	&F_3=-U_R\cdot\nabla(\chi_m\omega_R),
	\qquad F_4=(U_a+U_R)\cdot\nabla\chi_m\omega_R
	-\nu\Delta\chi_m\omega_R
	-2\nu\nabla\chi_m\cdot\nabla\omega_R,
\end{align*}

Taking $L^2$ inner product with $e^{2\Psi_{t_0}}\psi^2\chi_m\omega_R$ on both sides of \eqref{eq: omega R chi m}, we obtain
\begin{align*}
	&\frac{1}{2}\frac{d}{dt}\|e^{\Psi_{t_0} }\psi\chi_m\omega_R\|^2_{L^2}
			+\frac{20\eps_0\gamma}{\nu(t+t_0)}\int_{\Gamma(t)}e^{2\Psi_{t_0}}|\chi_m\psi\omega_R|^2\big(1-\gamma t-\theta(x,y)\big)_+ dxdy\\
			&\quad+\frac{20\eps_0}{\nu(t+t_0)^2}\|e^{\Psi_{t_0}} \psi\chi_m\omega_R\big(1-\gamma t-\theta(x,y)\big)_+\|_{L^2}^2
			-\nu\langle\Delta(\chi_m\omega_R),e^{2\Psi_{t_0}}\chi_m\psi^2\omega_R\rangle\\
			&=\sum_{i=1}^4\langle F_i,e^{2\Psi_{t_0}}\psi^2\chi_m\omega_R\rangle
			=\sum_{i=1}^4 I_i,
\end{align*}
where we utilize the fact
		\begin{align*}
			-\pa_t\Psi_{t_0}
			=\frac{40\eps_0\gamma}{\nu(t+t_0)}\big(1-\gamma t-\theta(x,y)\big)_+ \chi_{\Gamma(t)}
			+\frac{20\eps_0}{\nu(t+t_0)^2}\big(1-\gamma t-\theta(x,y)\big)_+^2.
		\end{align*}
		
\underline{Dissipative term.}
A direct computation gives
	\begin{align}\label{nable e^Psi}
			|\nabla(e^{2\Psi_{t_0}}\psi^2)|
			\leq \frac{C_0\eps_0}{\nu(t+t_0)}\big(1-\gamma t-\theta(x,y)\big)_+\cdot e^{2\Psi_{t_0}}\psi^2 \chi_{\Gamma(t)}
			+C_0 e^{2\Psi_{t_0}}\psi^2,
	\end{align}
	which implies
	\begin{align*}
			&-\nu\langle\Delta(\chi_m\omega_R),e^{2\Psi_{t_0}}\chi_m\psi^2\omega_R\rangle
			=\nu\|e^{\Psi_{t_0}}\psi\nabla(\chi_m\omega_R)\|^2_{L^2}
			+\nu\langle \nabla(\chi_m\omega_R), \nabla(e^{2\Psi_{t_0}}\psi^2)\chi_m\omega_R\rangle\\
			&\geq \nu\|e^{\Psi_{t_0}}\psi\nabla(\chi_m\omega_R)\|^2_{L^2}
			-C_0\nu\int_{\mathbb R^2_+}e^{2\Psi_{t_0}}\psi^2|\chi_m\omega_R||\nabla(\chi_m\omega_R)|dxdy \\
			&\quad-\frac{C_0\eps_0}{t+t_0}\int_{\Gamma(t)}e^{2\Psi_{t_0}}\psi^2\big(1-\gamma t-\theta(x,y)\big)_+|\chi_m\omega_R||\nabla(\chi_m\omega_R)|dxdy\\
			&\geq \frac{4\nu}{5}D_m(t)^2
			-C_0\nu\|e^{\Psi_{t_0} }\psi\chi_m\omega\|^2_{L^2}
			-\frac{C_0\eps_0^2}{\nu(t+t_0)^2}\|e^{\Psi_{t_0}} \psi\chi_m\omega_R\big(1-\gamma t-\theta(x,y)\big)_+\|_{L^2}^2.
		\end{align*}
		
\underline{Force terms.}
For $I_1$, we take integration by parts and use Proposition \ref{prop: app estimates}, \eqref{nable e^Psi} to obtain
\begin{align*}
	|I_1|&\leq 
	\int_{\mathbb R^2_+}|U_a||\nabla(e^{2\Psi_{t_0}}\psi^2)|(\chi_m\omega_R)^2 dxdy\\
	&\leq C\|e^{\Psi_{t_0} }\psi\chi_m\omega_R\|^2_{L^2}
	+\frac{C\eps_0}{\nu(t+t_0)}
	\int_{\Gamma(t)}e^{2\Psi_{t_0}}|\chi_m\psi\omega_R|^2\big(1-\gamma t-\theta(x,y)\big)_+ dxdy.
\end{align*}

For $I_2$, by Lemma \ref{lem: velocity estimates 2}, Proposition \ref{prop: app estimates} and Proposition \ref{prop: est of remainder}, we take $\eps_0$ small enough to deduce for some $C_1>0$,
\begin{align*}
	|I_2|&\leq \|e^{\Psi_{t_0} }\psi\chi_m\omega_R\|_{L^2}
	\big(\|U_R\|_{L^\infty(\chi_m)}\|e^{\Psi_{t_0}}\psi\chi_m\nabla\omega_a\|_{L^2}
	+\|e^{\Psi_{t_0} }\psi\chi_m(R_b+R_{vp})\|_{L^2} \big)\\
	&\leq Ce^{-\frac{C_1}{\nu t}}E(t)
	\big(\nu E(t)
		+e^{-\frac{C_1}{\nu t}}E(t)^{3/4}D(t)^{1/4}
		+1 \big)\\
		&\leq Ce^{-\frac{C_1}{\nu t}}\big(1+E(t)\big)^2
		+Ce^{-\frac{C_1}{\nu t}}D(t)^2.
\end{align*}

For $I_3$, by Lemma \ref{lem: velocity estimates 2}, we derive for some $C_1>0$,
\begin{align*}
	|I_3|&\leq \|U_R\|_{L^\infty(\chi_m)}
	\|e^{\Psi_{t_0} }\psi\nabla(\chi_m\omega_R)\|_{L^2}
	\|e^{\Psi_{t_0} }\psi\chi_m\omega_R\|_{L^2}\\
	&\leq C\big(\nu E(t)
		+e^{-\frac{C_1}{\nu t}}E(t)^{3/4}
		D(t)^{1/4} \big)E(t)D_m(t)\\
		&\leq \frac{\nu}{5}D_m(t)^2
		+C\big(1+E(t)\big)^6
		+Ce^{-\frac{C_1}{\nu t}}D(t)^2.
\end{align*}

For $I_4$, since $\Psi_{t_0}=0$ on $\operatorname{supp}\nabla\chi_m$, we apply Lemma \ref{lem: velocity estimates 2} and Proposition \ref{prop: app estimates} to obtain
\begin{align*}
	|I_4|&\leq 
	\big(\|U_a+U_R\|_{L^\infty(\chi_m)}\|\psi\omega_R\|_{L^2(\nabla\chi_m)}
	+\nu\|\psi(\omega_R,\nabla\omega_R)\|_{L^2(\nabla\chi_m)}\big)
	\|e^{\Psi_{t_0} }\psi\chi_m\omega_R\|_{L^2}\\
	&\leq Ce^{-\frac{C_1}{\nu t}}
	\Big\{ \big(\nu E(t)+E(t)^{3/4}D(t)^{1/4}+1\big)\big(E_b(t)+E_{vp}(t)\big)
	+D_{vp}(t) \Big\}E_m(t)\\
	&\leq Ce^{-\frac{C_1}{\nu t}}\big(1+E(t)\big)^3
	+Ce^{-\frac{C_1}{\nu t}}D(t)^2.
\end{align*}

Summing up these estimates, choosing $\gamma$ large enough and $t$ small, integrating from $0$ to $t$ and letting $t_0\rightarrow0$, we obtain the desired result.

\subsection{Higher-order estimates}\label{sec: High order estimates}

This subsection is devoted to proving the second inequality in Proposition \ref{prop: est of Em(t)} and to giving the high-order estimates of $\om_R$ in the interaction region. 

\begin{lemma}\label{lem: sobolev 1}
	There exists $T_0$ such that for $0\leq t\leq T_0,j=1,2,3$, 
	\begin{align*}
		&\sup_{[0,t]}\|\nabla^j\omega_R\|^2_{L^2(\frac{1}{2}+\frac{j}{8}\leq y\leq 5-\frac{j}{3})}
		+\nu\int_0^t\|\nabla^{1+j}\omega_R\|^2_{L^2(\frac{1}{2}+\frac{j}{8}\leq y\leq 5-\frac{j}{3})}ds
		\leq e^{-\frac{(40-5j)\eps_0}{\nu t}}\big(1+E(t)\big)^{4(1+j)}.
	\end{align*}
\end{lemma}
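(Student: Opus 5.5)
We argue by induction on $j=1,2,3$, using localized energy estimates for the remainder system \eqref{eq: omega R, U R} on a nested family of strips. Fix $j$ and choose a smooth cut-off $\phi_j(y)$ with $\phi_j=1$ on $\{\frac12+\frac j8\le y\le 5-\frac j3\}$. Its support is contained in the slightly larger strip $\{\frac12+\frac{j-1}{8}-\delta\le y\le 5-\frac{j-1}{3}+\delta\}$, on which the $(j-1)$-level bound is already available; for $j=1$ we use the zeroth-order bound instead. The decaying factor is supplied by the zeroth-order information on the region $\frac12\le y\le 5$. There the weight $\Psi$ in $E_m$ gives $\|\omega_R\|_{L^2}\lesssim e^{-\frac{20\eps_0\cdot c}{\nu t}}E_m$ away from the vortex and the boundary. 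Near the vortex disc, $\omega_R$ is controlled through $E_{vp}$ with the weight $p$: the strip $y\le 5$ lies at distance $\gtrsim 15$ from $X(t)$, so $p$ gives a factor $e^{a_0^2(\nu t)^{-1/2}}$ or better. Since $\operatorname{supp}\chi_{vp}$ does not meet $y\le5$ for $t$ small, only $E_m$ and $E_b$ actually enter. In the region $y\ge 3/8$ covered by $\chi_m$, the $E_b$ control has exponent $e^{\eps_0(1+\mu)\frac{y^2}{\nu t}}$ with $y\ge 1/2$, which yields $e^{-\frac{\eps_0/4}{\nu t}}$ in $L^1_y$. This is weaker than needed, so near $y=1/2$ we rely on $E_m$, where $\Psi\ge \frac{5\eps_0}{t}\cdot\frac1\nu\cdot$const once $\theta=1/4$, i.e. for $y\ge\frac12$ away from the vortex. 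This gives the base bound $\|\omega_R\|_{L^2(\frac12\le y\le5)}\lesssim e^{-\frac{40\eps_0}{\nu t}}(1+E)$, after possibly adjusting constants in $\Psi$.

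For the inductive step, we apply $\nabla^j$ to \eqref{eq: omega R, U R}, multiply by $\phi_j^2\nabla^j\omega_R$, and integrate. Dissipation yields $\nu\|\phi_j\nabla^{j+1}\omega_R\|^2$, and the commutator with $\phi_j$ costs $\nu\|\nabla^j\omega_R\|^2_{\operatorname{supp}\phi_j'}$. The transport term $U_a\cdot\nabla\nabla^j\omega_R$ integrates by parts to $\|\nabla U_a\|_{L^\infty}\|\phi_j\nabla^j\omega_R\|^2$ plus cut-off terms. On $y\ge1/4$ we have $\|\nabla^k U_{a,b}\|_{L^\infty}\lesssim\nu^{1/2}$ by \eqref{est: app3}, and $U_{a,vp}$ is smooth and bounded there. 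Lower-order commutators $\nabla^k U_a\cdot\nabla^{j+1-k}\omega_R$ are then controlled by the induction hypothesis. The terms $U_R\cdot\nabla\omega_a$ and the remainders $R_b,R_{vp}$ on this strip are exponentially small, because $\omega_{a,b}$ decays like $e^{-C'y^2/(\nu t)}$ and $\omega_{a,vp}$ like a Gaussian away from its support. Here Lemma \ref{lem: velocity estimates 2}(2) bounds $\nabla^i U_R$ by $\nu E+\|\omega_R\|_{H^{i+1}}$ on a slightly larger strip. For the nonlinear term $U_R\cdot\nabla\nabla^j\omega_R$, we integrate by parts on the top order and put the lower-order pieces in $L^\infty$ via Lemma \ref{lem: velocity estimates 2}(2). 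This produces at most a polynomial factor $(1+E)^{4}$ per step, which explains the exponent $4(1+j)$.

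We then apply Grönwall on $[0,t]$ with zero initial data. The forcing consists of $\nu\|\nabla^j\omega_R\|^2$ on the previous strip, which is in turn bounded by the $(j-1)$ time integral of $\nu\|\nabla^{j}\omega_R\|^2$ from the induction, together with exponentially small sources. The result is the bound with exponent $-(40-5(j-1))\eps_0/(\nu t)$. Shrinking to $-(40-5j)$ gives room to absorb polynomial losses in $1/(\nu t)$, using $s^{-k}e^{-c/(\nu s)}\le C e^{-(c-5\eps_0)/(\nu s)}$ and the monotonicity of $e^{-c/(\nu s)}$ in $s$.

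The main obstacle is the base step. We must show that $E_m$ and $E_b$ really give an $e^{-40\eps_0/(\nu t)}$-small $L^2$ bound on the whole strip $\frac58\le y\le \frac{14}3$, including the part near $y\approx1/2$ where $\Psi$ degenerates. The first approach is to interpolate there between the $E_b$ weight $e^{\eps_0 y^2/(\nu t)}$, which is of size $e^{\eps_0/(4\nu t)}$, and $E_m$. If this is insufficient, we will adjust the lower cut-off level so that $y\ge 5/8$ where $\Psi$ is already of size $\frac{5\eps_0}{\nu t}\cdot$const, and check that $20\eps_0(1-\gamma t-\theta)^2\ge 40\eps_0\cdot$ is achievable after rescaling $\eps_0$.
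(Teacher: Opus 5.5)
Your architecture matches the paper's: nested strips, induction in $j$, exponential smallness supplied by $E_m$ on $\{\frac12\le y\le5\}$, and spending part of the exponent to absorb powers of $(\nu t)^{-1}$. However, the way you treat the velocity commutators does not close.

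\textbf{The gap.} If you apply the full $\nabla^j$ to \eqref{eq: omega R, U R} and test with $\phi_j^2\nabla^j\omega_R$, you produce $\nabla^jU_R\cdot\nabla\omega_R$ and $\nabla^jU_R\cdot\nabla\omega_a$. Putting $\nabla^jU_R$ in $L^\infty$ through Lemma \ref{lem: velocity estimates 2}(2) costs $\|\omega_R\|_{H^{j+1}}$ on a strip strictly larger than $\operatorname{supp}\phi_j$, and nothing you have controls that.
\begin{itemize}
\item The induction hypothesis only gives $\sup_s\|\nabla^{j-1}\omega_R\|_{L^2}$ and $\nu\int_0^t\|\nabla^j\omega_R\|_{L^2}^2$ on the previous strip.
\item The level-$j$ dissipation gives $\nabla^{j+1}\omega_R$ only where $\phi_j\neq0$, and widening the cut-off only pushes the requirement further out.
\end{itemize}
Already for $j=1$, the term $\int\phi_1^2(\nabla U_R\cdot\nabla\omega_R)\cdot\nabla\omega_R$ would need $\nabla U_R\in L^\infty$, i.e.\ more than $H^1$ control of $\omega_R$, which is exactly what is being estimated. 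Putting $\nabla\omega_R$ in $L^\infty$ instead is no better.

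\textbf{The missing idea} is to move one derivative onto the test function before estimating. For $j=1$ the paper writes $\langle\pa_x G,\eta_1^2\pa_x\omega_R\rangle=-\langle G,\eta_1^2\pa_x^2\omega_R\rangle$, with $G=U_a\cdot\nabla\omega_R+U_R\cdot\nabla\omega_a+U_R\cdot\nabla\omega_R+R_b+R_{vp}$, and does the same for $\pa_y$. Consequences:
\begin{itemize}
\item Only $U_a$ and $U_R$ themselves appear in $L^\infty$.
\item $\|\eta_1\pa_x^2\omega_R\|_{L^2}$ is absorbed into $\nu\int_0^t\|\eta_1\nabla\pa_x\omega_R\|_{L^2}^2$.
\item The resulting factor $\nu^{-1}$ is harmless. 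The companion factor satisfies $\|\eta_1\nabla\omega_R\|_{L^2}\lesssim e^{-\Psi}\|e^\Psi\psi\nabla(\chi_m\omega_R)\|_{L^2}$, which is exponentially small, and $\int_0^tD_m^2\,ds\le\nu^{-1}E_m(t)^2$.
\item The quadratic term is closed by placing one factor $\|\eta_1\nabla\omega_R\|$ in $\sup_s$ and absorbing it, not by Gr\"onwall.
\end{itemize}
At level $j$ the same device means only $\nabla^{j-1}U_R$ must be bounded in $L^\infty$. By Lemma \ref{lem: velocity estimates 2}(2) this costs only $H^j$ on a slightly larger strip, which level $j-1$ supplies in time-integrated form. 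That is what makes the nested strips close.

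\textbf{Two smaller points.}
\begin{itemize}
\item Your base step is not actually an obstacle. On $\{\frac12\le y\le5\}$, which lies at distance at least $15$ from $(0,20)$, one has $\chi_m\equiv1$, $\psi\ge\frac14$ and $\theta\equiv\frac14$. Hence $\Psi\ge\frac{20\eps_0}{\nu t}(\frac34-\gamma t)^2$ uniformly there: there is no degeneration near $y=\frac12$, and neither $E_b$ nor $E_{vp}$ is needed. From $E_m$ you need both the weighted $L^2$ term and the dissipation $\nu\int_0^tD_m^2$; the latter serves as the level-$0$ gradient bound.
\item The exponent $40\eps_0$ cannot be reached by rescaling $\eps_0$, because both sides are linear in $\eps_0$. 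You must instead enlarge the numerical prefactor in $\Psi$, keeping its product with $\eps_0$ small. The paper's proof also uses $e^{-40\eps_0/(\nu t)}$ at this point without comment, so this bookkeeping issue is shared.
\end{itemize}
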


\begin{proof}
	We only prove the case $j=1$, since other cases are similar. We choose a smooth function $\eta_1(y)$ satisfying
	\begin{align*}
		\eta_1(y)=
		\left\{
		\begin{aligned}
			&1,\qquad \frac{1}{2}+\frac{1}{8}\leq y\leq 5-\frac{1}{3},\\
			&0,\qquad y\leq\frac{1}{2} \quad \text{or}\quad y\geq5.
		\end{aligned}
		\right.
	\end{align*}
	
	We apply $\pa_x$ on both sides of \eqref{eq: omega R, U R} and take $L^2$ inner product with $\eta_1^2\pa_x\omega$ and integrate over $0\leq s\leq t$ to have
	\begin{align*}
		&\frac{1}{2}\left\|\eta_1\pa_x\omega_R(t)\right\|_{L^2}^2
		+\nu\int_0^t \left\|\eta_1\pa_x\nabla\omega_R(t)\right\|_{L^2}^2ds
		=-2\nu\int_0^t\int_{\mathbb R^2_+}\eta_1'\eta_1\pa_x\omega_R\cdot\pa_x\pa_y\omega_R dxdy\\
		&\qquad\qquad+\int_0^t\int_{\mathbb R^2_+}\eta_1^2\big(U_a\cdot\nabla\omega_R+U_R\cdot\nabla\omega_a+R_b+R_{vp}\big)\cdot\pa_x^2\omega_R dxdy\\
		&\qquad\qquad-\int_0^t\int_{\mathbb R^2_+}\eta_1^2(U_R\cdot\nabla\omega_R)\cdot\pa_x^2\omega_R dxdy:=\sum_{1\leq i\leq3}I_i.
	\end{align*}
	
	For $I_1$, it holds that
	\begin{align*}
		|I_1|&\leq \frac{\nu}{2}\int_0^t \left\|\eta_1\pa_x\pa_y\omega_R(t)\right\|_{L^2}^2ds
		+Ce^{-\frac{40\eps_0}{\nu t}}\int_0^t\|e^\Psi\psi\nabla(\chi_m\omega_R)\|_{L^2}^2 ds\\
		&\leq \frac{\nu}{2}\int_0^t \left\|\eta_1\pa_x\pa_y\omega_R(t)\right\|_{L^2}^2ds
		+Ce^{-\frac{40\eps_0}{\nu t}}\int_0^t D(s)^2ds\\
		&\leq \frac{\nu}{2}\int_0^t \left\|\eta_1\pa_x\pa_y\omega_R(t)\right\|_{L^2}^2ds
		+Ce^{-\frac{35\eps_0}{\nu t}} E(t)^2.
	\end{align*}
	
	For $I_2$, we use Proposition \ref{prop: app estimates} and Lemma \ref{lem: velocity estimates 2} to obtain
	\begin{align*}
		|I_2|
		&\leq \int_0^t \big(\|U_a\|_{L^\infty(\frac{1}{2}\leq y\leq5)}\|\eta_1\nabla\omega_R\|_{L^2}
		+\|U_R\|_{L^\infty(\frac{1}{2}\leq y\leq5)}
		\|\eta_1\nabla\omega_a\|_{L^2}\\
		&\qquad\qquad+\|\eta_1(R_b+R_{vp})\|_{L^2} \big)
		\|\eta_1\pa_x^2\omega_R\|_{L^2}ds\\
		&\leq \frac{\nu}{5}\int_0^t
		\|\eta_1\pa_x^2\omega_R\|_{L^2}^2ds
		+Ce^{-\frac{40\eps_0}{\nu t}}
		\int_0^t \big(\left\|e^\Psi\psi\nabla(\chi_m\omega_R)\right\|_{L^2}^2
		+\|U_R\|^2_{L^\infty(\frac{1}{2}\leq y\leq5)}+1\big) ds\\
		&\leq \frac{\nu}{5}\int_0^t
		\|\eta_1\pa_x^2\omega_R\|_{L^2}^2ds
		+Ce^{-\frac{35\eps_0}{\nu t}}\big(1+E(t)\big)^2.
	\end{align*}
	
	For $I_3$, Lemma \ref{lem: velocity estimates 2} implies
	\begin{align*}
		|I_3|
		&\leq \int_0^t \|U_R\|_{L^\infty(\f12\leq y\leq5)}\|\eta_1\nabla\omega_R\|_{L^2}\left\|\eta_1\pa_x^2\omega_R\right\|_{L^2}ds\\
		&\leq \frac{\nu}{5}\int_0^t \|\eta_1\pa_x^2\omega_R\|^2_{L^2}ds
		+\frac{5}{\nu}\int_0^t\|U_R\|_{L^\infty(\f12\leq y\leq5)}^2\|\eta_1\nabla\omega_R\|^2_{L^2}ds\\
		&\leq \frac{\nu}{5}\int_0^t \|\eta_1\pa_x^2\omega_R\|^2_{L^2}ds
		+e^{-\frac{40\eps_0}{\nu t}}\sup_{[0,t]}\|\eta_1\nabla\omega_R\|_{L^2}
		\int_0^t \big(E(s)^2+E(s)^{3/2}D(s)^{1/2}\big)\left\|e^\Psi\psi\nabla(\chi_m\omega_R)\right\|_{L^2}ds\\
		&\leq \frac{\nu}{5}\int_0^t \|\eta_1\pa_x^2\omega_R\|^2_{L^2}ds
		+e^{-\frac{10\eps_0}{\nu t}}\sup_{[0,t]}\|\eta_1\nabla\omega_R\|_{L^2}^2
		+e^{-\frac{35\eps_0}{\nu t}}\big(1+E(t)\big)^8.
	\end{align*}
	Since $\eta_1\pa_y\omega_R$ can be estimated in a same way, we
	collect these estimates together to obtain the desired result.
	\end{proof}


\section{Energy  estimate near the boundary}\label{sec: estimate of E_b(t)}

This section is devoted to the proof of  Proposition \ref{prop: est of Eb(t)}.

\subsection{Solution formula near the boundary}

In this subsection, we derive the remainder system near the boundary.
Multiplying $\chi_b$ on both sides of \eqref{eq: omega R, U R} yields
\begin{align}\label{eq: chib omegaR}
	\left\{
	\begin{aligned}
		&\pa_t(\chi_b\omega_R)-\nu\Delta(\chi_b\omega_R)=N_R,\\
		&\chi_b\omega_R|_{t=0}=0,
		\qquad
		\nu(\pa_y+|D_x|)(\chi_b\omega_R)|_{y=0}=B_R,
	\end{aligned}
	\right.
\end{align}
where $B_R$ is defined in \eqref{def: BR} and
\begin{align}\label{def: NR}
	N_R=-\chi_b\big( U_a\cdot\nabla\omega_R
	+ U_R\cdot\nabla\omega_a
	+ U_R\cdot\nabla\omega_R\big)
	-\nu\chi_b''\omega_R
	-2\nu\chi_b'\pa_y\omega_R
	-\chi_b R_b.
\end{align}

We also need to derive corresponding estimates for  $x\omega_R$. Multiplying $x$ on both sides of \eqref{eq: chib omegaR} yields
\begin{align}\label{eq: chib x omegaR}
	\left\{
	\begin{aligned}
		&\pa_t(\chi_b x\omega_R)-\nu\Delta(\chi_b x\omega_R)=\widetilde{N_R} ,\\
		&\chi_b x\omega_R|_{t=0}=0,
		\qquad
		\nu(\pa_y+|D_x|)(\chi_b x\omega_R)|_{y=0}=\widetilde{B_R},
	\end{aligned}
	\right.
\end{align}
where 
\begin{align}\label{def: widetilde NR}
	\widetilde{N_R}
	=&-\chi_b\big( U_a\cdot\nabla(x\omega_R)
	+U_R\cdot\nabla(x\omega_a)
	+U_R\cdot\nabla(x\omega_R)\big)
	-\chi_b xR_b\\
	\nonumber
	&+\chi_b\big(u_a\omega_R+u_R\omega_a+u_R\omega_R\big)
	-\nu\chi_b'' x\omega_R
	-2\nu\chi_b'\pa_y(x\omega_R)
	-2\nu\chi_b\pa_x\omega_R.
\end{align}
And a direct computation implies
\begin{align}\label{def: widetilde BR}
	(\widetilde{B_R})_\xi
	&=\nu(\pa_y+|\xi|)(\chi_b x\omega_R)_\xi|_{y=0}
	=i\nu(\pa_y+|\xi|)\pa_\xi(\chi_b\omega_R)_\xi|_{y=0}\\
	\nonumber
	&=i\nu\pa_\xi\Big((\pa_y+|\xi|)(\chi_b\omega_R)_\xi\Big)|_{y=0}
	-i\nu sgn\xi(\omega_R)_\xi|_{y=0}\\
	\nonumber
	&=i\pa_\xi\big(B_R)_\xi
	-i\nu sgn\xi(\omega_R)_\xi|_{y=0},
\end{align}
where we denote by $f_\xi$ the Fourier transform of $f$ with respect to the horizontal variable $x$. 

By utilizing the solution formula in \cite{Maekawa}, we obtain the integral equations of $\omega_R$ and $x\omega_R$ as
\begin{align}\label{eq: integral eq of omega R}
	(\chi_b\omega_R)_\xi(t,y)
	=&\int_0^t\int_0^{+\infty}\big( H_\xi(t-s,y,z)
 	+R_\xi(t-s,y,z)\big) (N_R)_\xi(s,z)dzds\\
 	\nonumber
 	&-\int_0^t\big(H_\xi(t-s,y,0)+ R_\xi(t-s,y,0)\big) (B_R)_\xi(s)ds,
\end{align}
\begin{align}\label{eq: integral eq of widetilde omega R}
	(\chi_b x\omega_R)_\xi(t,y)
	=&\int_0^t\int_0^{+\infty}\big( H_\xi(t-s,y,z)
 	+R_\xi(t-s,y,z)\big) (\widetilde{N_R})_\xi(s,z)dzds\\
 	\nonumber
 	&-\int_0^t\big(H_\xi(t-s,y,0)+ R_\xi(t-s,y,0)\big) (\widetilde{B_R})_\xi(s)ds,
\end{align}
where
 \begin{align}
 	&H_\xi(t,y,z)=e^{-\nu\xi^2t}\big(g(\nu t,y-z)+g(\nu t,y+z)\big), \label{def of H xi} \\
	 &R_\xi(t,y,z)=\big( \Gamma (\nu t,x,y+z)-\Gamma(0,x,y+z)\big)_\xi, \label{def of R xi}
 \end{align}
 with 
\begin{align*}
	g(t,x)=\frac{1}{(4\pi t)^{1/2}}e^{-\frac{x^2}{4t}},\quad  \Gamma(t,x,y)=\big(\Xi E\ast G(t)\big)(x,y),
\end{align*}
 Here, 
 \begin{align*}
 	\Xi=2(\pa_x^2+|D_x|\pa_y\big)
   ,\
   E(x,y)=-\frac{1}{2\pi}\operatorname{log}\sqrt{x^2+y^2} ,\quad G(t,x,y)=g(t,x)g(t,y).
 \end{align*}
 
 In \cite{Kukavica}, \cite{TT Nguyen} and \cite{HWYZ},
 $R_\xi$ enjoys the following properties.
 \begin{lemma}\label{lem: property of Rxi}
 	(1) $\pa_y R_\xi(t,y,z)=\pa_z R_\xi(t,y,z).$
 	
 	(2) It holds that
 	\begin{align*}
 	&R_\xi(t,y,z)=-2\nu\int_0^t(-\xi^2+|\xi|\pa_y)\Big(e^{-\nu s\xi^2}g(\nu s,y+z)\Big)ds,\\
 		&|\pa_z^k R_\xi(t,y,z)|\leq C a^{k+1}e^{-\theta_0 a(y+z)}
 		+\frac{C}{(\nu t)^{(k+1)/2}}e^{-\theta_0\frac{(y+z)^2}{\nu t}}e^{-\frac{\nu\xi^2t}{8}},\quad k\geq0\quad a=|\xi|+\frac{1}{\sqrt{\nu}},\\
 		&|(y\pa_y)^kR_\xi(t,y,z)|\leq Cae^{-\frac{\theta_0}{2}a(y+z)}
 		+\frac{C}{\sqrt{\nu t}}e^{-\frac{\theta_0}{2}\frac{(y+z)^2}{\nu t}}e^{-\frac{\nu\xi^2t}{8}},\quad k=0,1,2,
 	\end{align*}
 	where $\theta_0$ is a universal constant and $C$ depends only on $\theta_0$.
 	
 \end{lemma}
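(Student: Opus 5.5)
Part (1) follows directly from the definition \eqref{def of R xi}. Both $\Gamma(\nu t,x,y+z)$ and $\Gamma(0,x,y+z)$ depend on $(y,z)$ only through $y+z$. So $R_\xi(t,y,z)=r_\xi(t,y+z)$ for some function $r_\xi$, which gives $\pa_yR_\xi=\pa_zR_\xi=r_\xi'$.

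For the integral representation in (2), I will write $\Gamma(\nu t)-\Gamma(0)=\int_0^t\pa_s\Gamma(\nu s)\,ds$. Since $G(t,x,y)$ solves the heat equation, $\pa_s G(\nu s)=\nu\Delta G(\nu s)$. The operator $\Xi$ commutes with convolution and $\Delta E=-\delta$, so
\begin{align*}
\pa_s\Gamma(\nu s)=\nu\,\Xi E\ast\Delta G(\nu s)=-\nu\,\Xi G(\nu s).
\end{align*}
Taking the Fourier transform in $x$, $G(\nu s,\cdot,y)$ becomes $e^{-\nu s\xi^2}g(\nu s,y)$ (up to the paper's normalization), and $\Xi$ becomes $2(-\xi^2+|\xi|\pa_y)$. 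Evaluating at the vertical variable $y+z$ gives the stated formula.

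For the pointwise bounds, I will first observe that each $\pa_z^k$ is a $\pa_y$-derivative of $r_\xi$ at $Y:=y+z$. The kernel is a time integral of Gaussians, which I will either evaluate in closed form or bound term by term. Setting $\tau=\nu s$, the integrand is
\begin{align*}
-2\left(-\xi^2 g(\tau,Y)+|\xi|\,\pa_Y g(\tau,Y)\right)e^{-\tau\xi^2},
\end{align*}
integrated over $\tau\in(0,\nu t)$. The $|\xi|\pa_Y g$ term integrates exactly in $\tau$ against the adjoint heat structure. The result is the classical expression
\begin{align*}
|\xi|\,e^{|\xi|Y+\nu t\xi^2}\operatorname{erfc}\Big(\frac{Y}{2\sqrt{\nu t}}+|\xi|\sqrt{\nu t}\Big)
\end{align*}
together with the boundary contributions at $\tau=\nu t$, namely Gaussians $(\nu t)^{-1/2}e^{-Y^2/(4\nu t)}e^{-\nu t\xi^2}$. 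Differentiating $k$ times in $Y$ produces factors of $|\xi|$ and Hermite-type factors $(\nu t)^{-k/2}$.

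The main work is the uniform bound. I will split into two regimes.
\begin{itemize}
\item When $|\xi|\sqrt{\nu t}\gtrsim1$, I will use $\operatorname{erfc}(w)\le Ce^{-w^2}$. This turns the erfc term into a Gaussian $e^{-\theta_0 Y^2/(\nu t)}e^{-\nu t\xi^2/8}$, and the prefactors are absorbed since $|\xi|\lesssim(\nu t)^{-1/2}$ is not needed there: one factor of $|\xi|$ is traded for $(\nu t)^{-1/2}$ up to $e^{-\nu t\xi^2/8}$.
\item When $|\xi|\sqrt{\nu t}\lesssim1$, I will use $e^{w^2}\operatorname{erfc}(w)\le C/(1+w)$ and $Y\le 2\sqrt{\nu t}\,w$. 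These give exponential decay $e^{-\theta_0|\xi|Y}$ times $|\xi|^{k+1}$, plus the Gaussian part.
\end{itemize}
Replacing $|\xi|$ by $a=|\xi|+\nu^{-1/2}$ only weakens the exponential factor. When $|\xi|\ll\nu^{-1/2}$ the $a$-decay is dominated by the Gaussian part anyway, since $Y\le$ const on the support relevant here, and I will use $a^{k+1}\ge|\xi|^{k+1}$.

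The bounds for $(y\pa_y)^k$ with $k\le2$ follow from the $\pa_z^k$ bounds with $y\le Y$. I will use $Y^j a^j e^{-\theta_0 aY}\le Ce^{-\theta_0 aY/2}$ and $(Y/\sqrt{\nu t})^j e^{-\theta_0Y^2/\nu t}\le Ce^{-\theta_0Y^2/(2\nu t)}$; this halving is why $\theta_0/2$ appears in that estimate. I expect the main obstacle to be tracking constants uniformly across the transition $|\xi|\sqrt{\nu t}\sim1$. Since this lemma is taken from \cite{Kukavica,TT Nguyen,HWYZ}, I would follow their erfc computation closely rather than redo it from scratch.
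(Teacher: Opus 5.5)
Your arguments for (1) and for the integral representation are correct. The paper itself gives no proof here; it simply imports the lemma from the works it cites. The gap is in the pointwise bounds.

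\textbf{The closed form.} The formula you write down is not the right one. The two parts of the integrand combine exactly, with no Gaussian boundary terms, into
\[
R_\xi(t,y,z)=|\xi|\,e^{-|\xi|Y}\operatorname{erfc}\Big(\frac{Y}{2\sqrt{\nu t}}-|\xi|\sqrt{\nu t}\Big),\qquad Y=y+z .
\]
You can also see this from $\Gamma(0)_\xi=-2|\xi|e^{-|\xi|Y}$ and $\Gamma(\nu t)_\xi=-|\xi|e^{-|\xi|Y}\operatorname{erfc}\big(|\xi|\sqrt{\nu t}-\frac{Y}{2\sqrt{\nu t}}\big)$, in the Fourier normalization of the displayed formula.

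The argument of erfc carries a minus sign, and it is negative when $Y<2|\xi|\nu t$. There $R_\xi\simeq|\xi|e^{-|\xi|Y}$ with no Gaussian decay at all. Your regime analysis (Gaussian when $|\xi|\sqrt{\nu t}\gtrsim1$, $|\xi|^{k+1}e^{-\theta_0|\xi|Y}$ when $|\xi|\sqrt{\nu t}\lesssim1$) is therefore the reverse of what actually happens.

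Differentiating, $\pa_Y^kR_\xi$ equals $(-|\xi|)^k$ times the expression above plus terms $c_{k,j}|\xi|^{k-j}(\nu t)^{-(j+1)/2}P_j(A)e^{-A^2-\nu t\xi^2}$, where $A=Y/(2\sqrt{\nu t})$ and $\deg P_j=j$. These extra terms are harmless, because $|\xi|^m e^{-\nu t\xi^2/2}\le C(\nu t)^{-m/2}$.

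\textbf{The handling of $a$.} This is the more serious problem. Replacing $|\xi|$ by $a=|\xi|+\nu^{-1/2}$ does not weaken the bound: $e^{-\theta_0 aY}\le e^{-\theta_0|\xi|Y}$, so it strengthens it. Hence an intermediate estimate $|\xi|^{k+1}e^{-\theta_0|\xi|Y}+(\text{Gaussian})$ does not imply the lemma. Your remark ``$Y\le$ const'' does not help: at $|\xi|=Y=1$ and $t\le1$, both terms on the right are exponentially small as $\nu\to0$, while $|\xi|e^{-\theta_0|\xi|Y}$ is of order one.

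What is missing is a restriction on time, and it is indispensable. As $t\to\infty$, $R_\xi\to2|\xi|e^{-|\xi|Y}$. At $|\xi|=Y=1$ this cannot be bounded by $Cae^{-\theta_0 aY}$ uniformly in $\nu$.

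For $0<t\le1$, which is all the paper uses, split by $|\xi|$ versus $\nu^{-1/2}$:
\begin{itemize}
\item If $|\xi|\ge\nu^{-1/2}$, then $a\le2|\xi|$, and $|\xi|^{k+1}e^{-|\xi|Y}\operatorname{erfc}(\cdot)\le 2a^{k+1}e^{-aY/2}$.
\item If $|\xi|<\nu^{-1/2}$, then $|\xi|\sqrt{\nu t}\le1$ and $|\xi|\le(\nu t)^{-1/2}$. Either $Y\le4\sqrt{\nu t}$, and the term is at most $2(\nu t)^{-(k+1)/2}$. Or the erfc argument exceeds $A/2$, and the term is at most $(\nu t)^{-(k+1)/2}e^{-Y^2/(16\nu t)}$. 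In both subcases the Gaussian term dominates, since $e^{-\nu\xi^2 t/8}\ge e^{-1/8}$.
\end{itemize}
So the $a$-term only has to absorb high frequencies, and low frequencies are purely Gaussian. This gives the stated bounds with $\theta_0=1/16$.

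Your reduction of the $(y\pa_y)^k$ estimates to the $\pa_z^k$ ones is correct.
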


\subsection{Semigroup estimates}

Our aim is to establish estimates of $\|(1,x)\omega_R\|_{Y_1(t)\cap Y_2(t)}$ by controlling the right hand side of \eqref{eq: integral eq of omega R}, \eqref{eq: integral eq of widetilde omega R} which is achieved via Lemma \ref{lem: est of HRN} and \ref{lem: est of HRB} below.

For later use, we denote
\begin{align}
	\mu_1:=\mu+\f12(\mu_0-\mu-\gamma s),
\end{align}
and observe that $0<\mu<\mu_1<\mu_0-\gamma s$. For convenience, we introduce the following norm.
\begin{align}\label{def: new norm of right}
	\|N(s)\|_{W_{\mu,s}}
	:=\sum_{i+j\leq1}\|\pa_x^i(y\pa_y)^jN(s)\|_{Y^1_{\mu,s}\cap Y^2_{\mu,s}}
	+e^{\frac{2\eps_0}{\nu s}}
	\sum_{i+j\leq2}\left\|\left\|\pa_x^i\pa_y^jN(s)\right\|_{L^2_x}\right\|_{L^1_y(y\geq1+\mu)}.
\end{align}

We have the following semigroup estimates.
\begin{lemma}\label{lem: est of HRN}
	For $\mu<\mu_0-\gamma t$, we have
	\begin{align*}
		&\sum_{i+j\leq1}\left\|\pa_x^i(y\pa_y)^j\int_0^t\int_0^{+\infty}\big( H(t-s,y,z)+R(t-s,y,z)\big)N_R(s,z)dzds\right\|_{Y^1_{\mu,t}\cap Y^2_{\mu,t}}\\
		&\leq C\int_0^t \|N_R(s)\|_{W_{\mu,s}} ds,
	\end{align*}
	and
	\begin{align*}
		&\sum_{i+j\leq2}\left\|\pa_x^i(y\pa_y)^j\int_0^t\int_0^{+\infty}\big( H(t-s,y,z)+R(t-s,y,z)\big)N_R(s,z)dzds\right\|_{Y^1_{\mu,t}\cap Y^2_{\mu,t}}\\
		&\leq C\int_0^t\big((\mu_0-\mu-\gamma s)^{-1}+(\mu_0-\mu-\gamma s)^{-\frac{1}{2}}(t-s)^{-\frac{1}{2}}\big) \|N_R(s)\|_{W_{\mu_1,s}} ds.
	\end{align*}
\end{lemma}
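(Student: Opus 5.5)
We prove Lemma \ref{lem: est of HRN} mode by mode in $\xi$, splitting the $z$-integral at $z=1+\mu$ (respectively $z=1+\mu_1$ for the second estimate). This separates the part of $N_R$ inside the analytic-weight strip from the part outside, where only Sobolev control with an extra factor $e^{2\eps_0/(\nu s)}$ is available.

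\textbf{Heat part, inner region.} The key pointwise fact is the weight transfer
\[
e^{\eps_0(1+\mu)\frac{y^2}{\nu t}}\,g(\nu(t-s),y\pm z)\lesssim e^{\eps_0(1+\mu)\frac{z^2}{\nu s}}\,g(2\nu(t-s),y\pm z)
\quad\text{for } s<t .
\]
This holds for $\eps_0$ small: by a convexity argument, $\frac{y^2}{t}\le \frac{z^2}{s}+\frac{(y-z)^2}{t-s}$, and the factor $\eps_0(1+\mu)\le \frac18$ absorbs the last term into the Gaussian. In the horizontal variable, $e^{\eps_0(1+\mu-y)_+|\xi|}\le e^{\eps_0(1+\mu-z)_+|\xi|}e^{\eps_0|y-z||\xi|}$, and the factor $e^{\eps_0|y-z||\xi|}$ is controlled by $e^{-\nu\xi^2(t-s)}$ times the Gaussian in $y-z$ (completing the square, losing only a constant).

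Integrating in $y$ then gives $\|H_\xi(t-s)\ast (N_R)_\xi(s)\|_{\mu,t}\lesssim \|e^{\eps_0(1+\mu-z)_+|\xi|}(N_R)_\xi(s)\|_{\mu,s}$. Taking $L^1_\xi$ or $L^2_\xi$ bounds the contribution by $\|N_R(s)\|_{Y^k_{\mu,s}}$.

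For derivatives, $\pa_x$ becomes multiplication by $i\xi$ and commutes with the kernel. For $y\pa_y$ we write $y\pa_y H = (y-z)\pa_y H + z\pa_y H$, and $z\pa_yH=-z\pa_z H$ on the even/odd parts. Integrating by parts in $z$ moves $z\pa_z$ onto $N_R$, and the factor $(y-z)\pa_yg$ is a harmless bounded kernel. This gives the first estimate for the inner part, with right-hand side $\sum_{i+j\le1}\|\pa_x^i(y\pa_y)^jN_R\|_{Y^1\cap Y^2}$.

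\textbf{Boundary corrector $R_\xi$.} We use Lemma \ref{lem: property of Rxi}. Property (1) lets us trade $\pa_y$ for $\pa_z$ and integrate by parts, so $y\pa_y$ again lands on $N_R$ up to bounded kernels. The bound $|R_\xi|\lesssim a e^{-\theta_0 a(y+z)}+(\nu t)^{-1/2}e^{-\theta_0(y+z)^2/(\nu t)}$ has $L^1_y$ norm $O(1)$ uniformly in $\xi$. Since $a\ge|\xi|$, the factor $e^{-\theta_0 a(y+z)}$ dominates $e^{\eps_0(z-y)|\xi|}$ for $\eps_0<\theta_0/2$, and it also beats the $y$-weight $e^{\eps_0 y^2/\nu t}$ on $y\le 1+\mu$ when $\nu$ is small (because $a\ge\nu^{-1/2}$, and the Gaussian term is handled as for $H$). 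This controls the $R_\xi$ contribution by the same norms.

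\textbf{Outer region $z\ge 1+\mu$.} Here $N_R$ carries no analytic weight. Since $y\le 1+\mu$, the kernel supplies the decay: for $|y-z|$ bounded below, $g(\nu(t-s),y-z)$ is exponentially small, of size $e^{-c/\nu}$. For $z$ just above $1+\mu$, the weight $e^{\eps_0(1+\mu)y^2/\nu t}$ is at most $e^{2\eps_0/\nu t}$. The horizontal weight is at most $e^{\eps_0(1+\mu-y)|\xi|}$ and, for $y$ near $1+\mu$, is absorbed through $e^{-\nu\xi^2(t-s)}$ together with the $H^2$-in-$x$ control; Plancherel handles the $L^2_\xi$ bound, and Cauchy--Schwarz with $(1+|\xi|^2)^{-1}$ handles the $L^1_\xi$ bound. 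This is exactly why $W_{\mu,s}$ contains $e^{2\eps_0/\nu s}\sum_{i+j\le2}\|\pa_x^i\pa_y^jN\|_{L^1_yL^2_x(y\ge1+\mu)}$, with two derivatives so that $|\xi|$ and $y\pa_y$ can be absorbed.

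\textbf{Second-order estimate, the main obstacle.} The second estimate, with two derivatives, loses one derivative and must be recovered from the analyticity gap. We estimate with the larger weight $\mu_1$ and use
\[
|\xi|e^{\eps_0(1+\mu-y)_+|\xi|}\le \frac{C}{\mu_1-\mu}\,e^{\eps_0(1+\mu_1-y)_+|\xi|},
\]
together with the analogous statement for the $y^2/\nu t$ weight converting $y\pa_y$. This is the inequality of Lemma \ref{lem: analytic recovery}, and it produces the factor $(\mu_0-\mu-\gamma s)^{-1}$. Alternatively, one derivative can fall on the heat kernel, with $\|\pa_yg(\nu(t-s))\|_{L^1}\sim(\nu(t-s))^{-1/2}$. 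Combined with half of the gap, this yields the mixed term $(\mu_0-\mu-\gamma s)^{-1/2}(t-s)^{-1/2}$, after interpolating between the $\xi$-weight gain and the parabolic smoothing of $H$.

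The delicate point is ensuring that $\nu^{-1/2}$ does not appear. For this we avoid using $|\xi|\sqrt{\nu}$ smoothing alone, and instead split $|\xi|\le(\mu_1-\mu)^{-1}$ against the gap as above. I expect this balancing, and the corresponding treatment of the $R_\xi$ term (where the factor $a\sim\nu^{-1/2}$ must be controlled by $e^{-\theta_0 a z}$ rather than by the gap), to be the hardest step.
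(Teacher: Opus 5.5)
Your architecture is the standard one (the paper itself omits this proof): Duhamel mode by mode, the weight transfer $\frac{y^2}{t}\le\frac{z^2}{s}+\frac{(y-z)^2}{t-s}$, splitting $z$ at $1+\mu$, paying for the outer part with $e^{2\varepsilon_0/(\nu s)}$ and $H^2$ control, moving $z\partial_z$ onto $N_R$ for the heat kernel, and analytic recovery for the second estimate. Your treatment of the $H_\xi$ part is sound.

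\textbf{The $R_\xi$ step is false as written.} You claim that $a e^{-\theta_0 a(y+z)}$ with $a\ge\nu^{-1/2}$ beats the weight $e^{\varepsilon_0(1+\mu)y^2/(\nu t)}$ on $0<y<1+\mu$. It does not. Take $z=0$ and $|\xi|\lesssim\nu^{-1/2}$. The exponent $\varepsilon_0(1+\mu)\frac{y^2}{\nu t}-\theta_0 a y$ is positive as soon as $y\gtrsim\sqrt{\nu}\,t/\varepsilon_0$. At $y=\frac12$ it is of order $\frac{1}{\nu t}-\frac{1}{\sqrt\nu}$. So, using the pointwise bounds of Lemma \ref{lem: property of Rxi}, that piece of the kernel has weighted $L^1_y$ norm of size $e^{c/(\nu t)}$, not $O(1)$.

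The repair is to use the exact representation in Lemma \ref{lem: property of Rxi}(2),
\[
R_\xi(t-s,y,z)=-2\nu\int_0^{t-s}\big(-\xi^2+|\xi|\partial_y\big)\big(e^{-\nu\tau\xi^2}g(\nu\tau,y+z)\big)\,d\tau .
\]
Its integrand is Gaussian in $Y=y+z$ at scale $\sqrt{\nu\tau}\le\sqrt{\nu t}$. Then:
\begin{itemize}
\item Since $y\le Y$ and $\tau\le t$, the weight satisfies $e^{\varepsilon_0(1+\mu)y^2/(\nu t)}\le e^{\varepsilon_0(1+\mu)Y^2/(\nu\tau)}$, which is absorbed for $\varepsilon_0$ small.
\item The horizontal loss $e^{\varepsilon_0(z-y)_+|\xi|}\le e^{\varepsilon_0 Y|\xi|}$ is absorbed by completing the square against $e^{-\nu\tau\xi^2}$.
\item The resulting $L^1_y$ norms are bounded by $\nu\xi^2\int_0^\infty e^{-\nu\tau\xi^2/2}d\tau$ and $\nu|\xi|\int_0^\infty(\nu\tau)^{-1/2}e^{-\nu\tau\xi^2/2}d\tau$. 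Both are $O(1)$ uniformly in $\xi$ and $\nu$.
\end{itemize}
Equivalently, keep the pointwise bound only for $y\le\sqrt{\nu t}$, where the weight is harmless; this is what the paper does for $T_2$ in the proof of Proposition \ref{prop: est of Eb(t)}.

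\textbf{Two smaller points.}

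First, for $y\partial_y$ on the $R$-term, trading $\partial_y$ for $\partial_z$ and integrating by parts does not put $z\partial_z$ on $N_R$. It leaves $y\,\partial_z N_R$ (with the output variable $y$) and a trace term $yR_\xi(t-s,y,0)(N_R)_\xi(s,0)$. Neither is controlled by $W_{\mu,s}$. The manoeuvre is also unnecessary. Since $R_\xi$ depends only on $Y$, we have $|(y\partial_y)^kR_\xi|\lesssim\sum_{l\le k}Y^l|\partial_Y^l R_\xi|$, and $Y^l\partial_Y^l$ costs nothing on the Gaussian integrands above. So conormal derivatives can stay on $R$ in both estimates, and only $\partial_x$ has to be moved to $N_R$ or recovered.

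Second, in the second estimate the mixed factor $(\mu_0-\mu-\gamma s)^{-1/2}(t-s)^{-1/2}$ does not come from interpolating the $\xi$-gap with parabolic smoothing. The $\xi$-gap cannot remove the $\nu^{-1/2}$ in $\|\partial_y g(\nu(t-s))\|_{L^1}$. The factor comes from the second inequality of Lemma \ref{lem: analytic recovery}. There the factor $y$ in $y\partial_y$ is absorbed by the gap in the Gaussian weight:
\[
\frac{y}{\sqrt{\nu t}}\,e^{-\varepsilon_0(\mu_1-\mu)y^2/(\nu t)}\lesssim(\mu_1-\mu)^{-1/2},
\]
which exactly cancels that $\nu^{-1/2}$. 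The $\xi$-gap handles the extra $\partial_x$ and produces only the $(\mu_0-\mu-\gamma s)^{-1}$ term.

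With these corrections your outline closes.
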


\begin{lemma}\label{lem: est of HRB}
	For $\mu<\mu_0-\gamma t$, we have 
	\begin{align*}
		&\sum_{i+j\leq1}\left\|\pa_x^i(y\pa_y)^j\int_0^t \big( H(t-s,y,0)+R(t-s,y,0)\big)B_R(s)ds\right\|_{Y^1_{\mu,t}\cap Y^2_{\mu,t}}\\
		&\leq C\int_0^t\sum_{i\leq1}\left\| e^{\eps_0(1+\mu)|\xi|}\xi^i (B_R)_\xi(s)\right\|_{L^1_\xi\cap L^2_\xi}ds,
	\end{align*}
	and
	\begin{align*}
		&\quad\sum_{i+j\leq2}\left\|\pa_x^i(y\pa_y)^j\int_0^t \big( H(t-s,y,0)+R(t-s,y,0)\big)B_R(s)ds\right\|_{Y^1_{\mu,t}\cap Y^2_{\mu,t}}\\
		&\leq C\int_0^t(\mu_0-\mu-\gamma s)^{-1}
		\sum_{i\leq1}\left\| e^{\eps_0(1+\mu_1)|\xi|}\xi^i (B_R)_\xi(s)\right\|_{L^1_\xi\cap L^2_\xi}ds.
	\end{align*}
\end{lemma}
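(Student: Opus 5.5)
The plan is to treat the boundary semigroup integral in Fourier variables, handling the heat part $H_\xi(t-s,y,0)$ and the nonlocal part $R_\xi(t-s,y,0)$ separately, for each fixed $\xi$ and then in $L^1_\xi\cap L^2_\xi$. Since $B_R(s)$ depends only on $(s,\xi)$, each term factors as
\[
\int_0^t K_\xi(t-s,y)\,(B_R)_\xi(s)\,ds,\qquad K_\xi\in\{H_\xi(\cdot,\cdot,0),\,R_\xi(\cdot,\cdot,0)\}.
\]
The task is then to bound $\|e^{\eps_0(1+\mu-y)_+|\xi|}\pa_x^i(y\pa_y)^jK_\xi(t-s,\cdot)\|_{\mu,t}$ uniformly in $s$, and pull the $\xi$-weight over to $(B_R)_\xi$. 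Derivatives in $x$ act as multiplication by $i\xi$, which accounts for the factor $\xi^i$, $i\le1$. For $y\pa_y$ derivatives of $H$, note that $y\pa_y g(\nu\tau,y)=-\frac{y^2}{2\nu\tau}g(\nu\tau,y)$. For $R_\xi$ we use the bound on $(y\pa_y)^k R_\xi$ with $k\le2$ from Lemma \ref{lem: property of Rxi}.

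For the heat part, $H_\xi(\tau,y,0)=2e^{-\nu\xi^2\tau}g(\nu\tau,y)$. The key computation is
\[
\int_0^{1+\mu} e^{\eps_0(1+\mu)\frac{y^2}{\nu t}}\,g(\nu\tau,y)\Big(1+\frac{y^2}{\nu\tau}\Big)^j dy\le C,
\]
uniformly for $0<\tau=t-s\le t$. This holds because $e^{\eps_0(1+\mu)y^2/(\nu t)}\le e^{2\eps_0 y^2/(\nu\tau)}$ is absorbed by the Gaussian $e^{-y^2/(4\nu\tau)}$ once $\eps_0$ is small, and the resulting integral is an $L^1$ norm of a rescaled Gaussian. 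The spatial weight satisfies $e^{\eps_0(1+\mu-y)_+|\xi|}\le e^{\eps_0(1+\mu)|\xi|}$, so summing over $i,j$ and integrating in $s$ gives the first inequality for $H$.

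For the nonlocal part, Lemma \ref{lem: property of Rxi} gives
\[
|(y\pa_y)^kR_\xi(\tau,y,0)|\le Cae^{-\frac{\theta_0}{2}ay}+\frac{C}{\sqrt{\nu\tau}}e^{-\frac{\theta_0}{2}\frac{y^2}{\nu\tau}}, \qquad a=|\xi|+\nu^{-1/2}.
\]
The second piece is treated like $H$. The first piece is the main obstacle. Its $L^1_y$ norm is $O(1)$, but the weight $e^{\eps_0(1+\mu)y^2/(\nu t)}$ must be beaten by $e^{-\theta_0 ay/2}$, and this fails for large $y$ when $t$ is small. I would split the integral at $y=\sqrt{\nu t}$.

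On $y\le\sqrt{\nu t}$ the weight is bounded. On $y\ge\sqrt{\nu t}$ I would use $a\ge\nu^{-1/2}$, so the decay is at least $e^{-\theta_0 y/(2\sqrt\nu)}$, and compare it with $e^{\eps_0(1+\mu)y^2/(\nu t)}$ for $y\le 1+\mu$. Here $y^2/(\nu t)\le y(1+\mu)/(\nu t)$, and this can dominate $y/\sqrt\nu$ only when $t\lesssim\sqrt\nu$, so this step is delicate. If the direct comparison fails, I would instead retain the exact representation
\[
R_\xi=-2\nu\int_0^\tau(-\xi^2+|\xi|\pa_y)\big(e^{-\nu r\xi^2}g(\nu r,y)\big)\,dr
\]
and estimate it as a superposition of heat kernels. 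After integrating by parts in $r$ where needed, each heat kernel term gets the same weighted bound as $H$, producing at most a factor $|\xi|\nu\tau\cdot(\nu r)^{-1/2}$. That factor should be absorbed by the allowed $\xi^i$ with $i\le1$ together with integrability in $r$.

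For the second inequality, with two derivatives, one extra $\pa_x$ or $y\pa_y$ costs a factor $|\xi|$. I would absorb it by shrinking the analytic weight from $e^{\eps_0(1+\mu_1)|\xi|}$ to $e^{\eps_0(1+\mu)|\xi|}$, using
\[
|\xi|e^{-\eps_0(\mu_1-\mu)|\xi|}\le \frac{C}{\mu_1-\mu}=\frac{2C}{\mu_0-\mu-\gamma s}.
\]
This yields the factor $(\mu_0-\mu-\gamma s)^{-1}$.
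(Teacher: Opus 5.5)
Your treatment of the heat part $H_\xi(t-s,y,0)=2e^{-\nu\xi^2(t-s)}g(\nu(t-s),y)$ is fine, and so is the $\mu\to\mu_1$ analytic shift for the second inequality. The gap is the estimate of the $R$-part, which is the real content of the lemma; it is not established.

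\textbf{The direct comparison fails outright.} Using $a\ge\nu^{-1/2}$, the exponent $\eps_0(1+\mu)y^2/(\nu t)-\theta_0 y/(2\sqrt\nu)$ is positive once $y\gtrsim\sqrt{\nu}\,t/\eps_0$. Near $y=1$ it is of order $(\nu t)^{-1}$ whenever $t\ll\eps_0\nu^{-1/2}$. Your threshold "$t\lesssim\sqrt\nu$" has the scaling inverted: for $\nu$ small the failure occurs for every $t\in(0,T]$, and the weighted integral of $ae^{-\theta_0 ay/2}$ is of size $e^{c/(\nu t)}$. The same applies to the bounds for $(y\pa_y)^kR_\xi$ from Lemma \ref{lem: property of Rxi} that you cite. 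These pointwise bounds are therefore usable only for $y\lesssim\sqrt{\nu t}$, and the whole argument rests on your fallback.

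\textbf{The fallback bookkeeping does not close as written.} There are three problems.
\begin{itemize}
\item You plan to absorb a leftover $|\xi|$ into the allowed $\xi^i$, $i\le1$. For the $i=1$ ($\pa_x$) term on the left, that single power is already consumed by $\pa_x$, so there is no spare $\xi$.
\item Dropping $e^{-\nu r\xi^2}$ in the $\xi^2$-piece leaves a factor $\nu\tau\xi^2$, which is not controlled by $\xi^i$, $i\le 1$.
\item Integrating that piece by parts in $r$ produces $\nu\pa_y^2 g(\nu r,\cdot)$, whose $L^1_y$ norm is $\sim r^{-1}$ and is not integrable at $r=0$.
\end{itemize}

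\textbf{The missing observation.} The factor $e^{-\nu r\xi^2}$ in
$$R_\xi(\tau,y,0)=-2\nu\int_0^\tau(-\xi^2+|\xi|\pa_y)\big(e^{-\nu r\xi^2}g(\nu r,y)\big)dr$$
removes all growth in $\xi$. Since $r\le\tau\le t$, the weight satisfies $e^{\eps_0(1+\mu)y^2/(\nu t)}\le e^{2\eps_0y^2/(\nu r)}$, which is absorbed by each Gaussian for $\eps_0$ small. Hence
\[
\|R_\xi(\tau,\cdot,0)\|_{\mu,t}\le C\nu\xi^2\int_0^\tau e^{-\nu r\xi^2}dr+C\nu|\xi|\int_0^\tau e^{-\nu r\xi^2}(\nu r)^{-1/2}dr\le C+C\int_0^{\infty}e^{-u}u^{-1/2}du,
\]
uniformly in $\xi,\tau,\nu$.

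The same bound holds after applying $y\pa_y$ or $(y\pa_y)^2$. These only replace $g$ and $\pa_yg$ by Gaussians carrying polynomial factors in $y/\sqrt{\nu r}$.

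Equivalently, you can use the closed form
$$R_\xi(\tau,y,0)=2|\xi|e^{-\nu\tau\xi^2}\int_0^\infty e^{|\xi|\zeta}g(\nu\tau,y+\zeta)\,d\zeta=2|\xi|e^{-|\xi|y}\int_{y-2\nu\tau|\xi|}^\infty g(\nu\tau,w)\,dw$$
and split at $y=4\nu\tau|\xi|$.

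\textbf{Completing the proof.} With this uniform $O(1)$ kernel bound, times $e^{\eps_0(1+\mu)|\xi|}$ from the analytic weight, the $\pa_x$'s cost exactly the powers $\xi^i$ that appear on the right-hand side. Your analytic shift then handles the extra power in the second inequality.

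The paper omits this proof. It does, however, treat the analogous term $T_2$ in the proof of Proposition \ref{prop: est of Eb(t)} in this way: the pointwise bound for $y\le\sqrt{\nu t}$, and the time-integral representation of $R_\xi$ for $y\ge\sqrt{\nu t}$.
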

We emphasize that Lemma \ref{lem: est of HRN} and \ref{lem: est of HRB} remain valid if $N_R, B_R$ are replaced by $\widetilde{N_R}, \widetilde{B_R}$, respectively. As these lemmas can be established via arguments analogous to those in \cite{WYZ}, we omit their proofs here.

\subsection{Estimates for $N_R, \widetilde{N_R}$}

We provide estimates for the force terms $N_R, \widetilde{N_R}$ in this subsection. The following proposition is the main result.
\begin{proposition}\label{prop: est of N in new norm}
	For $0<\mu<\mu_0-\gamma s$, it holds that
	\begin{align*}
		\left\|\big( N_R(s),\widetilde{ N_R}(s) \big)\right\|_{W_{\mu,s}}
		\leq C\nu(\mu_0-\mu-\gamma s)^{-\beta}\big(1+E(s)\big)^2.
	\end{align*}
\end{proposition}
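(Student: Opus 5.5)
We estimate $N_R$ and $\widetilde{N_R}$ term by term in the norm $W_{\mu,s}$, using the decomposition \eqref{def: NR} and \eqref{def: widetilde NR}. The two parts of $W_{\mu,s}$ are handled separately. For the analytic part, $\sum_{i+j\leq1}\|\pa_x^i(y\pa_y)^j N\|_{Y^1_{\mu,s}\cap Y^2_{\mu,s}}$, we work in $0<y<1+\mu$. There we rely on two facts. First, the weights $e^{\eps_0(1+\mu-y)_+|\xi|}$ and $e^{\eps_0(1+\mu)y^2/(\nu s)}$ make the spaces $Y^k_{\mu,s}$ behave like algebras under products with $L^1_\xi$-type sup norms. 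Second, $(1+\mu-y)_+|\xi|\le(1+\mu-y)_+|\xi-\eta|+(1+\mu-y)_+|\eta|$, so we can distribute the analytic weight over convolutions in $\xi$. The exterior part, $e^{2\eps_0/(\nu s)}\|\pa^{\le2}N\|_{L^2_xL^1_y(y\ge1+\mu)}$, lives in $1+\mu\le y\le3$ by the support of $\chi_b$. We bound it by the $H^3$ piece of $E_m$ on $\frac78\le y\le4$, which carries the factor $e^{-12\eps_0/(\nu s)}$ from Proposition \ref{prop: est of Em(t)}, together with the smallness of $\omega_a$ there. This gives $Ce^{-10\eps_0/(\nu s)}(1+E)^8\le C\nu$.

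\textbf{Transport terms.} We write $U\cdot\nabla f=u\,\pa_x f+\frac{v}{y}\,y\pa_y f$ throughout.
\begin{enumerate}[(i)]
\item For $\chi_bU_a\cdot\nabla\omega_R$ we use \eqref{est: app4}, which gives $u_a$ and $v_a/y$ in $L^1_\xi$ with sup in $y$ and with analyticity radius $C'\gg\eps_0(1+\mu_0)$. We then place one horizontal or conormal derivative on $\omega_R$. With $i+j\le1$ already in the norm, this produces a second-order conormal derivative of $\omega_R$. By the definition of $Y_k(s)$, that costs $(\mu_0-\mu-\gamma s)^{-\beta}\|\omega_R\|_{Y_k(s)}\le(\mu_0-\mu-\gamma s)^{-\beta}\nu E(s)$. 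This is exactly the source of the weight $(\mu_0-\mu-\gamma s)^{-\beta}$ in the statement.
\item For $\chi_bU_R\cdot\nabla\omega_a$ we use Lemma \ref{lem: velocity estimates 1}, which bounds $u_R$, $\pa_xu_R$, $v_R/y$ and their conormal derivatives by $C\nu E(s)$; the term $\pa_x v_R/y$ carries the factor $(\mu_0-\mu-\gamma s)^{-\beta}$. These are multiplied by $\pa_x\omega_{a,b}$ and $y\pa_y\omega_{a,b}$, which \eqref{est: app1} controls in the weighted $L^1_y$ norm with derivatives up to order $8$. The scaling needs care: $\omega_{a,b}\sim\nu^{-1/2}$, but its $L^1_y$ norm is $O(1)$, and the Gaussian weight $e^{C'y^2/(\nu t)}$ dominates $e^{\eps_0(1+\mu)y^2/(\nu s)}$ once $\eps_0\ll C'$.
\item For $\chi_bU_R\cdot\nabla\omega_R$ the argument is as in (ii), with the product bound giving $\nu E\cdot\nu E(\mu_0-\mu-\gamma s)^{-\beta}$.
\end{enumerate}
Since $\operatorname{supp}\chi_b\cap\operatorname{supp}\omega_{a,vp}=\emptyset$, no contribution from $\omega_{a,vp}$ appears.

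\textbf{Remaining terms.} Proposition \ref{prop: est of remainder} and \eqref{est: tilde Rb} give directly $\|\chi_bR_b\|\le C\nu$, together with the $x$-weighted and conormal versions. The terms $\nu\chi_b''\omega_R$ and $\nu\chi_b'\pa_y\omega_R$ are supported in $2\le y\le3$, outside $y<1+\mu$. They therefore enter only the exterior part, which is handled as above.

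\textbf{The terms in $\widetilde{N_R}$.}
\begin{enumerate}[(i)]
\item The extra terms $\chi_b(u_a\omega_R+u_R\omega_a+u_R\omega_R)$ are lower order and are treated like the transport terms.
\item The term $-2\nu\chi_b\pa_x\omega_R$ is bounded by $\nu\cdot\nu E$.
\item The terms with $x$ weights, $U_a\cdot\nabla(x\omega_R)$, $U_R\cdot\nabla(x\omega_a)$ and $xR_b$, use the $(1,x)$ weights built into $E_b$, \eqref{est: app1} and \eqref{est of R}. In $U_R\cdot\nabla(x\omega_a)$ the weight falls on $\omega_a$; there $u_R$ must be controlled without an $x$ weight, which Lemma \ref{lem: velocity estimates 1} supplies.
\end{enumerate}

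\textbf{Main obstacle.} The main difficulty is the derivative loss in the transport terms: $\pa_x^i(y\pa_y)^j$ with $i+j=1$ falls onto $\pa_x\omega_R$ or $y\pa_y\omega_R$. We must check that only the second-order weighted norm is needed, costing $(\mu_0-\mu-\gamma s)^{-\beta}$ and not a full power. A second point is that $y\pa_y(v/y)$ and $\pa_x(v_R/y)$ produce exactly the bounds of Lemma \ref{lem: velocity estimates 1} rather than worse ones. A third point is that the analytic weight must split correctly between the two factors of each product. Here the fact that the approximate solution is analytic with the larger radius $C'$ absorbs the weight on the $\omega_a$ factor.
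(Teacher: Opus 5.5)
Your plan is the paper's. The analytic part of $W_{\mu,s}$ is treated as in Lemma~\ref{lem: est of N near the boundary}: Lemma~\ref{lem: product estimate} with $U\cdot\nabla=u\,\pa_x+\frac{v}{y}\,y\pa_y$, Lemma~\ref{lem: velocity estimates 1}, \eqref{est: app1}, \eqref{est: app4} and the bound on $R_b$. The factor $(\mu_0-\mu-\gamma s)^{-\beta}$ comes from the second-order conormal norms in $Y_k(s)$ and from $\pa_x v_R/y$. The interior part follows Lemma~\ref{lem: est of N away from the boundary}.

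The one step that fails as written is the interior part. You bound $\|\omega_R(s)\|_{H^3(\frac78\le y\le4)}$ by the second inequality of Proposition~\ref{prop: est of Em(t)}, i.e.\ by $Ce^{-12\eps_0/(\nu s)}(1+E)^8$. You then claim $Ce^{-10\eps_0/(\nu s)}(1+E)^8\le C\nu$. The exponential does beat $\nu$, but $E(s)$ is not a priori bounded, since this proposition feeds the bootstrap of Proposition~\ref{prop: est of E(t) sum up}. So the polynomial factor cannot be dropped. This route actually gives $C\nu(1+E)^8$. For the quadratic term $U_R\cdot\nabla\omega_R$ it is worse: $\sum_{i+j\le2}\|\pa_x^i\pa_y^jU_R\|_{L^\infty(1\le y\le3)}$ is itself bounded through $\|\omega_R\|_{H^3}$ by Lemma~\ref{lem: velocity estimates 2}(2), which gives up to $C\nu(1+E)^{16}$. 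Neither is the stated $C\nu(1+E)^2$.

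The repair is to use the definition of $E_m$ instead of Proposition~\ref{prop: est of Em(t)}. Its last term gives $\|\omega_R(s)\|_{H^3(\frac78\le y\le4)}\le e^{-10\eps_0/(\nu s)}E(s)$, which is linear in $E$. Combine this with:
\begin{itemize}
\item Lemma~\ref{lem: velocity estimates 2}(2) for $U_R$ on $1\le y\le3$;
\item Proposition~\ref{prop: app estimates} for $U_a$;
\item the Gaussian smallness $e^{-C_1/(\nu s)}$ of $\omega_{a,b}$ and $R_b$ in $y\ge1$.
\end{itemize}
The interior part is then at most $Ce^{2\eps_0/(\nu s)}\big(e^{-10\eps_0/(\nu s)}(E+E^2)+e^{-C_1/(\nu s)}(1+E)\big)\le C\nu(1+E)^2$ for $\eps_0$ small. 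This is how the paper obtains the power $2$.

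One further point: for $\widetilde{N_R}$ the interior part also needs $\|x\omega_R\|_{H^3(\frac78\le y\le4)}$, which the $H^3$ term of $E_m$ as defined does not contain. The paper writes $\|(1,x)\omega_R\|_{H^3}$ without comment, so this omission is shared rather than specific to your argument.
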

The norm $W_{\mu,s}$ defined in \eqref{def: new norm of right} consists of two parts: near the boundary and away from the boundary. The estimate for the boundary part is provided in Lemma \ref{lem: est of N near the boundary}, and for the interior part in Lemma \ref{lem: est of N away from the boundary}. Proposition \ref{prop: est of N in new norm} is then proved by combining these two lemmas.

\begin{lemma}\label{lem: est of N near the boundary}
	For $0<\mu<\mu_0-\gamma s$, it holds that
	\begin{align*}
		\sum_{i+j\leq1}\|\pa_x^i(y\pa_y)^j\big( N_R(s),\widetilde{ N_R}(s) \big)\|_{Y^1_{\mu,s}\cap Y^2_{\mu,s}}
		\leq C\nu(\mu_0-\mu-\gamma s)^{-\beta}\big(1+E(s)\big)^2.
	\end{align*}
\end{lemma}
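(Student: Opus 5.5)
We would estimate $N_R$ term by term from \eqref{def: NR}, and then treat $\widetilde{N_R}$ from \eqref{def: widetilde NR} the same way, using the weight $x$ through the $(1,x)$ structure of $E_b$ and \eqref{est: app1}. The norm $Y^k_{\mu,s}$ only sees $0<y<1+\mu$. There $\chi_b\equiv1$, $\chi_b'=\chi_b''=0$, so the terms $\nu\chi_b''\omega_R$ and $2\nu\chi_b'\pa_y\omega_R$ drop out. The remainder $\chi_bR_b$ is bounded by $C\nu$ directly from \eqref{est of Rb ultimate}, provided $\eps_0(1+\mu)\le C'$ in both the Fourier and the Gaussian weight; this holds for $\eps_0$ small. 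What remains are the three transport terms.

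\textbf{Transport terms.} The main tool is the algebra property of $Y^1_{\mu,s}\cap Y^2_{\mu,s}$ under products where one factor is measured in $\big\|\sup_{y}e^{\eps_0(1+\mu-y)_+|\xi|}|f_\xi|\big\|_{L^1_\xi}$. This holds because $e^{\eps_0(1+\mu-y)_+|\xi|}$ is submultiplicative in $\xi$ ($|\xi|\le|\xi-\zeta|+|\zeta|$), so Young's inequality in $\xi$ with $L^1*L^k\subset L^k$ applies. We write each term in the form $u\pa_x+\frac{v}{y}\,y\pa_y$.
\begin{itemize}
\item For $U_a\cdot\nabla\omega_R$, we put $(u_a,v_a/y)$ in the sup-norm via \eqref{est: app4}, which costs $C$. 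We put $(\pa_x\omega_R,y\pa_y\omega_R)$ in $Y^k_{\mu,s}$, which costs $E_b\le\nu E$.
\item For $U_R\cdot\nabla\omega_a$, we put $(u_R,v_R/y)$ in the sup-norm via Lemma \ref{lem: velocity estimates 1}, which costs $C\nu E$. We put $\nabla\omega_{a,b}$ (in the form $\pa_x$, $y\pa_y$) in the weighted norm via \eqref{est: app1}, which costs $C$. The part of $\omega_a$ coming from $\omega_{a,vp}$ vanishes here.
\item For $U_R\cdot\nabla\omega_R$, we obtain $C\nu^2E^2$.
\end{itemize}
The Gaussian weight $e^{\eps_0(1+\mu)y^2/(\nu s)}$ causes no difficulty, since it sits on the factor taken in $L^1_y$. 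Applying one more $\pa_x^i(y\pa_y)^j$ with $i+j\le1$ and using Leibniz puts two derivatives on $\omega_R$ (or on $U_R$).

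\textbf{Main obstacle: the derivative loss.} The difficulty is the second-order terms, such as $u_a\pa_x^2\omega_R$ or $\frac{v_R}{y}(y\pa_y)^2\omega_R$. In $\|\cdot\|_{Y(s)}$ these only carry the factor $(\mu_0-\mu-\gamma s)^{-\beta}$. The same holds for $\pa_x(v_R/y)$, which is controlled by the second bound in Lemma \ref{lem: velocity estimates 1} and has exactly that loss. This is precisely where the $(\mu_0-\mu-\gamma s)^{-\beta}$ in the statement comes from. We would take the sup over $\mu$ so that the bound is expressed through $E_b(s)\le \nu E(s)$, using the definition of $\|\cdot\|_{Y_k(s)}$.

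\textbf{The tilde terms.} For $\widetilde{N_R}$ we have the following.
\begin{itemize}
\item The terms $U_a\cdot\nabla(x\omega_R)$ and $U_R\cdot\nabla(x\omega_a)$ are handled as above, with $x\omega_R$ controlled by $E_b$ and $x\omega_{a,b}$ by \eqref{est: app1}.
\item The lower-order products $u_a\omega_R$, $u_R\omega_a$, $u_R\omega_R$ are easier.
\item The term $2\nu\pa_x\omega_R$ has size $\nu\cdot\nu E$.
\item The term $xR_b$ is covered by \eqref{est of Rb ultimate}.
\end{itemize}
Collecting all contributions gives $C\nu(\mu_0-\mu-\gamma s)^{-\beta}(1+E(s))^2$.
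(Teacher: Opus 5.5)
Your proposal is correct and is essentially the paper's argument. On $0<y<1+\mu$ one has $\chi_b\equiv1$, and the transport terms are written as $u\,\pa_x+\frac{v}{y}\,y\pa_y$ and split via Lemma~\ref{lem: product estimate}: velocities go in the $\sup_y$--$L^1_\xi$ norm (from \eqref{est: app4} and Lemma~\ref{lem: velocity estimates 1}) and vorticity factors go in $Y^k_{\mu,s}$. The term $R_b$ is bounded directly, and the factor $(\mu_0-\mu-\gamma s)^{-\beta}$ enters only through second-order derivatives of $\omega_R$ and through $\pa_x(v_R/y)$ when $i+j=1$. The only extra bookkeeping in the paper is an explicit interior term $C\nu^{-1}(\mu_0-\mu-\gamma s)^{-\beta}\|\omega_R(s)\|_{H^3(1\leq y\leq 2)}^2$ in the case $i+j=1$, which is absorbed using the $H^3$ component of $E_m$.
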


\begin{proof}
	Due to the definition of $\chi_b$, we have for $0<y<1+\mu$,
	\begin{align*}
		N_R&=U_a\cdot\nabla\omega_R
	+ U_R\cdot\nabla\omega_a
	+ U_R\cdot\nabla\omega_R-R_b,\\
	\widetilde{N_R}&=
	U_a\cdot\nabla(x\omega_R)
	+U_R\cdot\nabla(x\omega_a)
	+U_R\cdot\nabla(x\omega_R)-xR_b\\
	&\qquad+u_a\omega_R+u_R\omega_a+u_R\omega_R
	-2\nu\pa_x\omega_R.
	\end{align*}
	
	\underline{Case 1: $i=j=0$.} Lemma \ref{lem: product estimate}, Lemma \ref{lem: velocity estimates 1} and Proposition \ref{prop: app estimates} give rise to
	\begin{align}\label{est: NR without derivative}
		\|N_R(s)\|_{Y^1_{\mu,s}\cap Y^2_{\mu,s}}
		&\leq \left\|\sup_{0<y<1+\mu}e^{\eps_0(1+\mu-y)_+|\xi|}|(u_a)_\xi(s,y)|\right\|_{L^1_\xi}\|\pa_x\omega_R(s)\|_{Y^1_{\mu,s}\cap Y^2_{\mu,s}}\\
		\nonumber
		&\quad+\left\|\sup_{0<y<1+\mu}e^{\eps_0(1+\mu-y)_+|\xi|}|(\frac{v_a}{y})_\xi(s,y)|\right\|_{L^1_\xi}\|y\pa_y\omega_R(s)\|_{Y^1_{\mu,s}\cap Y^2_{\mu,s}}\\
		\nonumber
		&\quad+\left\|\sup_{0<y<1+\mu}e^{\eps_0(1+\mu-y)_+|\xi|}|(u_R)_\xi(s,y)|\right\|_{L^1_\xi}
		\|\pa_x(\omega_a,\omega_R)(s)\|_{Y^1_{\mu,s}\cap Y^2_{\mu,s}}\\
		\nonumber
		&\quad+\left\|\sup_{0<y<1+\mu}e^{\eps_0(1+\mu-y)_+|\xi|}|(\frac{v_R}{y})_\xi(s,y)|\right\|_{L^1_\xi}\|y\pa_y(\omega_a,\omega_R)(s)\|_{Y^1_{\mu,s}\cap Y^2_{\mu,s}}\\
		\nonumber
		&\quad+\|R_b(s)\|_{Y^1_{\mu,s}\cap Y^2_{\mu,s}} \\
		\nonumber
		&\leq CE_b(s)+C\nu E(s)\big(1+E_b(s)\big)
		+C\nu \\
		\nonumber
		&\leq C\nu\big(1+E(s)\big)^2.
	\end{align}
	
	\underline{Case 2: $i+j=1$.} Similarly, we use Lemma \ref{lem: product estimate}, Lemma \ref{lem: velocity estimates 1} and Proposition \ref{prop: app estimates} to have
	\begin{align*}
		&\|\pa_x N_R(s)\|_{Y^1_{\mu,s}\cap Y^2_{\mu,s}}
		+\|y\pa_y N_R(s)\|_{Y^1_{\mu,s}\cap Y^2_{\mu,s}}\\
		&\leq C\nu(\mu_0-\mu-\gamma s)^{-\beta}\big(1+E(s)\big)^2
		+C\nu^{-1}(\mu_0-\mu-\gamma s)^{-\beta}\|\omega_R(s)\|_{H^3(1\leq y\leq2)}^2\\
		&\leq C\nu(\mu_0-\mu-\gamma s)^{-\beta}\big(1+E(s)\big)^2.
	\end{align*}
	
	Estimates of $\widetilde{N_R}$ can be estimated similarly.
\end{proof}
 
Next lemma involves the interior part in $W_{\mu,s}$ defined in \eqref{def: new norm of right}.
 
\begin{lemma}\label{lem: est of N away from the boundary}
	For $0<\mu<\mu_0-\gamma s$, it holds that
	\begin{align*}
		e^{\frac{2\eps_0}{\nu s}}
	\sum_{i+j\leq2}\left\|\left\|\pa_x^i\pa_y^j\big( N_R(s),\widetilde{ N_R}(s) \big)\right\|_{L^2_x}\right\|_{L^1_y(y\geq1)}
	\leq C\nu \big(1+E(s)\big)^2.
	\end{align*}
\end{lemma}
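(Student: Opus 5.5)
The plan is to exploit that on $\{y\geq 1\}$ the cut-off $\chi_b$ is either identically $1$ (for $1\le y\le 2$), has derivatives supported in $2\le y\le 3$, or vanishes ($y\ge3$). Every term of $N_R$ and $\widetilde{N_R}$ in \eqref{def: NR}, \eqref{def: widetilde NR} is therefore supported in the strip $1\le y\le 3$. This strip lies inside $\{\frac78\le y\le 4\}$, where $E_m$ controls $\omega_R$ in $H^3$ with the huge gain $e^{-\frac{10\eps_0}{\nu s}}$. Proposition \ref{prop: est of Em(t)} improves this to $e^{-\frac{12\eps_0}{\nu s}}(1+E)^8$, and Lemma \ref{lem: sobolev 1} gives the same type of bound on $\nabla^j\omega_R$ for $j\le3$ on slightly smaller strips. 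Since the target carries only the weight $e^{\frac{2\eps_0}{\nu s}}$, we have a margin of roughly $e^{-\frac{8\eps_0}{\nu s}}$ to spare. This margin also absorbs the factor $(1,x)$ and the $L^1_y$ norm over a bounded $y$-interval, since $\|\cdot\|_{L^1_y(1,3)}\le C\|\cdot\|_{L^2_y}$.

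I would estimate term by term. First, for the linear terms $\chi_b U_a\cdot\nabla\omega_R$, $\nu\chi_b''\omega_R$ and $\nu\chi_b'\pa_y\omega_R$, together with up to two derivatives, I would bound $\pa^k U_a$ in $L^\infty(1\le y\le 3)$. By \eqref{est: app3}, $U_{a,b}$ is $O(\nu^{1/2})$ there, and $U_{a,vp}$ is bounded there because $|X-X(t)|\gtrsim 1$ on this strip by \eqref{est: X(t)-X0}. The $L^2_xL^2_y$ norms of $\nabla^{\le3}\omega_R$ then give $Ce^{-\frac{12\eps_0}{\nu s}}(1+E)^8$. Second, for $\chi_b U_R\cdot\nabla\omega_a$ and $\chi_b R_b$, I use that $\omega_{a,b}$ and $R_b$ carry the Gaussian weight $e^{\frac{C'y^2}{\nu t}}$ from \eqref{est: app1} and \eqref{est of Rb ultimate}, hence are $O(e^{-\frac{C'}{\nu s}})$ for $y\ge1$. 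Meanwhile $\omega_{a,vp}$ and $R_{vp}$ vanish on $y\le 3$ by \eqref{support property of app and rem}. Combining this with $\|\pa^k U_R\|_{L^\infty(\frac12\le y\le 3)}\le C\nu E+C\|\omega_R\|_{H^{k+1}}$ from Lemma \ref{lem: velocity estimates 2}(2), and choosing $\eps_0\ll C'$, gives the bound $C\nu(1+E)^2$ after multiplying by $e^{\frac{2\eps_0}{\nu s}}$. The extra terms in $\widetilde{N_R}$ (namely $u_a\omega_R$, $u_R\omega_a$, $u_R\omega_R$, $\nu\pa_x\omega_R$, and the $x$-weighted versions) are handled identically. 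Here the $x$-weight is absorbed by the $(1,x)$ moments in \eqref{est: app1} and by the $\psi=y^2(1+|x|)$ weight in $E_m$ on the part of the strip inside $\operatorname{supp}\chi_m$.

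The main obstacle is the quadratic term $\chi_b U_R\cdot\nabla\omega_R$ with two derivatives, which requires $\nabla^3\omega_R$ multiplied by $U_R$, together with $\nabla^2U_R\cdot\nabla\omega_R$. I would place $U_R$ and its derivatives in $L^\infty$ via Lemma \ref{lem: velocity estimates 2}(2), which costs $\|\omega_R\|_{H^3}$, and put the $\omega_R$ factor in $L^2$ using the $H^3$ control. The product is then bounded by $e^{-\frac{12\eps_0}{\nu s}}(\nu E+e^{-\frac{12\eps_0}{\nu s}}(1+E)^8)(1+E)^8$. The difficulty is that the $H^3$ bound of Proposition \ref{prop: est of Em(t)} is stated on $\frac78\le y\le 4$, which just covers $[1,3]$, whereas Lemma \ref{lem: velocity estimates 2}(2) needs an enlarged strip by $\frac1{100}$. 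Both still fit inside $[\frac78,4]$. Finally, the high power $(1+E)^{16}$ appearing here must be reduced to $(1+E)^2$. I would do this by using part of the exponential margin $e^{-\frac{8\eps_0}{\nu s}}$ together with the bootstrap smallness, or, if the paper's convention allows, simply accept the higher power, which is harmless in the closing argument of Proposition \ref{prop: est of E(t) sum up}.
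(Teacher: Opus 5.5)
Your localization and term-by-term bookkeeping match the paper's. You restrict to the strip $1\le y\le 3$, put $\pa^{\le 2}U_a$ and $\pa^{\le 2}U_R$ in $L^\infty$ via \eqref{est: app3} and Lemma \ref{lem: velocity estimates 2}(2), use the Gaussian smallness $e^{-\frac{C'}{\nu s}}$ of $\omega_{a,b}$ and $R_b$ for $y\ge 1$, and control $\omega_R$ in $H^3$ on $\{\frac78\le y\le 4\}$.

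\textbf{The gap: the power of $E$.} As written, the argument does not prove the stated inequality. You bound $\|\omega_R\|_{H^3(\frac78\le y\le4)}$ with the second inequality of Proposition \ref{prop: est of Em(t)}, which costs $(1+E)^8$. This leaves $(1+E)^8$ in the linear terms and $(1+E)^{16}$ in the quadratic one. Neither of your remedies works:
\begin{itemize}
\item An exponential factor cannot lower the polynomial degree in $E$. Since $E$ is not a priori bounded, $e^{-\frac{8\eps_0}{\nu s}}(1+E)^{16}\le C\nu(1+E)^2$ simply fails for large $E$.
\item Appealing to bootstrap smallness, or accepting a higher power, changes the lemma. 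The new exponent would then have to be carried through Propositions \ref{prop: est of N in new norm} and \ref{prop: est of Eb(t)}.
\end{itemize}

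\textbf{The fix.} The missing step is one you state in your own first paragraph and then do not use. $E_m$, hence $E$, contains the term $e^{\frac{10\eps_0}{\nu s}}\sup\|\omega_R\|_{H^3(\frac78\le y\le4)}$, so
\[
\|\omega_R(s)\|_{H^3(\frac78\le y\le 4)}\le e^{-\frac{10\eps_0}{\nu s}}E(s),
\]
which is linear in $E$. This is exactly what the paper uses. After multiplying by $e^{\frac{2\eps_0}{\nu s}}$:
\begin{itemize}
\item The terms linear in $\omega_R$ (with coefficient $U_a$ or $\nu$) give $Ce^{-\frac{8\eps_0}{\nu s}}E(s)\le C\nu E(s)$.
\item For $\eps_0\ll C'$, the terms $U_R\cdot\nabla\omega_a$ and $R_b$ give
\[
Ce^{\frac{2\eps_0-C'}{\nu s}}\big(1+\nu E(s)+e^{-\frac{10\eps_0}{\nu s}}E(s)\big)\le C\nu\big(1+E(s)\big).
\]
\item The quadratic term gives
\[
Ce^{\frac{2\eps_0}{\nu s}}\big(\nu E(s)+e^{-\frac{10\eps_0}{\nu s}}E(s)\big)e^{-\frac{10\eps_0}{\nu s}}E(s)\le C\nu E(s)^2.
\]
\end{itemize}
This yields $(1+E)^2$ directly, and neither Proposition \ref{prop: est of Em(t)} nor Lemma \ref{lem: sobolev 1} is needed.

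\textbf{A secondary point: the $x$-weight.} Your claim that $\psi=y^2(1+|x|)$ absorbs the $x$-weight in $\widetilde{N_R}$ only works at zeroth order. With two derivatives, terms such as $\pa^2\big(U_a\cdot\nabla(x\omega_R)\big)$ require $\|x\nabla^3\omega_R(s)\|_{L^2(1\le y\le3)}$ at a fixed time. None of the available quantities controls this:
\begin{itemize}
\item $\|e^\Psi\psi\chi_m\omega_R\|_{L^2}$ involves no derivatives;
\item $D_m$ involves one derivative, and only in $L^2_t$;
\item the $H^3$ term in $E_m$ carries no $x$-weight.
\end{itemize}
The paper's proof has the same implicit step: it writes $\|(1,x)\omega_R\|_{H^3(\frac78\le y\le4)}$ and bounds it by $E$. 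A complete argument therefore needs the $H^3$ term of $E_m$, and Lemma \ref{lem: sobolev 1}, to carry the weight $(1,x)$. Commuting $x$ through the equation only produces lower-order terms such as $2\nu\pa_x\omega_R$ and $u\,\omega$, so this extension should be routine.
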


\begin{proof}
	A direct computation gives 
	\begin{align*}
		&\sum_{i+j\leq2}\left\|\left\|\pa_x^i\pa_y^j\big( N_R(s),\widetilde{ N_R}(s) \big)\right\|_{L^2_x}\right\|_{L^1_y(y\geq1)}\\
		&\leq C_0\sum_{k=0}^2 \sum_{i+j\leq k}\|\pa_x^i\pa_y^j U_a(s)\|_{L^\infty(1\leq y\leq3)}
		\sum_{i+j\leq3-k}\left\|\left\|\pa_x^i\pa_y^j(1,x)\omega_R(s)\right\|_{L^2_x}\right\|_{L^1_y(1\leq y\leq3)}\\
		&\quad
		+C_0\sum_{k=0}^2 \sum_{i+j\leq k}\|\pa_x^i\pa_y^j U_R(s)\|_{L^\infty(1\leq y\leq3)}
		\sum_{i+j\leq3-k}\left\|\left\|\pa_x^i\pa_y^j(1,x)\omega_a(s)\right\|_{L^2_x}\right\|_{L^1_y(1\leq y\leq3)}\\
		&\quad
		+C_0\sum_{k=0}^2 \sum_{i+j\leq k}\|\pa_x^i\pa_y^j U_R(s)\|_{L^\infty(1\leq y\leq3)}
		\sum_{i+j\leq3-k}\left\|\left\|\pa_x^i\pa_y^j(1,x)\omega_R(s)\right\|_{L^2_x}\right\|_{L^1_y(1\leq y\leq3)}\\
		&\quad
		+\nu\sum_{i+j\leq2}\left\|\left\|\pa_x^i\pa_y^j(1,x)\omega_R(s)\right\|_{L^2_x}\right\|_{L^1_y(1\leq y\leq3)}
		+\sum_{i+j\leq3}\left\|\left\|\pa_x^i\pa_y^j(1,x)R_b(s)\right\|_{L^2_x}\right\|_{L^1_y(1\leq y\leq3)}.
	\end{align*}
	Using Lemma \ref{lem: velocity estimates 2}, we obtain for some $C_1>0$,
	\begin{align*}
		&\sum_{i+j\leq2}\left\|\left\|\pa_x^i\pa_y^j\big( N_R(s),\widetilde{ N_R}(s) \big)\right\|_{L^2_x}\right\|_{L^1_y(y\geq1)}
		\leq C\|(1,x)\omega_R(s)\|_{H^3(1\leq y\leq3)}
		+Ce^{-\frac{C_1}{\nu s}}\\
		&\qquad\qquad
		+C\big(e^{-\frac{C_1}{\nu s}}
		+\|(1,x)\omega_R(s)\|_{H^3(1\leq y\leq3)}\big)
		\big(\nu E(s)+\|(1,x)\omega_R(s)\|_{H^3(\frac{7}{8}\leq y\leq4)}\big)\\
		&\leq Ce^{-\frac{C_1}{\nu s}}\big(1+E(s)\big)
		+C\|(1,x)\omega_R(s)\|_{H^3(\frac{7}{8}\leq y\leq4)}\big(1+E(s)\big)
		+C\|(1,x)\omega_R(s)\|_{H^3(\frac{7}{8}\leq y\leq4)}^2,
	\end{align*}
	which implies for $\eps_0$ small enough,
	\begin{align*}
		&e^{\frac{2\eps_0}{\nu s}}
	\sum_{i+j\leq2}\left\|\left\|\pa_x^i\pa_y^j\big( N_R(s),\widetilde{N_R}(s) \big)\right\|_{L^2_x}\right\|_{L^1_y(y\geq1)}
	\leq Ce^{-\frac{C_1}{2\nu s}}\big(1+E(s)\big)\\
	&\qquad\qquad
	+Ce^{\frac{2\eps_0}{\nu s}}
	\|(1,x)\omega_R(s)\|_{H^3(\frac{7}{8}\leq y\leq4)}\big(1+E(s)\big)
	+Ce^{\frac{2\eps_0}{\nu s}}
	\|(1,x)\omega_R(s)\|_{H^3(\frac{7}{8}\leq y\leq4)}^2\\
	&\leq C\nu \big(1+E(s)\big)^2.
	\end{align*}
\end{proof}

\subsection{Estimates of $B_R, \widetilde{B_R}$}
We provide estimates of the boundary term below.
\begin{proposition}\label{prop: est of BR}
	For $0<\mu<\mu_0-\gamma s$, it holds that
	\begin{align*}
		&\sum_{i\leq1}\left\|e^{\eps_0(1+\mu)|\xi|}\xi^i
		\Big((B_R)_\xi(s),(\widetilde{B_R})_\xi(s) \Big)\right\|_{L^1_\xi \cap L^2_\xi}
		\leq C\nu(\mu_0-\mu-\gamma s)^{-\beta} \big(E(s)+1\big)^4\\
		&\qquad\qquad\qquad\qquad+C(\nu s)^{1/2}\big(1+E(s)\big)^{3/2}D_{vp}(s)^{1/2} +Ce^{-\frac{C_1}{\nu s}}E(s)^{3/2}D(s)^{1/2} .
	\end{align*}
\end{proposition}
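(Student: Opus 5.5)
We estimate the Fourier transform of $B_R$ directly from its definition \eqref{def: BR}. Writing $F:=U_a\cdot\nabla\omega_R+U_R\cdot\nabla\omega_a+U_R\cdot\nabla\omega_R=\curl\dv(U_a\otimes U_R+U_R\otimes U_a+U_R\otimes U_R)$ (using incompressibility), Lemma \ref{lem: velocity formula} gives
\begin{align*}
(\pa_y\Delta_D^{-1}F)_\xi|_{y=0}=-\int_0^{+\infty}e^{-|\xi|y}F_\xi(s,y)\,dy .
\end{align*}
We split this integral with the partition $\chi_b+\chi_m'+\chi_{vp}$ of $\mathbb R^2_+$ and treat each region differently. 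The extra term $\nu|D_x|\omega_b^{(1)}|_{z=0}$ is $O(\nu)$ in the weighted norm by Proposition \ref{prop: WP of boundary layer}, since the weight $e^{\eps_0(1+\mu)|\xi|}$ is dominated by the $e^{C'|\xi|}$ analyticity of the boundary layer profiles.

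\textbf{Near the boundary} ($0<y<1+\mu$) we keep $F$ in transport form. We use $e^{\eps_0(1+\mu)|\xi|}e^{-|\xi|y}\le e^{\eps_0(1+\mu-y)_+|\xi|}$ (valid since $\eps_0<1$), so the contribution is bounded by the $Y^1_{\mu,s}\cap Y^2_{\mu,s}$ norms of $\chi_bF$. These norms are treated exactly as in \eqref{est: NR without derivative}: combine the product estimate with Lemma \ref{lem: velocity estimates 1} and Proposition \ref{prop: app estimates}. The extra factor $\xi$ costs one $\pa_x$, producing the loss $(\mu_0-\mu-\gamma s)^{-\beta}$ from the definition of $Y_k(s)$ and from the second inequality of Lemma \ref{lem: velocity estimates 1}. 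This gives $C\nu(\mu_0-\mu-\gamma s)^{-\beta}(1+E)^4$.

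\textbf{In the middle region} ($y\ge 1+\mu$, away from the vortex) we have $e^{\eps_0(1+\mu)|\xi|-|\xi|y}\le e^{-|\xi|/2}$, so we may freely integrate by parts in the divergence form to move derivatives onto the kernel. We then use the exponential smallness $e^{-C_1/\nu s}$ of $\omega_R$ and $\omega_a$ there (Lemmas \ref{lem: velocity estimates 2}, \ref{lem: sobolev 1}, and $E_m$). This yields the last term $e^{-C_1/\nu s}E^{3/2}D^{1/2}$.

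\textbf{Near the vortex} lies the main obstacle. The kernel is smooth and $e^{-10|\xi|}$-small there, so after two integrations by parts we need to control $\int_{\chi_{vp}}|U_a||U_R|+|U_R|^2\,dX$. By \eqref{est: app3}, $|U_a|\lesssim|X-X(s)|^{-1}$. For $U_R$ we use Corollary \ref{cor: est of UR}, which rests on the zero-moment cancellation Lemma \ref{lem: est of zero moment} through Lemma \ref{lem: velocity VR pointwise}:
\begin{align*}
|U_R|\lesssim \nu(1+E)^2+(1+|\eta|)^{-2}\big((\nu s)^{1/2}E+(\nu s)^{1/4}E^{3/4}D_{vp}^{1/4}\big)+e^{-C_1/\nu s}E^{3/4}D_m^{1/4}.
\end{align*}
Changing variables $X=X(s)+(\nu s)^{1/2}\eta$ gives
\begin{align*}
\int (1+|\eta|)^{-2}|\eta|^{-1}(\nu s)^{-1/2}\,\nu s\,d\eta=O((\nu s)^{1/2}),
\end{align*}
with the integral converging at infinity precisely because of the $|\eta|^{-2}$ decay. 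The first piece of $U_R$ therefore contributes $\nu s\,E$. The $D_{vp}$ piece contributes $(\nu s)^{3/4}E^{3/4}D_{vp}^{1/4}$, which we bound by $(\nu s)^{1/2}(1+E)^{3/2}D_{vp}^{1/2}$. The $|U_R|^2$ term is handled the same way, and the constant part $\nu(1+E)^2$ is integrated against $|U_a|\in L^1$.

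For $\widetilde{B_R}$ we use \eqref{def: widetilde BR}. The piece $i\pa_\xi(B_R)_\xi$ corresponds to the same computation with an extra weight $x$ (or with $\pa_\xi$ falling on $e^{-|\xi|y}$, producing a harmless factor $y$). These are controlled by the $(1,x)$ parts of $E_b$ and $E_{vp}$, using that $|x|$ is bounded on $\operatorname{supp}\chi_{vp}$. The piece $\nu\,\mathrm{sgn}\,\xi\,(\omega_R)_\xi|_{y=0}$ is bounded via the trace, $\sup_y e^{\eps_0(1+\mu-y)_+|\xi|}|y(\omega_R)_\xi|$-type arguments, and $\nu E_b\lesssim\nu E$ after using $\pa_y$ control, as in the proof of Lemma \ref{lem: velocity estimates 1}.
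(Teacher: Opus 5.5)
\textbf{Assessment: gap in the trace term of $\widetilde{B_R}$.}

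Your treatment of $B_R$ itself matches the paper's proof. You use the same split into boundary, intermediate and vortex regions, and the weight comparison $e^{\eps_0(1+\mu)|\xi|}e^{-|\xi|y}\le e^{\eps_0(1+\mu-y)_+|\xi|}$ near the boundary. Away from the boundary you integrate by parts in divergence form. Near the vortex you integrate by parts twice to get $e^{-10|\xi|}$ and use the $(1+|\eta|)^{-2}$ decay from Corollary \ref{cor: est of UR}. One cosmetic slip: $(\nu s)^{3/4}E^{3/4}D_{vp}^{1/4}$ is not bounded by $(\nu s)^{1/2}(1+E)^{3/2}D_{vp}^{1/2}$ when $D_{vp}$ is small. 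Split it by Young as $\nu sE+(\nu s)^{1/2}E^{1/2}D_{vp}^{1/2}$ instead.

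The genuine gap is the term $-i\nu\operatorname{sgn}\xi\,(\omega_R)_\xi(s,0)$ in $\widetilde{B_R}$. Nothing in $E(s)$ controls the trace $(\omega_R)_\xi(s,0)$.
\begin{itemize}
\item Near the boundary, $E_b$ contains only $\omega_R$, $\pa_x\omega_R$ and the conormal derivatives $(y\pa_y)^j\omega_R$ in weighted $L^1_y$. There is no $\pa_y$ control near $\{y=0\}$ anywhere in $E$, contrary to what you assume.
\item The argument you cite from Lemma \ref{lem: velocity estimates 1} bounds $\sup_y|y(\omega_R)_\xi(s,y)|$. That quantity carries the vanishing factor $y$ and says nothing about the value at $y=0$.
\item Conormal regularity never yields a trace. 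For instance, $f(y)=\sin(\log\log(1/y))$ has all $(y\pa_y)^jf$ bounded near $0$ but no limit there.
\end{itemize}
So this term cannot be bounded pointwise in $s$ by $E(s)$, and the step fails as written.

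The missing idea is to use the equation rather than the energy. Put $y=0$ in the Duhamel formula \eqref{eq: integral eq of omega R}, so that $(\omega_R)_\xi(s,0)=J_1+J_2$ with kernels $H_\xi(s-\tau,0,z)$ and $R_\xi(s-\tau,0,z)$. After weighting by $e^{\eps_0(1+\mu)|\xi|}|\xi|^i$, these kernels satisfy the following bounds (using \eqref{def of H xi} and Lemma \ref{lem: property of Rxi}):
\begin{itemize}
\item for $z<1+\mu$, they are bounded by $|\xi|^i(\nu(s-\tau))^{-1/2}e^{\eps_0(1+\mu-z)_+|\xi|}$, plus a term $|\xi|^{1+i}e^{\eps_0(1+\mu-z)_+|\xi|}$ in the case of $R_\xi$;
\item for $z\ge1+\mu$, they are bounded by a constant.
\end{itemize}
The prefactor $\nu$ therefore leaves a gain of $\nu^{1/2}$. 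The surplus powers of $\xi$ are recovered with Lemma \ref{lem: analytic recovery} at $\mu_2=\mu+\f12(\mu_0-\mu-\gamma\tau)$, at the cost of $(\mu_0-\mu-\gamma\tau)^{-1}$. This yields
\[
\nu\sum_{i\le1}\big\|e^{\eps_0(1+\mu)|\xi|}\xi^i(\omega_R)_\xi(s,0)\big\|_{L^1_\xi\cap L^2_\xi}\lesssim\nu^{1/2}\int_0^s\Big((s-\tau)^{-\f12}+(\mu_0-\mu-\gamma\tau)^{-1}\Big)\Big(\|N_R(\tau)\|_{W_{\mu_2,\tau}}+\sum_{i\le1}\big\|e^{\eps_0(1+\mu_2)|\xi|}\xi^i(B_R)_\xi(\tau)\big\|_{L^1_\xi\cap L^2_\xi}\Big)d\tau .
\]
Now insert Proposition \ref{prop: est of N in new norm} and the already proved bound for $B_R$ (Lemma \ref{lem: est of BR}). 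Close with Lemma \ref{lem: integral computation}, and use H\"older in time on the $D$-terms, noting that $\int_0^sD_{vp}^2/\tau\,d\tau$ is controlled by $E$. This gives a contribution of order $\nu^{3/2}\gamma^{-1/2}(\mu_0-\mu-\gamma s)^{-\beta}(1+E(s))^4$, up to harmless terms, which is well within the claimed bound. In particular, the estimate for $\widetilde{B_R}$ must come after the one for $B_R$, and it uses time-integrated information rather than just $E(s)$.
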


The estimate for $B_R$ is provided in Lemma \ref{lem: est of BR} and for $\widetilde{B_R}$ is provided in Lemma \ref{lem: est of widetilde BR}.

Recalling \eqref{def: BR}, we decompose $B_R$ into three parts as follows to describe vorticity behavior in different regions.
\begin{align}\label{decomposition of BR}
	B_R:=\sum_{1\leq i\leq3} B_{R,i},
\end{align}
where
\begin{align*}
	&B_{R,1}=\pa_y\Delta_D^{-1}\big\{ 
	\chi_b\big(U_a\cdot\nabla\omega_R
	+U_R\cdot\nabla\omega_a
	+U_R\cdot\nabla\omega_R\big)\big\}|_{y=0}-\nu|D_x|\omega_b^{(1)} ,\\
	&B_{R,2}=
	\pa_y\Delta_D^{-1}\big\{(1-\chi_b-\chi_{vp})\big(U_a\cdot\nabla\omega_R
	+U_R\cdot\nabla\omega_a
	+U_R\cdot\nabla\omega_R\big)\big\}|_{y=0},\\
	&B_{R,3}=
	\pa_y\Delta_D^{-1}\big\{\chi_{vp}\big(U_a\cdot\nabla\omega_R
	+U_R\cdot\nabla\omega_a
	+U_R\cdot\nabla\omega_R\big)\big\}|_{y=0}.
\end{align*}

Next lemma provides estimate for $B_R$.
\begin{lemma}\label{lem: est of BR}
	For $0<\mu<\mu_0-\gamma s$, it holds that
	\begin{align*}
		&\sum_{i\leq1}\left\|e^{\eps_0(1+\mu)|\xi|}\xi^i
		(B_R)_\xi(s)\right\|_{L^1_\xi \cap L^2_\xi}
		\leq C\nu(\mu_0-\mu-\gamma s)^{-\beta}
		\big(E(s)+1\big)^4\\
		&\quad\qquad\qquad\qquad\qquad
		+C(\nu s)^{1/2}\big(1+E(s)\big)^{3/2}D_{vp}(s)^{1/2}
		+Ce^{-\frac{C_1}{\nu s}}E(s)^{3/2}D(s)^{1/2}.
	\end{align*}
\end{lemma}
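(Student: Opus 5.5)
We estimate the three pieces of the decomposition \eqref{decomposition of BR} separately. The key tool is the Fourier representation of the trace $\pa_y\Delta_D^{-1}f|_{y=0}$ from Lemma \ref{lem: velocity formula}:
\begin{align*}
\big(\pa_y\Delta_D^{-1}f\big)_\xi|_{y=0}=-\int_0^{+\infty}e^{-|\xi|y}f_\xi(y)\,dy .
\end{align*}
Multiplying by the weight $e^{\eps_0(1+\mu)|\xi|}\xi^i$, $i\le1$, the factor $e^{-|\xi| y}$ absorbs the analytic weight in different ways in the three regions, and we exploit this case by case.

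\textbf{Boundary part $B_{R,1}$.} On $\operatorname{supp}\chi_b$ the weight satisfies $e^{\eps_0(1+\mu)|\xi|}e^{-|\xi|y}\le e^{\eps_0(1+\mu-y)_+|\xi|}$ for $y\le1+\mu$. Hence this part is bounded by the $Y^1_{\mu,s}\cap Y^2_{\mu,s}$ norms of $\chi_b(U_a\cdot\nabla\omega_R+U_R\cdot\nabla\omega_a+U_R\cdot\nabla\omega_R)$. The extra factor $\xi$ is handled by moving one $\pa_x$ onto the integrand, which costs $(\mu_0-\mu-\gamma s)^{-\beta}$ through the second-order part of $Y_k(s)$. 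The product estimates are exactly those already used for $N_R$ in Lemma \ref{lem: est of N near the boundary}, namely Lemma \ref{lem: product estimate}, Lemma \ref{lem: velocity estimates 1} and Proposition \ref{prop: app estimates}. This gives $C\nu(\mu_0-\mu-\gamma s)^{-\beta}(1+E)^2$. The tail $1+\mu\le y\le3$ has no analytic weight, so there we use $e^{-|\xi|y}\le e^{-|\xi|}$ together with the $H^3$ control in $E_m$. The term $\nu|D_x|\omega_b^{(1)}$ is $O(\nu)$ by Proposition \ref{prop: WP of boundary layer}.

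\textbf{Middle part $B_{R,2}$.} Here $y\ge1/4$ on the support, so $e^{(\eps_0(1+\mu)-y)|\xi|}$ still gives decay in $\xi$. We bound this part by $L^1$ norms of the integrand on $\operatorname{supp}\chi_m$ using Lemma \ref{lem: velocity estimates 2} and the weight $e^{\Psi}$. This produces $Ce^{-C_1/(\nu s)}(1+E)^2$ together with a term $e^{-C_1/\nu s}E^{3/2}D^{1/2}$, the latter coming from the $U_R$ bound in Lemma \ref{lem: velocity estimates 2}(1).

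\textbf{Vortex part $B_{R,3}$ (main obstacle).} Here $y\approx20$, so $e^{-|\xi|y}$ beats the weight by a margin of $e^{-10|\xi|}$. We write $U\cdot\nabla\omega=\curl\dv(U\otimes U)$ and integrate by parts twice onto the smooth kernel $e^{2\pi i x\xi-2\pi y|\xi|}$, as in \eqref{est of BR intro}. The cutoff produces boundary terms on $\operatorname{supp}\nabla\chi_{vp}$, which are exponentially small. This reduces the bound to $e^{-10|\xi|}\int_{\chi_{vp}}(|U_a||U_R|+|U_R|^2)\,dX$.

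\begin{itemize}
\item For the cross term $U_aU_R$ we use $|U_a|\lesssim|X-X(s)|^{-1}$ from \eqref{est: app3} and the pointwise bound of Corollary \ref{cor: est of UR}. The $(1+|\eta|)^{-2}$ decay makes $\int (\nu s)^{1/2}(1+|\eta|)^{-2}|X-X(s)|^{-1}dX$ finite; substituting $X-X(s)=(\nu s)^{1/2}\eta$ gives $O(\nu s)$. The contribution of order $(\nu s)^{1/4}E^{3/4}D_{vp}^{1/4}$ yields $(\nu s)^{1/2}(1+E)^{3/2}D_{vp}^{1/2}$ after Young's inequality. This is where the zero-moment Lemma \ref{lem: est of zero moment} (via Lemma \ref{lem: velocity VR pointwise}) is essential: with only $|\eta|^{-1}$ decay the integral diverges logarithmically as $\nu s\to0$.
\item For the quadratic term $|U_R|^2$ we use the same corollary, where the $\nu$-sized constant pieces give $\nu(1+E)^4$.
\item For the remaining piece $U_{a,b}$ interacting with $\omega_R$ near the vortex, we use $U_{a,b}=O(\nu^{1/2})$ from \eqref{est: app3}.
\end{itemize}

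Summing the three regions yields the claimed bound.
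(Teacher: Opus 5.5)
Your treatment of $B_{R,1}$ and $B_{R,3}$ is essentially the paper's. For $B_{R,1}$ you use the weight comparison $e^{\eps_0(1+\mu)|\xi|}e^{-|\xi|y}\le e^{\eps_0(1+\mu-y)_+|\xi|}$ with the product estimates, the $H^3$ strip bound for $1+\mu\le y\le 3$, and Proposition WP for $\nu|D_x|\omega_b^{(1)}$. For $B_{R,3}$ you use the $\curl\dv$ form, two integrations by parts, $|U_{a,vp}|\lesssim|X-X(s)|^{-1}$, Corollary \ref{cor: est of UR}, and $\int_{\chi_{vp}}|X-X(s)|^{-1}(1+|\eta|)^{-2}dX\le C(\nu s)^{1/2}$. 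Two small inaccuracies are harmless. The terms where derivatives hit $\chi_{vp}$ are not exponentially small, since $U_R$ is only $O(\nu(1+E)^2)$ there, but they are bounded by the same integral. Also, $\operatorname{supp}(1-\chi_b-\chi_{vp})\subset\{y\ge 2\}$, not merely $y\ge 1/4$.

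The gap is in $B_{R,2}$. The integrand $(1-\chi_b-\chi_{vp})(U_a\cdot\nabla\omega_R+U_R\cdot\nabla\omega_a+U_R\cdot\nabla\omega_R)$ contains $\nabla\omega_R$ on the unbounded region $\{y\ge2\}\setminus\{|X-(0,20)|\le5\}$. There $E(s)$ gives no pointwise-in-time control of derivatives of $\omega_R$. The $H^3$ term in $E_m$ only covers the strip $\frac78\le y\le 4$, and $D_m$ enters $E$ only through $\nu^{1/2}(\int_0^t D_m^2)^{1/2}$. So taking $L^1$ norms of the integrand gives $e^{-C_1/(\nu s)}\big((1+E)D_m+E^{3/4}D^{1/4}D_m\big)$, not the $e^{-C_1/(\nu s)}E^{3/2}D^{1/2}$ you claim. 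Such a term is not dominated by the lemma's right-hand side.

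This is not cosmetic. The lemma is later integrated against $(t-s)^{-1/2}$, in Lemma \ref{lem: est of widetilde BR} and in the second inequality of Proposition \ref{prop: est of Eb(t)}. Since $(t-s)^{-1/2}\notin L^2$, Hölder only works if $D$ appears with a power strictly below $1$.

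The missing step is the one you already use for $B_{R,3}$. Write the nonlinearity as $\dv(U_a\omega_R+U_R\omega_a+U_R\omega_R)$ and integrate by parts, so the derivative falls on $e^{-|\xi|y}$ and on $\nabla(\chi_b+\chi_{vp})$. The first costs a factor $|\xi|$, which is absorbed since $y\ge 2$. The second is supported where $\theta=\frac14$, so $\Psi$ is of size $\eps_0/(\nu s)$ and $\chi_m=1$. What remains is $e^{-C_1/(\nu s)}$ times
\[
\|U_a\|_{L^\infty(\chi_m)}\|e^\Psi\psi\chi_m\omega_R\|_{L^2}+\|U_R\|_{L^\infty(\chi_m)}\|e^\Psi\psi\chi_m(\omega_a,\omega_R)\|_{L^2}.
\]
Lemma \ref{lem: velocity estimates 2}(1) then yields $e^{-C_1/(\nu s)}\big((1+E)^2+(1+E)^{7/4}D^{1/4}\big)$, which fits the stated bound.
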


\begin{proof}
We handle $B_{R,1}\sim B_{R,3}$ respectively.

\underline{Estimate of $B_{R,1}$.}
	Due to  Lemma \ref{lem: velocity formula}, we have
	\begin{align*}
		&|(B_{R,1})_\xi(s)|
		\leq \nu|\xi|\left|(\omega_b^{(1)})_\xi(s,0)\right|\\
		&\qquad+\big(\int_0^{1+\mu}+\int_{1+\mu}^{+\infty}\big) e^{-|\xi|y}\chi_b(y)
		\left|\big(U_a\cdot\nabla\omega_R
	+U_R\cdot\nabla\omega_a
	+U_R\cdot\nabla\omega_R\big)_\xi(s,y)\right|dy .
	\end{align*}
	Using the following fact
	\begin{align}\label{weight transform 3}
		e^{\eps_0(1+\mu)|\xi|}
		e^{-|\xi|y}
		\leq 
		\left\{
		\begin{aligned}
			&e^{\eps_0(1+\mu-y)_+|\xi|},\qquad for \quad 0<y<1+\mu,\\
			&e^{-|\xi|/2},\qquad for \quad y\geq1+\mu,
		\end{aligned}
		\right.
	\end{align}
	we have
	\begin{align*}
		&\sum_{i\leq1}\left\|e^{\eps_0(1+\mu)|\xi|}\xi^i
		(B_{R,1})_\xi(s)\right\|_{L^1_\xi \cap L^2_\xi}
		\leq I_1+I_2+I_3 \\
		&:= \sum_{i\leq1}\left\|\pa_x^i\big(U_a\cdot\nabla\omega_R
	+U_R\cdot\nabla\omega_a
	+U_R\cdot\nabla\omega_R\big)\right\|_{Y^1_{\mu,s}\cap Y^2_{\mu,s}}\\
	&\quad+\int_1^3 \left\|\langle\xi\rangle e^{-|\xi|/2}\right\|_{L^2_\xi \cap L^\infty_\xi}
	\left\|\big(U_a\cdot\nabla\omega_R
	+U_R\cdot\nabla\omega_a
	+U_R\cdot\nabla\omega_R\big)_\xi(s,y)\right\|_{L^2_\xi}dy\\
	&\quad+\nu\left\|e^{\eps_0(1+\mu)|\xi|}\langle\xi\rangle^2(\omega_b^{(1)})_\xi(s,0)\right\|_{L^1_\xi \cap L^2_\xi} .
	\end{align*}
	
	For $I_1$, using Lemma \ref{lem: product estimate} and Proposition \ref{prop: app estimates}, we have
	\begin{align*}
		I_1 &\leq 
		\sum_{i_1+i_2\leq1}\left\|\sup_{0<y<1+\mu}e^{\eps_0(1+\mu-y)_+|\xi|}\xi^{i_1}\left|(u_a,\frac{v_a}{y})_\xi(s,y)\right|\right\|_{L^1_\xi}
		\|\pa_x^{i_2}(\pa_x,y\pa_y)\omega_R\|_{Y^1_{\mu,s}\cap Y^2_{\mu,s}}\\
		&
		+\sum_{i_1+i_2\leq1}\left\|\sup_{0<y<1+\mu}e^{\eps_0(1+\mu-y)_+|\xi|}\xi^{i_1}\left|(u_R,\frac{v_R}{y})_\xi(s,y)\right|\right\|_{L^1_\xi}
		\|\pa_x^{i_2}(\pa_x,y\pa_y)(\omega_a,\omega_R)\|_{Y^1_{\mu,s}\cap Y^2_{\mu,s}}\\
		&\leq C(\mu_0-\mu-\gamma s)^{-\beta} E_b(s)
		+C\nu(\mu_0-\mu-\gamma s)^{-\beta} E(s)\big(E_b(s)+1\big)\\
		&\quad+C\nu E(s)\big((\mu_0-\mu-\gamma s)^{-\beta}E_b(s)+1\big)\\
		&\leq C\nu(\mu_0-\mu-\gamma s)^{-\beta} \big(E(s)+1\big)^2.
	\end{align*}

	For $I_2$, due to Proposition \ref{prop: app estimates} and Lemma \ref{lem: velocity estimates 2}, we have
	\begin{align*}
		I_2&\leq \left\|U_a\cdot\nabla\omega_R
	+U_R\cdot\nabla\omega_a
	+U_R\cdot\nabla\omega_R\right\|_{L^2(1\leq y\leq3)}\\
	&\leq \|U_a\|_{L^\infty(1\leq y\leq3)}\|\omega_R\|_{H^3(1\leq y\leq3)}
	+\|U_R\|_{L^\infty(1\leq y\leq3)}\|(\omega_a,\omega_R)\|_{H^3(1\leq y\leq3)}\\
	&\leq C\|\omega_R\|_{H^3(\frac{7}{8}\leq y\leq4)}
	+C\big(\nu E(s)+\|\omega_R\|_{H^3(\frac{7}{8}\leq y\leq4)}\big)
	\big(e^{-\frac{C_1}{\nu s}}
	+\|\omega_R\|_{H^3(\frac{7}{8}\leq y\leq4)}\big)\\
	&\leq C\nu\big(E(s)+1\big)^2.
	\end{align*}
	
	For $I_3$, Proposition \ref{prop: WP of boundary layer} and the remark under which yield $I_3\leq C\nu$.
	
	Collecting these estimates gives
	\begin{align*}
		\sum_{i\leq1}\left\|e^{\eps_0(1+\mu)|\xi|}\xi^i
		(B_{R,1})_\xi(s)\right\|_{L^1_\xi \cap L^2_\xi}
		\leq C\nu(\mu_0-\mu-\gamma s)^{-\beta}\big(E(s)+1\big)^2.
	\end{align*}
	
\underline{Estimate of $B_{R,2}$.}
Due to \eqref{BS law formulation 2} and integration by parts, we have
\begin{align*}
	(B_{R,2})_\xi(s)&=\f12\int_0^{+\infty}e^{-|\xi|y}\Big((1-\chi_b-\chi_{vp})\dv(U_a\omega_R+U_R\omega_a+U_R\omega_R ) \Big)_\xi(s,y) dy\\
	&=\f12\int_0^{+\infty}e^{-|\xi|y}
	\Big(\nabla(\chi_b+\chi_{vp})\cdot(U_a\omega_R+U_R\omega_a+U_R\omega_R ) \Big)_\xi(s,y)dy\\
	&\quad+\f12\int_0^{+\infty}(i\xi,|\xi|)e^{-|\xi|y}\Big((1-\chi_b-\chi_{vp})(U_a\omega_R+U_R\omega_a+U_R\omega_R ) \Big)_\xi(s,y) dy,
\end{align*}
which implies
\begin{align*}
	&\sum_{i\leq1}\left\|e^{\eps_0(1+\mu)|\xi|}\xi^i
		(B_{R,2})_\xi(s)\right\|_{L^1_\xi \cap L^2_\xi}
		\leq \int_2^{+\infty} \left\|\nabla(\chi_b+\chi_{vp})\cdot(U_a\omega_R+U_R\omega_a+U_R\omega_R )\right\|_{L^2_x} dy\\
		&\qquad+\int_2^{+\infty} \left\|(1-\chi_b-\chi_{vp})(U_a\omega_R+U_R\omega_a+U_R\omega_R )\right\|_{L^2_x} dy \\
		&\leq \|y(U_a\omega_R+U_R\omega_a+U_R\omega_R )\|_{L^2(1-\chi_b-\chi_{vp})}
		+\|U_a\omega_R+U_R\omega_a+U_R\omega_R \|_{L^2(\nabla(\chi_b+\chi_{vp}))}\\
		&\leq Ce^{-\frac{C_1}{\nu s}}
		\Big(\|U_a\|_{L^\infty(\chi_m)}\|e^\Psi\psi\chi_m\omega_R\|_{L^2}
		+\|U_R\|_{L^\infty(\chi_m)}\|e^\Psi\psi\chi_m(\omega_a,\omega_R)\|_{L^2} \Big)\\
		&\leq Ce^{-\frac{C_1}{\nu s}}
		\Big\{ E(s)+\big(1+E(s)\big)
		\big(\nu E(s)+E(s)^{3/4}D(s)^{1/4} \big) \Big\}\\
		&\leq Ce^{-\frac{C_1}{\nu s}}\big(1+E(s)\big)^2
		+Ce^{-\frac{C_1}{\nu s}}\big(1+E(s)\big)^{7/4}D(s)^{1/4}.
\end{align*}

\underline{Estimate of $B_{R,3}$.} 
We firstly rewrite $B_{R,3}$ as follows
\begin{align*}
	B_{R,3}=\pa_y\Delta_D^{-1}\big\{\chi_{vp}\curl\dv\big(U_a\otimes U_R+U_R\otimes U_a+U_R\otimes U_R\big)\big\}|_{y=0}.
\end{align*}

Thus, due to \eqref{BS law formulation 2} and integration by parts, it holds that
\begin{align*}
	|(B_{R,3})_\xi(s)|
	&\leq
	\left|\int_{\mathbb R^2_+}e^{2\pi ix'\xi-2\pi y'|\xi|}\chi_{vp}\curl\dv\big(U_a\otimes U_R+U_R\otimes U_a+U_R\otimes U_R\big)(x',y')dx'dy'\right|\\
	&\leq \int_{\chi_{vp}} \left|\nabla^2_{x',y'}(e^{2\pi ix'\xi-2\pi y'|\xi|}\chi_{vp})\right|
	\big(2|U_a||U_R|+|U_R|^2 \big)(x',y')dx'dy'\\
	&\leq e^{-10|\xi|}\int_{\chi_{vp}} \big(2|U_a||U_R|+|U_R|^2 \big)(x',y')dx'dy'.
\end{align*}

Armed with \eqref{est: app3} in Proposition \ref{prop: app estimates} and Corollary \ref{cor: est of UR}, we have
\begin{align*}
	&|(B_{R,3})_\xi(s)|
	\leq Ce^{-10|\xi|}\int_{\chi_{vp}} 
	\big(\frac{1}{|X-X(s)|}+e^{-\frac{C_1}{\nu s}}\big)
	\Big\{\nu\big(1+E(s)\big)^2
		+e^{-\frac{C_1}{\nu s}}E(s)^{3/4}D_m(s)^{1/4}\\
		&\quad +(\nu s)^{1/2}E(s)(1+|\eta|)^{-2}+(\nu s)^{1/4}E(s)^{3/4}D_{vp}(s)^{1/4}(1+|\eta|)^{-2} \Big\}
		+\Big\{\nu\big(1+E(s)\big)^2\\
		&\quad +e^{-\frac{C_1}{\nu s}}E(s)^{3/4}D_m(s)^{1/4}
		+(\nu s)^{1/2}E(s)(1+|\eta|)^{-2}+(\nu s)^{1/4}E(s)^{3/4}D_{vp}(s)^{1/4}(1+|\eta|)^{-2} \Big\}^2 dX\\
		&\leq Ce^{-10|\xi|} \big\{ \nu\big(1+E(s)\big)^4
		+(\nu s)^{3/4}E(s)^{3/4}D_{vp}(s)^{1/4}\\
		&\quad+(\nu s)^{3/2}E(s)^{3/2}D_{vp}(s)^{1/2}+e^{-\frac{C_1}{\nu s}}E(s)^{3/2}D_m(s)^{1/2} \big\},
\end{align*}
where we utilize the following fact
\begin{align*}
	\int_{\chi_{vp}}\frac{1}{|X-X(s)|}\cdot\frac{1}{(1+\frac{|X-X(s)|}{\sqrt{\nu s}})^2}dX \leq C(\nu s)^{1/2}.
\end{align*}
Thus, we have
\begin{align*}
	&\sum_{i\leq1}\left\|e^{\eps_0(1+\mu)|\xi|}\xi^i
		(B_{R,3})_\xi(s)\right\|_{L^1_\xi \cap L^2_\xi}\\
		&\leq \nu\big(1+E(s)\big)^4
		+(\nu s)^{1/2}E(s)^{3/2}D_{vp}(s)^{1/2}+e^{-\frac{C_1}{\nu s}}E(s)^{3/2}D_m(s)^{1/2}.
\end{align*}

Collecting these estimates together, we finish the proof.
\end{proof}

\begin{remark}\label{rem: est for BR i=0}
	In fact, we have proved 
	\begin{align}\label{est: BR i=0}
		&\left\|e^{\eps_0(1+\mu)|\xi|}
		(B_R)_\xi(s)\right\|_{L^1_\xi \cap L^2_\xi}
		\\
		\nonumber
		&\leq C\nu\big(E(s)+1\big)^4
		+C(\nu s)^{1/2}\big(1+E(s)\big)^{3/2}D_{vp}(s)^{1/2}
		+e^{-\frac{C_1}{\nu s}}E(s)^{3/2}D(s)^{1/2}.
	\end{align}
\end{remark}

Next lemma  provides estimate for $\widetilde{B_R}$.
\begin{lemma}\label{lem: est of widetilde BR}
	For $0<\mu<\mu_0-\gamma s$, it holds that
	\begin{align*}
		&\sum_{i\leq1}\left\|e^{\eps_0(1+\mu)|\xi|}\xi^i
		(\widetilde{B_R})_\xi(s)\right\|_{L^1_\xi \cap L^2_\xi}
		\leq C\nu(\mu_0-\mu-\gamma s)^{-\beta} \big(E(s)+1\big)^4\\
		&\quad+C(\nu s)^{1/2}\big(1+E(s)\big)^{3/2}D_{vp}(s)^{1/2} +Ce^{-\frac{C_1}{\nu s}}E(s)^{3/2}D(s)^{1/2} .
	\end{align*}
\end{lemma}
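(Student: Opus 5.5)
The plan is to reduce $\widetilde{B_R}$ to quantities already controlled, starting from the identity \eqref{def: widetilde BR}:
\begin{align*}
(\widetilde{B_R})_\xi = i\pa_\xi (B_R)_\xi - i\nu\, \mathrm{sgn}\,\xi\,(\omega_R)_\xi|_{y=0}.
\end{align*}
The estimate then splits into two pieces: the $\xi$-derivative of $B_R$, and the boundary trace of $\omega_R$.

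\textbf{The boundary trace.} The term $\nu\,\mathrm{sgn}\,\xi\,(\omega_R)_\xi(s,0)$ carries an explicit factor $\nu$. It therefore suffices to bound
\begin{align*}
\sum_{i\le1}\|e^{\eps_0(1+\mu)|\xi|}\xi^i(\omega_R)_\xi(s,0)\|_{L^1_\xi\cap L^2_\xi}
\end{align*}
by $C(\mu_0-\mu-\gamma s)^{-\beta}E_b(s)/\nu$ plus harmless terms. I would do this with the fundamental theorem of calculus in $y$ on $(0,1+\mu)$, as in the proof of Lemma \ref{lem: velocity estimates 1}. This gives
\begin{align*}
|f(0)|\le \int_0^{1+\mu}\big(|f|+|\pa_y f|\big)\,dy.
\end{align*}
The $\pa_y$ costs a derivative near $y=0$, where only $y\pa_y$ is available. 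To avoid this, I would instead write the trace through the integral representation \eqref{eq: integral eq of omega R}, or note that $E_b$ contains $\pa_x^i(y\pa_y)^j$ up to order two with weight $(\mu_0-\mu-\gamma s)^\beta$. Since $\nu E_b\le \nu^2E$, a crude trace bound losing $(\nu s)^{-1/2}$ is still acceptable. Indeed, the Gaussian weight $e^{\eps_0 y^2/(\nu s)}$ gives the trace control
\begin{align*}
|f(0)|\lesssim (\nu s)^{-1/2}\|f\|_{\mu,s}+\|y\pa_y f\|_{\mu,s}\cdot(\nu s)^{-1/2}.
\end{align*}
The outcome is $\nu\cdot(\nu s)^{-1/2}\cdot\nu E$, which is at most $\nu E$ once $\nu\le s$. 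The region $s<\nu$ needs a separate treatment: there I would use the heat-semigroup formula directly.

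\textbf{The $\xi$-derivative of $B_R$.} Applying $\pa_\xi$ to the Fourier representation of $\pa_y\Delta_D^{-1}$ produces two kinds of terms. The first replaces $f$ by $xf$, which is the Fourier side of $\pa_\xi$. The second is a kernel factor $-y\,\mathrm{sgn}\,\xi\, e^{-|\xi|y}$. I would go through the decomposition $B_{R,1},B_{R,2},B_{R,3}$ exactly as in Lemma \ref{lem: est of BR}, with the following substitutions.

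For $B_{R,1}$, the terms become $x(U_a\cdot\nabla\omega_R+\dots)$. These are controlled by the $(1,x)$ components of $E_b$ and by \eqref{est: app1} and \eqref{est: app4}, which already include the $(1,x)$ weights. The additional factor $y$ only helps. The term $\nu|D_x|\omega_b^{(1)}$ gives $\nu\pa_\xi(|\xi|(\omega_b^{(1)})_\xi)$, which is bounded via the $x$-weighted estimates in Proposition \ref{prop: WP of boundary layer}.

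For $B_{R,2}$, the extra factor $|x|+y$ is absorbed by the weight $\psi=y^2(1+|x|)$ together with $e^{\Psi}$. This again yields the bound $e^{-C_1/(\nu s)}(1+E)^2$.

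For $B_{R,3}$, the support of $\chi_{vp}$ is compact, so $|x'|,|y'|\le C$ there. Thus $\pa_\xi$ of $e^{2\pi i x'\xi-2\pi y'|\xi|}$ is still bounded by $Ce^{-10|\xi|}$, and Corollary \ref{cor: est of UR} together with the integral $\int |X-X(s)|^{-1}(1+|\eta|)^{-2}\,dX\le C(\nu s)^{1/2}$ applies verbatim.

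\textbf{Main obstacle.} The delicate point is the trace term $\nu(\omega_R)_\xi|_{y=0}$ with the weight $e^{\eps_0(1+\mu)|\xi|}$, which requires an extra factor $\xi$ at the boundary. I expect to handle it by using the second-order part of $Y_k(s)$, which carries the loss $(\mu_0-\mu-\gamma s)^{-\beta}$, since $\xi\,\omega_\xi$ corresponds to $\pa_x\omega$. Combined with the $y$-trace argument above, I aim for a bound of the form $C\nu(\mu_0-\mu-\gamma s)^{-\beta}(1+E)$, with the small-$s$ regime handled separately as noted.
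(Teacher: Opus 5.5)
\textbf{Gap: the boundary-trace term.} Your handling of $i\pa_\xi(B_R)_\xi$ matches the paper: differentiate the representations of $B_{R,1},B_{R,2},B_{R,3}$, use $i\pa_\xi f_\xi=(xf)_\xi$, the compact support of $\chi_{vp}$, and the weight $\psi$. The term $\nu\,\mathrm{sgn}\,\xi\,(\omega_R)_\xi(s,0)$, however, is not controlled by your argument.

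The trace inequality $|f(0)|\lesssim(\nu s)^{-1/2}\big(\|f\|_{\mu,s}+\|y\pa_y f\|_{\mu,s}\big)$ is false. Take $f(y)=\phi(y/\delta)$ with $\phi(0)=1$ and $\phi$ supported in $[0,1]$. The left side equals $1$, while the right side is $O\big(\delta(\nu s)^{-1/2}\big)\to0$ as $\delta\to0$. The underlying reason is structural. All norms entering $E_b$ are $L^1_y$ norms of $\pa_x^i(y\pa_y)^j f$, including the second-order ones with the $(\mu_0-\mu-\gamma s)^{\beta}$ weight. Under a dilation at $y=0$ these scale like $\delta$, while the trace does not change. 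The Gaussian weight only localizes to $y\lesssim(\nu s)^{1/2}$ and provides no lower scale. So $E(s)$ carries no information on $(\omega_R)_\xi(s,0)$ at a fixed time. Your estimate $\nu\cdot(\nu s)^{-1/2}\cdot\nu E$, the split at $s=\nu$, and the appeal to the second-order part of $Y_k(s)$ therefore do not yield a bound.

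\textbf{The missing step.} The trace has to come from the equation, as in the paper. Set $y=0$ in \eqref{eq: integral eq of omega R} to write $(\omega_R)_\xi(s,0)=J_1+J_2$, with $J_1$ driven by $N_R$ and $J_2$ by $B_R$. With the weight $e^{\eps_0(1+\mu)|\xi|}$, the kernels satisfy
$$\big(|H_\xi|+|R_\xi|\big)(s-\tau,0,z)\lesssim\big((\nu(s-\tau))^{-1/2}+|\xi|\big)e^{\eps_0(1+\mu-z)_+|\xi|}\quad\text{for } z<1+\mu,$$
and they are bounded for $z\ge1+\mu$. The extra $|\xi|$, together with the $\xi^i$ factor, is absorbed by Lemma \ref{lem: analytic recovery}. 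Use $\mu_2=\mu+\frac12(\mu_0-\mu-\gamma\tau)$ at cost $(\mu_0-\mu-\gamma\tau)^{-1}$. Then apply Proposition \ref{prop: est of N in new norm} to $J_1$ and Lemma \ref{lem: est of BR} to $J_2$; the latter is the bound for $B_R$ itself, so there is no circularity. Lemma \ref{lem: integral computation} then gives a contribution of size $\nu^{3/2}\gamma^{-1/2}(\mu_0-\mu-\gamma s)^{-\beta}(1+E)^4$, plus the $D_{vp}$ and $D$ terms. Because $(s-\tau)^{-1/2}$ is integrable, the product of the prefactor $\nu$, the kernel's $\nu^{-1/2}$, and the $\nu$ from $N_R,B_R$ is uniform in $s$. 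No separate small-$s$ regime is needed. You mention this integral representation only as an unexecuted alternative, but it is the route that actually works here.
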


\begin{proof}
	Recalling the relation \eqref{def: widetilde BR}: $(\widetilde{B_R})_\xi=i\pa_\xi(B_R)_\xi-i\nu sgn\xi(\omega_R)_\xi|_{y=0}$, we have
	\begin{align*}
		&\sum_{i\leq1}\left\|e^{\eps_0(1+\mu)|\xi|}\xi^i
		(\widetilde{B_R})_\xi(s)\right\|_{L^1_\xi \cap L^2_\xi}\\
		&\leq \sum_{i\leq1}\left\|e^{\eps_0(1+\mu)|\xi|}\xi^i
		\pa_\xi({B_R})_\xi(s)\right\|_{L^1_\xi \cap L^2_\xi}
		+\nu\sum_{i\leq1}\left\|e^{\eps_0(1+\mu)|\xi|}\xi^i(\omega_R)_\xi|_{y=0}\right\|_{L^1_\xi \cap L^2_\xi}.
	\end{align*}
	The first term can be estimated by taking $\pa_\xi$ on $B_{R,1}\sim B_{R,3}$ in Lemma \ref{lem: est of BR} and by the relation $i\pa_\xi f_\xi=(xf)_\xi$. And we have
	\begin{align*}
		&\sum_{i\leq1}\left\|e^{\eps_0(1+\mu)|\xi|}\xi^i
		\pa_\xi({B_R})_\xi(s)\right\|_{L^1_\xi \cap L^2_\xi}
		\leq C\nu(\mu_0-\mu-\gamma s)^{-\beta}\big(E(s)+1\big)^4\\
		&\quad\qquad\qquad\qquad\qquad
		+C(\nu s)^{1/2}\big(1+E(s)\big)^{3/2}D_{vp}(s)^{1/2}
		+Ce^{-\frac{C_1}{\nu s}}E(s)^{3/2}D(s)^{1/2}.
	\end{align*}
	
	Taking $y=0$ in \eqref{eq: integral eq of omega R} gives
	\begin{align*}
		(\omega_R)_\xi|_{y=0}(s)
		=&\int_0^s\int_0^{+\infty}\big(H_\xi(s-\tau,0,z)
		+R_\xi(s-\tau,0,z)\big)(N_R)_\xi(\tau,z)dzd\tau\\
		&-\int_0^s \big(H_\xi(s-\tau,0,0)
		+R_\xi(s-\tau,0,0)\big)(B_R)_\xi(\tau)d\tau:=J_1+J_2.
	\end{align*}
	
	For $J_1$, by \eqref{def of H xi}, \eqref{def of R xi}, Lemma \ref{lem: property of Rxi} and a direct computation, we have
	\begin{align*}
		e^{\eps_0(1+\mu)|\xi|}|\xi|^i
		|H_\xi(s-\tau,0,z)|
		\leq 
		\left\{
		\begin{aligned}
			&\frac{C|\xi|^i}{\nu^{1/2}(s-\tau)^{1/2}}e^{\eps_0(1+\mu-z)_+|\xi|},\qquad z<1+\mu,\\
			&C,\qquad z\geq1+\mu,
		\end{aligned}
		\right.
	\end{align*}
	and
	\begin{align*}
		e^{\eps_0(1+\mu)|\xi|}|\xi|^i
		|R_\xi(s-\tau,0,z)|
		\leq 
		\left\{
		\begin{aligned}
			&C\big(\frac{|\xi|^i}{\nu^{1/2}(s-\tau)^{1/2}}+|\xi|^{1+i}\big)e^{\eps_0(1+\mu-z)_+|\xi|},\qquad z<1+\mu,\\
			&C,\qquad z\geq1+\mu.
		\end{aligned}
		\right.
	\end{align*}
	Thus, it holds that
	\begin{align*}
		&\nu\sum_{i\leq1}\left\|e^{\eps_0(1+\mu)|\xi|}\xi^i
		J_1\right\|_{L^1_\xi \cap L^2_\xi}
		\leq C\nu^{1/2}\int_0^s(s-\tau)^{-1/2}\sum_{i\leq1}\|\pa_x^i N_R\|_{Y_{\mu,\tau}^1\cap Y_{\mu,\tau}^2}d\tau\\
		&\qquad+C\nu\int_0^s \sum_{i\leq2}\|\pa_x^i N_R\|_{Y_{\mu,\tau}^1\cap Y_{\mu,\tau}^2}d\tau
		+C\nu \int_0^s \sum_{i\leq1}\left\|\left\|(\pa_x^i N_R)_\xi(\tau,z)\right\|_{L^1_z(z\geq1+\mu)}\right\|_{L^1_\xi\cap L^2_\xi}d\tau\\
		&\leq C\nu^{1/2}\int_0^s\big((s-\tau)^{-1/2}+(\mu_0-\mu-\gamma\tau)^{-1}\big)\sum_{i\leq1}\|\pa_x^i N_R\|_{Y_{\mu_2,\tau}^1\cap Y_{\mu_2,\tau}^2}d\tau\\
		&\qquad+C\nu \int_0^s \sum_{i\leq2}\left\|\left\|\pa_x^i N_R(\tau,z)\right\|_{L^2_x}\right\|_{L^1(z\geq1+\mu)}d\tau\\
		&\leq C\nu^{1/2}\int_0^s\big((s-\tau)^{-1/2}+(\mu_0-\mu-\gamma\tau)^{-1}\big) \|N_R(\tau)\|_{W_{\mu_2,\tau}}d\tau ,
	\end{align*}
	where we take $\mu_2=\mu+\f12(\mu_0-\mu-\gamma\tau)$ and use Lemma \ref{lem: analytic recovery} in the last but one step. By Proposition \ref{prop: est of N in new norm} and Lemma \ref{lem: integral computation}, we have
	\begin{align*}
		&\nu\sum_{i\leq1}\left\|e^{\eps_0(1+\mu)|\xi|}\xi^i
		J_1\right\|_{L^1_\xi \cap L^2_\xi}
		\\
		& \leq C\nu^{1/2}\int_0^s \big((s-\tau)^{-1/2}+(\mu_0-\mu-\gamma\tau)^{-1}\big)(\mu_0-\mu-\gamma\tau)^{-\beta}\cdot\nu\big(1+E(\tau)\big)^2 d\tau\\
		&\leq \frac{C\nu^{3/2}}{\gamma^{1/2}}(\mu_0-\mu-\gamma s)^{-\beta}
		\big(1+E(s)\big)^2.
	\end{align*}
	
	For $J_2$, as in $J_1$, we use Lemma \ref{lem: est of BR} to have
	\begin{align*}
		&\nu\sum_{i\leq1}\left\|e^{\eps_0(1+\mu)|\xi|}\xi^i
		J_2\right\|_{L^1_\xi \cap L^2_\xi}
		\leq C\nu^{1/2}\int_0^s(s-\tau)^{-1/2}\sum_{i\leq1}\left\|e^{\eps_0(1+\mu)|\xi|}\xi^i(B_R)_\xi(\tau)\right\|_{L^1_\xi \cap L^2_\xi}d\tau\\
		&\qquad+C\nu\int_0^s \sum_{i\leq2}\left\|e^{\eps_0(1+\mu)|\xi|}\xi^i(B_R)_\xi(\tau)\right\|_{L^1_\xi \cap L^2_\xi}d\tau\\
		&\leq C\nu^{1/2}\int_0^s \big((s-\tau)^{-1/2}+(\mu_0-\mu-\gamma\tau)^{-1}\big)
		\sum_{i\leq1}\left\|e^{\eps_0(1+\mu_2)|\xi|}\xi^i(B_R)_\xi(\tau)\right\|_{L^1_\xi \cap L^2_\xi}d\tau\\
		&\leq C\nu^{1/2}\int_0^s \big((s-\tau)^{-1/2}+(\mu_0-\mu-\gamma\tau)^{-1}\big)
		\big\{e^{-\frac{C_1}{\nu \tau}}E(\tau)^{3/2}D_m(\tau)^{1/2} \\
		&\qquad+\nu(\mu_0-\mu-\gamma\tau)^{-\beta}\big(1+E(\tau)\big)^4
		+(\nu \tau)^{1/2}\big(1+E(\tau)\big)^{3/2}D_{vp}(\tau)^{1/2} \big\}d\tau\\
		&\leq \frac{C\nu^{3/2}}{\gamma^{1/2}}(\mu_0-\mu-\gamma s)^{-\beta} \big(E(s)+1\big)^4
		+Ce^{-\frac{C_1}{\nu s}}
		\big(\int_0^s D_m(\tau)^2 d\tau\big)^{1/4}\\
		&\qquad+C\nu\big(E(s)+1\big)^{3/2}\big(s^{3/4}+\gamma^{-3/4}s^{1/2}(\mu_0-\mu-\gamma s)^{-1/4}\big)
		\big(\int_0^s D_{vp}(\tau)^2 d\tau\big)^{1/4}\\
		&\leq C\nu^{3/2}\big(\gamma^{-1/2}(\mu_0-\mu-\gamma s)^{-\beta}+s^{5/4}\big)\big(E(s)+1\big)^4\\
		&\leq C\nu(\mu_0-\mu-\gamma s)^{-\beta} \big(E(s)+1\big)^4.
	\end{align*}

    Collecting these estimates together, we finish the proof.
\end{proof}

\subsection{Proof of Proposition \ref{prop: est of Eb(t)}}

\underline{The first inequality in Proposition \ref{prop: est of Eb(t)}.} \
Using Lemma \ref{lem: est of HRN}, Lemma \ref{lem: est of HRB}, Proposition \ref{prop: est of N in new norm} and Proposition \ref{prop: est of BR}, we have for $\mu<\mu_0-\gamma t$,
\begin{align*}
	&\sum_{i+j\leq1}\|\pa_x^i(y\pa_y)^j\big((1,x)\omega_R\big)(t)\|_{Y^1_{\mu,t}\cap Y^2_{\mu,t}}
	\\
	&\leq C\int_0^t\left\|\big(N_R(s),\widetilde{N_R}(s)\right\|_{W_{\mu,s}}ds+C\int_0^t\sum_{i\leq1}\left\|e^{\eps_0(1+\mu)|\xi|}\xi^i\big((B_R)_\xi(s),(\widetilde{B_R})_\xi(s)\big)\right\|_{L^1_\xi\cap L^2_\xi}ds\\
	&\leq C\int_0^t \Big\{\nu(\mu_0-\mu-\gamma s)^{-\beta}\big(1+E(s)\big)^4
	+\nu^{1/2}\big(1+E(s)\big)^{3/2}D_{vp}(s)^{1/2}
	+e^{-\frac{C_1}{\nu s}}E(s)^{3/2}D(s)^{1/2}  \Big\}ds\\
	&\leq C(\frac{\nu}{\gamma}+\nu t)\big(1+E(t)\big)^4.
\end{align*}
Similarly, we have for $\mu_1=\mu+\f12(\mu_0-\mu-\gamma s)$,
\begin{align*}
	&\sum_{i+j=2}\|\pa_x^i(y\pa_y)^j\big((1,x)\omega_R\big)(t)\|_{Y^1_{\mu,t}\cap Y^2_{\mu,t}}\\
	&\leq C\int_0^t \big((\mu_0-\mu-\gamma s)^{-1}+(\mu_0-\mu-\gamma s)^{-1/2}(t-s)^{-1/2}\big)\left\|\big(N_R(s),\widetilde{N_R}(s)\right\|_{W_{\mu_1,s}}ds \\
	&\qquad+C\int_0^t (\mu_0-\mu-\gamma s)^{-1}\left\|e^{\eps_0(1+\mu_1)|\xi|}\xi^i\big((B_R)_\xi(s),(\widetilde{B_R})_\xi(s)\big)\right\|_{L^1_\xi\cap L^2_\xi}ds\\
	&\leq C\int_0^t \big((\mu_0-\mu-\gamma s)^{-1}+(\mu_0-\mu-\gamma s)^{-1/2}(t-s)^{-1/2}\big)(\mu_0-\mu-\gamma s)^{-\alpha}\cdot \nu\big(1+E(s)\big)^4 ds\\
	&\qquad+C\int_0^t (\mu_0-\mu-\gamma s)^{-1}
	\Big\{\nu(\mu_0-\mu-\gamma s)^{-\beta}\big(1+E(s)\big)^4 \\
	&\qquad+\nu^{1/2}\big(1+E(s)\big)^{3/2}D_{vp}(s)^{1/2}
	+e^{-\frac{C_1}{\nu s}}E(s)^{3/2}D(s)^{1/2}  \Big\}ds\\
	&\leq C(\mu_0-\mu-\gamma s)^{-\beta}(\frac{\nu}{\gamma^{1/2}}+\nu t)\big(1+E(t)\big)^4.
\end{align*}
Thus,
\begin{align*}
	\|(1,x)\omega_R(t)\|_{Y_1(t)\cap Y_2(t)}
	&\leq C(\frac{\nu}{\gamma^{1/2}}+\nu t)\big(1+E(t)\big)^4.
\end{align*}

\medskip

\underline{The second inequality in Proposition \ref{prop: est of Eb(t)}.}
Using \eqref{eq: integral eq of omega R}, for $\mu<\mu_0-\gamma t$, we have
	\begin{align*}
		&e^{\frac{\eps_0 y^2}{\nu t}}|(\chi_b\omega_R)_\xi(t,y)|
		\leq \int_0^t\int_0^{+\infty}e^{\frac{\eps_0y^2}{\nu t}}H_\xi(t-s,y,z)|(N_R)_\xi(s,z)|dzds\\
		&\quad+\int_0^t\int_0^{+\infty}e^{\frac{\eps_0y^2}{\nu t}}|R_\xi(t-s,y,z)||(N_R)_\xi(s,z)|dzds\\
		&\quad+\int_0^te^{\frac{\eps_0y^2}{\nu t}}H_\xi(t-s,y,0)|(B_R)_\xi(s)|ds
		+\int_0^te^{\frac{\eps_0y^2}{\nu t}}|R_\xi(t-s,y,0)||(B_R)_\xi(s)|ds:=\sum_{i=1}^4 T_i.
	\end{align*}
	For $T_1$, a direct computation gives
	\begin{align*}
		e^{\frac{\eps_0 y^2}{\nu t}}H_\xi(t-s,y,z)
		\leq 
		\left\{
		\begin{aligned}
			&C\nu^{-1/2}(t-s)^{-1/2}e^{\frac{\eps_0 z^2}{\nu s}},\qquad z<1+\mu,\\
			&C\nu^{-1/2}(t-s)^{-1/2},\qquad z\geq1+\mu,
		\end{aligned}
		\right.
	\end{align*}
	which along with \eqref{est: NR without derivative} and Lemma \ref{lem: est of N away from the boundary} implies
	\begin{align*}
		&\|T_1\|_{L^1_\xi\cap L^2_\xi}
		\leq C\int_0^t \nu^{-1/2}(t-s)^{-1/2}\big(\|N_R(s)\|_{Y^1_{\mu,s}\cap Y^2_{\mu,s}}
		+\left\|\left\|(N_R)_\xi(s,z)\right\|_{L^1_z}\right\|_{L^1_\xi\cap L^2_\xi} \big)ds\\
		&\leq C\int_0^t \nu^{-1/2}(t-s)^{-1/2}\big(\|N_R(s)\|_{Y^1_{\mu,s}\cap Y^2_{\mu,s}}
		+\sum_{i\leq1}\|\pa_x^iN_R(s)\|_{L^2(1\leq z\leq3)} \big)ds\\
		&\leq C\int_0^t \nu^{-1/2}(t-s)^{-1/2}
		\Big(\nu\big(1+E(s)\big)^2
		+e^{\frac{5\eps_0}{\nu s}}\|\omega_R(s)\|^2_{H^3(\f78\leq y\leq4)} \Big)ds\\
		&\leq C\nu^{1/2}t^{1/2}\big(1+E(t)\big)^2.
	\end{align*}
	
For $T_2$, if $y\geq(\nu t)^{1/2}$, we utilize Lemma \ref{lem: property of Rxi} to obtain
	\begin{align*}
		&|T_2|
		\leq C\nu\int_0^t\int_0^{+\infty}\int_0^{t-s} e^{\frac{\eps_0y^2}{\nu t}}
		\Big|(-\xi^2+\xi\pa_y)\big(e^{-\nu\tau\xi^2}g(\nu\tau,y+z) \big) \Big||(N_R)_\xi(s,z)|dzd\tau ds.
	\end{align*}
	A direct computation gives
	\begin{align*}
		e^{\frac{\eps_0y^2}{\nu t}}\big|\nu(-\xi^2+\xi\pa_y)\big(e^{-\nu\tau\xi^2}g(\nu\tau,y+z) \big)\big|
		\leq \frac{C}{t}\cdot\frac{1}{(\nu\tau)^{1/2}}e^{-\frac{(y+z)^2}{5\nu\tau}},
	\end{align*}
	which implies
	\begin{align*}
		|T_2|
		&\leq \frac{C}{t}\int_0^t \int_0^{+\infty} \int_0^{t-s}\frac{1}{(\nu\tau)^{1/2}}e^{-\frac{(y+z)^2}{5\nu\tau}}|(N_R)_\xi(s,z)|d\tau dz ds\\
		&\leq \int_0^t\int_0^{+\infty}\frac{C}{\nu^{1/2}(t-s)^{1/2}}|(N_R)_\xi(s,z)|dzds
	\end{align*}
	Thus, as in $T_1$, we have
	\begin{align*}
		\|T_2\|_{L^1_\xi\cap L^2_\xi}
		&\leq C\nu^{-1/2}\int_0^t \big(\|N_R(s)\|_{Y^1_{\mu,s}\cap Y^2_{\mu,s}}
		+\sum_{i\leq1}\|\pa_x^iN_R(s)\|_{L^2(1\leq z\leq3)} \big)ds\\
		&\leq C\nu^{1/2}t^{1/2}\big(1+E(t)\big)^2.
	\end{align*}
	If $y\leq(\nu t)^{1/2}$, we still use Lemma \ref{lem: property of Rxi} to obtain
	\begin{align*}
		&e^{-|\xi|}|T_2|
		\leq C e^{-|\xi|}\int_0^t\int_0^{+\infty}\big( (|\xi|+\frac{1}{\sqrt{\nu}})e^{-\theta_0(|\xi|+\frac{1}{\sqrt{\nu}})z}
		+\frac{1}{\sqrt{\nu(t-s)}}e^{-\theta_0\frac{z^2}{\nu(t-s)}}\big)|(N_R)_\xi(s,z)|dzds\\
		&\leq \int_0^t\int_0^{+\infty}\frac{C}{\nu^{1/2}(t-s)^{1/2}}|(N_R)_\xi(s,z)|dzds,
	\end{align*}
	which implies
	\begin{align*}
		\left\|e^{-|\xi|}|T_2|\right\|_{L^1_\xi\cap L^2_\xi}
		&\leq  \int_0^t \frac{C}{\sqrt{\nu(t-s)}}\big(\|N_R(s)\|_{Y^1_{\mu,s}\cap Y^2_{\mu,s}}
		+\left\|\left\|(N_R)_\xi(s,z)\right\|_{L^1_z}\right\|_{L^1_\xi\cap L^2_\xi} \big)ds\\
		&\leq C\nu^{1/2}t^{1/2}\big(1+E(t)\big)^2.
	\end{align*}
	
	For $T_3, T_4$, we similarly have
	\begin{align*}
		&\left\|e^{-|\xi|}(T_3+T_4)\right\|_{L^1_\xi\cap L^2_\xi}
		\leq \int_0^t \frac{C}{\nu^{1/2}(t-s)^{1/2}}\left\|(B_R)_\xi(s)\right\|_{L^1_\xi\cap L^2_\xi} ds\\
		&\leq \int_0^t \frac{C}{\nu^{1/2}(t-s)^{1/2}}
		\Big(\nu\big(E(s)+1\big)^4
		+(\nu s)^{1/2}\big(E(s)+1\big)^{3/2}
		D_{vp}(s)^{1/2}
		+e^{-\frac{C_1}{\nu s}}E(s)^{3/2}D(s)^{1/2}\Big) ds\\
		&\leq C(\nu t)^{1/2}\big(E(t)+1\big)^4,
	\end{align*}
	where we used \eqref{est: BR i=0}.
	
	Summing these estimates, we obtain the desired result.

\appendix 

\section{Some technical lemmas}


The following lemmas are frequently used in constructing the approximate solutions.
First, we present the velocity formula via the Biot-Savart law, whose proof is provided in \cite{HWYZ}.

\begin{lemma}\label{lem: derivation of velocity formula}
	It holds that
	\begin{align}
  	BS_{\mathbb R^2_+}[\chi_{vp}\omega](t,x,y)
  	=\frac{\alpha}{(\nu t)^{1/2}}\{\mathcal V^{\mathcal W}\big(\eta,t)-
  	\widetilde{\mathcal V^{\mathcal W}}(\eta+\widetilde\eta,t)\}.
  \end{align}
\end{lemma}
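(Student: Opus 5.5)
The identity should follow from the half-plane Biot--Savart law \eqref{BS law formulation 2} by a change of variables into the self-similar coordinates \eqref{def: self-similar}. I would write
\begin{align*}
BS_{\mathbb R^2_+}[\chi_{vp}\omega](X)=\frac{1}{2\pi}\int_{\mathbb R^2_+}\Big(\frac{(X-Y)^\perp}{|X-Y|^2}-\frac{(X-Y^\ast)^\perp}{|X-Y^\ast|^2}\Big)\chi_{vp}\omega(Y)\,dY,
\end{align*}
and treat the direct and the image kernels separately. Since $\operatorname{supp}\chi_{vp}$ lies in $\{y\ge14\}$, the integration may be extended to $\mathbb R^2$ with $\chi_{vp}\omega$ viewed as a function on the whole plane. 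I then substitute $\chi_{vp}\omega(Y)=\frac{\alpha}{\nu t}\mathcal W(\eta',t)$ with $Y=X(t)+\sqrt{\nu t}\,\eta'$, so that $dY=\nu t\,d\eta'$, and set $X=X(t)+\sqrt{\nu t}\,\eta$.

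For the direct kernel, $X-Y=\sqrt{\nu t}(\eta-\eta')$. The homogeneity of degree $-1$ of $Z^\perp/|Z|^2$ produces a factor $(\nu t)^{-1/2}$, and the term becomes
\begin{align*}
\frac{\alpha}{(\nu t)^{1/2}}\cdot\frac{1}{2\pi}\int_{\mathbb R^2}\frac{(\eta-\eta')^\perp}{|\eta-\eta'|^2}\mathcal W(\eta',t)\,d\eta'=\frac{\alpha}{(\nu t)^{1/2}}\mathcal V^{\mathcal W}(\eta,t),
\end{align*}
by \eqref{BS law formulation 1}.

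For the image kernel, $Y^\ast=X(t)^\ast+\sqrt{\nu t}\,(\eta')^\ast$. Hence
\begin{align*}
X-Y^\ast=\sqrt{\nu t}\big(\eta+\widetilde\eta-(\eta')^\ast\big),
\end{align*}
using the definition \eqref{def: widetilde eta} of $\widetilde\eta$. Put $\zeta=\eta+\widetilde\eta$ and write $\zeta^\ast=(\zeta_1,-\zeta_2)$. The key step is the reflection identity $(\zeta-(\eta')^\ast)^\perp=R\big((\zeta^\ast-\eta')^\perp\big)$ with $R(a,b)=(-a,b)$. This is checked directly: $(\zeta-(\eta')^\ast)^\perp=(-(\zeta_2+\eta'_2),\zeta_1-\eta'_1)$, while $(\zeta^\ast-\eta')^\perp=(\zeta_2+\eta'_2,\zeta_1-\eta'_1)$. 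Together with $|\zeta-(\eta')^\ast|=|\zeta^\ast-\eta'|$, the image term equals
\begin{align*}
\frac{\alpha}{(\nu t)^{1/2}}\,R\,\mathcal V^{\mathcal W}(\zeta^\ast,t),
\end{align*}
which is exactly $\frac{\alpha}{(\nu t)^{1/2}}\widetilde{\mathcal V^{\mathcal W}}(\eta+\widetilde\eta,t)$ by \eqref{def: tilde-F}. It enters with a minus sign, and subtracting it from the direct term gives the claimed formula.

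The computation is routine. The only point needing care is the sign and reflection bookkeeping in the image term, so that it matches the operator $\widetilde F(x,y)=(-f_1(x,-y),f_2(x,-y))$. I would verify that step componentwise as above. Extending the integral to $\mathbb R^2$ is harmless because of the support of $\chi_{vp}$.
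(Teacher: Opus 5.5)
Your proposal is correct. The paper does not write out a proof (it defers to \cite{HWYZ}), but the substitution you use, $Y=X(t)+\sqrt{\nu t}\,\eta'$ and $Y^\ast=X(t)^\ast+\sqrt{\nu t}\,(\eta')^\ast$ in the half-plane Biot--Savart kernel, is exactly the manipulation the paper itself performs in \eqref{eq: expansion of velocity from vortex point 0}. Your componentwise check that the image kernel produces $R\,\mathcal V^{\mathcal W}(\zeta^\ast)=\widetilde{\mathcal V^{\mathcal W}}(\eta+\widetilde\eta)$ under the convention \eqref{def: tilde-F} handles the only delicate point, and it is right.
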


The next lemma is employed to estimate the positional variation of the point vortex.
\begin{lemma}\label{lem: ODE est}
	For $f(t)\in C\big( [0,T] ; \mathbb R^2_+ \big)$, there exist $T>0$ and a unique solution $X(t)\in C^1\big( [0,T] ; \mathbb R^2_+ \big)$ satisfying 
	\begin{align*}
		X'(t)
		=-\frac{\alpha}{(\nu t)^{1/2}}\mathcal V^G(\widetilde\eta)
		+f(t),
		\quad X(0)=X_0,
	\end{align*}
	and there exists $C$ independent of $\nu$ such that 
	 \begin{align*}
	 	|X'(t)|\leq C,\qquad |X(t)-X_0|\leq Ct.
	 \end{align*}
\end{lemma}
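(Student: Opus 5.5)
The plan is to rewrite the ODE in the unknown $X(t)=(x_1(t),x_2(t))$ and use the explicit form of $\mathcal V^G$ from \eqref{Oseen profile}. Since $X^\ast=(x_1,-x_2)$, we have $\widetilde\eta=(0,2x_2(t))/\sqrt{\nu t}$, and therefore
\begin{align*}
-\frac{\alpha}{(\nu t)^{1/2}}\mathcal V^G(\widetilde\eta)
=\frac{\alpha}{4\pi x_2(t)}\Big(1-e^{-\frac{x_2(t)^2}{\nu t}}\Big)\,(1,0).
\end{align*}
This is the sum of the Kirchhoff image-vortex drift $\frac{\alpha}{4\pi x_2}(1,0)$ and a correction that is exponentially small in $x_2^2/(\nu t)$. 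The map $F(t,X):=\frac{\alpha}{4\pi x_2}\big(1-e^{-x_2^2/(\nu t)}\big)(1,0)$ extends continuously to $t=0$ by $F(0,X)=\frac{\alpha}{4\pi x_2}(1,0)$, because $e^{-x_2^2/(\nu t)}\to0$ uniformly on $\{x_2\geq 10\}$. So the apparent singularity $(\nu t)^{-1/2}$ in the statement is removable.

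For existence, I would work on the closed set $K=\{X:|X-X_0|\leq 1\}$. This set lies in $\{x_2\geq 19\}$, since $X_0=(0,20)$. On $[0,1]\times K$ we have $|F|\leq \frac{|\alpha|}{4\pi\cdot 19}$. Moreover $F$ is Lipschitz in $X$ uniformly in $t$. The derivative of $x_2\mapsto x_2^{-1}(1-e^{-x_2^2/(\nu t)})$ is bounded on $x_2\geq19$, because $\frac{2x_2}{\nu t}e^{-x_2^2/(\nu t)}\leq C/x_2$ uniformly in $\nu t$. Both bounds are independent of $\nu$ and $t$. With $M:=\sup_{[0,T]}|f|$, I would apply Picard iteration, or Banach's fixed point theorem, to
\begin{align*}
\Phi[X](t)=X_0+\int_0^t\big(F(s,X(s))+f(s)\big)\,ds
\end{align*}
on $C([0,T];K)$, choosing $T\leq\big(\frac{|\alpha|}{4\pi\cdot19}+M\big)^{-1}$ so that $\Phi$ maps into $K$. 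A contraction in a weighted sup-norm then gives a unique fixed point in $C([0,T];K)$. The fixed point is $C^1$ on $[0,T]$, including at $t=0$, since the integrand is continuous up to $s=0$.

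For uniqueness among all $C^1$ solutions with values in $\mathbb R^2_+$, I would note that any solution must stay in $K$ on $[0,T]$ by the same speed bound. A continuity argument on the first exit time from $K$ shows this, and Gronwall's inequality with the uniform Lipschitz bound then gives uniqueness. The estimates follow immediately: $|X'(t)|\leq \frac{|\alpha|}{4\pi\cdot19}+M=:C$ and $|X(t)-X_0|\leq Ct$, with $C$ independent of $\nu$.

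The only delicate point is the uniformity in $\nu t$ of the Lipschitz bound on the exponential correction near $t=0$. I would handle it by the elementary inequality $\sup_{r>0} r\,e^{-r}\leq 1$, applied with $r=x_2^2/(\nu t)$.
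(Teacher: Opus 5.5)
Your proof is correct. The paper states this lemma in the appendix without any proof and only invokes it, for instance to get \eqref{est: X(t)-X0} and the bound $|X(t)-X(t)^\ast|\ge 38$. So there is no written argument to compare against, and your proposal supplies one.

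Your reduction
\[
-\frac{\alpha}{(\nu t)^{1/2}}\mathcal V^G(\widetilde\eta)=\frac{\alpha}{4\pi x_2}\Big(1-e^{-x_2^2/(\nu t)}\Big)(1,0)
\]
is right, with $\eta^\perp=(-\eta_2,\eta_1)$. It reproduces the Kirchhoff drift $\frac{\alpha}{4\pi y_0}$ in $Z(t)$. Your choice of $K$ as the unit ball around $X_0$ gives exactly the lower bound $x_2\ge 19$ that the paper later relies on.

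\textbf{A simpler route.} The formula you derived shows the drift is purely horizontal and depends only on $x_2$. So the system is triangular: $x_2'=f_2$, hence $x_2(t)=20+\int_0^t f_2$, and $x_1$ then follows from a single quadrature. Existence, and uniqueness among all $\mathbb R^2_+$-valued $C^1$ solutions, are then immediate. You need no contraction mapping, no exit-time argument and no Gronwall. The uniform Lipschitz bound you single out as the delicate point is not needed at all. Only two facts are used: $|F|\le |\alpha|/(4\pi x_2)$, and $F$ extends continuously to $t=0$ on $\{x_2\ge 19\}$.

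\textbf{What your approach buys.} Your Picard/Gronwall formulation is heavier but more robust. It would still work if $f$ were allowed to depend Lipschitz-continuously on $X$. That is closer to the actual construction, where $X_1'=C_0(t)$ and $X_2'=C_1(t)$ are computed from quantities that themselves involve $X(t)$.

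\textbf{A caveat for both versions.} $T$ and $C$ depend on $\sup|f|$. Fix $f$ on some $[0,T_0]$ first, to avoid circularity in choosing $T$. The claim that $C$ is independent of $\nu$ then requires $\sup|f|$ to be $\nu$-uniform. This holds in the application $f=\nu^{1/2}X_1'+\nu X_2'$.
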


\medskip

Next lemma which has been proved in \cite{Gallay 5} reveals the decay rate of the velocity at infinity.

\begin{lemma}\label{lem: decay rate of velocity}
	If one of the following holds
	
	(1) $m=1$,
	
	(2) $m=2$, and $\int_{\mathbb R^2}w(\eta)d\eta=0$,
	
	(3) $m=3$, and $\int_{\mathbb R^2}w(\eta)d\eta=\int_{\mathbb R^2}\eta_1 w(\eta)d\eta=\int_{\mathbb R^2}\eta_2 w(\eta)d\eta=0$,
	
	then we have that for $1<p_1<2<p_2<+\infty$,
	\begin{align*}
		\left\|\langle\eta\rangle^m \mathcal V^w(\eta)\right\|_{L^\infty}
		\leq C_{p_1,p_2}\left\|\langle\eta\rangle^m w(\eta)\right\|_{L^{p_1}\cap L^{p_2}}.
	\end{align*}
\end{lemma}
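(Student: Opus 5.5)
The plan is to reduce the estimate to a one-dimensional weighted bound in polar coordinates, using the Biot--Savart representation $\mathcal V^w = \nabla^\perp \Delta^{-1} w$ and splitting the kernel into near and far parts. The standard approach (as in Gallay--Wayne) treats each $m$ separately. For $|\eta|\le 2$, $\langle\eta\rangle^m\sim 1$. Hence it suffices to bound $\|\mathcal V^w\|_{L^\infty}$. By splitting the kernel $\frac{1}{|\eta-\eta'|}$ into $|\eta-\eta'|\le1$ and $|\eta-\eta'|>1$ and applying Hölder, we get $\|\mathcal V^w\|_{L^\infty}\le C\|w\|_{L^{p_1}\cap L^{p_2}}$. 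This works because $|\cdot|^{-1}\in L^{p_2'}(B_1)$, since $p_2'<2$, and $|\cdot|^{-1}\in L^{p_1'}(B_1^c)$, since $p_1'>2$.

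For $|\eta|\ge2$, I will subtract the Taylor expansion of the kernel $K(\eta-\eta')=\frac{(\eta-\eta')^\perp}{2\pi|\eta-\eta'|^2}$ at $\eta'=0$ up to order $m-2$. For $m=1$ nothing is subtracted. For $m=2$ we subtract $K(\eta)$, and for $m=3$ we subtract $K(\eta)-\eta'\cdot\nabla K(\eta)$. The vanishing moment hypotheses make these subtractions free, giving
\begin{align*}
\mathcal V^w(\eta)=\int_{\mathbb R^2}\Big(K(\eta-\eta')-\sum_{k\le m-2}\frac{(-\eta'\cdot\nabla)^kK(\eta)}{k!}\Big)w(\eta')d\eta'.
\end{align*}

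The domain of $\eta'$ is then split into three regions.

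\textbf{Region A: $|\eta'|\le|\eta|/2$.} The Taylor remainder is bounded by $C|\eta'|^{m-1}|\eta|^{-m}$. The contribution is therefore $C|\eta|^{-m}\int|\eta'|^{m-1}|w|\,d\eta'\le C|\eta|^{-m}\|\langle\eta\rangle^m w\|_{L^{p_1}}$. The last step uses Hölder with $\langle\eta'\rangle^{-1}\in L^{p_1'}$, valid because $p_1'>2$.

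\textbf{Region B: $|\eta'|\ge 2|\eta|$.} Each subtracted term is at most $C|\eta'|^k|\eta|^{-k-1}\le C|\eta'|^{m-1}|\eta|^{-m}$, since $k\le m-2$ and $|\eta'|\ge|\eta|$. The kernel itself satisfies $|K(\eta-\eta')|\le C|\eta'|^{-1}\le C|\eta'|^{m-1}|\eta|^{-m}$. So Region B is handled exactly like Region A.

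\textbf{Region C: $|\eta|/2\le|\eta'|\le2|\eta|$.} The polynomial terms are again bounded by $C|\eta'|^{m-1}|\eta|^{-m}$. The singular piece $\int_{C}|K(\eta-\eta')||w(\eta')|d\eta'$ is treated by inserting $|w(\eta')|\le C|\eta|^{-m}\langle\eta'\rangle^m|w(\eta')|$, which holds on this annulus. We then reuse the near/far Hölder splitting from the first step, applied to $\langle\eta'\rangle^m w$. This gives the bound $C|\eta|^{-m}\|\langle\eta\rangle^m w\|_{L^{p_1}\cap L^{p_2}}$.

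Multiplying through by $\langle\eta\rangle^m$ and combining the three regions proves the claim.

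The main obstacle is the bookkeeping in Region B for $m=3$. There the subtracted linear term $\eta'\cdot\nabla K(\eta)\sim|\eta'||\eta|^{-2}$ grows in $\eta'$. It must be checked that this growth is still controlled by $|\eta'|^{2}|\eta|^{-3}$, which holds because $|\eta'|\ge|\eta|$. It must also be checked that $\int|\eta'|^{2}|w|\,d\eta'$ is finite under $\langle\eta\rangle^3 w\in L^{p_1}$, which holds because $\langle\eta'\rangle^{-1}\in L^{p_1'}$.
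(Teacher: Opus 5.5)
Your proof is correct. Subtracting the Taylor polynomial of the kernel costs nothing under the moment conditions, and the integrals involved converge absolutely because $\langle\cdot\rangle^{-1}\in L^{p_1'}$. Splitting into $|\eta'|\le|\eta|/2$, $|\eta'|\ge 2|\eta|$ and the annulus, with $L^{p_2}$ controlling the local singularity and $L^{p_1}$ the far field, then gives $|\mathcal V^w(\eta)|\le C|\eta|^{-m}\|\langle\eta\rangle^m w\|_{L^{p_1}\cap L^{p_2}}$ for $|\eta|\ge 2$, which together with your bound for $|\eta|\le 2$ proves the lemma.

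The paper gives no proof of its own and simply cites Gallay for this lemma. Your argument is the standard one for such estimates, and it uses the same kernel-subtraction idea the paper itself employs in Lemma~\ref{lem: velocity VR pointwise}.
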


\medskip

Next lemma provides velocity estimates and is established by the Biot-Savart law \eqref{BS law formulation 2}.
\begin{lemma}\label{lem: BS est appendix}
	It holds that
	
	(1) $\|BS_{\mathbb R^2_+}[f]\|_{L^p}\leq C\|f\|_{L^{\frac{2p}{p+2}}}$ \quad for \quad $2<p<+\infty$.
	
	(2) $\|BS_{\mathbb R^2_+}[f]\|_{L^\infty(A)}\leq C\|f\|_{L^1}$ \quad if \quad $dist(A,\operatorname{supp}f)>0$.
\end{lemma}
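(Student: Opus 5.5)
The plan is to reduce both estimates to a pointwise bound on the half-plane Biot--Savart kernel in \eqref{BS law formulation 2}, and then apply classical potential estimates in $\mathbb R^2$. Write
\begin{align*}
K(X,Y)=\frac{1}{2\pi}\Big(\frac{(X-Y)^\perp}{|X-Y|^2}-\frac{(X-Y^*)^\perp}{|X-Y^*|^2}\Big),\qquad X,Y\in\mathbb R^2_+ .
\end{align*}
For $X=(x_1,x_2)$ and $Y=(y_1,y_2)$ with $x_2,y_2>0$ we have $|x_2+y_2|\geq|x_2-y_2|$, hence $|X-Y^*|\geq|X-Y|$. Therefore
\begin{align*}
|K(X,Y)|\leq \frac{1}{2\pi}\Big(\frac{1}{|X-Y|}+\frac{1}{|X-Y^*|}\Big)\leq \frac{1}{\pi}\frac{1}{|X-Y|}.
\end{align*}
Consequently, pointwise on $\mathbb R^2_+$,
\begin{align*}
|BS_{\mathbb R^2_+}[f](X)|\leq \frac1\pi\int_{\mathbb R^2}\frac{|\bar f(Y)|}{|X-Y|}\,dY,
\end{align*}
where $\bar f$ denotes the extension of $f$ by zero to $\mathbb R^2$. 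The right-hand side is a constant multiple of the Riesz potential $I_1|\bar f|$.

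For (1), apply the Hardy--Littlewood--Sobolev inequality in $\mathbb R^2$ with kernel $|X-Y|^{-1}$, i.e.\ the Riesz potential of order $1$. It gives $\|I_1 g\|_{L^p(\mathbb R^2)}\leq C\|g\|_{L^q(\mathbb R^2)}$ whenever $1<q<2$ and $\frac1p=\frac1q-\frac12$. Solving for $q$ gives $q=\frac{2p}{p+2}$. The condition $1<q<2$ is exactly $2<p<\infty$, which is why the endpoints are excluded. Restricting the left side to $\mathbb R^2_+$ and using $\|\bar f\|_{L^q(\mathbb R^2)}=\|f\|_{L^q(\mathbb R^2_+)}$ gives (1).

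For (2), set $d:=\operatorname{dist}(A,\operatorname{supp}f)>0$. For $X\in A$ and $Y\in\operatorname{supp}f$ we have $|X-Y|\geq d$, so the same kernel bound yields
\begin{align*}
|BS_{\mathbb R^2_+}[f](X)|\leq \frac{1}{\pi d}\|f\|_{L^1}\qquad\text{for all }X\in A.
\end{align*}
Thus the constant $C$ in (2) depends on $d$. This is harmless in the applications in the paper, where $d$ is a fixed distance between the supports of the cut-off functions, for instance between $\operatorname{supp}\zeta_2$ and $\operatorname{supp}\chi_m$.

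There is no real obstacle here. The only points needing care are the elementary inequality $|X-Y^*|\geq|X-Y|$, which lets the reflected term be absorbed into the direct one, and checking the HLS exponent range.
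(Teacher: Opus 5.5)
Your proof is correct and is essentially the argument the paper intends (the paper only says the lemma follows from the Biot--Savart formula \eqref{BS law formulation 2}). The bound $|X-Y^*|\ge|X-Y|$ reduces the half-plane kernel to the order-one Riesz potential. Hardy--Littlewood--Sobolev then gives (1) with exactly $q=\frac{2p}{p+2}$ and $2<p<\infty$, and the trivial bound $\frac{1}{\pi d}\|f\|_{L^1}$ gives (2), with $C$ depending on $d$.

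One small caveat concerns your side remark, not the lemma itself. As the cut-offs are defined, $\operatorname{supp}\zeta_2$ and $\operatorname{supp}\chi_m$ may both contain the circle $|X-(0,20)|=3$. So $d>0$ is not automatic in that particular application unless the cut-offs are slightly separated.
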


Direct computation yields the following lemma, which is employed to address the loss of derivatives.

\begin{lemma}\label{lem: analytic recovery}
	For $\widetilde \mu>\mu\geq0$, we have
\begin{align*}
		e^{\eps_0(1+\mu-y)_+|\xi|}|(\pa_x f)_\xi(y)|
		&\leq \frac{C}{\widetilde\mu-\mu}
		e^{\eps_0(1+\widetilde\mu-y)_+|\xi|}|f_\xi(y)|, 
\end{align*}
and
\begin{align*}	
		 e^{\eps_0(1+\mu)\frac{y^2}{\nu t}}
		\left|y\pa_y\Big( e^{-\frac{(y-z)^2}{4\nu(t-s)}} \Big)\right|
		&\leq\frac{C}{\sqrt{(\widetilde\mu-\mu)(t-s)}}
		e^{\eps_0(1+\widetilde\mu)\frac{y^2}{\nu t}}
		e^{-\frac{(y-z)^2}{5\nu(t-s)}}.
	\end{align*}
\end{lemma}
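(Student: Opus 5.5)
Both inequalities follow from trading a small part of the exponential weight for the polynomial factor that the derivative produces. I take the first one in the regime where it is used, namely $0<y<1+\mu$ (the range of the norms $\|\cdot\|_{\mu,t}$ and $Y^k_{\mu,t}$). For $y\ge 1+\widetilde\mu$ both weights equal $1$ and there is no gain, so this restriction is needed.

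\textbf{First inequality.} Since $(\pa_x f)_\xi=2\pi i\xi f_\xi$, we have $|(\pa_x f)_\xi(y)|=C|\xi||f_\xi(y)|$. For $0<y<1+\mu$ both positive parts are active, so
\begin{align*}
\eps_0(1+\widetilde\mu-y)_+|\xi|-\eps_0(1+\mu-y)_+|\xi|=\eps_0(\widetilde\mu-\mu)|\xi|.
\end{align*}
It therefore suffices to show $|\xi|e^{-\eps_0(\widetilde\mu-\mu)|\xi|}\le C/(\widetilde\mu-\mu)$. This follows from the elementary bound $re^{-ar}\le (ea)^{-1}$ with $a=\eps_0(\widetilde\mu-\mu)$. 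The resulting constant $C$ depends on $\eps_0$, which is fixed.

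\textbf{Second inequality.} First compute
\begin{align*}
y\pa_y e^{-\frac{(y-z)^2}{4\nu(t-s)}}=-\frac{y(y-z)}{2\nu(t-s)}e^{-\frac{(y-z)^2}{4\nu(t-s)}}.
\end{align*}
Then split the prefactor as
\begin{align*}
\frac{y\,|y-z|}{\nu(t-s)}=\frac{y}{\sqrt{\nu t}}\cdot\frac{|y-z|}{\sqrt{\nu(t-s)}}\cdot\sqrt{\frac{t}{t-s}}.
\end{align*}
The middle factor is absorbed by part of the Gaussian: $\frac{|y-z|}{\sqrt{\nu(t-s)}}e^{-\frac{(y-z)^2}{20\nu(t-s)}}\le C$, since $\frac14-\frac15=\frac1{20}$, and this leaves $e^{-\frac{(y-z)^2}{5\nu(t-s)}}$. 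The first factor is absorbed by the gain in the $y$-weight: $\frac{y}{\sqrt{\nu t}}e^{-\eps_0(\widetilde\mu-\mu)\frac{y^2}{\nu t}}\le \frac{C}{\sqrt{\eps_0(\widetilde\mu-\mu)}}$, by $xe^{-ax^2}\le C a^{-1/2}$. Combining the two absorptions gives the bound
\begin{align*}
\frac{C}{\sqrt{\widetilde\mu-\mu}}\sqrt{\frac{t}{t-s}}\,e^{\eps_0(1+\widetilde\mu)\frac{y^2}{\nu t}}e^{-\frac{(y-z)^2}{5\nu(t-s)}}.
\end{align*}

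\textbf{Main obstacle.} The only delicate point is the leftover factor $\sqrt{t}$. Since $0<s<t\le T\le 1$, we have $\sqrt{t/(t-s)}\le (t-s)^{-1/2}$, which yields the claimed $C/\sqrt{(\widetilde\mu-\mu)(t-s)}$. This holds in the time range $0<t\le T$ with $T$ small that is used throughout the paper.
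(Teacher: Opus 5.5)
Your proof is correct and is the "direct computation" the paper invokes without writing it out. For the first bound you trade the weight gain $e^{-\eps_0(\widetilde\mu-\mu)|\xi|}$ against the factor $|\xi|$. For the second you split $\frac{y|y-z|}{\nu(t-s)}$ and absorb the two factors into the $y$-weight gain and the Gaussian gap $\frac14-\frac15$.

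The two restrictions you add are genuinely necessary and match how the paper uses the lemma. The first inequality is false for $y\ge 1+\widetilde\mu$, and it is only ever applied inside the $\|\cdot\|_{\mu,t}$ norms, i.e.\ for $0<y<1+\mu$. The second needs $t\le T\le 1$ to absorb the leftover factor $\sqrt{t}$.
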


The following lemma is used to close the uniform boundedness and is proved in \cite{HWYZ}.
\begin{lemma}\label{lem: integral computation}
	For $\frac{1}{2}<\beta<1$, $0<\zeta<1$, $\gamma>0$ and $\mu<\mu_0-\gamma t$, it holds that
	\begin{align*}
		(\mu_0-\mu-\gamma t)^\beta\int_0^t(\mu_0-\mu-\gamma s)^{-1-\beta}ds&
		\leq \frac{C}{\gamma},\\
		(\mu_0-\mu-\gamma t)^\beta\int_0^t(\mu_0-\mu-\gamma s)^{-\frac{1}{2}-\beta}(t-s)^{-\f12}ds&\leq \frac{C}{\gamma^{\f12}},\\
		\sup_{\mu<\mu_0-\gamma t}(\mu_0-\mu-\gamma t)^\zeta \ln\frac{\mu_0-\mu}{\mu_0-\mu-\gamma t}&\leq C(\gamma t)^\zeta,
	\end{align*}
	here $C$ is a constant depending on $\mu_0$, $\beta$ and $\zeta$.
\end{lemma}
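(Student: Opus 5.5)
The statement to prove is Lemma \ref{lem: integral computation}. All three inequalities are elementary one-variable estimates, and I will treat them separately. Throughout I write $A:=\mu_0-\mu$ and $B:=\mu_0-\mu-\gamma t>0$, so that $A=B+\gamma t$ and $\mu_0-\mu-\gamma s=A-\gamma s$ decreases from $A$ to $B$ as $s$ runs over $[0,t]$.

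\emph{First inequality.} Substitute $r=A-\gamma s$, so $ds=-dr/\gamma$. Then
$$\int_0^t(A-\gamma s)^{-1-\beta}ds=\frac1\gamma\int_B^A r^{-1-\beta}dr\le\frac{1}{\gamma\beta}B^{-\beta}.$$
Multiplying by $B^\beta$ gives the bound $C/\gamma$ with $C=1/\beta$.

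\emph{Second inequality.} Split the integral at $s=t/2$, or more naturally according to the size of $t-s$ relative to $B/\gamma$.

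On the region where $\gamma(t-s)\le B$, the factor $A-\gamma s=B+\gamma(t-s)$ is comparable to $B$. Hence this part is bounded by
$$C B^{-\frac12-\beta}\int_{t-s\le B/\gamma}(t-s)^{-1/2}ds\le CB^{-\frac12-\beta}(B/\gamma)^{1/2}=C\gamma^{-1/2}B^{-\beta}.$$

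On the complementary region, $A-\gamma s\ge \gamma(t-s)$ and also $A-\gamma s\ge B$. Since $\beta>\frac12$, I write $(A-\gamma s)^{-\frac12-\beta}\le (A-\gamma s)^{-1-\beta'}\,(A-\gamma s)^{\frac12+\beta'-\beta}$ with a suitable choice. A simpler route is to use $(t-s)^{-1/2}\le \gamma^{1/2}(A-\gamma s)^{-1/2}$ on this region. The integral is then bounded by
$$\gamma^{1/2}\int_0^t(A-\gamma s)^{-1-\beta}ds\le \gamma^{1/2}\cdot\frac{C}{\gamma}B^{-\beta}=C\gamma^{-1/2}B^{-\beta},$$
using the first inequality.

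Combining the two regions and multiplying by $B^\beta$ yields $C\gamma^{-1/2}$. The only care needed is in the region split, so this is the step I regard as the main obstacle, mild though it is.

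\emph{Third inequality.} Write $\ln\frac{A}{B}=\ln\bigl(1+\frac{\gamma t}{B}\bigr)$ and use the bound $\ln(1+x)\le C_\zeta x^\zeta$, valid for all $x>0$ and $0<\zeta<1$. This gives
$$B^\zeta\ln\frac{A}{B}\le C_\zeta B^\zeta\Big(\frac{\gamma t}{B}\Big)^\zeta=C_\zeta(\gamma t)^\zeta,$$
uniformly in $\mu$.

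The constants depend only on $\beta$ and $\zeta$, and hence trivially on $\mu_0$ as well.
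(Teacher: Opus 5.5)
The paper gives no proof of this lemma (it is only cited from earlier work), so your argument must stand on its own. Your first and third inequalities are correct as written, and the $\ln(1+x)\le C_\zeta x^\zeta$ bound is exactly what is needed for the third. The region split for the second inequality is also the right idea, but one step in it is wrong as stated.

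On the region $\gamma(t-s)>B$ you use $(t-s)^{-1/2}\le\gamma^{1/2}(A-\gamma s)^{-1/2}$ and justify it by $A-\gamma s\ge\gamma(t-s)$. That justification gives the opposite inequality, $(t-s)^{-1/2}\ge\gamma^{1/2}(A-\gamma s)^{-1/2}$. In fact your claimed bound is false for every $s$: it is equivalent to $\gamma(t-s)\ge A-\gamma s=B+\gamma(t-s)$, which contradicts $B>0$.

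What you need on that region is the upper bound
\[
A-\gamma s=B+\gamma(t-s)\le 2\gamma(t-s),
\]
which gives $(t-s)^{-1/2}\le(2\gamma)^{1/2}(A-\gamma s)^{-1/2}$. With this correction the rest of your estimate (reducing to the first inequality) goes through, at the cost of a factor $\sqrt2$. Also delete the abandoned sentence about $\beta'$, since it plays no role.

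As a side remark, the second bound follows in one line from the substitution $\tau=t-s=(B/\gamma)u$:
\[
B^\beta\int_0^t(A-\gamma s)^{-\frac12-\beta}(t-s)^{-\frac12}\,ds=B^\beta\int_0^t(B+\gamma\tau)^{-\frac12-\beta}\tau^{-\frac12}\,d\tau\le\gamma^{-\frac12}\int_0^\infty(1+u)^{-\frac12-\beta}u^{-\frac12}\,du .
\]
The last integral is finite for every $\beta>0$. So the splitting is unnecessary, and the hypothesis $\beta>\frac12$ is not actually used in this lemma.
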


\medskip

To deal with the nonlinear terms, we require the following product estimates which can be found in \cite{HWYZ}.
\begin{lemma}\label{lem: product estimate}
	For $0<\mu<\mu_0-\gamma s$, we have for $k=1,2$,
	\begin{align*}
		\|fg\|_{Y^k_{\mu,s}}
		\leq \left\|\sup_{0<y<1+\mu}e^{\eps_0(1+\mu-y)_+|\xi|}|f_\xi(s,y)|\right\|_{L^1_\xi}\|g(s)\|_{Y^k_{\mu,s}}.
	\end{align*}
\end{lemma}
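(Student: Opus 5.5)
The plan is to prove the weighted estimate in Fourier space, for fixed $\xi$ and fixed $y$, and then integrate in $\xi$. The product bound is the heart of it, since by Lemma \ref{lem: product estimate} it is what gets applied to every quadratic term in $N_R$. Write
\[
(fg)_\xi(y)=\int_{\mathbb R}f_{\xi-\zeta}(y)\,g_\zeta(y)\,d\zeta .
\]
Since the weight $e^{\eps_0(1+\mu-y)_+|\xi|}$ comes from an exponential of $|\xi|$ with a nonnegative coefficient, and $|\xi|\le|\xi-\zeta|+|\zeta|$, it splits as
\[
e^{\eps_0(1+\mu-y)_+|\xi|}\le e^{\eps_0(1+\mu-y)_+|\xi-\zeta|}\,e^{\eps_0(1+\mu-y)_+|\zeta|}.
\]

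Multiplying by the $y$-weight $e^{\eps_0(1+\mu)y^2/(\nu s)}$, which involves only $g$, and integrating over $0<y<1+\mu$ gives
\[
\big\|e^{\eps_0(1+\mu-y)_+|\xi|}(fg)_\xi\big\|_{\mu,s}\le\int_{\mathbb R}\sup_{0<y<1+\mu}e^{\eps_0(1+\mu-y)_+|\xi-\zeta|}|f_{\xi-\zeta}(y)|\;\big\|e^{\eps_0(1+\mu-y)_+|\zeta|}g_\zeta\big\|_{\mu,s}\,d\zeta .
\]
The supremum in $y$ is pulled out of the $y$-integral first, so that $\|\cdot\|_{\mu,s}$ lands only on the $g$ factor. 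This step is legitimate because the $\xi$-weight at each fixed $y$ factors pointwise before any integration.

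The right-hand side is a convolution in $\xi$ of two functions:
\[
F(\xi)=\sup_{0<y<1+\mu}e^{\eps_0(1+\mu-y)_+|\xi|}|f_\xi(y)|,\qquad \Phi(\zeta)=\big\|e^{\eps_0(1+\mu-y)_+|\zeta|}g_\zeta\big\|_{\mu,s}.
\]
Young's inequality $\|F*\Phi\|_{L^k}\le\|F\|_{L^1}\|\Phi\|_{L^k}$ with $k=1,2$ then gives exactly $\|F\|_{L^1_\xi}\|g\|_{Y^k_{\mu,s}}$, which is the claimed bound.

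The one delicate point is measurability and order of operations: taking the supremum in $y$ inside the $\zeta$-integral only increases the quantity, so the inequality direction is preserved. I expect no real obstacle beyond confirming that the weights are subadditive in the frequency variable, which holds because $(1+\mu-y)_+\ge0$.
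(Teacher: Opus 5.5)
Your argument is correct and is the standard proof the paper relies on; the paper does not prove this lemma itself and defers it to an earlier reference. You write $(fg)_\xi$ as a convolution in frequency, split the weight using $|\xi|\le|\xi-\zeta|+|\zeta|$ together with $(1+\mu-y)_+\ge0$, and place the $y$-supremum on the $f$ factor so that $\|\cdot\|_{\mu,s}$ falls on $g$ (its Gaussian $y$-weight does not depend on $\xi$). You then finish with Tonelli and Young's inequality $\|F*\Phi\|_{L^k}\le\|F\|_{L^1}\|\Phi\|_{L^k}$ for $k=1,2$; with the paper's $e^{\mp 2\pi i x\xi}$ normalization the convolution identity has no extra constant, so the bound holds with constant one as stated.
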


\section{Estimates for boundary layer part}

This section is devoted to estimating $\omega_b^{(0)}, \omega_b^{(1)}$ which are defined by \eqref{eq: omega p0} and \eqref{eq: omega p1}. 
First, we provide a solution formula for the heat equation subject to the Neumann boundary condition. As it follows from a direct computation, we omit the proof.

\begin{lemma}\label{lem: solution formula for basic equation0}
Let $w(t,z)$ satisfy  
	\begin{align*}
		\left\{
		\begin{aligned}
			& \pa_t w-\pa_z^2 w=f,\qquad (t,z)\in \mathbb R_+\times\mathbb R_+,\\
			& w|_{t=0}=b,\qquad z\in \mathbb R_+,\\
			&\pa_z w|_{z=0}=h,\qquad t\in \mathbb R_+,
		\end{aligned}
		\right.
	\end{align*}
Then, for $t>0$, $w$ can be expressed as
	\begin{align*}
		w(t,z)=e^{t\Delta_N}b(z)
		+\int_0^t e^{(t-s)\Delta_N}f(s,z)ds
		-\int_0^t\int_z^{+\infty}\frac{\tilde z}{(4\pi)^{1/2}(t-s)^{3/2}}e^{-\frac{\tilde z^2}{4(t-s)}}h(s)d\tilde zds,
	\end{align*}
	where
	\begin{align*}
		e^{t\Delta_N}b(z)
		:=\int_0^{+\infty}\frac{1}{(4\pi t)^{1/2}}\big(e^{-\frac{(z-\tilde z)^2}{4t}}
		+e^{-\frac{(z+\tilde z)^2}{4t}} \big)b(\tilde z)d\tilde z.
	\end{align*}
\end{lemma}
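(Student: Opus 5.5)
The statement to prove is Lemma \ref{lem: solution formula for basic equation0}: the representation formula for the heat equation on the half-line with Neumann data. The plan is to reduce to the homogeneous-Neumann problem by subtracting off the boundary flux, and then use the even-reflection (Neumann) heat kernel together with Duhamel's principle. Throughout, write $N(t,z,\tilde z)=g(t,z-\tilde z)+g(t,z+\tilde z)$, with $g(t,x)=(4\pi t)^{-1/2}e^{-x^2/(4t)}$. This $N$ is the kernel of $e^{t\Delta_N}$: it solves the heat equation in $(t,z)$, satisfies $\pa_z N|_{z=0}=0$ by evenness, and tends to $\delta_{\tilde z}$ as $t\to0^+$ for $\tilde z>0$.

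\textbf{Step 1: the terms with $b$ and $f$.} The first two terms in the formula are handled by standard facts. The function $e^{t\Delta_N}b$ solves the homogeneous problem with initial data $b$ and zero Neumann data. The Duhamel term $\int_0^t e^{(t-s)\Delta_N}f(s)ds$ solves the forced problem with zero initial data and zero Neumann data. Both statements follow from differentiating under the integral; the only boundary contribution comes from $s=t$ and reproduces $f$.

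\textbf{Step 2: the term with $h$.} I will show that
\[
w_h(t,z):=-\int_0^t\int_z^{+\infty}\frac{\tilde z}{(4\pi)^{1/2}(t-s)^{3/2}}e^{-\frac{\tilde z^2}{4(t-s)}}h(s)d\tilde zds
\]
solves the heat equation with zero initial data and $\pa_z w_h|_{z=0}=h$. The inner integral can be computed explicitly, since $\tilde z(t-s)^{-3/2}e^{-\tilde z^2/4(t-s)}/(4\pi)^{1/2}=-2\pa_{\tilde z} g(t-s,\tilde z)$. This gives
\[
w_h(t,z)=-2\int_0^t g(t-s,z)h(s)ds,
\]
which is twice the single-layer heat potential. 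It solves the heat equation for $z>0$ because $g$ does, and its zero initial data is evident. For the boundary condition,
\[
\pa_z w_h(t,z)=\int_0^t \frac{z}{(4\pi)^{1/2}(t-s)^{3/2}}e^{-\frac{z^2}{4(t-s)}}h(s)ds .
\]
This is the classical jump relation: the kernel $\frac{z}{(4\pi)^{1/2}\tau^{3/2}}e^{-z^2/4\tau}$ is a probability density in $\tau$ that concentrates at $\tau=0$ as $z\to0^+$. Hence $\pa_z w_h(t,z)\to h(t)$ for continuous $h$, which is exactly the required Neumann condition.

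\textbf{Step 3: conclusion.} Adding the three pieces gives a solution of the full problem. Uniqueness within the class of bounded or tempered solutions then follows from the energy identity, or alternatively from the maximum principle applied to the difference of two solutions, which solves the problem with all data equal to zero. This establishes the formula.

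The only delicate point is the jump relation in Step 2, i.e.\ verifying that the boundary limit is $h(t)$ with the correct sign and factor. I would check this by the substitution $\tau=z^2/(4\sigma)$, which turns the kernel integral into $\pi^{-1/2}\int\sigma^{-1/2}e^{-\sigma}d\sigma=1$, and then apply dominated convergence.
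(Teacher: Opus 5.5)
Your proof is correct. The identity $\frac{\tilde z}{(4\pi)^{1/2}\tau^{3/2}}e^{-\tilde z^2/4\tau}=-2\partial_{\tilde z}g(\tau,\tilde z)$, the reduction of the last term to $-2\int_0^t g(t-s,z)h(s)\,ds$, and the jump relation $\partial_z w_h(t,z)\to h(t)$ as $z\to0^+$ all check out, including the sign.

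The paper gives no argument beyond calling the lemma a direct computation, so your verification-plus-uniqueness argument is one legitimate way to supply it. The other standard way works directly on the given solution $w$, which matches how the lemma is phrased. One pairs $w$ with the reflected kernel $N(t-s,z,\tilde z)$ and applies Green's identity on $(0,t)\times(0,\infty)$. Because $\partial_{\tilde z}N(t-s,z,\tilde z)$ vanishes at $\tilde z=0$, the only boundary contribution is $-N(t-s,z,0)h(s)=-2g(t-s,z)h(s)$. Letting $s\to t$ and $s\to0$ then produces the formula in one step.

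That route needs no separate uniqueness theorem. It only requires enough decay of $w$ to discard the terms at $\tilde z=\infty$. It also adapts more directly to measure initial traces, such as the boundary Dirac mass $-u_0\delta_{\partial\mathbb R^2_+}$ that the paper later feeds into the lemma for $\omega_c$: one just passes to the limit in $\int N(t-s,z,\cdot)\,w(s,\cdot)$ as $s\to0$ by weak convergence.

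Your route has a cost: you should state the uniqueness class explicitly, e.g. bounded solutions, or solutions of Tychonoff growth via even reflection, which is legitimate because the Neumann data of the difference vanish. In exchange it also gives existence. It also makes the boundary behaviour explicit through the jump relation, which is what matters when $b$ and $h$ are incompatible, as in the paper's remark following the lemma.
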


\begin{remark}
	During the above lemma, when the compatibility condition $\pa_z b|_{z=0}=h|_{t=0}$ does not hold,  $w|_{t=0}$ is defined as $\lim_{t\rightarrow0^+}w$. 
\end{remark}

Now, we are in a position to state the main result of this section.

\begin{proposition}\label{prop: WP of boundary layer}
	For $T$ small, there exists $C,C'>0$ such that for $0\leq t\leq T, k=0,1$, 
	\begin{align}\label{est: omega pk U pk}
		&\sum_{i+j\leq8}\left\|e^{C'|\xi|}\left\|e^{\frac{C'z^2}{t}}\big( (1,x)\pa_x^i(z\pa_z)^j\omega_b^{(k)} \big)_\xi(t,z)\right\|_{L^1_z}\right\|_{L^1_\xi \cap L^2_\xi}
		+t^{1/2}\left\|e^{C'|\xi|}\big((1,x)\omega_b^{(k)}\big)_\xi|_{z=0}\right\|_{L^1_\xi \cap L^2_\xi} \\
		\nonumber
		&+\sum_{i+j\leq8}\left\|e^{C'|\xi|}\left\|\big(\pa_x^i(z\pa_z)^j U_b^{(k)} \big)_\xi(t,y)\right\|_{L^\infty_y}\right\|_{L^1_\xi}
		\leq C,
	\end{align} 
	where $U_b^{(k)}$ is defined by \eqref{eq: def of up(k)}.
\end{proposition}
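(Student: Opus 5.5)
The plan is a Picard/contraction argument for the coupled system \eqref{eq: omega p0} (and then the linear system \eqref{eq: omega p1}) in the analytic–Gaussian norm
\begin{align*}
\|f\|_{\mathcal X(t)}:=\sum_{i+j\leq 8}\left\|e^{C'|\xi|}\left\|e^{\frac{C'z^2}{t}}\big((1,x)\pa_x^i(z\pa_z)^jf\big)_\xi(z)\right\|_{L^1_z}\right\|_{L^1_\xi\cap L^2_\xi},
\end{align*}
with the analyticity radius and Gaussian weight allowed to shrink slightly in time, $C'\to C'(1-\gamma t)$ as in the definition of $Y_k(t)$. This absorbs the loss of one derivative coming from $\pa_x$ and $z\pa_z$ in the transport terms, via Lemma \ref{lem: analytic recovery}.

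\textbf{Step 1: removing the initial layer.} Following the splitting announced in the introduction, write $\omega_b^{(0)}=w_h+w_r$. Here $w_h$ solves the heat equation $\pa_tw_h-\pa_z^2w_h=0$ with $\lim_{t\to0^+}w_h=-u_0\delta_{z=0}$. This is the Neumann problem of Lemma \ref{lem: solution formula for basic equation0} with $b=0$ and boundary datum $h$ encoding the jump $-u_0$, which gives explicitly
\begin{align*}
w_h(t,x,z)=-u_0(x)\,(\pi t)^{-1/2}e^{-z^2/4t}.
\end{align*}
Since $u_0$ is the tangential velocity of a point vortex at distance $20$ from the boundary, it is a Poisson-type kernel whose Fourier transform is of the form $e^{-c|\xi|}$. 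Hence $e^{C'|\xi|}(u_0)_\xi\in L^1\cap L^2$ for $C'<c$, and the $x$-weight is harmless because $\pa_\xi$ of that profile is also integrable. The Gaussian weight $e^{C'z^2/t}$ with $C'<1/4$ gives $\|w_h\|_{\mathcal X(t)}\leq C$ uniformly, and $t^{1/2}|w_h|_{z=0}|\lesssim |u_0|$, which explains the factor $t^{1/2}$ in the boundary trace term of \eqref{est: omega pk U pk}.

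\textbf{Step 2: the remainder $w_r$.} The remainder $w_r$ has zero initial data and satisfies the equation with forcing $-(\text{transport})\cdot\nabla(w_h+w_r)$ and Neumann datum given by \eqref{eq: BC of omega p0}. I would write $w_r$ through Lemma \ref{lem: solution formula for basic equation0} and estimate it in $\mathcal X(t)$ by Duhamel, in the manner of Lemmas \ref{lem: est of HRN}--\ref{lem: est of HRB}. The coefficients $u_{vp}^{(0)}(t,x,0)$ and $\pa_yv_{vp}^{(0)}(t,x,0)$ come from \eqref{def: Uvp 0} and are explicit rational functions analytic in a strip, so they are bounded in the multiplier norm of Lemma \ref{lem: product estimate}. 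The self-induced velocities $u_b^{(0)}$ and $v_b^{(0)}/y$ are bounded via Lemma \ref{lem: velocity formula} by $\int(1+z)|(\omega_b^{(0)})_\xi|dz$, as in Lemma \ref{est: U_b^0}. The boundary datum in \eqref{eq: BC of omega p0} is bounded by the remarks following \eqref{def: BC b0} and \eqref{def: BC vp0}: the $BC_{b,0}$ part is $O(1)$ in terms of $\|\omega_b^{(0)}\|_{\mathcal X}$, and the $BC_{vp,0}$ part is an explicit function carrying the factor $e^{-10|\xi|}$. The last display in Appendix B ($\nabla U_b$ bounds) then follows from Lemma \ref{lem: velocity formula} once $\omega_b^{(k)}$ is controlled.

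\textbf{Step 3: closing.} Using Lemma \ref{lem: analytic recovery} for the loss of $\pa_x$ and $z\pa_z$, together with Lemma \ref{lem: integral computation}, gives an inequality of the form
\begin{align*}
\sup_{s\le t}\|w_r(s)\|_{\mathcal X(s)}\le C(t^{1/2}+\gamma^{-1/2})\big(1+\sup_{s\le t}\|w_r(s)\|_{\mathcal X(s)}\big)^2.
\end{align*}
For $T$ small and $\gamma$ large this gives both existence, via a contraction on the difference, and the bound. For $k=1$, the system \eqref{eq: omega p1} is linear in $\omega_b^{(1)}$ with zero initial data. Its forcing involves $\omega_b^{(0)}$ and the coefficients $u_b^{(1)}$, $v_b^{(1)}/y$, $U_{vp}^{(1)}$. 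The same estimate applies, with the already-bounded $\omega_b^{(0)}$ entering at one fewer derivative, so I would estimate $\omega_b^{(0)}$ in a few more than 8 derivatives to absorb this.

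\textbf{Main obstacle.} I expect the hardest point to be the interaction of the singular initial layer $w_h\sim t^{-1/2}$ with the transport terms, in particular $z\pa_zw_h$ and the Neumann datum. The forcing $(\cdot)\pa_xw_h$ has $L^1_z$ norm $O(1)$ but trace $O(t^{-1/2})$. I would handle this by working only with $L^1_z$ norms in $\mathcal X$, which are scale-invariant for $w_h$, and by checking that the Neumann kernel in Lemma \ref{lem: solution formula for basic equation0} produces an integrable $(t-s)^{-1/2}$ singularity, so that the trace bound carries exactly the weight $t^{1/2}$.
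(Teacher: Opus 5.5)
Your proposal is correct and follows essentially the paper's proof. You split off an explicit heat-type corrector carrying the initial layer $-u_0\delta$, whose $t^{-1/2}$ trace accounts for the $t^{1/2}$ factor. You then bound the remainder by Duhamel through Lemma \ref{lem: solution formula for basic equation0} in analytic--Gaussian weighted $L^1_z$ norms indexed by $\mu<\mu_0-\gamma t$ with a $\beta$-weighted top derivative. Lemmas \ref{lem: analytic recovery} and \ref{lem: integral computation} then close the estimate for $T$ small and $\gamma$ large.

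The only difference is cosmetic. The paper's corrector $\omega_c$ also carries the explicit Neumann datum $(\pa_y\Delta_D^{-1}BC_{vp,0})|_{y=0}$, which is bounded by $Ce^{-10|\xi|}$. You instead keep that datum in the remainder as a source, which works equally well.
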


\begin{remark}
	We stress here the factor $t^{1/2}$ before the second term in \eqref{est: omega pk U pk} can be removed in the case $k=1$, since the initial data and boundary condition of $\omega_b^{(1)}$ are compatible.
\end{remark}

\begin{proof}
	We only focus on $\omega_b^{(0)}$, since $\omega_b^{(1)}$ can be proved in the same manner. Recall $\omega_b^{(0)}$ satisfies
	\begin{align*}
		\left\{
		\begin{aligned}
			&\pa_t \omega_b^{(0)}
		-\pa_z^2 \omega_b^{(0)}
		=-\big(u_b^{(0)}+u_{vp}^{(0)}(t,x,0) \big)\pa_x\omega_b^{(0)}
		-\big(\frac{v_b^{(0)}}{y} +\pa_y v_{vp}^{(0)}(t,x,0) \big) z\pa_z\omega_b^{(0)}:=F,\\
		&\lim_{t\rightarrow0^+}\omega_b^{(0)}=-u_0\delta_{\partial \mathbb R^2_+},
		\qquad\qquad\quad
		\pa_z\omega_b^{(0)}|_{z=0}
		=\big(\pa_y\Delta_D^{-1}BC_{b,0} \big)|_{y=0}
	+\big(\pa_y\Delta_D^{-1}BC_{vp,0} \big)|_{y=0},
		\end{aligned}
		\right.
	\end{align*}
	with $BC_{b,0}, BC_{vp,0}$ defined by
	\begin{align*}
		BC_{b,0}
	=&\nu^{-1/2}\Big\{ (u_{vp}^{(0)}+u_b^{(0)})(t,x,y)(\chi_b(y)\pa_x\omega_b^{(0)})(t,x,\frac{y}{\nu^{1/2}})\\
	\nonumber
	&+\frac{v_{vp}^{(0)}+v_b^{(0)}}{y}(t,x,y)y\pa_y\big(\chi_b(y)\omega_b^{(0)}(t,x,\frac{y}{\nu^{1/2}})\big) \Big\},\\
	BC_{vp,0}
	=&-\frac{\alpha}{\sqrt{\nu t}}\mathcal V^G(\frac{X-X(t)^\ast}{\sqrt{\nu t}})\cdot\nabla \Big\{\frac{\alpha}{\nu t}G(\frac{X-X(t)}{\sqrt{\nu t}})\chi_{vp} \Big\}.
	\end{align*}
	
	We note $BC_{b,0}|_{t=0}=0$, $\lim_{t\rightarrow0}BC_{vp,0}\neq 0$, thus the initial data and boundary condition of $\omega_b^{(0)}$ are incompatible. Therefore, we make the decomposition $\omega_b^{(0)}=\omega_c+\omega_{re}$, where $\omega_c$ is the corrector and $\omega_{re}$ is the remainder defined by
	\begin{align}\label{eq: omega c}
		\left\{
		\begin{aligned}
			&\pa_t\omega_c-\pa_z^2\omega_c=0,\quad \lim_{t\rightarrow0^+}\omega_c=-u_0\delta_{\partial \mathbb R^2_+},\\
			&\pa_z\omega_c|_{z=0}=\big(\pa_y\Delta_D^{-1}BC_{vp,0} \big)|_{y=0},
		\end{aligned}
		\right.
		\qquad
		\left\{
		\begin{aligned}
			&\pa_t\omega_{re}-\pa_z^2\omega_{re}=F,\quad \omega_{re}|_{t=0}=0, \\
			&\pa_z\omega_{re}|_{z=0}=\big(\pa_y\Delta_D^{-1}BC_{b,0} \big)|_{y=0}.
		\end{aligned}
		\right.
	\end{align}
	We denote 
	\begin{align*}
		U_c(t,x,y)=BS_{\mathbb R^2_+}[\chi_b(y)\omega_c(t,x,\frac{y}{\nu^{1/2}})],\qquad U_{re}(t,x,y)=BS_{\mathbb R^2_+}[\chi_b(y)\omega_{re}(t,x,\frac{y}{\nu^{1/2}})].
	\end{align*}
	 The proof of this proposition is derived by Lemma \ref{lem: est of omega c U c} and Lemma \ref{lem: est of omega re U re}.
	\end{proof}

\medskip

Next lemma provides estimates for $\omega_c$.
\begin{lemma}\label{lem: est of omega c U c}
	For $T$ small, there exists $C,C'>0$ such that for $0\leq t\leq T$, 
	\begin{align}\label{est: omega c U c}
		&\sum_{i+j\leq8}\left\|e^{C'|\xi|}\left\|e^{\frac{C'z^2}{t}}\big( (1,x)\pa_x^i(z\pa_z)^j\omega_c \big)_\xi(t,z)\right\|_{L^1_z}\right\|_{L^1_\xi \cap L^2_\xi}
		+t^{1/2}\left\|e^{C'|\xi|}\big((1,x)\omega_c \big)_\xi(t,0)\right\|_{L^1_\xi \cap L^2_\xi}\\
		\nonumber
		&\qquad+\sum_{i+j\leq8}\left\|e^{C'|\xi|}\left\|\pa_x^i(z\pa_z)^j\big(\pa_x^i(z\pa_z)^j U_c \big)_\xi(t,y)\right\|_{L^{\infty}_y}\right\|_{L^1_\xi}
		\leq C.
	\end{align}
\end{lemma}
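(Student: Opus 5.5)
We plan to write $\omega_c$ explicitly through the Neumann heat formula of Lemma \ref{lem: solution formula for basic equation0}, applied for each fixed $x$ (so in Fourier variables for each fixed $\xi$) with $f=0$. The initial measure $-u_0\delta_{\partial\mathbb R^2_+}$ is interpreted as in the remark following that lemma. The solution then splits as $\omega_c=\omega_{c,1}+\omega_{c,2}$. The first part is the pure initial layer,
\begin{align*}
	\omega_{c,1}(t,x,z)=-u_0(x)\,\frac{1}{(\pi t)^{1/2}}e^{-\frac{z^2}{4t}},
\end{align*}
namely $e^{t\Delta_N}$ applied to a Dirac mass at $z=0$. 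The second part is the boundary-flux Duhamel term
\begin{align*}
	\omega_{c,2}=-\int_0^t\int_z^{+\infty}\frac{\tilde z}{(4\pi)^{1/2}(t-s)^{3/2}}e^{-\frac{\tilde z^2}{4(t-s)}}h(s,x)\,d\tilde z\,ds,\qquad h:=\big(\pa_y\Delta_D^{-1}BC_{vp,0}\big)|_{y=0}.
\end{align*}

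For $\omega_{c,1}$ the estimate separates into an $x$-part and a $z$-part. In $x$, $u_0$ is the trace on $y=0$ of the velocity of a point vortex at $(0,20)$ together with its image. Hence $u_0(x)=c\,\frac{y_0}{x^2+y_0^2}$, which is analytic in a strip of width $y_0=20$, and its Fourier transform is $(u_0)_\xi=c' e^{-2\pi y_0|\xi|}$. Therefore $e^{C'|\xi|}\xi^i(u_0)_\xi$ lies in $L^1_\xi\cap L^2_\xi$ for $C'<2\pi y_0$, and $(xu_0)_\xi=i\pa_\xi(u_0)_\xi$ has the same bound. In $z$, the Gaussian satisfies $\|e^{C'z^2/t}(z\pa_z)^j t^{-1/2}e^{-z^2/4t}\|_{L^1_z}\le C$ for $C'<1/4$, and at $z=0$ it equals $t^{-1/2}$, which accounts for the factor $t^{1/2}$ in the boundary term.

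For $\omega_{c,2}$ we first need analyticity and smoothness in time of $h$. From \eqref{def: BC vp0} and the integration-by-parts computation in the remark after it, $h_\xi(s)$ is an integral over $\operatorname{supp}\chi_{vp}$ of $\nabla^2\big(e^{2\pi i\tilde x\xi-2\pi\tilde y|\xi|}\chi_{vp}\big)$ against the product $\frac{\alpha}{\sqrt{\nu s}}\mathcal V^G(\cdot)\,\frac{\alpha}{\nu s}G(\cdot)$. This gives $|h_\xi(s)|+|\pa_\xi h_\xi(s)|\le Ce^{-10|\xi|}$ uniformly in $s$, using Lemma \ref{lem: 1/nu of VG differ} and Lemma \ref{lem: ODE est} as in that remark. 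The boundary-flux kernel then satisfies
\begin{align*}
	\int_z^{\infty}\frac{\tilde z}{(t-s)^{3/2}}e^{-\frac{\tilde z^2}{4(t-s)}}d\tilde z=\frac{2}{(t-s)^{1/2}}e^{-\frac{z^2}{4(t-s)}}.
\end{align*}
Since $t-s\le t$, we have $e^{C'z^2/t}e^{-z^2/4(t-s)}\le e^{-z^2/8(t-s)}$ for small $C'$. Integrating in $z$ gives a bounded contribution uniformly in $s$, and the time integral contributes $\int_0^t ds$, so the weighted $L^1_z$ norm of $\omega_{c,2}$ is $O(t)$. Applying $(z\pa_z)^j$ to $e^{-z^2/4(t-s)}$ only produces polynomial factors in $z^2/(t-s)$, which are absorbed into the Gaussian. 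Powers of $\xi$, and multiplication by $x$ (which becomes $\pa_\xi$), are handled by the bound on $h_\xi$ with a slightly smaller $C'$. At $z=0$ we get $\int_0^t(t-s)^{-1/2}ds\,|h_\xi|\le Ct^{1/2}e^{-10|\xi|}$.

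For the velocity $U_c=BS_{\mathbb R^2_+}[\chi_b\omega_c(t,x,y/\nu^{1/2})]$ we use Lemma \ref{lem: velocity formula}, exactly as in the first step of Lemma \ref{est: U_b^0}. This gives $|(U_c)_\xi(y)|\le\int_0^\infty|(\omega_c)_\xi(z)|dz$ uniformly in $y$. The $\pa_x^i(z\pa_z)^j$ derivatives are handled the same way: $\pa_x$ becomes $\xi$, absorbed by $e^{C'|\xi|}$ at the cost of a smaller constant, and $y\pa_y$ is treated through the identity $y\pa_y(v/y)=-\pa_xu-v/y$ together with the $y(\omega)_\xi$ fundamental-theorem argument from Lemma \ref{lem: velocity estimates 1}. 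The cutoff $\chi_b$ only produces harmless commutators supported where $z\ge2\nu^{-1/2}$, and these are exponentially small under the Gaussian weight.

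I expect the main obstacle to be the uniform-in-$\nu$, uniform-in-$s\to0$ analytic bound on $h$. The raw $BC_{vp,0}$ is of size $(\nu s)^{-3/2}$, and the bound on $h$ relies on integrating by parts twice and on the large distance between $X(s)$ and $X(s)^\ast$. I would verify it carefully via the substitution $\eta=(\widetilde X-X(s))/\sqrt{\nu s}$, using $|\mathcal V^G(\eta+\widetilde\eta)|\le C\sqrt{\nu s}$ on the support, which holds because $|\widetilde\eta|\ge 38/\sqrt{\nu s}$.
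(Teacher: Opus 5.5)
Your proposal is correct and follows the paper's proof step by step: the same splitting of the Neumann heat formula into the initial-layer Gaussian $-(\pi t)^{-1/2}e^{-z^2/4t}(u_0)_\xi$ and the boundary-flux Duhamel term, the same bound $|h_\xi(s)|\le Ce^{-10|\xi|}$ obtained from the integration by parts in the remark after \eqref{def: BC vp0}, the same Gaussian kernel estimates, $\pa_\xi$ for the $x$-weight, and Lemma \ref{lem: velocity formula} for $U_c$. One small correction: for $h$ you need only one integration by parts, not two. Since the image velocity is divergence free, $BC_{vp,0}$ is a divergence, so a single gradient lands on $e^{2\pi i\tilde x\xi-2\pi\tilde y|\xi|}\chi_{vp}$ (the $\nabla^2$ form belongs to the $\curl\dv$ rewriting used for $B_{R,3}$). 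That one step already suffices, because $|\mathcal V^G(\eta+\widetilde\eta)|\le C\sqrt{\nu s}$ on $\operatorname{supp}\chi_{vp}$ cancels the remaining factor $(\nu s)^{-1/2}$.
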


\begin{proof}
	By \eqref{BS law formulation 2} and integration by parts, we obtain
	\begin{align*}
		&\pa_y\Delta_D^{-1}BC_{vp,0}|_{y=0}(t,x)\\
		&=-\frac{1}{\pi}\frac{\alpha^2}{(\nu t)^{3/2}}\int_{\mathbb R^2_+} \nabla_{\tilde x,\tilde y}\big(\frac{\tilde y \chi_{vp}(\tilde x,\tilde y)}{(x-\tilde x)^2+\tilde y^2}\big)
		\cdot \mathcal V^G(\frac{\widetilde X-X(t)^\ast}{\sqrt{\nu t}}) G(\frac{\widetilde X-X(t)}{\sqrt{\nu t}})d\widetilde X.
	\end{align*}
	Thus, we have
	\begin{align*}
		&\left|\big(\pa_x^i\pa_y\Delta_D^{-1}BC_{vp,0}\big)_\xi|_{y=0}(t)\right|\\
		&=\frac{\alpha^2|\xi|^i}{(\nu t)^{3/2}}
		\left|\int_{\chi_{vp}}\nabla_{\tilde x,\tilde y}
		\big(e^{-2\pi i\tilde x\xi}e^{-2\pi y|\xi|}\chi_{vp}(\tilde x,\tilde y)\big)
		\cdot \mathcal V^G(\frac{\widetilde X-X(t)^\ast}{\sqrt{\nu t}}) G(\frac{\widetilde X-X(t)}{\sqrt{\nu t}})d\widetilde X\right|\\
		&\leq \frac{C}{(\nu t)^{1/2}}e^{-10|\xi|}
		\big(\int_{|\eta|\leq\frac{8}{\sqrt{\nu t}}}+\int_{|\eta|>\frac{8}{\sqrt{\nu t}}}\big)
		\left|\mathcal V^G(\eta+\frac{X(t)-X(t)^\ast}{\sqrt{\nu t}})\right|G(\eta)d\eta
		\leq Ce^{-10|\xi|}.
	\end{align*}

	By Lemma \ref{lem: solution formula for basic equation0} and taking Fourier transform w.r.t $x$, we have
	\begin{align*}
		(\pa_x^i(z\pa_z)^j\omega_c)_\xi&(t,z)
		=-\frac{1}{(\pi t)^{1/2}}(z\pa_z)^j e^{-\frac{z^2}{4t}}(\pa_x^i u_0)_\xi|_{y=0} \\
		&-\int_0^t
		(z\pa_z)^j\int_z^{+\infty}
		\frac{\tilde z}{(4\pi)^{1/2}(t-s)^{3/2}}e^{-\frac{\tilde z^2}{4(t-s)}}
		\big(\pa_x^i\pa_y\Delta_D^{-1}BC_{vp,0}\big)_\xi|_{y=0}(s)d\tilde zds,
	\end{align*}
	which along with Lemma \ref{lem: est of Ue(k)} and the following estimates which hold for $C', t$ small
	\begin{align}\label{est of integral of gauss kernel}
		(z\pa_z)^j\int_z^{+\infty}
		\frac{\tilde z}{(t-s)^{3/2}}e^{-\frac{\tilde z^2}{4(t-s)}}d\tilde z
		\leq \frac{C}{(t-s)^{1/2}}e^{-\frac{z^2}{5(t-s)}},
		\qquad
		e^{\frac{C'z^2}{t}}e^{-\frac{z^2}{5(t-s)}}\leq e^{-\frac{z^2}{6(t-s)}},
	\end{align}
	gives
	\begin{align}\label{est medium omega c}
		e^{C'|\xi|}e^{\frac{C'z^2}{t}}|(\omega_c)_\xi|
		\leq \int_0^t \frac{C}{(t-s)^{1/2}}e^{-\frac{z^2}{6(t-s)}}ds\cdot e^{-9|\xi|}
		+\frac{C}{t^{1/2}}e^{-\frac{z^2}{5t}}e^{C'|\xi|}\left|(\pa_x^i u_0)_\xi|_{y=0}\right| ,
	\end{align}
	which implies
	\begin{align*}
		\left\|e^{C'|\xi|}\left\|e^{\frac{C'z^2}{t}}(\pa_x^i(z\pa_z)^j\omega_c)_\xi(t,z)\right\|_{L^1_z}\right\|_{L^1_\xi \cap L^2_\xi}\leq C.
	\end{align*}
	Similarly,
	\begin{align*}
		\left\|e^{C'|\xi|}(\omega_c )_\xi(t,0)\right\|_{L^1_\xi \cap L^2_\xi}
		&\leq \int_0^t(t-s)^{-1/2}\left\|e^{C'|\xi|} \big(\pa_y\Delta_D^{-1}BC_{vp,0}\big)_\xi|_{y=0}(t)\right\|_{L^1_\xi \cap L^2_\xi}\\
		&\quad+\frac{C}{t^{1/2}}\left\| e^{C'|\xi|}\left|( u_0)_\xi|_{y=0}\right|\right\|_{L^1_\xi \cap L^2_\xi} 
		\leq Ct^{-1/2}.
	\end{align*}
	
	The term $(x\omega_c)_\xi$ can be estimated similarly by the fact $(x\omega_c)_\xi=\frac{i}{2\pi}\pa_\xi(\omega_c)_\xi$. 
	
	For the third term in \eqref{est: omega c U c}, we utilize Lemma \ref{lem: velocity formula} and \eqref{est medium omega c} to deduce that
	\begin{align*}
		\|(\pa_x^i(z\pa_z)^j U_c)_\xi(t,y)\|_{L^\infty_y}
		\leq \|(\pa_x^i(z\pa_z)^j\omega_c)_\xi(t,z)\|_{L^1_z},
	\end{align*}
	which implies the desired result.
	\end{proof}

Next lemma provides estimates for $\omega_{re}$.

\begin{lemma}\label{lem: est of omega re U re}
	For $T$ small, there exists $C,C'>0$ such that for $0\leq t\leq T$, 
	\begin{align}\label{est: omega re U re}
		&\sum_{i+j\leq8}\left\|e^{C'|\xi|}\left\|e^{\frac{C'z^2}{t}}\big( (1,x)\pa_x^i(z\pa_z)^j\omega_{re} \big)_\xi(t,z)\right\|_{L^1_z}\right\|_{L^1_\xi \cap L^2_\xi}
		+\left\|e^{C'|\xi|}\big((1,x)\omega_{re} \big)_\xi(t,0)\right\|_{L^1_\xi \cap L^2_\xi}\\
		\nonumber
		&\qquad+\sum_{i+j\leq8}\left\|e^{C'|\xi|}\left\|\pa_x^i(z\pa_z)^j\big(\pa_x^i(z\pa_z)^j U_{re} \big)_\xi(t,y)\right\|_{L^{\infty}_y}\right\|_{L^1_\xi}
		\leq C.
	\end{align}
\end{lemma}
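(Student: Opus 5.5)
The plan is a Duhamel-type fixed-point/bootstrap argument for $\omega_{re}$, using the Neumann heat solution formula of Lemma \ref{lem: solution formula for basic equation0} with zero initial data. Taking the Fourier transform in $x$ gives
\begin{align*}
	(\omega_{re})_\xi(t,z)=\int_0^t \big(e^{(t-s)\Delta_N}F_\xi(s)\big)(z)\,ds
	-\int_0^t\int_z^{+\infty}\frac{\tilde z}{(4\pi)^{1/2}(t-s)^{3/2}}e^{-\frac{\tilde z^2}{4(t-s)}}\,h_\xi(s)\,d\tilde z\,ds,
\end{align*}
where $h=(\pa_y\Delta_D^{-1}BC_{b,0})|_{y=0}$. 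I will work with the time-dependent norm
\[
M(t):=\sup_{s\le t}\sum_{i+j\le 8}\Big\|e^{(C'-\kappa s)|\xi|}\Big\|e^{\frac{C'z^2}{s}}\big((1,x)\pa_x^i(z\pa_z)^j\omega_{re}\big)_\xi(s,z)\Big\|_{L^1_z}\Big\|_{L^1_\xi\cap L^2_\xi}
\]
together with the boundary trace. The analytic radius decreases in time (with $\kappa$ large) so that the $\pa_x$ loss in $F$ can be absorbed, in the manner of Lemma \ref{lem: analytic recovery} and Lemma \ref{lem: integral computation}. Alternatively, I could use a $(\mu_0-\mu-\gamma t)^\beta$ weighted norm as in $Y_k(t)$.

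\textbf{Forcing term.} The source is $F=-(u_b^{(0)}+u_{vp}^{(0)}(t,x,0))\pa_x\omega_b^{(0)}-(v_b^{(0)}/y+\pa_yv_{vp}^{(0)}(t,x,0))z\pa_z\omega_b^{(0)}$, with $\omega_b^{(0)}=\omega_c+\omega_{re}$.
\begin{itemize}
\item The coefficients $u_{vp}^{(0)}(t,x,0)$ and $\pa_yv_{vp}^{(0)}(t,x,0)$ come from the explicit formula \eqref{def: Uvp 0}. They are real-analytic in $x$, with Fourier transforms bounded by $Ce^{-c|\xi|}$, because $X(t)$ stays at distance about $20$ from the boundary by Lemma \ref{lem: ODE est}.
\item The coefficients $u_b^{(0)}$ and $v_b^{(0)}/y$ are controlled by $\|(\omega_b^{(0)})_\xi\|_{L^1_z}$ through Lemma \ref{lem: velocity formula}, exactly as in the proof of Lemma \ref{est: U_b^0}. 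This bounds them by $C+M(t)$, using Lemma \ref{lem: est of omega c U c} for the $\omega_c$ part.
\item Products are handled with the convolution structure in $\xi$, which is the analogue of Lemma \ref{lem: product estimate}.
\item The Neumann heat kernel preserves the Gaussian weight $e^{C'z^2/t}$ provided $C'$ is small, via the elementary inequality $e^{C'z^2/t}e^{-(z-\tilde z)^2/4(t-s)}\lesssim e^{C'\tilde z^2/s}e^{-(z-\tilde z)^2/5(t-s)}$.
\item The $z\pa_z$ derivatives commute with the kernel up to harmless terms, as in the second inequality of Lemma \ref{lem: analytic recovery}.
\end{itemize}

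\textbf{Boundary term.} Using Lemma \ref{lem: velocity formula} and $y=\nu^{1/2}z$, $h_\xi$ becomes $-\int_0^\infty e^{-|\xi|\nu^{1/2}z}(\tilde{BC}_{b,0})_\xi\,dz$. This is bounded by $(C+M)\,M$ times analytic factors, with no $\nu$ loss, as noted in the Remark after \eqref{def: BC b1}. The kernel estimate \eqref{est of integral of gauss kernel} then gives a contribution $\int_0^t (t-s)^{-1/2}(\cdots)\,ds\lesssim t^{1/2}$. The boundary trace of $\omega_{re}$ follows in the same way, and since the data are compatible ($\omega_{re}|_{t=0}=0$ and $BC_{b,0}|_{t=0}=0$), no $t^{1/2}$ prefactor is needed.

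\textbf{Closing the estimate.} The $x$-weighted bounds follow from $(x f)_\xi=\frac{i}{2\pi}\pa_\xi f_\xi$, differentiating the Duhamel formula in $\xi$. Finally, the velocity bound for $U_{re}$ follows from $\|(U_{re})_\xi\|_{L^\infty_y}\le\|(\omega_{re})_\xi\|_{L^1_z}$, exactly as at the end of Lemma \ref{lem: est of omega c U c}.

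Combining all pieces gives $M(t)\le C t^{1/2}(1+M(t))^2+C\kappa^{-1}M(t)$. A continuity argument with $T$ small and $\kappa$ large then yields $M\le C$.

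\textbf{Main obstacle.} The hardest step is the derivative loss: $\pa_x\omega$ and $z\pa_z\omega$ appear in $F$ at top order ($i+j=8$). The plan is to pay for it with the shrinking analyticity radius, losing a factor $(\kappa(t-s))^{-1}$ for $\pa_x$, and with the Gaussian-weight recovery, losing $(t-s)^{-1/2}$ for $z\pa_z$. Combining the two losses into an integrable singularity in $s$, as in Lemma \ref{lem: integral computation}, is where care is needed.
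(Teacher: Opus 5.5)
Your overall framework matches the paper's: the Duhamel formula for the Neumann heat problem with zero data, an analytic weight in $x$, a Gaussian weight in $z$, $x$-moments via $\pa_\xi$, and velocity and trace bounds read off at the end. The step that closes the bootstrap, however, fails as written.

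Both $F$ and the boundary flux $h$ contain $\pa_x\omega_b^{(0)}$ and $z\pa_z\omega_b^{(0)}$. So the order-$8$ bound needs nine derivatives of $\omega_{re}$. This includes $\pa_x^8 h$, which you bound by $(C+M)M$ although it contains $\pa_x^9\omega_{re}$. The equation is of Prandtl type (only $\pa_z^2$, no $\pa_x^2$), so the horizontal loss can only be paid by analyticity. With one shrinking radius $C'-\kappa s$ and a norm of type $\sup_s\|\cdot\|_{L^1_\xi\cap L^2_\xi}$, that payment is $\sup_\xi|\xi|e^{-\kappa(t-s)|\xi|}\simeq(\kappa(t-s))^{-1}$, and $\int_0^t(t-s)^{-1}ds=\infty$. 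No choice of $\kappa$ produces the term $C\kappa^{-1}M(t)$ in your final inequality, and Lemma \ref{lem: integral computation} does not cover a $(t-s)^{-1}$ kernel.

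The conormal loss has the same defect when the Gaussian parameter is fixed. The second inequality of Lemma \ref{lem: analytic recovery}, which you invoke, requires $\widetilde\mu>\mu$ in the Gaussian weight. With a single parameter, the only room between $e^{C'z^2/t}$ and $e^{C'\tilde z^2/s}$ is about $C'\tilde z^2(t-s)/(st)$. Absorbing the factor $\tilde z$ created when $z\pa_z$ falls on the kernel therefore costs $\sqrt{st/(t-s)}$. Together with the kernel's $(t-s)^{-1/2}$, this is again non-integrable.

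The missing idea is the Cauchy--Kovalevskaya scale of norms the paper uses. Take the supremum over a continuum $\mu<\mu_0-\gamma t$, and let $\mu$ enter \emph{both} weights, $e^{\eps_0(1+\mu)|\xi|}$ and $e^{\eps_0(1+\mu)z^2/t}$. Put the ninth derivatives into the norm with weight $(\mu_0-\mu-\gamma t)^\beta$; this is the norm $Z(t)$.

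For $i+j\le 8$, the paper moves the conormal derivatives from the kernel onto $F$ by integration by parts. It then uses the ninth-order part of $Z$ at the same radius, which costs only $(\mu_0-\mu-\gamma s)^{-\beta}$, integrable since $\beta<1$. For $i+j=9$, it evaluates the forcing at the intermediate radius $\mu_1=\mu+\frac12(\mu_0-\mu-\gamma s)$, whose distance to $\mu$ does not vanish as $s\to t$. Lemma \ref{lem: analytic recovery} then gives losses $(\mu_0-\mu-\gamma s)^{-1}$ for $\pa_x$ and $(\mu_0-\mu-\gamma s)^{-1/2}(t-s)^{-1/2}$ for $z\pa_z$. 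Lemma \ref{lem: integral computation} converts these into negative powers of $\gamma$, and the continuity argument closes for $T$ small and $\gamma$ large.

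You mention this weighted norm only as an alternative, but your closing inequality is stated for $M$, where it does not hold. One might try to rescue a single shrinking radius for the $\pa_x$ loss by taking the time supremum inside the $\xi$-integral, Chemin--Lerner style, since $\int_0^t|\xi|e^{-\kappa(t-s)|\xi|}ds\le\kappa^{-1}$ pointwise in $\xi$. Even then, the conormal loss would still require a scale in the Gaussian parameter.
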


In order to prove Lemma \ref{lem: est of omega re U re}, we introduce the following norms for $\mu<\mu_0-\gamma t$, $\beta\in(\f12,1)$ and sufficiently small $\eps_0$ to be determined later.
\begin{align*}
	\|f\|_{\widetilde {X}_{\mu,t}}
	=\left\|\left\|e^{\eps_0(1+\mu)|\xi|}f_\xi(t,z)\right\|_{L^\infty_z}\right\|_{L^1_\xi},\qquad
	\|f\|_{\widetilde {Y}_{\mu,t}}
	=\left\|\left\|e^{\eps_0(1+\mu)\frac{z^2}{t}}e^{\eps_0(1+\mu)|\xi|}f_\xi(t,z)\right\|_{L^1_z}\right\|_{L^1_\xi\cap L^2_\xi},\\
	\|\omega_{re}\|_{Z(t)}
	=\sup_{\mu<\mu_0-\gamma t}
	\Big(\sum_{i+j\leq8}\left\|\pa_x^i(z\pa_z)^j\omega_{re}\right\|_{\widetilde{Y}_{\mu,t}}
	+(\mu_0-\mu-\gamma t)^\beta \sum_{i+j=9}\left\|\pa_x^i(z\pa_z)^j\omega_{re}\right\|_{\widetilde{Y}_{\mu,t}} \Big).
\end{align*}

\noindent\textbf{Proof of Lemma \ref{lem: est of omega re U re}.} 
By Lemma \ref{lem: solution formula for basic equation0} and taking Fourier transform w.r.t $x$, we have
\begin{align}\label{eq: solution formula for omega re}
	&(\omega_{re})_\xi(t,z)
		=\int_0^t e^{(t-s)\Delta_N}F_\xi(s,z)ds\\
		\nonumber
		&\qquad-\int_0^t\int_z^{+\infty}
		\frac{\tilde z}{(4\pi)^{1/2}(t-s)^{3/2}}e^{-\frac{\tilde z^2}{4(t-s)}}
		\big(\pa_y\Delta_D^{-1}BC_{b,0}\big)_\xi|_{y=0}(s)d\tilde zds.
\end{align}
Thus, it holds that
\begin{align*}
	\big(\pa_x^i(z\pa_z)^j&\omega_{re}\big)_\xi(t,z)
	=\int_0^t (z\pa_z)^j\int_0^{+\infty}
	\frac{1}{(4\pi (t-s))^{1/2}}\big(e^{-\frac{(z-\tilde z)^2}{4(t-s)}}
		+e^{-\frac{(z+\tilde z)^2}{4(t-s)}} \big)(\pa_x^i F)_\xi(s,\tilde z)d\tilde zds\\
		&-\int_0^t (z\pa_z)^j \big(\int_z^{+\infty}
		\frac{\tilde z}{(4\pi)^{1/2}(t-s)^{3/2}}e^{-\frac{\tilde z^2}{4(t-s)}} d\tilde z\big)(\pa_x^i\pa_y\Delta_D^{-1}BC_{b,0})_\xi|_{y=0}(s)ds.
\end{align*}
Integration by parts yields
\begin{align*}
	&\left| (z\pa_z)\int_0^{+\infty}
	\frac{1}{(4\pi (t-s))^{1/2}}\big(e^{-\frac{(z-\tilde z)^2}{4(t-s)}}
		+e^{-\frac{(z+\tilde z)^2}{4(t-s)}} \big)(\pa_x^i F)_\xi(s,\tilde z)d\tilde zds \right|\\
		&=\left| \int_0^{+\infty}\frac{z}{(4\pi (t-s))^{1/2}}
		\pa_{\tilde z}\big(e^{-\frac{(z-\tilde z)^2}{4(t-s)}}
		-e^{-\frac{(z+\tilde z)^2}{4(t-s)}} \big)(\pa_x^i F)_\xi(s,\tilde z)d\tilde zds \right|\\
		&\leq \int_0^{+\infty}\frac{|z-\tilde z|}{(4\pi (t-s))^{1/2}}
		\left|\pa_{\tilde z}\big(e^{-\frac{(z-\tilde z)^2}{4(t-s)}}
		-e^{-\frac{(z+\tilde z)^2}{4(t-s)}} \big)\right| \left|(\pa_x^i F)_\xi(s,\tilde z)\right| d\tilde zds\\ 
		&\quad+\int_0^{+\infty}\frac{1}{(4\pi (t-s))^{1/2}}
		\big(e^{-\frac{(z-\tilde z)^2}{4(t-s)}}
		+e^{-\frac{(z+\tilde z)^2}{4(t-s)}} \big)
		\left|\big(\pa_x^i(\tilde z\pa_{\tilde z})F\big)_\xi(s,\tilde z)\right|d\tilde zds\\
		&\leq\int_0^{+\infty}\frac{C}{(t-s)^{1/2}}e^{-\frac{(z-\tilde z)^2}{5(t-s)}}\big( \left|(\pa_x^i F)_\xi(s,\tilde z)\right|
		+\left|\big(\pa_x^i(\tilde z\pa_{\tilde z})F\big)_\xi(s,\tilde z)\right|\big)d\tilde zds.
\end{align*}
Similarly, by integration by parts several times, we obtain
\begin{align}\label{est: conormal transform  back}
	&\left| (z\pa_z)^j\int_0^{+\infty}
	\frac{1}{(4\pi (t-s))^{1/2}}\big(e^{-\frac{(z-\tilde z)^2}{4(t-s)}}
		+e^{-\frac{(z+\tilde z)^2}{4(t-s)}} \big)(\pa_x^i F)_\xi(s,\tilde z)d\tilde zds \right|\\
		\nonumber
		&\leq \int_0^{+\infty}\frac{C}{(t-s)^{1/2}}e^{-\frac{(z-\tilde z)^2}{5(t-s)}}\sum_{j_0\leq j}\left|\big(\pa_x^i(\tilde z\pa_{\tilde z})^{j_0}F\big)_\xi(s,\tilde z)\right| d\tilde zds.
\end{align}
A direct computation gives
\begin{align}\label{est: conormal on integral}
	(z\pa_z)^j \big(\int_z^{+\infty}
		\frac{\tilde z}{(t-s)^{3/2}}e^{-\frac{\tilde z^2}{4(t-s)}} d\tilde z\big)
		\leq \frac{C}{(t-s)^{1/2}}e^{-\frac{z^2}{5(t-s)}}.
\end{align}
Using \eqref{est: conormal transform  back}, \eqref{est: conormal on integral} and the following fact
\begin{align}\label{est: weight transform 0}
	e^{\frac{\eps_0(1+\mu)z^2}{t}}e^{-\frac{(z-\tilde z)^2}{10(t-s)}}
	\leq e^{\frac{\eps_0(1+\mu)\tilde z^2}{s}},
\end{align}
we have
\begin{align*}
	&\sum_{i+j\leq 8}e^{\frac{\eps_0(1+\mu)z^2}{t}}
	\left|\big(\pa_x^i(z\pa_z)^j\omega_{re}\big)_\xi(t,z)\right|\\
	&\leq \int_0^t\int_0^{+\infty}
	\frac{C}{(t-s)^{1/2}}e^{-\frac{(z-\tilde z)^2}{10(t-s)}}e^{\frac{\eps_0(1+\mu)\tilde z^2}{s}}
	\sum_{i+j\leq8}\left|\big(\pa_x^i(\tilde z\pa_{\tilde z})^jF\big)_\xi(s,\tilde z)\right| d\tilde zds\\
	&\qquad+\int_0^t
	\frac{C}{(t-s)^{1/2}}e^{-\frac{z^2}{10(t-s)}}
	\sum_{i\leq8}\left|(\pa_x^i\pa_y\Delta_D^{-1}BC_{b,0})_\xi|_{y=0}(s) \right|ds,
\end{align*}
which implies
\begin{align*}
	\sum_{i+j\leq 8}
	\left\|\pa_x^i(z\pa_z)^j\omega_{re}\right\|_{\widetilde{Y}_{\mu,t}}
	\leq& C\int_0^t \sum_{i+j\leq 8}
	\left\|\pa_x^i(z\pa_z)^jF(s)\right\|_{\widetilde{Y}_{\mu,s}}ds\\
	&+C\int_0^t \sum_{i\leq8}\left\|e^{\eps_0(1+\mu)|\xi|}(\pa_x^i\pa_y\Delta_D^{-1}BC_{b,0})_\xi|_{y=0}(s)\right\|_{L^1_\xi\cap L^2_\xi} ds.
\end{align*}
Denoting $\mu_1=\mu+\f12(\mu_0-\mu-\gamma s)$ and using Lemma \ref{lem: analytic recovery}, we proceed as above to derive that
\begin{align*}
	&\sum_{i+j=9}
	\left\|\pa_x^i(z\pa_z)^j\omega_{re}\right\|_{\widetilde{Y}_{\mu,t}}
	\leq C\int_0^t (\mu_0-\mu-\gamma s)^{-1} \sum_{i\leq8}\left\|e^{\eps_0(1+\mu_1)|\xi|}(\pa_x^i\pa_y\Delta_D^{-1}BC_{b,0})_\xi|_{y=0}(s)\right\|_{L^1_\xi\cap L^2_\xi} ds \\
	&\qquad+C\int_0^t \big((\mu_0-\mu-\gamma s)^{-1}+(\mu_0-\mu-\gamma s)^{-1/2}(t-s)^{-1/2}\big) \sum_{i+j\leq 8}
	\left\|\pa_x^i(z\pa_z)^jF(s)\right\|_{\widetilde{Y}_{\mu_1,s}}ds.
\end{align*}

Armed with Lemma \ref{lem: est of F BC in omega re} below, we arrive at
\begin{align*}
	\|\omega_{re}\|_{Z(t)}
	\leq& C\int_0^t 1+(\mu_0-\mu-\gamma s)^{-\beta}\big(\|\omega_{re}\|_{Z(s)}+\|\omega_{re}\|_{Z(s)}^2\big)ds\\
	&+C(\mu_0-\mu-\gamma t)^{\beta} \int_0^t 
	\big((\mu_0-\mu-\gamma s)^{-1}+(\mu_0-\mu-\gamma s)^{-1/2}(t-s)^{-1/2}\big)\\
	&\qquad\cdot\Big(1+(\mu_0-\mu-\gamma s)^{-\beta}\big(\|\omega_{re}\|_{Z(s)}+\|\omega_{re}\|_{Z(s)}^2\big) \Big)ds,
\end{align*}
which along with Lemma \ref{lem: integral computation} implies
\begin{align*}
	\sup_{0<s<t}\|\omega_{re}\|_{Z(s)}
	\leq C(t+\gamma^{-1/2}) \sup_{0<s<t}\big(\|\omega_{re}\|_{Z(s)}+\|\omega_{re}\|_{Z(s)}^2\big)
	+C\gamma^{-1/2}.
\end{align*}
Therefore, we obtain a uniform estimate for $\omega_{re}$ via a continuous argument. And $x\omega_{re}$ can be estimated in a similar manner. The third term in \eqref{est: omega re U re} is bounded by Lemma \ref{lem: velocity formula}. For the second term, we utilize \eqref{eq: solution formula for omega re}  to obtain
\begin{align*}
	\left|e^{C'|\xi|}(\omega_{re})_\xi|_{z=0}\right|
	&\leq \int_0^t\int_0^{+\infty}\frac{1}{(t-s)^{1/2}}e^{-\frac{\widetilde z^2}{4(t-s)}}e^{C'|\xi|}|F_\xi(s,\widetilde z)|d\widetilde zds\\
	&\quad+\int_0^t \frac{1}{(t-s)^{1/2}}e^{C'|\xi|} \left|(\pa_y\Delta_D^{-1}BC_{b,0})_\xi|_{y=0}(s)\right|ds,
\end{align*}
which along with Lemma \ref{lem: est of F BC in omega re} below and the uniform boundedness for $\omega_{re}$ implies
\begin{align*}
	\left\|e^{C'|\xi|}(\omega_{re})_\xi|_{z=0}\right\|_{L^1_\xi\cap L^2_\xi}\leq C.
\end{align*}

\ef

To complete the proof, it remains only to establish the following lemma, which deals with $F$ and the boundary terms.
\begin{lemma}\label{lem: est of F BC in omega re}
	For $\mu<\mu_0-\gamma t$, it holds that
	\begin{align*}
		&\sum_{i+j\leq 8}
	\left\|\pa_x^i(z\pa_z)^jF(t)\right\|_{\widetilde{Y}_{\mu,t}}
	+\sum_{i\leq8}\left\|e^{\eps_0(1+\mu)|\xi|}(\pa_x^i\pa_y\Delta_D^{-1}BC_{b,0})_\xi|_{y=0}(t)\right\|_{L^1_\xi\cap L^2_\xi}\\
		&\quad\leq C+(\mu_0-\mu-\gamma t)^{-\beta}\big(\|\omega_{re}\|_{Z(t)}+\|\omega_{re}\|_{Z(t)}^2\big).
	\end{align*}
\end{lemma}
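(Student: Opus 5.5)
The proof splits into the forcing $F$ and the boundary datum $\pa_y\Delta_D^{-1}BC_{b,0}|_{y=0}$. Throughout, the velocities $u_b^{(0)},v_b^{(0)}/y$ are generated by $\omega_b^{(0)}=\omega_c+\omega_{re}$. The corrector part is already controlled uniformly by Lemma \ref{lem: est of omega c U c}; the part from $\omega_{re}$ will be controlled through $\|\omega_{re}\|_{Z(t)}$ via Lemma \ref{lem: velocity formula}. The vortex velocities $u_{vp}^{(0)}(t,x,0)$ and $\pa_yv_{vp}^{(0)}(t,x,0)$ are explicit by \eqref{def: Uvp 0}. They are real-analytic in $x$, since $X(t)$ stays at height $\approx 20$ by \eqref{est: X(t)-X0}, so they admit Fourier bounds $e^{c|\xi|}$ with $c\gg \eps_0(1+\mu_0)$.

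\textbf{Estimate of $F$.} Write $\omega_b^{(0)}=\omega_c+\omega_{re}$ in both factors of $F$ and use the product estimate Lemma \ref{lem: product estimate}, adapted to $\widetilde Y_{\mu,t}$ (sup-in-$z$ Fourier norm $\widetilde X_{\mu,t}$ on the velocity factor, $\widetilde Y_{\mu,t}$ on the vorticity factor). Distribute $\pa_x^i(z\pa_z)^j$, $i+j\le 8$, by Leibniz.

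The velocity factors $\pa_x^{i_1}(z\pa_z)^{j_1}(u_b^{(0)},v_b^{(0)}/y)$ are bounded in $\widetilde X_{\mu,t}$ as in Lemma \ref{lem: velocity estimates 1}. The bound is $C$ from $\omega_c$ plus $C\sum_{i+j\le 8}\|\pa_x^i(z\pa_z)^j\omega_{re}\|_{\widetilde Y_{\mu,t}}\le C\|\omega_{re}\|_Z$. Using $z\le \nu^{-1/2}\cdot 3$ on $\operatorname{supp}\chi_b$ is not needed, since $y\pa_y = z\pa_z$.

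The vorticity factors carry at most $9$ derivatives in total, because of the extra $\pa_x$ or $z\pa_z$ in $F$. When exactly nine fall on $\omega_{re}$, this costs the factor $(\mu_0-\mu-\gamma t)^{-\beta}\|\omega_{re}\|_Z$ by definition of $Z(t)$. When nine fall on $\omega_c$, the contribution is bounded by $C$ via Lemma \ref{lem: est of omega c U c}, whose statement extends to nine derivatives by the same explicit heat-kernel computation. Terms with at most eight derivatives are bounded by $C+\|\omega_{re}\|_Z$.

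This produces $C+(\mu_0-\mu-\gamma t)^{-\beta}(\|\omega_{re}\|_Z+\|\omega_{re}\|_Z^2)$. The mixed terms $\omega_c\times\omega_{re}$ are absorbed by Young's inequality, and all $\omega_c\times\omega_c$ terms go into the constant.

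\textbf{Boundary term.} By Lemma \ref{lem: velocity formula},
\[
\big(\pa_y\Delta_D^{-1}f\big)_\xi|_{y=0}=-\int_0^\infty e^{-|\xi|y}f_\xi\,dy .
\]
Change variables $y=\nu^{1/2}z$; the prefactor $\nu^{-1/2}$ in $BC_{b,0}$ cancels the Jacobian $\nu^{1/2}$. Then $e^{\eps_0(1+\mu)|\xi|}e^{-|\xi|\nu^{1/2}z}\le e^{\eps_0(1+\mu)|\xi|}$. The resulting expression is exactly an $L^1_z$ norm of products of the same type as in $F$, and the weight $e^{\eps_0(1+\mu)z^2/t}\ge 1$ can be inserted for free. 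The factor $|\xi|^i$, $i\le 8$, is handled the same way, so the same bound follows. The cutoff $\chi_b'$ terms are supported at $z\sim\nu^{-1/2}$, where the Gaussian weight gives $e^{-c/(\nu t)}$ smallness.

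\textbf{Main obstacle.} The hardest step is the $z\pa_z$ derivatives hitting $v_b^{(0)}/y$ and $u_{vp}^{(0)}$ in the transport term with weight $z$. For the velocity, I would use $y\pa_y(v/y)=-\pa_xu-v/y$ as in Lemma \ref{lem: velocity estimates 1}. The coefficient $\pa_yv_{vp}^{(0)}(t,x,0)$ is $z$-independent, so it commutes with $z\pa_z$.

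Second, the corrector $\omega_c$ is singular as $t\to0$: it behaves like $t^{-1/2}e^{-z^2/4t}$. Its $L^1_z$ norm is nevertheless bounded, with $C'$ small relative to the $1/4$ in the Gaussian. This is why the $\widetilde Y$ norm, rather than an $L^\infty_z$ norm, is used for the vorticity factor; the velocity factor from $\omega_c$ is then bounded in $L^\infty_z$ by its $L^1_z$ norm.
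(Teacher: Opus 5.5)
Your proposal is correct and follows essentially the paper's route: split $\omega_b^{(0)}=\omega_c+\omega_{re}$ and apply the product estimate, with the velocity in a sup-in-$z$ analytic norm controlled by $\widetilde Y_{\mu,t}$-norms of the vorticity. You then charge whichever factor carries nine derivatives with $(\mu_0-\mu-\gamma t)^{-\beta}\|\omega_{re}\|_{Z(t)}$ (the paper also uses the nine-derivative bound on $\omega_c$ that you flag, though only tacitly). Finally you reduce the boundary datum to the same products via $-\int_0^\infty e^{-|\xi|y}(BC_{b,0})_\xi\,dy$ and $y=\nu^{1/2}z$.

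One bookkeeping slip: $v_b^{(0)}/y$ costs an extra $\pa_x$, since $|(v/y)_\xi|\lesssim|\xi|\int|\omega_\xi|\,dz$. So top-order velocity factors may need nine derivatives of $\omega_{re}$, not eight as you state, which is why the paper writes $\sum_{i\le 9}$ on the velocity. Only one factor in each Leibniz term can reach nine derivatives, however, so your final bound is unchanged.
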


\begin{proof}
	Using the definition of $F$ and Lemma \ref{lem: product estimate}, we have
	\begin{align*}
		\sum_{i+j\leq 8}
	\left\|\pa_x^i(z\pa_z)^jF(t)\right\|_{\widetilde{Y}_{\mu,t}}
	&\leq \sum_{i\leq 8}\left\|e^{\eps_0(1+\mu)|\xi|} \big(\pa_x^i\big(u_b^{(0)}+u_{vp}^{(0)}(t,0) \big)\big)_\xi\right\|_{L^1_\xi}
	\sum_{i+j\leq9}\left\|\pa_x^i(z\pa_z)^j\omega_b^{(0)}\right\|_{\widetilde{Y}_{\mu,t}}\\
	&+\sum_{i\leq 9}\left\|e^{\eps_0(1+\mu)|\xi|} \big(\pa_x^i\big(u_b^{(0)}+u_{vp}^{(0)}(t,0) \big)\big)_\xi\right\|_{L^1_\xi}
	\sum_{i+j\leq8}\left\|\pa_x^i(z\pa_z)^j\omega_b^{(0)}\right\|_{\widetilde{Y}_{\mu,t}} ,
	\end{align*}
	which along with Lemma \ref{lem: est of Ue(k)} and Lemma \ref{lem: est of omega c U c} implies for $\eps_0<C'/2$,
	\begin{align*}
		&\sum_{i\leq 9}\left\|e^{\eps_0(1+\mu)|\xi|} \big(\pa_x^i\big(u_b^{(0)}+u_{vp}^{(0)}(t,0) \big)\big)_\xi\right\|_{L^1_\xi}
		\leq C+\sum_{i\leq 9}\left\|e^{\eps_0(1+\mu)|\xi|} \big(\pa_x^iu_{re}\big)_\xi(t,0)\right\|_{L^1_\xi}\\
		&\leq C+\sum_{i\leq 9}\left\|\pa_x^i\omega_{re}\right\|_{\widetilde{Y}_{\mu,t}}
		\leq C+C(\mu_0-\mu-\gamma t)^{-\beta}\|\omega_{re}\|_{Z(t)},
	\end{align*}
	and
	\begin{align*}
		\sum_{i\leq 8}\left\|e^{\eps_0(1+\mu)|\xi|} \big(\pa_x^i\big(u_b^{(0)}+u_{vp}^{(0)}(t,0) \big)\big)_\xi\right\|_{L^1_\xi}
		\leq C+\sum_{i\leq 8}\left\|e^{\eps_0(1+\mu)|\xi|} \big(\pa_x^iu_{re}\big)_\xi(t,0)\right\|_{L^1_\xi}
		\leq C+C\|\omega_{re}\|_{Z(t)}.
	\end{align*}
	Therefore, we obtain
	\begin{align*}
		\sum_{i+j\leq 8}
	\left\|\pa_x^i(z\pa_z)^jF(t)\right\|_{\widetilde{Y}_{\mu,t}}
	\leq C\big(1+(\mu_0-\mu-\gamma t)^{-\beta}\|\omega_{re}\|_{Z(t)}\big)\big(1+\|\omega_{re}\|_{Z(t)}\big).
	\end{align*}
	
	We now focus on the second inequality. Applying Lemma \ref{lem: velocity formula}, we obtain
	\begin{align*}
		(\pa_y\Delta_D^{-1}BC_{b,0})_\xi|_{y=0}(t)
		=-\int_0^{+\infty}e^{-|\xi|y}(BC_{b,0})_\xi(t,y)dy.
	\end{align*}
	Armed with the definition of $BC_{b,0}$ and the transformation $z=\frac{y}{\nu^{1/2}}$, we have
	\begin{align*}
		\left|(\pa_x^i\pa_y\Delta_D^{-1}BC_{b,0})_\xi|_{y=0}(t)\right|
		\leq& \int_0^{+\infty}\left|\Big(\pa_x^i\big((u_{vp}^{(0)}+u_b^{(0)})(t,x,\nu^{1/2}z)(\pa_x\omega_b^{(0)})(t,x,z)\big)\Big)_\xi\right|dz\\
		&+\int_0^{+\infty}\left|\Big(\pa_x^i\big(\frac{v_{vp}^{(0)}+v_b^{(0)}}{y}(t,x,\nu^{1/2}z)(z\pa_z\omega_b^{(0)})(t,x,z)\big)\Big)_\xi\right|dz.
	\end{align*}
	Due to Lemma \ref{lem: product estimate}, we deduce that
	\begin{align*}
		&\sum_{i\leq8}\left\|e^{\eps_0(1+\mu)|\xi|}(\pa_x^i\pa_y\Delta_D^{-1}BC_{b,0})_\xi|_{y=0}(t)\right\|_{L^1_\xi\cap L^2_\xi}\\
		&\leq \sum_{i\leq 8}\left\|e^{\eps_0(1+\mu)|\xi|}\sup_{z>0}\left| \big(\pa_x^i(u_b^{(0)}+u_{vp}^{(0)})\big)_\xi(t,z)\right|\right\|_{L^1_\xi}
	\sum_{i+j\leq9}\left\|\pa_x^i(z\pa_z)^j\omega_b^{(0)}\right\|_{\widetilde{Y}_{\mu,t}}\\
	&\quad+\sum_{i\leq 9}\left\|e^{\eps_0(1+\mu)|\xi|}\sup_{z>0}\left| 
	 \big(\pa_x^i(u_b^{(0)}+u_{vp}^{(0)})\big)_\xi(t,z)\right|\right\|_{L^1_\xi}
	\sum_{i+j\leq8}\left\|\pa_x^i(z\pa_z)^j\omega_b^{(0)}\right\|_{\widetilde{Y}_{\mu,t}}.
	\end{align*}
	The remaining steps are identical to those for the first inequality in this lemma.
\end{proof}

\medskip

\section{Estimates for approximate solutions}\label{sec: Estimates for approximate solutions}
This section is devoted to proving Proposition \ref{prop: app estimates}. We start by presenting the following lemma, which provides estimates for $U_{vp}^{(k)}$ near the boundary.
\begin{lemma}\label{lem: est of Ue(k)}
	For $T$ small and $k=0,1$, there exist $C,C'>0$ such that
	\begin{align*}
		\sum_{\substack{i\leq 10 \\ l\leq 1}}\left\|\sup_{0\leq y\leq 5}e^{C'|\xi|}\left|\big((1,x)\pa_t^l (u_{vp}^{(k)},\frac{v_{vp}^{(k)}}{y})\big)_\xi(t,y)\right|\right\|_{L^1_\xi\cap L^2_\xi}
		+\sum_{i\leq 10}\left\|e^{C'|\xi|}\big((1,x)\pa_x^i u_0\big)_\xi|_{y=0}\right\|_{L^1_\xi\cap L^2_\xi} \leq C.
	\end{align*}
\end{lemma}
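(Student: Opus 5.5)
The plan is to work directly from the explicit formulas \eqref{def: Uvp 0} and \eqref{def: Uvp 1} and to exploit one structural fact: on the strip $0\le y\le 5$ the field point $X=(x,y)$ stays at distance at least $10$ from both the vortex center $X(t)$ and its mirror image $X(t)^\ast$. By \eqref{est: X(t)-X0} we have $|X(t)-X_0|\le 1$ with $X_0=(0,20)$. All kernels involved are therefore harmonic rational functions of $x$ with poles at $x=x_1(t)\pm i(x_2(t)\mp y)$, and the imaginary part of each pole has modulus at least $14$. Consequently each such function extends holomorphically to the complex strip $|\operatorname{Im}x|<14$, uniformly in $y\in[0,5]$ and $t\in[0,T]$. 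The Paley--Wiener theorem then yields decay like $e^{-c|\xi|}$ of the Fourier transform, with any $c<14$. Choosing $C'\le 1$ gives the weights $e^{C'|\xi|}$ with plenty of room.

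\textbf{The term $U_{vp}^{(0)}$.} Here $\chi_{vp}(X(t))=1$, so $u_{vp}^{(0)}$ is a difference of two Poisson-type kernels $\frac{-(y-x_2)}{(x-x_1)^2+(y-x_2)^2}$ and its reflected counterpart. Their Fourier transforms in $x$ are explicit, namely $\pi\,\mathrm{sgn}(\cdot)\,e^{-2\pi|\xi||y\mp x_2|}e^{-2\pi i x_1\xi}$ up to constants. This gives $|(u_{vp}^{(0)})_\xi|\lesssim e^{-2\pi\cdot 14|\xi|}$, and hence the $L^1_\xi\cap L^2_\xi$ bounds after multiplying by $e^{C'|\xi|}$. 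For the multiplier $x$, I use $(xf)_\xi=\frac{i}{2\pi}\partial_\xi f_\xi$. The only delicate point is the non-smoothness of $|\xi|$ at $\xi=0$, but the $\partial_\xi$-derivative is bounded and exponentially decaying, so the $L^1\cap L^2$ norm is finite. For $v_{vp}^{(0)}/y$, I note that $v_{vp}^{(0)}$ vanishes at $y=0$, since the kernel difference is odd under reflection. Writing $v/y=\int_0^1\partial_y v(x,\theta y)\,d\theta$ reduces this term to $\partial_y v_{vp}^{(0)}$, which has the same analytic structure. Derivatives $\partial_x^i$ with $i\le 10$ cost only a factor $|\xi|^i$, which is absorbed by the exponential decay. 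The time derivative $\partial_t$ falls on $X(t)$, producing a factor $X'(t)$, which is bounded by Lemma \ref{lem: ODE est}.

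\textbf{The term $U_{vp}^{(1)}$.} Writing $X=X(t)+\sqrt{\nu t}\,Z$ inside the integral of \eqref{eq: expansion of velocity from vortex point 1}, I Taylor-expand the Biot--Savart kernel in $\sqrt{\nu t}\,Z$. This is legitimate because the kernel is smooth, with uniformly bounded derivatives, on the region where $|X-X(t)-\sqrt{\nu t}\,Z|\ge 10$. The first-order term carries the moment $\int Z G\,dZ=0$, and the contribution of $\nu t\Omega_2$ is $O(\nu t)$. Hence $U_{vp}^{(1)}$ consists of finitely many derivatives of the same kernels evaluated at $X(t)$ and $X(t)^\ast$, multiplied by bounded coefficients of the form $t^{1/2}$, $t$, and moments of $\Omega_2$. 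I have to be careful here. The $\nu^{1/2}$ order also arises from $X(t)-X_0(t)=\nu^{1/2}X_1+\dots$, depending on which center the expansion is taken around, and this contributes derivative-of-kernel terms times $X_1(t)$, which are bounded. The cut-off $\chi_{vp}(X(t)+\sqrt{\nu t}\,Z)$ differs from $1$ only where $|Z|\gtrsim(\nu t)^{-1/2}$, so it contributes only $e^{-c/(\nu t)}$. Each resulting term then has the same holomorphic structure as before, and the estimates above apply verbatim.

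\textbf{The term $u_0$.} For $u_0|_{y=0}$, I take $U_0=BS_{\mathbb R^2_+}[\alpha\delta_{X_0}]$, so $u_0(x,0)=\frac{\alpha}{\pi}\frac{20}{x^2+400}$. Its Fourier transform is $\alpha e^{-40\pi|\xi|}$. Weighted moments and derivatives are then immediate.

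The main obstacle I anticipate is the $\nu^{1/2}$-order term. Making $U_{vp}^{(1)}$ precise and uniform in $t\to0$ requires checking that all $\nu$-dependence of $X(t)$ enters only through smooth, bounded coefficients. That is what the bounds on $X_1'=C_0(t)$ and on the moments of $\Omega_2$ are needed for.
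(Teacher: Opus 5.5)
Your argument is correct and uses the same mechanism as the paper's: the strip $0\le y\le 5$ is a fixed positive distance from $\operatorname{supp}\chi_{vp}\subseteq\{y'\ge 14\}$ and from its reflection, so Poisson-type kernels give $e^{-c|\xi|}$ decay. The organization differs, though.

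\textbf{The paper's route.} It never writes $U_{vp}^{(0)}$ or $U_{vp}^{(1)}$ explicitly. It takes the $x$-Fourier transform of the half-plane Biot--Savart law for the whole field $u_{a,vp}=BS_{\mathbb R^2_+}[\omega_{a,vp}]$. This is an integral of $e^{-2\pi i x'\xi}e^{-2\pi|y\mp y'||\xi|}$ against $\omega_{a,vp}$ over $\{y'\ge 14\}$, and the paper reads the decay off directly. It then treats the bounds on the expansion coefficients as inherited from this representation. The term $u_0|_{y=0}$ is handled by its explicit transform, as you do. This route is shorter and needs no Taylor expansion, since it works for any vorticity supported in $\operatorname{supp}\chi_{vp}$; it is the same computation that gives \eqref{est: app4}. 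However, it leaves implicit how the bound passes to coefficients defined through a $\nu\to0$ limit. It also does not really address the weight $x$ (its factor $(1,i\,\mathrm{sgn}\,\xi)$ is not multiplication by $x$), the quotient $v/y$, or $\pa_t$.

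\textbf{Your route.} Working with closed-form point-vortex kernels, you handle exactly those points:
\begin{itemize}
\item the weight $x$, via $(xf)_\xi=\frac{i}{2\pi}\pa_\xi f_\xi$, including the kink of $|\xi|$ at $\xi=0$;
\item the division by $y$, via $v_{vp}^{(0)}|_{y=0}=0$ and reduction to $\pa_y v_{vp}^{(0)}$;
\item the time derivative, via $X'(t)$ and $X_1'=C_0$.
\end{itemize}

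\textbf{One point to tighten.} $U_{vp}^{(1)}$ is defined by $\lim_{\nu\to0}\nu^{-1/2}(\cdots)$. So all Taylor terms of order $\ge 2$ and all $\Omega_2$ contributions are $O(\nu t)$ and vanish in the limit. The first-order term also vanishes, since $\int ZG\,dZ=\int Z\Omega_2\,dZ=0$ up to exponentially small cut-off errors. Hence $U_{vp}^{(1)}$ is either $0$ or $X_1(t)$ dotted with the centre-gradient of the $U_{vp}^{(0)}$ kernel, depending on which centre is used in $U_{vp}^{(0)}$. In particular, no coefficient such as $t^{1/2}$ survives. You should say this explicitly rather than listing $t^{1/2}$ among the possible coefficients, because its time derivative would break the $l=1$ bound.
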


\begin{proof}
	Since $U_{a,vp}=BS_{\mathbb R^2_+}[\omega_{a,vp}]=U_{vp}^{(0)}+\nu^{1/2} U_{vp}^{(1)}+O(\nu)$, we utilize \eqref{BS law formulation 2} and take Fourier transformation to obtain
	\begin{align*}
		\big((1,x) u_{a,vp}\big)_\xi(t,y)
		=\frac{1}{2\pi}\int_{\chi_{vp}}(1,isgn\xi)e^{-2\pi ix'\xi}\big(e^{-2\pi|y-y'||\xi|}-e^{-2\pi|y+y'||\xi|} \big)\omega_{a,vp}(t,x',y')dx'dy',
	\end{align*}
	and
	\begin{align*}
		\big((1,x)u_0\big)_\xi(y)
		=\frac{(1,isgn\xi)}{\pi}e^{-2\pi(y+y_0)|\xi|}.
	\end{align*}
	The identities above obviously yield the desired result.
\end{proof}

\textbf{Proof of Proposition \ref{prop: app estimates}.}  The first estimate \eqref{est: app1} is derived from the definition of $\omega_{a,b}$ \eqref{def: app solution ultimate 2} and Proposition \ref{prop: WP of boundary layer}.
 
  The first inequality in \eqref{est: app2} follows from Proposition \ref{prop: properties of Lambda} and equations of $\Omega_2,\Omega_3$ given in \eqref{eq: eq of Omega2}, \eqref{eq: eq of Omega3}. For the second identity in \eqref{est: app2}, we use the fact $\int_{\mathbb R^2}\Omega_0 d\eta=1$ together with $ \Omega_2\in\mathcal Y_2, \Omega_3\in \mathcal Y_2\oplus\mathcal Y_3$ which imply $\int_{\mathbb R^2}\Omega_2 d\eta=\int_{\mathbb R^2}\Omega_3 d\eta=0$.
  
  To establish the first inequality in \eqref{est: app3}, we recall $U_{a,b}=(u_{a,b},v_{a,b})=BS_{\mathbb R^2_+}[\omega_{a,b}]$ and only prove the case $j=1$ for $u_{a,b}$. The remaining cases can be treated analogously. By Lemma \ref{lem: velocity formula}, we have
  \begin{align*}
  	(\pa_x^i\pa_y u_{a,b})_\xi(y)=-\xi^i(\omega_{a,b})_\xi(y)
  	&+\frac{|\xi|\xi^i}{2}\Big(\int_0^y e^{-|\xi|(y-z)}(1-e^{-2|\xi|z})(\omega_{a,b})_\xi(z)dz\\
  	&-\int_y^{+\infty}e^{-|\xi|(z-y)}(1+e^{-2|\xi|y})(\omega_{a,b})_\xi(z)dz\Big),
  \end{align*}
  which implies
  \begin{align*}
  	&\left\|\pa_x^i\pa_y u_{a,b}\right\|_{L^\infty(y\geq1/4)}
  	\leq \left\|\|(\pa_x^i\pa_y u_{a,b})_\xi(y)\|_{L^\infty_y(y\geq1/4)}\right\|_{L^1_\xi}\\
  	&\leq \left\|\|\xi^i(\omega_{a,b})_\xi(y)\|_{L^\infty_y(y\geq\f14)}\right\|_{L^1_\xi}
  	+\left\|\left\|\xi^{i+2}\frac{1-e^{-2|\xi|z}}{2|\xi|z}z(\omega_{a,b})_\xi(z)\right\|_{L^1_z}\right\|_{L^1_\xi}
  	+\left\|\left\|\xi^{i+1}(\omega_{a,b})_\xi(z)\right\|_{L^1_z(z\geq\f14)}\right\|_{L^1_\xi}\\
  	&\leq \left\|\|\xi^i(\omega_{a,b})_\xi(y)\|_{L^\infty_y(y\geq\f14)}\right\|_{L^1_\xi}
  	+5\left\|\left\|\langle\xi\rangle^{i+2}z(\omega_{a,b})_\xi(z)\right\|_{L^1_z}\right\|_{L^1_\xi}
  \end{align*}
  By Sobolev embedding and Proposition \ref{prop: WP of boundary layer}, we have
  \begin{align*}
  	\left\|\left\|\xi^i(\omega_{a,b})_\xi(y)\right\|_{L_y^\infty(y\geq\f14)}\right\|_{L^1_\xi}
  	&\leq C\sum_{k\leq6}\left\|\left\|\xi^i\pa_y^k(\omega_{a,b})_\xi(y)\right\|_{L_y^1(y\geq\f14)}\right\|_{L^1_\xi}\\
  	&\leq Ce^{-\frac{C_1}{\nu t}}\sum_{k\leq6}\left\|\left\|e^{\frac{\eps_0y^2}{\nu t}}\xi^i\pa_y^k(\omega_{a,b})_\xi(y)\right\|_{L_y^1(y\geq\f14)}\right\|_{L^1_\xi}
  	\leq Ce^{-\frac{C_1}{\nu t}}.
  \end{align*}
 By Proposition \ref{prop: WP of boundary layer}, we have
 \begin{align*}
 	&\left\|\left\|\langle\xi\rangle^{i+2}z(\omega_{a,b})_\xi(z)\right\|_{L^1_z}\right\|_{L^1_\xi}
 	\leq\left\|\left\|\langle\xi\rangle^{i+2}z e^{\frac{\eps_0z^2}{\nu t}}\chi_b(\nu^{-1/2}\omega_b^{(0)}+\omega_b^{(1)}+\nu^{1/2}\omega_b^{(2)})_\xi(t,\frac{z}{\nu^{1/2}})\right\|_{L^1_z}\right\|_{L^1_\xi}\\
 	&\leq \nu^{1/2}\left\|\left\|\langle\xi\rangle^{i+2}z' e^{\eps_0 z'^2}(\omega_b^{(0)}+\nu^{1/2}\omega_b^{(1)}+\nu\omega_b^{(2)})_\xi(t,z')\right\|_{L^1_{z'}}\right\|_{L^1_\xi}
 	\leq C\nu^{1/2}.
 \end{align*}
 Collecting these estimates together, we obtain
 \begin{align*}
 	\left\|\pa_x^i\pa_y u_{a,b}\right\|_{L^\infty(y\geq1/4)}
  	\leq C\nu^{1/2}.
 \end{align*}
 
 For the second inequality in \eqref{est: app3}, we use \eqref{BS law formulation 2} and the support property of $\chi_{vp}$ to obtain
\begin{align*}
	&U_{a,vp}(X,t)=\frac{1}{2\pi}\int_{\mathbb R^2}\big(\frac{(X-Y)^\perp}{|X-Y|^2}-\frac{(X-Y^\ast)^\perp}{|X-Y^\ast|^2}\big)\cdot\frac{\alpha}{\nu t}\chi_{vp}(Y)\mathcal W_a(\frac{Y-X(t)}{(\nu t)^{1/2}},t)dY\\
	&=\frac{1}{2\pi}\int_{\mathbb R^2}\frac{(X-Y)^\perp}{|X-Y|^2}\cdot\frac{\alpha}{\nu t}\mathcal W_a(\frac{Y-X(t)}{(\nu t)^{1/2}},t)dY\\
	&\qquad+\frac{1}{2\pi}\int_{\mathbb R^2}\frac{(X-Y)^\perp}{|X-Y|^2}\cdot\frac{\alpha}{\nu t}(\chi_{vp}(Y)-1)\mathcal W_a(\frac{Y-X(t)}{(\nu t)^{1/2}},t)dY\\
	&\qquad-\frac{1}{2\pi}\int_{\mathbb R^2}\frac{(X-Y^\ast)^\perp}{|X-Y^\ast|^2}\cdot\frac{\alpha}{\nu t}\chi_{vp}(Y)\mathcal W_a(\frac{Y-X(t)}{(\nu t)^{1/2}},t)dY
	:=I_1+I_2+I_3.
\end{align*}  
For $I_1$, Lemma \ref{lem: decay rate of velocity} implies
\begin{align*}
	\left\|(X-X(t))I_1\right\|_{L^\infty}
	\leq C\|\mathcal W_a\|_{L^{4/3}\cap L^4}\leq C,
\end{align*}
thus we obtain
\begin{align*}
	|I_1|\leq \frac{C}{|X-X(t)|}.
\end{align*}
For $I_2$, if $2<p<+\infty$, we use Hardy-Littlewood-Sobolev inequality to obtain
\begin{align*}
	\|I_2\|_{L^p}
	\leq \frac{C}{\nu t}\|(1-\chi_{vp})\mathcal W_a\|_{L^{\frac{2p}{p-2}}}
	\leq Ce^{-\frac{C_1}{\nu t}},
\end{align*}
and if $p=+\infty$, Sobolev embedding yields
\begin{align*}
	\|I_2\|_{L^\infty}
	\leq \frac{C}{\nu t}\|(1-\chi_{vp})\mathcal W_a\|_{L^{4/3}\cap L^4}
	\leq Ce^{-\frac{C_1}{\nu t}}.
\end{align*}
Therefore, we have $I_2=O_{L^p}(e^{-\frac{C_1}{\nu t}})$ for $2<p\leq+\infty$.

For $I_3$, since $Y\in\operatorname{supp}\chi_{vp}$, it holds that $|X-Y^\ast|\geq C|X-X(t)|$ for $X\in\mathbb R^2_+$. Therefore, we have 
\begin{align*}
	|I_3|\leq \frac{C}{|X-X(t)|} \int_{\mathbb R^2}\frac{\alpha}{\nu t}\mathcal W_a(\frac{Y-X(t)}{(\nu t)^{1/2}},t)dY
	\leq \frac{C}{|X-X(t)|}.
\end{align*}
Collecting these estimates together, we obtain the second inequality in \eqref{est: app3}.

For \eqref{est: app4}, we only prove the estimate for $u_a$, as the bound for $\frac{v_a}{y}$ follows by an identical argument. Recall that $u_a=u_{a,b}+u_{a,vp}$. For $u_{a,b}$, by Lemma \ref{lem: velocity formula} and Proposition \ref{prop: WP of boundary layer}, we obtain
\begin{align*}
	\sum_{i+j\leq8}\left\| e^{C'|\xi|}\sup_{0<y<5}\left|\big(\pa_x^i(y\pa_y)^j u_{a,b}\big)_\xi(t,y)\right|\right\|_{L^1_\xi}
	\leq \sum_{i+j\leq8}\left\| e^{C'|\xi|}\left\|\big(\pa_x^i(y\pa_y)^j \omega_{a,b}\big)_\xi(t,y)\right\|_{L^1_y}\right\|_{L^1_\xi}
	\leq C.
\end{align*}
For $u_{a,vp}$, by \eqref{BS law formulation 2}, it holds that
\begin{align*}
	u_{a,vp}(t,x,y)
	=\frac{1}{2\pi}\int_{\mathbb R^2_+}
	\big(\frac{y+y'}{(x-x')^2+(y+y')^2}
	-\frac{y-y'}{(x-x')^2+(y-y')^2}\big)
	\omega_{a,vp}(t,x',y')dx'dy'.
\end{align*}
Taking Fourier transformation w.r.t. $x$ variable, we have
\begin{align*}
	(u_{a,vp})_\xi(t,y)
	=\int_{\mathbb R^2_+}
	e^{2\pi ix'\xi}\big(e^{-2\pi(y+y')|\xi|}-e^{-2\pi(y-y')|\xi|}\big)
	\omega_{a,vp}(t,x',y')dx'dy'.
\end{align*}
Due to $\operatorname{supp}\omega_{a,vp}\subseteq\{y'\geq10\}$, it holds that
\begin{align*}
	&\sum_{i+j\leq8}\left\| e^{C'|\xi|}\sup_{0<y<5}\left|\big(\pa_x^i(y\pa_y)^j u_{a,vp}\big)_\xi(t,y)\right|\right\|_{L^1_\xi}\\
	&\leq C\sum_{i+j\leq8}\left\| e^{C'|\xi|}e^{-10\pi|\xi|}\xi^{i+j}\right\|_{L^1_\xi}
	\cdot\int_{\mathbb R^2_+}\left|\omega_{a,vp}(t,x',y')\right|dx'dy'
	\leq C.
\end{align*}

\ef

\end{document}